\newcommand{\href}[1]{#1} 
\newtheorem{theorem}{Theorem}[section]
\newtheorem{lemma}[theorem]{Lemma}
\newtheorem{corollary}[theorem]{Corollary}
\theoremstyle{definition}
\newtheorem{remark}{Remark}[section]
\newtheorem*{proofthm}{Proof of Theorem 4.5.1}
\newtheorem*{example}{Example}
\newtheorem{conj}[theorem]{Conjecture}
\newtheorem*{thm1}{Theorem}
\newtheorem*{thm2}{Theorem}
\newtheorem*{thm3}{Theorem}
\let\origdoublepage\cleardoublepage
\newcommand{\clearemptydoublepage}{%
  \clearpage{\pagestyle{empty}\origdoublepage}}
\let\cleardoublepage\clearemptydoublepage
\newcommand\prooflem{\noindent\textsl{Proof of Lemma 3.2.2. }}
\begin{document}

\pagestyle{empty}
\pagenumbering{roman}

\begin{titlepage}
        \begin{center}
        \vspace*{1.0cm}

        \Huge
        {\bf Entropy and Graphs }

        \vspace*{1.0cm}

        \normalsize
        by \\

        \vspace*{1.0cm}

        \Large
        Seyed Saeed Changiz Rezaei \\

        \vspace*{3.0cm}

        \normalsize
        A thesis \\
        presented to the University of Waterloo \\ 
        in fulfillment of the \\
        thesis requirement for the degree of \\
        Master of Math \\
        in \\
        Combinatorics and Optimization \\

        \vspace*{2.0cm}

        Waterloo, Ontario, Canada, 2013 \\

        \vspace*{1.0cm}

        \copyright\ Seyed Saeed Changiz Rezaei 2013 \\
        \end{center}
\end{titlepage}

\pagestyle{plain}
\setcounter{page}{2}

\cleardoublepage 

  \noindent
\begin{center}\textbf{Author's Declaration}\end{center}
I hereby declare that I am the sole author of this thesis. This is a true copy of the thesis, including any required final revisions, as accepted by my examiners.

  \bigskip
  
  \noindent
I understand that my thesis may be made electronically available to the public.

\cleardoublepage


\begin{center}\textbf{Abstract}\end{center}

The entropy of a graph is a functional depending both on the graph itself and on a probability distribution on its vertex set. This graph functional originated  from the problem of source coding in information theory and was introduced by J. K\"{o}rner in 1973. Although the notion of graph entropy has its roots in information theory, it was proved to be closely related to some classical and frequently studied graph theoretic concepts. For example, it provides an equivalent definition for a graph to be perfect and it can also be applied to obtain lower bounds in graph covering problems. 

In this thesis, we review and investigate three equivalent definitions of graph entropy and its basic properties. Minimum entropy colouring of a graph was proposed by N. Alon in 1996. We study  minimum entropy colouring and its relation to graph entropy. We also discuss the relationship between the entropy and the fractional chromatic number of a graph which was already established in the literature.

A graph $G$ is called \emph{symmetric with respect to a functional $F_G(P)$} defined on the set of all the probability distributions on its vertex set if the distribution $P^*$ maximizing $F_G(P)$ is uniform on $V(G)$. Using the combinatorial definition of the entropy of a graph in terms of its vertex packing polytope and the relationship between the graph entropy and fractional chromatic number, we prove that vertex transitive graphs are symmetric with respect to graph entropy. Furthermore, we show that a bipartite graph is symmetric with respect to graph entropy if and only if it has a perfect matching. As a generalization of this result, we characterize some classes of symmetric perfect graphs with respect to graph entropy. Finally, we prove that the line graph of every bridgeless cubic graph is symmetric with respect to graph entropy. 

\cleardoublepage


\begin{center}\textbf{Acknowledgements}\end{center}

I would like to thank my advisor Chris Godsil for his guidance and support throughout my graduate studies in Combinatorics and Optimization Department.  I am also grateful to the Department of Combinatorics and Optimization for providing me with a motivating academic environment.
\cleardoublepage

%
%

\renewcommand\contentsname{Table of Contents}
\tableofcontents
\cleardoublepage
\phantomsection

%
\addcontentsline{toc}{chapter}{List of Figures}
\listoffigures
\cleardoublepage
\phantomsection		


\pagenumbering{arabic}

\chapter{Introduction}

The entropy of a graph is an information theoretic functional which is defined on a graph with a probability density on its vertex set. This functional was originally proposed by J. K\"{o}rner in 1973 to study the minimum number of codewords required for representing an information source (see J. K\"{o}rner \cite{JKor}).

J. K\"{o}rner investigated the basic properties of the graph entropy in several papers from 1973 till 1992 (see J. K\"{o}rner \cite{JKor}-\cite{Jkor4}). 

Let $F$ and $G$ be two graphs on the same vertex set $V$. Then the union of graphs $F$ and $G$ is the graph $F\cup G$ with vertex set $V$ and its edge set is the union of the edge set of graph $F$ and the edge set of graph $G$. That is
\begin{eqnarray}
&&V\left(F\cup G\right) = V,\nonumber\\
&&E\left(F\cup G\right) = E\left(F\right)\cup E\left(G\right).\nonumber
\end{eqnarray}
The most important property of the entropy of a graph is that it is sub-additive with respect to the union of graphs. This leads to the application of graph entropy for graph covering problem as well as the problem of perfect hashing. 

The graph covering problem can be described as follows. Given a graph $G$ and a family of graphs $\mathcal G$ where each graph $G_i\in \mathcal G$ has the same vertex set as $G$, we want to cover the edge set of $G$ with the minimum number of graphs from $\mathcal G$. Using the sub-additivity of graph entropy one can obtain lower bounds on this number.

Graph entropy was used in a paper by Fredman and Koml\'{o}s for the minimum number of perfect hash functions of a given range that hash all $k$-element subsets of a set of a given size (see Fredman and Koml\'{o}s \cite{FK}). 

As another application of graph entropy, Kahn and Kim in \cite{Kahn} proposed a sorting algorithm based on the entropy of an appropriate comparability graph.

In 1990, I. Csisz\'{a}r, J. K\"{o}rner, L. Lov\'{a}sz, K. Marton, and G. Simony, characterized minimal pairs of convex corners which generate the probability density $\mathbf p=(p_1,\cdots,p_k)$ in a $k$-dimensional space. Their study led to another definition of the graph entropy in terms of the vertex packing polytope of the graph. They also gave another characterization of a perfect graph using the sub-additivity property of graph entropy. 

The sub-additivity property of the graph entropy was further studied in J. K\"{o}rner \cite{JKor01}, J. K\"{o}rner and G. Longo \cite{JKor11}, J. K\"{o}rner and et.~al. \cite{JKor2}, and J. K\"{o}rner and K. Marton \cite{JKor3}.  Their studies led to the notion of a class of graphs which is called \emph{normal graphs}. 

A set $\mathcal A$ consisting of some subsets of the vertices of a graph $G$ is a \emph{covering}, if every vertex of $G$ is contained in an element of $\mathcal A$. 

A graph $G$ is called a \emph{normal graph}\index{normal graph}, if it admits two coverings $\mathcal C$ and $\mathcal S$ such that every element $C$ of $\mathcal C$ induces a clique and every element $S$ of $\mathcal S$ induces a co-clique, and the intersection of any element of $\mathcal C$ and any element of $\mathcal S$ is nonempty, i.e.,
\[
C\cap S \neq \emptyset,~\forall C\in \mathcal C,~S\in \mathcal S.
\]
It turns out that one can consider normal graphs as a generalization of perfect graphs, since every perfect graph is a normal graph (see J. K\"{o}rner \cite{JKor01} and C. De Simone and J. K\"{o}rner \cite{De}).

Noga Alon, and Alon Orlitsky studied the problem of source coding in information theory using the minimum entropy colouring of the characteristic graph associated with a given information source. They investigated the relationship between the minimum entropy colouring of a graph and the graph entropy (see N. Alon and A. Orlitsky \cite{Alon96}).

This thesis is organized as follows. In Chapter 2, we define the entropy of a random variable. We also briefly investigate the application of entropy in counting problems. In chapter 3, we define the entropy of a graph. Let $VP(G)$ be the \emph{vertex packing polytope} of a given graph $G$ which is the convex hull of the characteristic vectors of its independent sets. Let $|V(G)| = n$ and $P$ be a probability density on $V(G)$. Then the \emph{entropy of $G$ with respect to the probability density $P$} is defined as
\[
H_k(G,P) = \min_{\mathbf{a}\in VP(G)} \sum_{i=1}^n p_i\log (1/a_i).\label{eq:combent}
\]
 This is the definition of graph entropy which we work with throughout this thesis and was given by I. Csisz\'{a}r and et.~al. in \cite{Csis}. However, the origininal denition of graph entropy was given by J. K\"{o}rner \cite{JKor} in the context of source coding problem in information theory and is as follows. Let $G^{(n)}$ be the \emph{$n$-th conormal power} of the given graph $G$ with vertex set $V\left(G^{(n)}\right)= V^n$ and edge set $E^{(n)}$ as
\[
E^{(n)} = \{(x,y) \in V^n \times V^n :  \exists i :
(x_i,y_i)\in E\}.
\]
Furthermore, let 
\[
T_\epsilon^{(n)} = \{U\subseteq V^n: P^n(U)\geq 1 - \epsilon\}.
\]
 Then J. K\"{o}rner \cite{JKor} defined graph entropy $H_k(G,P)$ as
\begin{equation}\label{eq:ent1}
H(G,P) = \lim_{n\rightarrow\infty} \min_{U\in T_\epsilon^{(n)}}  \frac{1}{n} \log\chi(G^{(n)}[U]).
\end{equation}
It is shown in I. Csisz\'{a}r and et. al. in \cite{Csis} that the above two definitions are equal.
We also investigate the basic properties of graph entropy and explain the relationship between the the graph entropy and perfect graphs and fractional chromatic number of a graph. Chapter 4 is devoted to minimum entropy colouring of a given graph and its connection to the graph entropy. G. Simonyi in \cite{Simu} showed that the maximum of the graph entropy of a given graph over the probability density of its vertex set is equal to its fractional chromatic number. We call a graph is symmetric with respect to graph entropy if the uniform density maximizes its entropy. We show that vertex transitive graphs are symmetric. In Chapter 5, we study some other classes of graphs which are symmetric with respect to graph entropy.  Our main results are the following theorems.
\begin{thm1}
Let $G$ be a bipartite graph with parts $A$ and $B$, and no isolated vertices. Then, uniform probability distribution $U$ over the vertices of $G$ maximizes $H_k\left(G,P\right)$ if and only if $G$ has a perfect matching.
\end{thm1}
As a generalization of this result we show that
\begin{thm2}
Let $G=(V,E)$ be a perfect graph and $P$ be a probability distribution on $V(G)$. Then $G$ is symmetric with respect to graph entropy $H_k\left(G,P\right)$ if and only if $G$ can be covered by its cliques of maximum size.
\end{thm2}
A. Schrijver \cite{Schriv1} calls a graph $G$ a \emph{$k$-graph}\index{$k$-graph} if it is $k$-regular and its fractional edge coloring number $\chi_f^\prime(G)$ is equal to $k$. We show that
\begin{thm3}
Let $G$ be a $k$-graph with $k\geq3$. Then the line graph of $G$ is symmetric with respect to graph entropy.
\end{thm3}
As a corollary to this result we show that the line graph of every bridgeless cubic graph is symmetric with respect to graph  entropy. 
\chapter{Entropy and Counting}

In this chapter, we explain some probabilistic preliminaries such as the notions of probability spaces, random variables and the entropy of a random variable. Furthermore, we give some applications of entropy methods in counting problems.

\section{Probability Spaces, Random Variables, and Density functions}

Let $\Omega$ be a set of \emph{outcomes}, let $\mathcal{F}$ be a family of subsets of $\Omega$ which is called the set of \emph{events}, and let  $P:\mathcal{F}\rightarrow[0,1]$ be a function that assigns probabilities to events. The triple $\left(\Omega, \mathcal{F}, P\right)$ is a \emph{probability space}\index{probability!probability space|(}\index{probability!probability space|)} . A \emph{measure}\index{measure!measure} is a nonnegative countably additive set function, that is a function $\mu:\mathcal{F}\rightarrow\mathbb{R}$ such that
\begin{enumerate}[(i).]
\item $\mu(A)\geq\mu(\emptyset)=0$ for all $A\in\mathcal{F}$, and
\item if $A_i\in\mathcal{F}$ is a countable sequence of disjoint sets, then
\[
\mu\left(\bigcup_iA_i\right) = \sum_i\mu\left(A_i\right).
\]
\end{enumerate}

If $\mu\left(\Omega\right)=1$, we call $\mu$ a \emph{probability measure}\index{probability!probability measure|(}. Throughout this thesis, probability measures are denoted by $P(.)$. A probability space is discrete if $\Omega$ is countable. In this thesis, we only consider discrete probability spaces. Then having $p(\omega)\geq0$ for all $\omega\in\Omega$ and $\sum_{\omega\in\Omega}p(\omega) = 1$, for all event $A\in \mathcal{F}$, the probability of the event $A$ is denoted by $P(A)$, which is
\[
P(A) = \sum_{\omega\in A} p(w)
\]
Note that members of $\mathcal{F}$ are called \emph{measurable sets}\index{measure!measurable sets} in measure theory; they are also called \emph{events} in a probability space.

On a finite set $\Omega$, there is a natural probability measure\index{probability!probability measure|)} $P$, called the (discrete) \emph{uniform} measure on $2^{\Omega}$, which assigns probability $\frac{1}{|\Omega|}$ to singleton $\{\omega\}$ for each $\omega$ in $\Omega$. Coin tossing gives us examples with $|\Omega| = 2^n$, $n = 1,2,\cdots$. Another classical example is a fair \emph{die}, a perfect cube which is thrown at random so that each of the six faces, marked with the integers 1 to 6, has equal probability $\frac{1}{6}$ of coming up.

Probability spaces become more interesting when random variables are defined on them. Let $(S,\mathcal S)$ be a measurable space. A function
\[
X:\Omega\rightarrow S,
\]
is called a \emph{measurable map}\index{measure!measurable map}  from $\left(\Omega,\mathcal F\right)$ to $\left(S,\mathcal S\right)$ if
\[
X^{-1} \left(B\right) = \{\omega: X(\omega)\in B\}\in \mathcal{F}.
\]
If $(S,\mathcal S) = (\mathbb R,\mathcal R)$, the real valued function $X$ defined on $\Omega$ is a \emph{random variable}\index{random!random variable|(}.

For a discrete probability space $\Omega$ any function $X:\Omega\rightarrow\mathbb{R}$ is a random variable. The \emph{indicator function} $1_A(\omega)$ of a set $A\in\mathcal{F}$ which is defined as

\begin{equation}
1_A(\omega) = \left\{ \begin{array}{rcl}
1, & & \omega\in A ,\\
0, && \omega\notin A.
\end{array}\right.
\end{equation}
is an example of a random variable.
If $X$ is a random variable, then $X$ induces a probability measure on $\mathbb{R}$ called its \emph{probability density function}\index{probability!probability density function} by setting
\[
\mu(A)=P(X\in A)
\]
for sets $A$. Using the notation introduced above, the right-hand side can be written as $P(X^{-1}(A))$. In words, we pull $A\subseteq\mathbb{R}$ back to $X^{-1}(A)\in \mathcal{F}$ and then take $P$ of that set. For a comprehensive study of probability spaces see R. M. Duddley \cite{prob1} and Rick Durrett \cite{prob2}.

In this thesis, we consider discrete random variables. Let $X$ be a discrete random variable with alphabet $\mathcal{X}$ and probability density function $p_X(x)=\mathrm{Pr}\{X=x\}$, $x\in\mathcal{X}$. For the sake of convenience, we use $p(x)$ instead of $p_X(x)$. Thus, $p(x)$ and $p(y)$ refer to two different random variables and are in fact different probability density functions, $p_X(x)$ and $p_Y(y)$, respectively.

\section{Entropy of a Random Variable}

Let $X$ be a random variable $X$ with probability density $p(x)$. We denote the \emph{expectation} by $E$. Then \emph{expected value of the random variable} $X$ is written
\[
E \left(X\right) = \sum_{x\in\mathcal X}xp(x),
\]
and for a function $g(.)$, the \emph{expected value of the random variable} $g(X)$ is written
\[
E_p \left(g(X)\right) = \sum_{x\in\mathcal{X}}g(x)p(x),
\]
or more simply as $E\left(g(X)\right)$ when the probability density function is understood from the context.

Let $X$ be a random variable which drawn according to probability density function $p(x)$. The \emph{entropy}\index{entropy!entropy} of $X$, $H(X)$ is defined as the expected value of the random variable $\log\frac{1}{p(x)}$, therefore, we have
\[
H(X) = -\sum_{x\in\mathcal{X}}p(x)\log p(x).
\]
The $\log$ is to the base 2 and entropy is expressed in bits. Furthermore, $0\log 0 =0$. Since $0\leq p(x)\leq 1$, we have $\log\frac{1}{p(x)}\geq 0$ which implies that $H(X)\geq 0$. Let us recall our coin toss example where the coin is not necessarily fair. That is denoting the event \emph{head} by \emph{H} and the event \emph{tail} by \emph{T}, let $P(H) = p$ and $P(T) = 1 - p$. Then the corresponding random variable $X$ is defined as $X(H) = 1$ and $X(T)=0$. That is we have

\begin{equation}
X = \left\{ \begin{array}{rcl}
1, & & \mathrm{Pr}\{X=1\}=p ;\\
0, &&  \mathrm{Pr}\{X=0\}=1 -  p.\nonumber
\end{array}\right.
\end{equation}

Then,
\[
H(X) = -p\log p - (1 - p)\log(1 - p).
\]
Note that the maximum of $H(X) $ is equal to $1$ which is attained when $p = \frac{1}{2}$. Thus, the entropy of a fair coin toss, i.e., $P(H) = P(T) = \frac{1}{2}$ is 1 bit. More generally for any random variable $X$,
\begin{equation}
H(X)\leq\log|\mathcal{X}|,\label{eq:maxent}
\end{equation}
with equality if and only if $X$ is uniformly distributed.

The \emph{joint entropy}\index{entropy!joint entropy} $H(X,Y)$ of a pair of discrete random variables $(X,Y)$ with a joint probability density function $p(x,y)$ is defined as
\[
H\left(X,Y\right) = -\sum_{x\in\mathcal{X}}\sum_{y\in\mathcal{Y}}p(x,y)\log p(x,y).
\]

Note that we can also express $H(X,Y)$ as
\[
H(X,Y) = -E \left(\log p(X,Y)\right).
\]
We can also define the conditional entropy of a random variable given another. The \emph{Conditional Entropy}\index{entropy!conditional entropy} $H(Y|X)$ is defined as
\begin{equation}\label{eq:entcond}
H(Y|X) = \sum_{x\in\mathcal{X}} p(x) H(Y|X=x).
\end{equation}

Now we can again get another description of the conditional entropy in terms of the conditional expectation of random variable as follows.
\begin{eqnarray}
H\left(Y|X\right)&=&\sum_{x\in\mathcal{X}}p(x)H\left(Y|X=x\right)\nonumber\\
&=&-\sum_{x\in\mathcal{X}}p(x)\sum_{y\in\mathcal{Y}}p(y|x)\log p(y|x)\nonumber\\
&=&-\sum_{x\in\mathcal{X}}\sum_{y\in\mathcal{Y}}p(x,y)\log p(y|x)\nonumber\\
&=&-E\log p(Y|X).\nonumber
\end{eqnarray}

The following theorem is proved by T. Cover and J. Thomas in \cite{Cov} pages 17 and 18.
\begin{theorem}
Let $X$, $Y$, and $Z$ be random variables with joint probability distribution $p(x,y,z)$. Then we have
\begin{eqnarray}
&&H\left(X,Y\right) = H\left(X\right) + H\left(Y|X\right),\nonumber\\
&&H\left(X,Y|Z\right) = H\left(X|Z\right) + H\left(Y|X,Z\right).\nonumber
\end{eqnarray}
 \qed
\end{theorem}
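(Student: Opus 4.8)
The plan is to establish the first identity directly from the definitions and then bootstrap the second from it by conditioning on $Z$. First I would start from the definition of joint entropy, $H(X,Y) = -\sum_{x,y} p(x,y)\log p(x,y)$, and use the factorization $p(x,y) = p(x)\,p(y|x)$, which holds on every outcome with $p(x)>0$. Taking logarithms turns this product into a sum, $\log p(x,y) = \log p(x) + \log p(y|x)$, so the joint entropy splits into two separate sums.

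For the first resulting sum I would marginalize $Y$ out: since $\sum_y p(x,y) = p(x)$, the term $-\sum_{x,y} p(x,y)\log p(x)$ collapses to $-\sum_x p(x)\log p(x) = H(X)$. The second sum, $-\sum_{x,y} p(x,y)\log p(y|x)$, is exactly the expression already identified in the excerpt as $H(Y|X) = -E\log p(Y|X)$. Combining the two pieces yields $H(X,Y) = H(X) + H(Y|X)$, the first identity.

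For the second identity I would reduce to the first by working inside each slice $Z=z$. For fixed $z$ the conditional law $p(x,y|z)$ is itself an honest probability density on the pairs $(x,y)$, so the first identity applies verbatim to it, giving $H(X,Y|Z=z) = H(X|Z=z) + H(Y|X,Z=z)$. I would then multiply through by $p(z)$ and sum over $z$; by the defining formula \eqref{eq:entcond} for conditional entropy, the left-hand side becomes $H(X,Y|Z)$ while the two right-hand terms become $H(X|Z)$ and $H(Y|X,Z)$ respectively.

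There is no substantial obstacle here: this is the elementary chain rule for entropy, and the only points requiring care are the handling of zero-probability outcomes (absorbed by the convention $0\log 0 = 0$ recorded earlier) and the bookkeeping needed to verify that averaging the sliced identity over $z$ reproduces the conditional entropies exactly as defined in \eqref{eq:entcond}. The main thing to get right is simply matching each sum to the correct definition rather than overcoming any genuine difficulty.
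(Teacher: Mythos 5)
Your proof is correct and follows essentially the same route as the source the paper defers to (Cover and Thomas, pages 17--18): split $\log p(x,y)$ via the factorization $p(x,y)=p(x)p(y|x)$, marginalize to recover $H(X)$ and $H(Y|X)$, and obtain the conditional version by applying the unconditional identity to each slice $Z=z$ and averaging with weights $p(z)$ as in \eqref{eq:entcond}. No gaps; the handling of zero-probability terms via the $0\log 0=0$ convention is the only point of care and you address it.
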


Furthermore, letting $f(.)$ be any function (see T. Cover and J. Thomas \cite{Cov} pages 34 and 35), we have
\[
0\leq H(X|Y)\leq H(X|f(Y))\leq H(X).
\]
\section{Relative Entropy and Mutual Information}
Let $X$ be a random variable and consider two different probability density functions $p(x)$ and $q(x)$ for $X$.  The \emph{relative entropy}\index{entropy!relative entropy|(} $D(p||q)$ is a measure of the distance between two distributions $p(x)$ and $q(x)$. The \emph{relative entropy}\index{entropy!relative entropy|)} or \emph{Kullback-Leibler distance}\index{distance!Kullback-Leibler distance} between two probability densities $p(x)$ and $q(x)$ is defined as
\begin{eqnarray}
D(p||q)=\sum_{x\in\mathcal{X}}p(x)\log \frac{p(x)}{q(x)},
\end{eqnarray}
We can see that $D(p||q) = E_p\log\frac{p(X)}{q(X)}$.

Now consider two random variables $X$ and $Y$ with a joint probability densities $p(x,y)$ and marginal densities $p(x)$ and $p(y)$. The \emph{mutual information}\index{mutual information!mutual information} $I(X;Y)$ is the relative entropy between the joint distribution and the product distribution $p(x)p(y)$. More precisely, we have

\begin{eqnarray}
I(X;Y)&&=\sum_{x\in\mathcal{X}}\sum_{y\in\mathcal{Y}}p(x,y)\log\frac{p(x,y)}{p(x)p(y)}\nonumber\\
&&=D(p(x,y)||p(x)p(y)).\nonumber
\end{eqnarray}

It is proved in T. Cover and J. Thomas \cite{Cov}, on pages 28 and 29, that we have
\begin{eqnarray}\label{eq:mut}
&&I(X;Y)=H(X) - H(X|Y),\\
&&I(X;Y)=H(Y) - H(Y|X),\nonumber\\
&&I(X;Y) =H(X) + H(Y) - H(X,Y),\nonumber\\
&&I(X;Y)=I(Y;X),\nonumber\\
&&I(X;X)=H(X).\nonumber
\end{eqnarray}

\section{Entropy and Counting}

In this section we consider the application of entropy method in counting problems. The following lemmas are two examples of using entropy methods in sovling well-known combinatorial probelms (see J. Radhakrishnan \cite{Rad}).

\begin{lemma}
\emph{(Shearer's Lemma)}. Suppose $n$ distinct points in $\mathbb{R}^3$ have $n_1$ distinct projections on the $XY$-plane, $n_2$ distinct projections on the $XZ$-plane and $n_3$ distinct projections on the $YZ$-plane. Then, $n^2\leq n_1n_2n_3$.

\proof

Let $P = \left(A,B,C\right)$ be one of the $n$ points picked at random with uniform distribution, and $P_1 = \left(A,B\right)$, $P_2 = \left(A,C\right)$, and $P_3=\left(B,C\right)$ are its three projections. Then we have

\begin{equation}
H\left(P\right) = H\left(A\right) + H\left(B|A\right) + H\left(C|A,B\right),\label{eq:n2}
\end{equation}

Furthermore,
\begin{eqnarray}
&&H\left(P_1\right) = H\left(A\right) + H\left(B|A\right),\nonumber\\
&&H\left(P_2\right) = H\left(A\right) + H\left(C|A\right),\nonumber\\
&&H\left(P_3\right) = H\left(B\right) + H\left(C|B\right).\nonumber
\end{eqnarray}
Adding both sides of these equations and considering \ref{eq:n2}, we have $2H\left(P\right)\leq H\left(P_1\right) + H\left(P_2\right) + H\left(P_3\right)$. Now, noting that $H\left(P\right) = \log n$, and $H\left(P_i\right)\leq \log n_i$, the lemma is proved.
\qed
\end{lemma}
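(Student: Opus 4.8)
The plan is to run the standard entropy-counting argument: realize the four quantities $n,n_1,n_2,n_3$ as sizes of ranges of suitable random variables, bound the corresponding entropies using (\ref{eq:maxent}), and then extract the desired inequality from the chain rule together with the fact that conditioning cannot increase entropy.

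First I would pick a point $P=(A,B,C)$ uniformly at random among the $n$ given points, and regard its coordinates $A,B,C$ as jointly distributed random variables. Since $P$ is uniform on a set of size $n$, equality in (\ref{eq:maxent}) gives $H(P)=\log n$. The three projections are $P_1=(A,B)$, $P_2=(A,C)$, and $P_3=(B,C)$, and by hypothesis they take at most $n_1,n_2,n_3$ distinct values respectively; hence (\ref{eq:maxent}) yields $H(P_i)\le\log n_i$ for each $i$.

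Next I would expand all four entropies by the chain rule $H(X,Y)=H(X)+H(Y\mid X)$, writing $H(P)=H(A)+H(B\mid A)+H(C\mid A,B)$, together with $H(P_1)=H(A)+H(B\mid A)$, $H(P_2)=H(A)+H(C\mid A)$, and $H(P_3)=H(B)+H(C\mid B)$. The entire lemma then reduces to the single inequality $2H(P)\le H(P_1)+H(P_2)+H(P_3)$: once this is in hand, substituting $H(P)=\log n$ on the left and $H(P_i)\le\log n_i$ on the right gives $2\log n\le\log(n_1n_2n_3)$, which is exactly $n^2\le n_1n_2n_3$.

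The heart of the matter is therefore establishing $2H(P)\le H(P_1)+H(P_2)+H(P_3)$. Cancelling the common terms after substituting the chain-rule expansions, this is equivalent to $H(B\mid A)+2H(C\mid A,B)\le H(B)+H(C\mid A)+H(C\mid B)$, which I would obtain from three instances of the inequality $H(X\mid Y)\le H(X\mid f(Y))\le H(X)$ recorded in the excerpt: namely $H(B\mid A)\le H(B)$, and $H(C\mid A,B)\le H(C\mid A)$, $H(C\mid A,B)\le H(C\mid B)$ (the latter two taking $Y=(A,B)$ with $f$ the projection onto the first or second coordinate). I expect the only delicate point to be the bookkeeping, that is, pairing the conditional terms so that each projection entropy absorbs exactly the right piece of $2H(P)$; once the expansions are aligned, the three monotonicity inequalities finish the proof with no further computation.
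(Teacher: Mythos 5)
Your proof is correct and follows essentially the same route as the paper: uniform choice of a point, the chain-rule expansions of $H(P)$ and the $H(P_i)$, the inequality $2H(P)\le H(P_1)+H(P_2)+H(P_3)$, and the bounds $H(P)=\log n$, $H(P_i)\le\log n_i$. The only difference is that you explicitly justify the key inequality via $H(B\mid A)\le H(B)$ and $H(C\mid A,B)\le H(C\mid A),\,H(C\mid B)$, a step the paper asserts without detail, so your write-up is if anything more complete.
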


As another application of the entropy method, we can give an upper bound on the number of the perfect matchings of a bipartite graph (see J. Radhakrishnan \cite{Rad}).
\begin{theorem}
\emph{(Br\'{e}gman's Theorem).}Let $G$ be a bipartite graph with parts $V_1$ and $V_2$ such that $|V_1| = |V_2| = n$. Let $d(v)$ denote the degree of a vertex $v$ in $G$. Then, the number of perfect matchings in $G$ is at most
\[
\prod_{v\in V_1}\left(d(v)!\right)^\frac{1}{d(v)}.
\]
\end{theorem}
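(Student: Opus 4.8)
The plan is to run the entropy method on a uniformly random perfect matching, mirroring the proof of Shearer's Lemma above but with a randomized revelation order. Write $V_1 = \{v_1,\dots,v_n\}$ and let $M$ be a perfect matching drawn uniformly at random from the set $\mathcal M$ of all perfect matchings of $G$. Since each such matching is a bijection, I encode $M$ by the tuple $(\sigma(v))_{v\in V_1}$, where $\sigma(v)\in V_2$ is the partner of $v$. Because the distribution is uniform, the equality case of \eqref{eq:maxent} gives $H(M) = \log|\mathcal M|$, so it suffices to prove $H(M) \le \sum_{v\in V_1}\frac{1}{d(v)}\log\bigl(d(v)!\bigr)$ and then exponentiate.

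First I would apply the chain rule for entropy from Section 2.2 to the coordinates $\sigma(v)$, but revealed in the order given by a uniformly random permutation $\pi$ of $V_1$. Since $H(M)$ is independent of the order in which we expand it, averaging the chain rule over $\pi$ yields
\[
H(M) = \mathbb E_\pi\sum_{v\in V_1} H\bigl(\sigma(v)\mid \sigma(u):u\prec_\pi v\bigr),
\]
where $u\prec_\pi v$ means $u$ precedes $v$ under $\pi$. Next, fixing $\pi$ and conditioning on the already revealed values, the partner $\sigma(v)$ must be a neighbour of $v$ that has not yet been used; hence the number of admissible values is at most the number $r_v$ of neighbours of $v$ still available, and the bound $H\le\log(\#\text{outcomes})$ gives $H\bigl(\sigma(v)\mid \sigma(u):u\prec_\pi v\bigr)\le \mathbb E[\log r_v]$. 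Summing, $H(M)\le \sum_{v\in V_1}\mathbb E_{\pi,M}[\log r_v]$.

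The crux of the argument is a counting identity: for a fixed matching $M$ and fixed $v$, the available-neighbour count $r_v$ is uniformly distributed on $\{1,2,\dots,d(v)\}$ as $\pi$ ranges over all orderings. To see this I would track the $d(v)$ vertices of $V_1$ that are matched to the neighbours of $v$; this set has size exactly $d(v)$ and contains $v$ itself (matched to $\sigma(v)$). A neighbour $w$ of $v$ is still available precisely when the vertex matched to $w$ does not precede $v$, so $r_v$ equals the number of these $d(v)$ distinguished vertices sitting at or after $v$ in $\pi$. Since the relative rank of $v$ among those $d(v)$ vertices is uniform on $\{1,\dots,d(v)\}$, so is $r_v$. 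Consequently $\mathbb E_\pi[\log r_v] = \frac{1}{d(v)}\sum_{j=1}^{d(v)}\log j = \frac{1}{d(v)}\log\bigl(d(v)!\bigr)$, independent of $M$. Substituting into the sum and exponentiating completes the proof.

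I expect the main obstacle to be the uniformity claim for $r_v$: it is trivial to bound $r_v\le d(v)$, but that only yields the weaker estimate $\prod_{v\in V_1} d(v)$. Obtaining the sharp constant $(d(v)!)^{1/d(v)}$ requires precisely the observation that the relevant $d(v)$ matched vertices include $v$ and that $v$'s rank among them is uniform; this is where the randomized ordering does the real work, and verifying it carefully is the heart of the matter, the surrounding entropy manipulations being routine.
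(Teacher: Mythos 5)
Your proposal is correct and follows essentially the same route as the paper's proof: compute the entropy of a uniformly random perfect matching, expand by the chain rule in a uniformly random order on $V_1$, and use the fact that the number of still-available neighbours of $v$ is uniformly distributed on $\{1,\dots,d(v)\}$. In fact your justification of that uniformity claim (tracking the $d(v)$ vertices of $V_1$ matched to $N(v)$ and the rank of $v$ among them) is spelled out more carefully than in the paper, which merely asserts $P_{X(v),\tau}(Y_{v,\tau}=j)=\frac{1}{d(v)}$.
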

\proof
Let $\mathcal X$ be the set of perfect matchings of $G$. Let $X$ be a random variable corresponding to the elements of $\mathcal X$ with uniform density. Then
\[
H(X) = \log |\mathcal X|.
\]
The following remark is useful in our discussion. Let $Y$ be any random variable with the set of possible values $\mathcal Y$. First note that the conditional entropy $H\left(Y|X\right)$ is obtained using (\ref{eq:entcond}). 
Let $\mathcal Y_x$ denote the set of possible values for the random variable $Y$ given $x\in\mathcal X$, that is
\[
\mathcal Y_x = \{y\in\mathcal Y: P(Y = y|X=x)>0\}.
\]
We partition the set $\mathcal X$ into sets $\mathcal X_1,\mathcal X_2,\cdots,\mathcal X_r$ such that for $i = 1,2,\cdots, r$ and all $x\in \mathcal X_i$, we have
\begin{equation}\label{eq:condY}
|\mathcal Y_x|=i.
\end{equation}
Letting $Y_x$ be a random variable taking its value on the set $\mathcal Y_x$ with uniform density, and noting equations (\ref{eq:maxent}) and (\ref{eq:condY}) for all $x\in\mathcal X_i$ we have
\begin{equation}\label{eq:cond1}
H\left(Y_x\right)=\log i.
\end{equation}
But note that
\begin{eqnarray}\label{eq:cond2}
H\left(Y|X\right) &=& E_X\left(H\left(Y_x\right)\right).
\end{eqnarray}
Then using (\ref{eq:cond1}) and (\ref{eq:cond2}), we get
\begin{eqnarray}\label{eq:cond3}
H\left(Y|X\right)\leq\sum_i^rP\left(X\in \mathcal X_i\right)\log i.
\end{eqnarray}
We define the random variable $X(v)$ for all $v\in V_1$ as
\[
X(v) := u~\text{such that}~u\in V_2~\text{and}~u~\text{is matched to}~v~\text{in}~X,~\forall v\in V_1.
\]
For a fixed ordering vertices $v_1,\cdots,v_n$ of $V_1$
\begin{eqnarray}
\log|\mathcal X| &=& H(X)\nonumber\\
                          &=& H\left(X(v_1)\right) + H\left(X(v_2)|X(v_1)\right) + \cdots + H\left(X(v_n)|X(v_1),\cdots,X(v_{n-1})\right)\label{eq:count1}
\end{eqnarray}
Now, pick a random permutation
\[
\tau:[n]\rightarrow V_1,
\]
and consider $X$ in the order determined by $\tau$. Then for every permutation $\tau$, we have
\[
H(X) = H\left(X(\tau(1))\right) + H\left(X(\tau(2))|X(\tau(1))\right)+\cdots+H\left(X(\tau(n))|X(\tau(1)),\cdots,X(\tau(n-1))\right).\label{eq:count2}
\]
By averaging over all $\tau$, we get
\[
H(X) = E_\tau\left(H\left(X(\tau(1))\right) + H\left(X(\tau(2))|X(\tau(1))\right)+\cdots+H\left(X(\tau(n))|X(\tau(1)),\cdots,X(\tau(n-1))\right)\right).\label{eq:count2}
\]
For a fixed $\tau$, fix $v\in V_1$ and let $k = \tau^{-1}(v)$. Then we let $\mathcal Y_{v,\tau}$ to be the set of vertices $u$ in $V_2$ which are adjacent to vertex $v\in V_1$ and 
\[
u\notin\{x\left(\tau(1)\right),x\left(\tau(2)\right),\cdots,x\left(\tau(k-1)\right)\}
\]
Letting $\mathcal N(v)$ be the set of neighbours of $v\in V_1$ in $V_2$, we have
\[
\mathcal Y_{v,\tau} = \mathcal N(v)\setminus\{x\left(\tau(1)\right),x\left(\tau(2)\right),\cdots,x\left(\tau(k-1)\right)\}.
\]
Letting $d(v)$ be the degree of vertex $v$ and $Y_{v,\tau} = |\mathcal Y_{v,\tau}|$ be a random variable taking its value in $\{1,\cdots,d(v)\}$, that is
\[
Y_{v,\tau} = j,~\text{for}~j\in\{1,\cdots,d(v)\}.
\]
Using (\ref{eq:cond2}) and noting that $P_{X(v),\tau}(Y_{v,\tau} = j) = \frac{1}{d(v)}$, we have
\begin{eqnarray}
H\left(X\right) &=&\sum_{v\in V_1}E_{\tau}\left(X(v)|X(\tau(1)),X(\tau(2)),\cdots,X(\tau(k-1))\right)\nonumber\\
&\leq&\sum_{v\in V_1}E_{\tau}\left(\sum_{j=1}^{d(v)}P_{X(v)}\left(Y_{v,\tau} = j\right).\log j\right)\nonumber\\
&=&\sum_{v\in V_1}\sum_{j=1}^{d(v)}E_{\tau}\left(P_{X(v)}\left(Y_{v,\tau} = j\right)\right).\log j\nonumber\\
&=&\sum_{v\in V_1}\sum_{j=1}^{d(v)}P_{X(v),\tau}\left(Y_{v,\tau} = j\right).\log j\nonumber\\
&=&\sum_{v\in V_1}\sum_{j=1}^{d(v)}\frac{1}{d(v)}\log j\nonumber\\
&=&\sum_{v\in V_1}\log\left(d(v)!\right)^{\frac{1}{d(v)}}.\nonumber 
\end{eqnarray} 
Then using (\ref{eq:count1}), we get
\[
|\mathcal X|\leq \left(d(v)!\right)^{\frac{1}{d(v)}}.
\]
\qed

\chapter{Graph Entropy}

In this chapter, we introduce and study the entropy of a graph which was defined in \cite{JKor} by J. K\"{o}rner in 1973. We present several equivalent definitions of this parameter. However, we will focus mostly on the combinatorial definition which is going to be the main theme of this thesis.
\section{Entropy of a Convex Corner\index{entropy!entropy of a convex corner}}

A subset $\mathcal{A}$ of $\mathbb{R}_{+}^n$ is called a \emph{convex corner}\index{convex!convex corner|(} if it is compact, convex, has  non-empty interior, and for every $\mathbf a\in \mathcal{A}$, $\mathbf a^\prime\in \mathbb{R}_{+}^n$ with $\mathbf a^\prime\leq \mathbf a$, we have $\mathbf a^\prime \in \mathcal{A}$. For example, the \emph{vertex packing polytope}\index{polytope!vertex packing polytope} $VP(G)$ of a graph $G$, which is the convex hull of the characteristic vectors of its independent sets, is a convex corner.

Now, let $\mathcal{A}\subseteq \mathbb{R}_{+}^n$ be a convex corner, and $P\in \mathbb{R}_{+}^n$ a probability density, i.e., its coordinates add up to 1. The entropy of $P$ with respect to $\mathcal{A}$ is
\[
H_{\mathcal{A}}\left(P\right) = \min_{a\in \mathcal{A}} \sum_{i = 1}^np_i\log \frac{1}{a_i}.
\]
\begin{remark}\label{rem:Rem1}
Note that the function $-\sum_{i=1}^kp_i\log a_i$ in the definition of a convex corner\index{convex!convex corner|)} is a convex function and tends to infinity at the boundary of the non-negative orthant and tends monotonically to $-\infty$ along the rays from the origin.
\end{remark}
Consider the convex corner $\mathcal{S} := \{x\geq0 , \sum_i x_i \leq 1\}$, which is called a \emph{unit corner}\index{unit corner}. The following lemma relates the entropy of a random variable defined in the previous chapter to the entropy of the unit corner.
\begin{lemma}
The entropy $H_\mathcal{S}\left(P\right)$ of a probability density\index{probability!probability density} $P$ with respect to the unit corner $\mathcal{S}$ is just the regular (Shannon) entropy $H\left(P\right) = -\sum_{i}p_i\log p_i$.
\end{lemma}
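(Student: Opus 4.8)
The plan is to show that the minimizer in the definition of $H_{\mathcal{S}}(P)$ is exactly $a = P$, at which point the objective $\sum_i p_i \log(1/a_i)$ collapses to the Shannon entropy $H(P)$. First I would reduce the minimization to the probability simplex. The objective $-\sum_i p_i \log a_i$ is strictly decreasing in each coordinate $a_i$ (this is precisely the monotone behaviour along rays from the origin recorded in Remark 3.1.1), so if a feasible $a \in \mathcal{S}$ had $\sum_i a_i < 1$, then replacing it by $a/\sum_j a_j$ gives a point that still lies in $\mathcal{S}$, dominates $a$ coordinatewise, and strictly decreases the objective. Hence it suffices to minimize over $\Delta = \{a \geq 0 : \sum_i a_i = 1\}$.

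Next, on $\Delta$ I would compare the objective to the Shannon entropy via relative entropy. Writing
$\sum_i p_i \log\frac{1}{a_i} = \sum_i p_i \log\frac{p_i}{a_i} - \sum_i p_i \log p_i = D(P\|a) + H(P)$,
and using the non-negativity $D(P\|a) \geq 0$ (Gibbs' inequality, with equality if and only if $a = P$), we obtain $\sum_i p_i \log\frac{1}{a_i} \geq H(P)$ for every $a \in \Delta$, with equality precisely at $a = P$. Since $P \in \Delta \subseteq \mathcal{S}$, this lower bound is attained, and therefore $H_{\mathcal{S}}(P) = H(P)$.

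The one step requiring care is the inequality $D(P\|a) \geq 0$. I would either invoke it as the standard Gibbs inequality or establish it in a single line from Jensen's inequality applied to the concave map $\log$: $-D(P\|a) = \sum_i p_i \log\frac{a_i}{p_i} \leq \log\sum_i p_i\frac{a_i}{p_i} = \log\sum_i a_i = \log 1 = 0$, with equality iff the ratios $a_i/p_i$ are all equal, which together with $\sum_i a_i = \sum_i p_i = 1$ forces $a = P$. A fully equivalent route, avoiding relative entropy altogether, is a Lagrange-multiplier computation on $\Delta$: setting the gradient of $-\sum_i p_i \log a_i + \lambda(\sum_i a_i - 1)$ to zero gives $a_i = p_i/\lambda$, and the constraint forces $\lambda = 1$, so $a_i = p_i$; the convexity of the objective noted in Remark 3.1.1 certifies that this stationary point is the global minimum, while the blow-up of the objective as any $a_i \to 0$ with $p_i > 0$ excludes minimizers on the lower-dimensional faces. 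I expect the only genuine subtlety to be bookkeeping around coordinates with $p_i = 0$, handled by the convention $0\log 0 = 0$ so that such coordinates contribute nothing and impose no condition on the corresponding $a_i$.
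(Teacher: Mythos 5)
Your proposal is correct and follows essentially the same route as the paper's proof: reduce the minimization to the probability simplex via the monotonicity in Remark 3.1.1, then write the objective as $D(P\|a)+H(P)$ and invoke the non-negativity of relative entropy. The extra detail you supply (the explicit Jensen argument for Gibbs' inequality, the Lagrange-multiplier alternative, and the $0\log 0$ bookkeeping) only fleshes out steps the paper leaves implicit.
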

\proof

From Remark \ref{rem:Rem1}, we have
\[
H_{\mathcal S}(\mathbf p) = \min_{\mathbf s\in\mathcal S} -\sum_i p_i\log s_i = \min_{\mathbf s\in\{x\geq0,~\sum_ix_i = 1\}}-\sum_i p_i\log s_i
\]
Thus the above minimum is attained by a probability density vector $\mathbf s$. More precisely, we have
\[
H_{\mathcal S}(\mathbf p) = D(\mathbf p||\mathbf s) + H(\mathbf p).\label{eq:unc}
\]
Noting that $ D(\mathbf p||\mathbf s) \geq 0$ and $ D(\mathbf p||\mathbf s) = 0$ if and only if $\mathbf s=\mathbf p$, we get
\[
H_{\mathcal S}(\mathbf p) = H(\mathbf p).
\]
\qed

There is another way to obtain the entropy of a convex corner\index{entropy!entropy of a convex corner}. Consider the mapping $\Lambda : \mathrm{int}~\mathbb{R}_{+}^n \rightarrow \mathbb{R}^n$ defined by

\[
\Lambda(x) := \left(-\log x_1,\cdots,-\log x_n\right).
\]

It is easy to see using the concavity of the log function that if $\mathcal{A}$ is a convex corner, then $\Lambda(\mathcal{A})$ is a closed, convex, full-dimensional set, which is \emph{up-monotone}, i.e., $a\in\Lambda(\mathcal{A})$ and $a^\prime\geq a$ imply $a^\prime\in\Lambda(\mathcal{A})$. Now, $H_\mathcal{A}(P)$ is the minimum of the linear objective function $\sum_{i}p_ix_i$ over $\Lambda(\mathcal{A})$. Now we have the following lemma (See \cite{Csis}).

\begin{lemma}\emph{(I. Csisz\'{a}r, J. K\"{o}rner, L. Lov\'{a}s , K. Marton, and G. Simonyi ).}\label{lem:corner1}
For two convex corners $\mathcal{A},\mathcal{C}\subseteq \mathbb{R}_{+}^k$, we have $H_\mathcal{A}(P)\geq H_\mathcal{C}(P)$ for all $P$ if and only if $\mathcal{A}\subseteq \mathcal{C}$.
\end{lemma}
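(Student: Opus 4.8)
The plan is to prove the two implications separately, with the reverse implication routine and the forward implication handled by convex separation in the logarithmic picture supplied by the map $\Lambda$ introduced just before the lemma.

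For the direction $\mathcal{A}\subseteq\mathcal{C}\Rightarrow H_\mathcal{A}(P)\geq H_\mathcal{C}(P)$, I would argue straight from the definition. Since $H_\mathcal{A}(P)=\min_{a\in\mathcal{A}}\sum_i p_i\log(1/a_i)$ is the infimum of a fixed objective over the feasible set $\mathcal{A}$, enlarging that set to $\mathcal{C}\supseteq\mathcal{A}$ can only decrease (or leave unchanged) the minimum, so $H_\mathcal{C}(P)\leq H_\mathcal{A}(P)$ for every density $P$. No structure beyond the inclusion is needed here.

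For the converse I would prove the contrapositive: assuming $\mathcal{A}\not\subseteq\mathcal{C}$, I construct a density $P$ with $H_\mathcal{A}(P)<H_\mathcal{C}(P)$. Pick $a^{*}\in\mathcal{A}\setminus\mathcal{C}$. Because $\mathcal{C}$ is closed, $\mathcal{A}\setminus\mathcal{C}$ is relatively open and nonempty in $\mathcal{A}$, and since $\mathcal{A}$ is full-dimensional its interior points are dense; moreover any interior point has a ball inside $\mathbb{R}^{n}_{+}$ and hence strictly positive coordinates. Thus I may replace $a^{*}$ by an interior point of $\mathcal{A}$ lying outside $\mathcal{C}$, so that $a^{*}\in\mathrm{int}\,\mathbb{R}^{n}_{+}$ and $\Lambda(a^{*})$ is finite. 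Now $\Lambda(a^{*})\in\Lambda(\mathcal{A})$ but $\Lambda(a^{*})\notin\Lambda(\mathcal{C})$, the latter a closed convex set, so the separating hyperplane theorem yields a nonzero $P$ and a scalar $\beta$ with $\sum_i p_i\Lambda(a^{*})_i<\beta\leq\sum_i p_i y_i$ for every $y\in\Lambda(\mathcal{C})$. The key step is to show $P$ may be taken coordinatewise nonnegative: since $\Lambda(\mathcal{C})$ is up-monotone, a coordinate $p_i<0$ would let me push $y_i\to+\infty$ inside $\Lambda(\mathcal{C})$ and send $\sum_i p_i y_i\to-\infty$, contradicting the lower bound $\beta$. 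Hence $P\geq 0$, and after normalization $P$ is a probability density. Recalling from the paragraph before the lemma that $H_\mathcal{C}(P)=\min_{y\in\Lambda(\mathcal{C})}\sum_i p_i y_i\geq\beta$ while $H_\mathcal{A}(P)\leq\sum_i p_i\Lambda(a^{*})_i<\beta$ (using $\Lambda(a^{*})\in\Lambda(\mathcal{A})$), I obtain $H_\mathcal{A}(P)<H_\mathcal{C}(P)$, the required contradiction.

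I expect the two technical points above to be the only genuine obstacles. The first is the reduction of $a^{*}$ to a strictly positive point, needed so that the logarithmic transform is finite and the separation takes place in $\mathbb{R}^{n}$ rather than on the boundary. The second, and more essential, is the passage from an arbitrary separating normal to a nonnegative one, which is exactly where the up-monotonicity of $\Lambda(\mathcal{C})$ established before the lemma is indispensable. Everything else reduces to the identification of $H_\mathcal{A}(P)$ with the minimum of the linear functional $\sum_i p_i x_i$ over $\Lambda(\mathcal{A})$, which is already recorded in the excerpt.
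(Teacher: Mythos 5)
Your proof is correct and follows essentially the same route as the paper: both directions rest on the identification of $H_\mathcal{A}(P)$ with the minimum of the linear functional $\sum_i p_i x_i$ over the closed, convex, up-monotone set $\Lambda(\mathcal{A})$. The paper simply asserts that $H_\mathcal{A}(P)\geq H_\mathcal{C}(P)$ for all $P$ forces $\Lambda(\mathcal{A})\subseteq\Lambda(\mathcal{C})$, whereas you supply the separating-hyperplane argument (including the reduction to a strictly positive point and the use of up-monotonicity to make the separating normal nonnegative) that justifies this step.
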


\proof

The ``if" part is obvious. Assume that $H_\mathcal{C}(P)\leq H_\mathcal{A}(P)$ for all $P$. As remarked above, we have
\[
H_\mathcal{A}(P) = \min\{P^T\mathbf x :\mathbf x\in \Lambda(\mathcal{A})\},
\]
and hence it follows that we must have $\Lambda(\mathcal{A})\subseteq \Lambda(\mathcal{C})$. This clearly implies $\mathcal{A}\subseteq \mathcal{C}$.
\qed

Then we have the following corollary.

\begin{corollary}
We have $0\leq H_\mathcal{A}(P) \leq H(P)$ for every probability distribution $P$ if and only if $\mathcal{A}$ contains the unit corner and is contained in the unit cube.
\end{corollary}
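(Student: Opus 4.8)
The plan is to derive the corollary directly from Lemma~\ref{lem:corner1} by matching the two bounding inequalities against two specific convex corners. First I would introduce the unit cube $\mathcal Q := \{x\in\mathbb R_+^n : x_i\le 1,\ i=1,\dots,n\}$ and check that it is itself a convex corner: it is compact, convex, has nonempty interior, and is down-closed, so Lemma~\ref{lem:corner1} is applicable to it. I would then record the one computation that is not already packaged as a cited result, namely that its entropy vanishes identically. Since $a_i\le 1$ forces $\log(1/a_i)\ge 0$, the objective $\sum_i p_i\log(1/a_i)$ is nonnegative on $\mathcal Q$ and is minimized at $a=(1,\dots,1)\in\mathcal Q$, giving $H_{\mathcal Q}(P)=0$ for every probability density $P$. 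On the other side, the earlier lemma identifying the unit corner $\mathcal S$ already gives $H_{\mathcal S}(P)=H(P)$, and $\mathcal S$ is a convex corner as well.

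Next I would split the required double inequality into its two halves and treat each with the monotonicity lemma. For the upper bound, $H_{\mathcal A}(P)\le H(P)$ for all $P$ is the same as $H_{\mathcal S}(P)\ge H_{\mathcal A}(P)$ for all $P$, which by Lemma~\ref{lem:corner1} (applied with the roles of $\mathcal A$ and $\mathcal C$ taken by $\mathcal S$ and $\mathcal A$) holds if and only if $\mathcal S\subseteq\mathcal A$, i.e.\ exactly when $\mathcal A$ contains the unit corner. For the lower bound, $H_{\mathcal A}(P)\ge 0=H_{\mathcal Q}(P)$ for all $P$, which by Lemma~\ref{lem:corner1} holds if and only if $\mathcal A\subseteq\mathcal Q$, i.e.\ exactly when $\mathcal A$ is contained in the unit cube.

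Combining the two equivalences, the double inequality $0\le H_{\mathcal A}(P)\le H(P)$ holds for every $P$ if and only if $\mathcal S\subseteq\mathcal A\subseteq\mathcal Q$, which is precisely the stated containment condition. The argument is short because the real content has already been isolated into Lemma~\ref{lem:corner1} and the unit-corner lemma. The only points that require care are verifying that both $\mathcal S$ and $\mathcal Q$ genuinely qualify as convex corners so that the monotonicity lemma applies, and keeping the direction of the inclusion in that lemma straight when translating each inequality; I expect the small computation $H_{\mathcal Q}(P)=0$ to be the one new step, and it is routine.
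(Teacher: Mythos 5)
Your proposal is correct and is exactly the argument the paper intends (the corollary is stated without proof, as an immediate consequence of Lemma~\ref{lem:corner1}): you compare $\mathcal A$ against the unit corner $\mathcal S$ with $H_{\mathcal S}(P)=H(P)$ and the unit cube $\mathcal Q$ with $H_{\mathcal Q}(P)=0$, and both applications of the monotonicity lemma have the inclusion directions right. The added checks that $\mathcal S$ and $\mathcal Q$ are convex corners and the computation $H_{\mathcal Q}(P)=0$ are correct and appropriately routine.
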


\section{Entropy of a Graph\index{entropy!entropy of a graph}}

Let $G$ be a graph on vertex set
$V(G)=\{1,\cdots,n\}$, let $P=(p_1,\cdots,p_n)$ be a probability density on
$V(G)$, and let $VP(G)$ denote the vertex packing polytope of $G$. The \emph{entropy of $G$ with respect to $P$} is then defined as
\[
H_k(G,P) = \min_{\mathbf{a}\in VP(G)} \sum_{i=1}^n p_i\log (1/a_i).\label{eq:combent}
\]
Let $G = (V,E)$ be a graph with vertex set $V$ and edge set $E$. Let $V^n$ be the set of sequences of length $n$ from $V$. Then the graph $G^{(n)} = (V^{n}, E^{(n)})$ is the \emph{$n$-th conormal
power}. Two distinct vertices $x$ and $y$ of $G^{(n)}$ are adjacent in $G^{(n)}$ if there is some $i\in n$ such that $x_i$ and $y_i$ are adjacent in $G$, that is
\[
E^{(n)} = \{(x,y) \in V^n \times V^n :  \exists i :
(x_i,y_i)\in E\}.
\]
For a graph $F$ and $Z\subseteq V(F)$ we denote by $F[Z]$ the induced subgraph of $F$ on $Z$. The chromatic number of $F$ is denoted by $\chi(F)$.

Let
\[
T_\epsilon^{(n)} = \{U\subseteq V^n: P^n(U)\geq 1 - \epsilon\}.
\]
We define the functional $H(G,P)$ with respect to the probability distribution $P$ on the vertex set $V(G)$ as follows.
\begin{equation}\label{eq:ent1}
H(G,P) = \lim_{n\rightarrow\infty} \min_{U\in T_\epsilon^{(n)}}  \frac{1}{n} \log\chi(G^{(n)}[U]).
\end{equation}
Let $X$ and $Y$ be two discrete random variables taking their values on some (possibly different) finite sets and consider the random variable formed by the pair $(X,Y)$.

Now let $X$ denote a random variable taking its values on the vertex set of $G$ and $Y$ be a random variable
taking its values on the independent sets of $G$. Having a fixed distribution $P$ over the vertices, the set of feasible joint distributions $\mathcal Q$ consists of the joint distributions $Q$ of $X$ and $Y$ such that
\[
\sum_{y\in\mathcal Y}Q(X,Y = y) = P(X).
\]
As an example let the graph $G$ be a 5-cycle $C_5$ with the vertex set
\[
V(C_5) = \{x_1,x_2,x_3,x_4,x_5\},
\]
and let $\mathcal Y$ denote the set of independent sets of $G$. Let $P$ be the uniform distribution over the vertices of $G$, i.e.,
\[
P(X=x_i) = \frac{1}{5},~\forall i\in\{1,\cdots,5\},
\]
Noting that each vertex of $C_5$ lies in two maximal independent sets, we define the joint distribution $Q$ as
\begin{equation}
Q(X=x,Y=y) = \left\{ \begin{array}{rcl}
\frac{1}{10}, & &~\text{ $y$ maximal and $y\ni x$} ,\\
0, && ~\text{Otherwise}.
\end{array}\right.
\end{equation}
is a feasible joint distribution.

Now given a graph $G$, we define the functional $H^\prime(G,P)$ with respect to the probability distribution $P$ on the vertex set $V(G)$, as
\begin{equation}\label{eq:ent2}
H^\prime(G,P) = \min_{\mathcal{Q}} I(X;Y).
\end{equation}
The following lemmas relate the functionals defined above.

\begin{lemma}\emph{(I. Csisz\'{a}r, et.~al.)}.\label{lem:equiv}
For every graph $G$ we have $H_k\left(G,P\right) = H^\prime\left(G,P\right)$.
\end{lemma}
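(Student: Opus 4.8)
The plan is to establish the two inequalities $H^\prime(G,P) \ge H_k(G,P)$ and $H^\prime(G,P) \le H_k(G,P)$ separately. Throughout I use the feasibility constraint implicit in the definition and visible in the $C_5$ example: a joint law $Q \in \mathcal Q$ is supported on pairs $(x,y)$ with $x \in y$ (the random vertex lies in the random independent set) and has $X$-marginal $P$. Writing $q(y) = \sum_{x} Q(x,y)$ for the $Y$-marginal, the first step is to record the identity $I(X;Y) = \sum_x p_x \, D\bigl(Q(\cdot\mid x)\,\|\,q\bigr)$, where $Q(\cdot\mid x)$ is the conditional law of $Y$ given $X = x$, supported on independent sets containing $x$.

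For the lower bound, given any feasible $Q$ I would associate the point $\mathbf a \in VP(G)$ with coordinates $a_i = \sum_{y \ni i} q(y)$. Since $\mathbf a = \sum_y q(y)\,\mathbf 1_y$ is a convex combination of characteristic vectors of independent sets, it lies in $VP(G)$. For each fixed $x$ the log-sum inequality gives
\[
D\bigl(Q(\cdot\mid x)\,\|\,q\bigr) = \sum_{y \ni x} Q(y\mid x)\log\frac{Q(y\mid x)}{q(y)} \ge \Bigl(\sum_{y\ni x}Q(y\mid x)\Bigr)\log\frac{\sum_{y\ni x}Q(y\mid x)}{\sum_{y\ni x}q(y)} = \log\frac{1}{a_x},
\]
using $\sum_{y\ni x}Q(y\mid x)=1$ (all conditional mass sits on sets containing $x$) and $\sum_{y\ni x}q(y)=a_x$. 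Summing against $p_x$ yields $I(X;Y) \ge \sum_x p_x\log(1/a_x) \ge H_k(G,P)$, and taking the minimum over $Q$ gives $H^\prime(G,P) \ge H_k(G,P)$.

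For the upper bound I would run this computation backwards from an optimizer. Let $\mathbf a^\ast$ attain $H_k(G,P)$ and write $\mathbf a^\ast = \sum_y \lambda_y \mathbf 1_y$ as a convex combination of independent-set indicators, so $a^\ast_i = \sum_{y\ni i}\lambda_y$. The crucial observation is the first-order optimality condition for the convex program $\min_{\mathbf a\in VP(G)} g(\mathbf a)$ with $g(\mathbf a)=-\sum_i p_i\log a_i$: the gradient at $\mathbf a^\ast$ has coordinates $-p_i/a^\ast_i$, so $\nabla g(\mathbf a^\ast)\cdot \mathbf a^\ast = -\sum_i p_i = -1$, and since $\mathbf a^\ast$ minimizes a convex function over the polytope $VP(G)$, every active vertex $\mathbf 1_y$ (those with $\lambda_y>0$) satisfies $\nabla g(\mathbf a^\ast)\cdot \mathbf 1_y = \nabla g(\mathbf a^\ast)\cdot\mathbf a^\ast = -1$, that is $\sum_{x\in y} p_x/a^\ast_x = 1$. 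I would then define $Q(x,y) = p_x\lambda_y/a^\ast_x$ for $x \in y$ and $0$ otherwise. A direct check shows its $X$-marginal is $P$ and, using the optimality condition, its $Y$-marginal is exactly $\lambda$, so $Q \in \mathcal Q$; and because $Q(x,y)/(p_x\lambda_y) = 1/a^\ast_x$ is constant in $y$, the log-sum steps above hold with equality, giving $I(X;Y) = \sum_x p_x\log(1/a^\ast_x) = H_k(G,P)$. Hence $H^\prime(G,P) \le H_k(G,P)$.

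The main obstacle is the upper bound, specifically pinning down the optimality condition $\sum_{x\in y}p_x/a^\ast_x = 1$ on the support and verifying that the constructed $Q$ is genuinely feasible. This requires care at the boundary of $VP(G)$: I must argue that the optimal $a^\ast_i$ are strictly positive on the support of $P$ (guaranteed because $-\log$ blows up at $0$, cf. Remark~\ref{rem:Rem1}), so the divisions are well-defined, and I must ensure the chosen convex representation is one for which the optimality condition holds simultaneously for all active $y$. The lower bound, by contrast, is a clean application of the log-sum inequality and should present no difficulty.
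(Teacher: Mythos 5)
Your proof is correct. The lower bound is essentially the paper's own argument: both pass from a feasible coupling $Q$ to the point $a_i=\sum_{y\ni i}q(y)$ of $VP(G)$ and apply Jensen's inequality (your log-sum step) coordinate-wise in $x$. For the upper bound, however, you take a genuinely different route. The paper starts from an \emph{arbitrary} $\mathbf a\in VP(G)$ with a representation $a_i=\sum_{F\ni i}s(F)$, defines the transition kernel $q(F\mid i)=s(F)/a_i$, and bounds the mutual information of the resulting coupling by $-\sum_i p_i\log a_i$ using only $D(r\,\|\,s)\ge 0$, where $r$ is the actual $Y$-marginal and $s$ is the mixing density; no optimality of $\mathbf a$ is invoked, and taking the infimum over $\mathbf a$ finishes the proof. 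You instead work at the minimizer $\mathbf a^*$ and use the first-order (KKT) condition to establish $\sum_{x\in y}p_x/a^*_x=1$ on the active independent sets, which forces the $Y$-marginal of your coupling to coincide with the mixing weights $\lambda$ and yields $I(X;Y)=H_k(G,P)$ with equality. Your argument buys slightly more --- it exhibits an explicit coupling attaining the minimum in $H^\prime$ --- at the cost of the boundary care you already flag (strict positivity of $a^*$ on the support of $P$, hence differentiability of $g$ there); the paper's version sidesteps all of this because its inequality holds for every $\mathbf a$, optimal or not. One small remark: the condition $\sum_{x\in y}p_x/a^*_x=1$ holds for every active set in \emph{any} convex representation of $\mathbf a^*$, since $\nabla g(\mathbf a^*)\cdot\mathbf 1_y\ge -1$ for all independent sets $y$ while the $\lambda$-average of these quantities equals $-1$; so your worry about choosing a compatible representation is moot.
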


\proof
First, we show that $H_k\left(G,P\right)= H^\prime\left(G,P\right)$. Let $X$ be a random variable taking its values on the vertices of $G$ with probability density $P=\left(p_1,\cdots,p_n\right)$. Furthermore, let $Y$ be the random variable associated with the independent sets of $G$ and $\mathcal F(G)$ be the family of independent sets of $G$. Let $q$ be the conditional distribution of $Y$ which achieves the minimum in $(\ref{eq:ent2})$ and $r$ be the corresponding distribution of $Y$. Then we have
\[
H^\prime(G,P) = I\left(X;Y\right) = -\sum_ip_i\sum_{i\in F\in \mathcal F(G)}q\left(F|i\right)\log\frac{r(F)}{q(F|i)}.
\]
From the concavity of the $\log$ function we have
\[
\sum_{i\in F\in \mathcal F(G)}q\left(F|i\right)\log\frac{r(F)}{q(F|i)}\leq \log\sum_{i\in F\in \mathcal F(G)}r(F).
\]
Now we define the vector $\mathbf{a}$ by setting
\[
a_i = \sum_{i\in F\in \mathcal F(G)}r(F).
\]
Note that $\mathbf{a}\in VP(G)$. Hence,
\[
H^\prime\left(G,P\right)\geq -\sum_ip_i\log a_i.
\]
and consequently,
\[
H^\prime\left(G,P\right)\geq H_k\left(G,P\right).
\]
Now we prove the reverse inequality. Let $\mathbf{a}\in VP(G)$. Then letting $s$ be a probability density on $\mathcal F(G)$, we have
\[
a_i = \sum_{i\in F\in \mathcal F(G)}s(F).
\]
We define transition probabilities as
\begin{equation}
q(F|i) = \left\{ \begin{array}{rcl}
\frac{s(F)}{a_i} & & i\in F ,\\
0 && i\notin F.
\end{array}\right.
\end{equation}
Then, setting $r(F)=\sum_{i}p_iq(F|i)$, we get
\[
H^\prime(G,P)\leq \sum_{i,F}p_iq(F|i)\log\frac{q(F|i)}{r(F)}
\]
By the concavity of the $\log$ function, we get
\[
-\sum_{F}r(F)\log r(F)\leq -\sum_F r(F)\log s(F),
\]
Thus,
\[
-\sum_{i,F}p_iq(F|i)\log r(F)\leq -\sum_{i,F}p_iq(F|i)\log s(F).
\]
And therefore,
\[
H^\prime(G,P)\leq \sum_{i,F}p_iq(F|i)\log\frac{q(F|i)}{s(F)}=-\sum_ip_i\log a_i.
\]
\qed

\begin{lemma}\label{lem:equiv1}
\emph{(J. K\"{o}rner)}. For every graph $G$ we have $H^\prime\left(G,P\right) = H\left(G,P\right)$.
\end{lemma}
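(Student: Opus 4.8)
The plan is to establish the equivalent identity $H(G,P)=H_k(G,P)$; combined with Lemma~\ref{lem:equiv}, which already gives $H_k(G,P)=H^\prime(G,P)$, this yields the claim, and working with $H_k$ is convenient because its optimizer lies in $VP(G)$. The combinatorial fact I will use repeatedly is that a set $J\subseteq V^n$ is independent in $G^{(n)}$ if and only if each coordinate projection $\{x_i:x\in J\}$ is an independent set of $G$; equivalently, every colour class of a proper colouring of $G^{(n)}$ is contained in a product $I_1\times\cdots\times I_n$ of independent sets of $G$, so $\chi(G^{(n)}[U])$ is the least number of such products needed to cover $U$. I will prove $H\le H_k$ (achievability) and $H\ge H_k$ (converse) separately.

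For achievability, fix an optimal $\mathbf a\in VP(G)$ for $H_k(G,P)$ and write it as a convex combination $\mathbf a=\sum_{F\in\mathcal F(G)}\lambda_F\mathbf 1_F$, so that $\lambda$ is a probability distribution on independent sets with $a_i=\sum_{F\ni i}\lambda_F$. Take $U$ to be the strong typical set, which lies in $T_\epsilon^{(n)}$ for large $n$. Build random product colour classes by drawing $I_1,\dots,I_n$ independently from $\lambda$: a fixed sequence $x$ is covered by one such product with probability $\prod_i a_{x_i}$, which for typical $x$ equals $2^{-n(\sum_v p_v\log(1/a_v)+o(1))}=2^{-n(H_k(G,P)+o(1))}$. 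A standard union-bound covering argument over the at most $2^{n(H(P)+\epsilon)}$ typical sequences shows that $2^{n(H_k(G,P)+o(1))}$ random products suffice to cover $U$ with positive probability; hence $\chi(G^{(n)}[U])\le 2^{n(H_k(G,P)+o(1))}$, and since $H$ is a minimum over $U\in T_\epsilon^{(n)}$ this gives $H(G,P)\le H_k(G,P)$.

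For the converse, fix any $U\in T_\epsilon^{(n)}$ together with a proper colouring of $G^{(n)}[U]$ using $\chi=\chi(G^{(n)}[U])$ colours, and let $X\sim P^n$, so the coordinates $X_1,\dots,X_n$ are independent with $X_i\sim P$. On the event $X\in U$ let $C$ be the colour of $X$, which determines independent sets $I_1^{(C)},\dots,I_n^{(C)}$ with $X_i\in I_i^{(C)}$; define $Y_i:=I_i^{(C)}$ there, and off $U$ set $Y_i:=\{X_i\}$, a singleton (hence independent) set containing $X_i$. Then for each $i$ the pair $(X_i,Y_i)$ is a feasible joint distribution with vertex marginal $P$, so $I(X_i;Y_i)\ge H^\prime(G,P)=H_k(G,P)$, the last equality by Lemma~\ref{lem:equiv}. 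Writing $W=\mathbf 1[X\in U]$, the tuple $Y^n$ is determined by $C$ on $\{W=1\}$ and by $X$ on $\{W=0\}$, whence $H(Y^n)\le H(W)+H(Y^n\mid W)\le 1+\log\chi+\epsilon\,n\log|V|$. Since the $X_i$ are independent, single-letterization gives $I(X^n;Y^n)\ge\sum_i I(X_i;Y_i)\ge n\,H_k(G,P)$, while $I(X^n;Y^n)\le H(Y^n)\le 1+\log\chi+\epsilon\,n\log|V|$; combining these, $\tfrac1n\log\chi\ge H_k(G,P)-\epsilon\log|V|-\tfrac1n$. Letting $n\to\infty$ and then $\epsilon\to0$ yields $H(G,P)\ge H_k(G,P)$.

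The main obstacle is the converse, and within it the passage from a colouring to a \emph{feasible} single-letter coupling: the definition of $H$ takes the minimum over all $U$ with $P^n(U)\ge1-\epsilon$, so an adversary may delete an $\epsilon$-fraction of the mass, and I must supply values of $Y_i$ on $V^n\setminus U$ that keep $(X_i,Y_i)$ feasible while controlling the entropy they contribute. The singleton choice $Y_i=\{X_i\}$ resolves feasibility and confines the extra cost to the $\epsilon\,n\log|V|$ term, but that term only vanishes after sending $\epsilon\to0$; obtaining the identity for every fixed $\epsilon\in(0,1)$ (a strong converse) would instead require a concentration tool such as the blowing-up lemma. The single-letterization inequality $I(X^n;Y^n)\ge\sum_i I(X_i;Y_i)$, valid precisely because the $X_i$ are independent under the unconditioned law $P^n$, and the achievability covering argument are comparatively routine.
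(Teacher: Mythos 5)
Your achievability argument is essentially the paper's: both draw $L$ independent random ``product of independent sets'' colour classes from the distribution realizing the optimum (you from the convex decomposition of the optimal $\mathbf a\in VP(G)$, the paper from the optimal coupling $Q^*$; these are the same object via Lemma \ref{lem:equiv}), compute that a typical sequence is covered with probability about $2^{-nH_k(G,P)}$, and finish with a union bound over the at most $2^{nH(P)+o(n)}$ typical sequences. That direction is fine and matches the appendix.

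The converse is where you genuinely diverge, and where the gap is. Your chain $nH^\prime(G,P)\le\sum_iI(X_i;Y_i)\le I(X^n;Y^n)\le H(Y^n)\le 1+\log\chi+\epsilon n\log|V|$ is correct step by step (the feasibility of each $(X_i,Y_i)$, the single-letterization under independence of the $X_i$, and the entropy bound via $W$ all check out), but it only yields $\tfrac1n\log\chi\ge H^\prime(G,P)-\epsilon\log|V|-\tfrac1n$. The functional $H(G,P)$ in (\ref{eq:ent1}) is defined for a \emph{fixed} $\epsilon\in(0,1)$ with no outer limit $\epsilon\to0$ (contrast the definition of $H_\pi$, which does carry one), and the appendix proof explicitly concludes $\lim_n\tfrac1n\log M(n,\epsilon)=H^\prime(G,P)$ for every fixed $\epsilon$. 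For fixed $\epsilon$ your lower bound is short by $\epsilon\log|V|$, and you also cannot conclude that the limit in (\ref{eq:ent1}) exists, since your two bounds do not sandwich. You have diagnosed this yourself; the paper's converse is precisely the device that closes it. Rather than coupling $Y$ to $X$ globally, the paper intersects $U$ with the typical set (costing only a constant factor $1-\delta-\epsilon$ in probability, hence nothing in the exponent) and upper-bounds the probability mass that any single maximal independent set of $G^{(n)}$ can carry inside the typical set by a type-counting argument: at most $(n+1)^{|V(\Gamma)|}2^{n\max_{Q^\prime}H(Q^\prime|a)+O(\sqrt n)}$ typical sequences per independent set, each of probability at most $2^{-nH(P)+O(\sqrt n)}$, giving $\chi\ge P\left(U\cap T^n(P)\right)/\max_{\mathbf S}P\left(\mathbf S\cap T^n(P)\right)\ge(1-\delta-\epsilon)\,2^{nH^\prime(G,P)-O(\sqrt n)}$. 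So your converse is a valid and cleaner proof of the $\epsilon\to0$ version of the identity, but as written it does not prove the statement under the definition the paper actually uses; to match the paper you must either import a concentration tool (blowing-up) or replace the single-letterization by the typical-set counting bound above.
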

\proof

See  Appendix A.
\qed
\section{Graph Entropy and Information Theory}
A \emph{discrete memoryless} and \emph{stationary information source} $X$ is a sequence $\left\{X_i\right\}_{i=1}^\infty$ of independent, identically distributed discrete random variables with values in a finite set $\mathcal X$. Let $\mathcal X$ denote the set of the alphabet of a discrete memoryless and stationary information source with five elements. That is
\[
\mathcal X = \{x_1,x_2,x_3,x_4,x_5\}.
\]
We define a characteristic graph $G$ corresponding to $\mathcal X$ as follows. The vertex set of $G$ is
\[
V(G) = \mathcal X.
\]
Furthermore, two vertices of $G$ are adjacent if and only if the corresponding elements of $\mathcal X$ are distinguishable. As an example one can think of the 5-cycle of Figure \ref{fig:charcGraph1} as a characteristic graph of an information source $\mathcal X$.
\begin{figure}[!t]
    \begin {center}
        \begin{tikzpicture}
        [scale = 2]
            \draw (-.5,0) -- (-1,1);
            \draw (-1,1) -- (0,1.75);
            \draw (0,1.75) -- (1,1);
            \draw (1,1) -- (.5,0);
            \draw (.5,0) -- (-.5,0);
            \node[font=\small] at (0,2) {$x_1$};
            \node[font=\small] at (1.2,1.2) {$x_2$};
            \node[font=\small] at (.8,0) {$x_3$};
            \node[font=\small] at (-.8,0) {$x_4$};
            \node[font=\small] at (-1.3,1.3) {$x_5$};
            \filldraw [blue]
            (-0.5,0) circle (3pt)
            (-1,1) circle (3pt)
            (0,1.75) circle (3pt)
            (1,1) circle (3pt)
            (.5,0) circle (3pt);
        \end{tikzpicture}
    \end{center}
\caption{A characteristic graph of an information source with 5 alphabets}.\label{fig:charcGraph1}
  \end{figure}
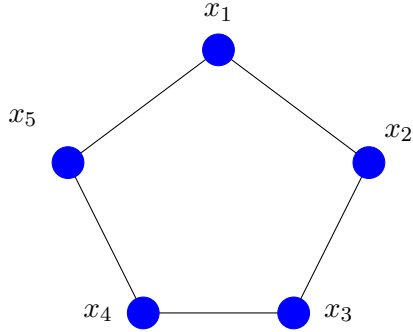
In the source coding problem, our goal is to label the vertices of the characteristic graph with minimum number of labels so that we can recover the elemnets of a given alphabet in a unique way. This means that we should colour the vertices of the graph properly with minimum number of colours. More precisely, one way of encoding the elements of the source alphabet $\mathcal X$ in Figure \ref{fig:charcGraph1} is
\begin{eqnarray}
\{x_1,x_3\}&\rightarrow& \text{red,}\nonumber\\
\{x_2,x_4\}&\rightarrow& \text{blue,}\nonumber\\
\{x_5\}&\rightarrow& \text{green.}\nonumber\\
\end{eqnarray}
Now, let $X$ be a random variable takes its values from $\mathcal X$ with the following probability density
\[
P(X=x_i)=p_i,~\forall i\in\{1,\cdots,5\}.
\]
Now consider the graph $G^{(n)}$, and let $\epsilon>0$. Then neglecting vertices of $G^{(n)}$ having a total probability less than $\epsilon$, the encoding of vertices of $G^{(n)}$ essentially becomes the colouring of a sufficiently large subgraph of $G^{(n)}$. And therefore, the minimum number of codewords is
\[
\min_{U\in T_\epsilon^{(n)}}  \chi(G^{(n)}(U)).
\]
Taking logarithm of the above quantity, normalizing it by $n$, and making $n$ very large, we get the minimum number of required information bits which is the same as the graph entropy of $G$. The characteristic graph of a regular source where distinct elements of the source alphabet are distinguishable is a complete graph. We will see in section 3.5 that the entropy of a complete graph is the same as the entropy of a random variable.

\section{Basic Properties of Graph Entropy}
The main properties of graph entropy are \emph{monotonicity}, \emph{sub-additivity}, and \emph{additivity under vertex substitution}. Monotonicity is formulated in the following lemma.

\begin{lemma}\emph{(J. K\"{o}rner).}\label{lem:mono}
Let $F$ be a spanning subgraph of a graph $G$. Then for any probability density $P$ we have $H_k(F,P)\leq H_k(G,P)$.
\end{lemma}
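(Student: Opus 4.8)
The plan is to reduce the statement to the containment lemma for convex corners, Lemma \ref{lem:corner1}, by comparing the vertex packing polytopes of $F$ and $G$. The key combinatorial observation is that passing from $G$ to a spanning subgraph $F$ only \emph{enlarges} the collection of independent sets: since $F$ is spanning we have $V(F)=V(G)$ and $E(F)\subseteq E(G)$, so any vertex set that contains no edge of $G$ certainly contains no edge of $F$. Hence every independent set of $G$ is an independent set of $F$, and in particular the characteristic vector of each independent set of $G$ is also the characteristic vector of an independent set of $F$.

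Next I would pass from independent sets to their convex hulls. Because the set of characteristic vectors defining $VP(G)$ is a subset of the set defining $VP(F)$, and the convex hull operation is monotone under inclusion, it follows that $VP(G)\subseteq VP(F)$. Both sets are convex corners, as recalled right after the definition of the entropy of a convex corner, so they are legitimate inputs to Lemma \ref{lem:corner1}.

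Finally I would apply Lemma \ref{lem:corner1} with $\mathcal{A}=VP(G)$ and $\mathcal{C}=VP(F)$. Since $\mathcal{A}\subseteq\mathcal{C}$, that lemma gives $H_{VP(G)}(P)\ge H_{VP(F)}(P)$ for every probability density $P$. By the definition of graph entropy, $H_{VP(G)}(P)=H_k(G,P)$ and $H_{VP(F)}(P)=H_k(F,P)$, so this is exactly the desired inequality $H_k(F,P)\le H_k(G,P)$.

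I do not expect a substantive obstacle here; the proof is essentially bookkeeping built on the right inclusion. The one place to be careful is the direction of the inequality: adding edges (going from $F$ up to $G$) \emph{shrinks} the vertex packing polytope, and a smaller convex corner yields a \emph{larger} entropy, which matches the sign in Lemma \ref{lem:corner1}. As long as the containment $VP(G)\subseteq VP(F)$ is stated in the correct direction and fed into the lemma accordingly, the conclusion follows immediately.
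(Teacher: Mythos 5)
Your proof is correct and follows essentially the same route as the paper: both rest on the inclusion $VP(G)\subseteq VP(F)$ for a spanning subgraph $F$, after which the inequality is immediate because the entropy is a minimum over the packing polytope (the paper says this follows "by the definition of graph entropy," while you package the same monotonicity as the easy direction of Lemma \ref{lem:corner1}). The direction of the containment and of the resulting inequality are both handled correctly.
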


\proof

For graphs $F$ and $G$ mentioned above, we have $VP(G)\subseteq VP(F)$. This immediately implies the statement by the definition of graph entropy.
\qed

The sub-additivity was first recognized by K\"{o}rner in \cite{JKor1} and he proved the following lemma.
\begin{lemma}\label{lem:subadd}\emph{(J. K\"{o}rner).}
Let $F$ and $G$ be two graphs on the same vertex set $V$ and $F\cup G$ denote the graph on $V$ with edge set $E(F)\cup E(G)$. For any fixed probability density $P$ we have
\[
H_k\left(F\cup G,P\right) \leq H_k\left(F,P\right) + H_k\left(G,P\right).
\]
\end{lemma}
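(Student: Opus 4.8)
The plan is to work entirely with the combinatorial definition of graph entropy via the vertex packing polytope. The whole argument reduces to a single structural fact about how vertex packing polytopes behave under graph union; once that fact is in hand, sub-additivity follows by selecting optimal points and exploiting the additivity of the logarithm. First I would record the elementary observation that a set $S\subseteq V$ is independent in $F\cup G$ if and only if it is independent in both $F$ and $G$. This is immediate from $E(F\cup G)=E(F)\cup E(G)$: a set spans an edge of $F\cup G$ precisely when it spans an edge of $F$ or an edge of $G$.

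The central claim is then the following: if $\mathbf a\in VP(F)$ and $\mathbf b\in VP(G)$, then their coordinate-wise product $\mathbf c$, defined by $c_i=a_ib_i$, lies in $VP(F\cup G)$. To prove this, write $\mathbf a=\sum_j\lambda_j\mathbf 1_{I_j}$ and $\mathbf b=\sum_k\mu_k\mathbf 1_{J_k}$ as convex combinations of characteristic vectors of independent sets $I_j$ of $F$ and $J_k$ of $G$, where $\lambda_j,\mu_k\geq 0$ and $\sum_j\lambda_j=\sum_k\mu_k=1$. Using $(\mathbf 1_{I_j})_i(\mathbf 1_{J_k})_i=(\mathbf 1_{I_j\cap J_k})_i$, one computes
\[
c_i=\Bigl(\sum_j\lambda_j(\mathbf 1_{I_j})_i\Bigr)\Bigl(\sum_k\mu_k(\mathbf 1_{J_k})_i\Bigr)=\sum_{j,k}\lambda_j\mu_k\,(\mathbf 1_{I_j\cap J_k})_i,
\]
so $\mathbf c=\sum_{j,k}\lambda_j\mu_k\,\mathbf 1_{I_j\cap J_k}$. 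Each $I_j\cap J_k$ is a subset of $I_j$ and of $J_k$, hence independent in both $F$ and $G$, and therefore independent in $F\cup G$ by the observation above. Since the coefficients $\lambda_j\mu_k$ are nonnegative and sum to $(\sum_j\lambda_j)(\sum_k\mu_k)=1$, the vector $\mathbf c$ is a convex combination of characteristic vectors of independent sets of $F\cup G$, i.e. $\mathbf c\in VP(F\cup G)$.

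To conclude, let $\mathbf a^\ast$ attain the minimum defining $H_k(F,P)$ and $\mathbf b^\ast$ attain the minimum defining $H_k(G,P)$. By the claim, the product $\mathbf c^\ast$ with $c_i^\ast=a_i^\ast b_i^\ast$ belongs to $VP(F\cup G)$, so it is a feasible point in the minimization defining $H_k(F\cup G,P)$, whence
\[
H_k(F\cup G,P)\leq\sum_i p_i\log\frac{1}{a_i^\ast b_i^\ast}=\sum_i p_i\log\frac{1}{a_i^\ast}+\sum_i p_i\log\frac{1}{b_i^\ast}=H_k(F,P)+H_k(G,P).
\]

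The only real obstacle is the product claim; everything else is bookkeeping. The subtlety there is twofold: verifying that the product distributes correctly over the two convex representations (which is just the distributive law together with the identity $(\mathbf 1_{I})_i(\mathbf 1_{J})_i=(\mathbf 1_{I\cap J})_i$), and checking that the intersection of the two supporting independent sets remains independent in the union graph. The latter is exactly where the defining relation $E(F\cup G)=E(F)\cup E(G)$ is used, and it is the crux of the whole argument.
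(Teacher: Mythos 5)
Your proof is correct and follows essentially the same route as the paper: both arguments take the optimal vectors $\mathbf a^\ast\in VP(F)$ and $\mathbf b^\ast\in VP(G)$ and observe that their Schur product lies in $VP(F\cup G)$ because intersections of stable sets of $F$ with stable sets of $G$ are stable in $F\cup G$. The only difference is that you spell out the convex-combination expansion justifying the product claim, which the paper merely asserts.
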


\proof

Let $\mathbf{a}\in VP(F)$ and $\mathbf{b}\in VP(G)$ be the vectors achieving the minima in the definition of graph entropy for $H_k\left(F,P\right)$ and $H_k\left(G,P\right)$, respectively. Notice the vector $\mathbf{a} \circ \mathbf{b} = (a_1b_1, a_2b_2, \cdots, a_nb_n)$ is in $VP(F\cup G)$, simply because the intersection of a stable set of $F$ with a stable set of $G$ is always a stable set in $F\cup G$. Hence, we have

\begin{align*}
H_k\left(F,P\right) + H_k\left(G,P\right) &= \sum_{i = 1}^np_i\log\frac{1}{a_i} + \sum_{i = 1}^np_i\log\frac{1}{b_i} \\
&= \sum_{i = 1}^np_i\log\frac{1}{a_ib_i}\\
&\geq  H_k\left(F\cup G, P\right).
\end{align*}
\qed

\begin{figure}[!t]%
\centering
\subfloat[A 5-cycle $G$.]{
\begin{tikzpicture}
[scale = 2]
            \draw (-.5,0) -- (-1,1);
            \draw (-1,1) -- (0,1.75);
            \draw (0,1.75) -- (1,1);
            \draw (1,1) -- (.5,0);
            \draw (.5,0) -- (-.5,0);
            \node[font=\small] at (0,2) {$u_1$};
            \node[font=\small] at (1.2,1.2) {$u_2$};
            \node[font=\small] at (.8,0) {$u_3$};
            \node[font=\small] at (-.8,0) {$u_4$};
            \node[font=\small] at (-1.3,1.3) {$u_5$};
            \filldraw [blue]
            (-0.5,0) circle (3pt)
            (-1,1) circle (3pt)
            (0,1.75) circle (3pt)
            (1,1) circle (3pt)
            (.5,0) circle (3pt);
\end{tikzpicture}
}
\qquad\qquad\qquad
\subfloat[A triangle $F$.]{
\begin{tikzpicture}
[scale = 2]
            \draw (-.5,1.5) -- (.5,1.5);
            \draw (.5,1.5) -- (0,2);
            \draw (0,2) -- (-.5,1.5);
            \node[font=\small] at (0,2.2) {$v_1$};
            \node[font=\small] at (.7,1.7) {$v_2$};
            \node[font=\small] at (-.7,1.7) {$v_3$};
            \filldraw [blue]
            (-0.5,1.5) circle (3pt)
            (.5,1.5) circle (3pt)
            (0,2) circle (3pt);
\end{tikzpicture}
}
\qquad\qquad\qquad
\subfloat[The graph $G_{u_1\longleftarrow F}$]{
\begin{tikzpicture}
[scale = 2]
            \draw (-.5,0) -- (-1,1);
            \draw (-1,1) to [out=120,in=150] (0,2);
            \draw (-1,1) -- (-.5,1.5);
            \draw (-1,1) -- (.5,1.5);
            \draw (0,2) to [out=30,in=60]  (1,1);
            \draw (-.5,1.5) -- (1,1);
            \draw (.5,1.5) -- (1,1);
            \draw (1,1) -- (.5,0);
            \draw (.5,0) -- (-.5,0);
            \draw (-.5,1.5) -- (.5,1.5);
            \draw (-.5,1.5) -- (0,2);
            \draw (.5,1.5) -- (0,2);
            \node[font=\small] at (0,2.2) {$v_1$};
            \node[font=\small] at (-.7,1.7) {$v_2$};
            \node[font=\small] at (.7,1.7) {$v_3$};
            \node[font=\small] at (1.2,1.2) {$u_2$};
            \node[font=\small] at (.8,0) {$u_3$};
            \node[font=\small] at (-.8,0) {$u_4$};
            \node[font=\small] at (-1.3,1.3) {$u_5$};
            \filldraw [blue]
            (-0.5,0) circle (3pt)
            (-1,1) circle (3pt)
            (0,2) circle (3pt)
            (-.5,1.5) circle (3pt)
            (.5,1.5) circle (3pt)
            (1,1) circle (3pt)
            (.5,0) circle (3pt);
\end{tikzpicture}
}
\caption{}%
\label{fig:Subs}%
\end{figure}

The notion of substitution is defined as follows. Let $F$ and $G$ be two vertex disjoint graphs and $v$ be a vertex of $G$. By substituting $F$ for $v$ we mean deleting $v$ and joining every vertex of $F$ to those vertices of $G$ which have been adjacent with $v$. We will denote the resulting graph $G_{v\leftarrow F}$. We extend this concept also to distributions. If we are given a probability distribution $P$ on $V(G)$ and a probability distribution $Q$ on $V(F)$ then by $P_{v\leftarrow Q}$ we denote the distribution on $V\left(G_{v\leftarrow F}\right)$ given by $P_{v\leftarrow Q}(x) = P(x)$ if $x \in V(G) \setminus {v}$ and $P_{v\leftarrow Q}(x) = P(x) Q(x)$ if $x \in V(F)$. This operation is illustrated in Figure \ref{fig:Subs}.

Now we state the following lemma whose proof can be found in J. K\"{o}rner, et. al. \cite{JKor2}.

\begin{lemma}\emph{(J. K\"{o}rner, G. Simonyi, and Zs. Tuza).}\label{lem:Subs}
Let $F$ and $G$ be two vertex disjoint graphs, $v$ a vertex of $G$, while $P$ and $Q$ are probability distributions on $V(G) $ and $V(F)$, respectively. Then we have
\[
H_k\left(G_{v\leftarrow F}, P_{v\leftarrow Q}\right) = H_k\left(G,P\right) + P(v)H_k\left(F,Q\right).
\]
\qed
\end{lemma}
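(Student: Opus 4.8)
The plan is to work entirely with the combinatorial definition $H_k(G,P) = \min_{\mathbf a \in VP(G)} \sum_i p_i \log(1/a_i)$ and to prove the two inequalities separately, the whole argument resting on an explicit description of $VP\!\left(G_{v\leftarrow F}\right)$ in terms of $VP(G)$ and $VP(F)$. The first step is to understand the independent sets of $G_{v\leftarrow F}$. Because every vertex of $F$ is joined to exactly the former neighbours of $v$, an independent set $S$ of $G_{v\leftarrow F}$ is of exactly one of two types: either $S \cap V(F) = \emptyset$, in which case $S$ is an independent set of $G$ avoiding $v$ (first type); or $S \cap V(F) = J \neq \emptyset$ is an independent set of $F$ and $(S \setminus J) \cup \{v\}$ is an independent set $I$ of $G$ containing $v$ (second type). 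This bijective bookkeeping is the combinatorial heart of the lemma.

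From this I would derive the polytope identity: a vector $\mathbf c$ lies in $VP\!\left(G_{v\leftarrow F}\right)$ precisely when there exist $\mathbf a \in VP(G)$ and $\mathbf b \in VP(F)$ with $c_x = a_x$ for $x \in V(G)\setminus\{v\}$ and $c_x = a_v b_x$ for $x \in V(F)$. The forward direction is a direct construction: writing $\mathbf a = \sum_k \alpha_k \chi_{I_k}$ and $\mathbf b = \sum_\ell \beta_\ell \chi_{J_\ell}$ as convex combinations of characteristic vectors of independent sets, I replace each $I_k$ containing $v$ by the family $\{(I_k \setminus \{v\}) \cup J_\ell\}_\ell$ weighted by $\beta_\ell$, and leave each $I_k$ avoiding $v$ untouched; since $\sum_\ell \beta_\ell = 1$ the coordinates collapse exactly to the stated formula and the total weight remains $1$. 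The converse uses the inverse bookkeeping: decompose $\mathbf c = \sum_m \gamma_m \chi_{S_m}$, set $\mathbf a = \sum_m \gamma_m \chi_{I_m}$ via the type-correspondence above, so that $a_v$ equals the total weight $\gamma_B$ on sets of the second type, and put $b_x = c_x / a_v$ on $V(F)$, which is a convex combination of the $\chi_{J_m}$ and hence lies in $VP(F)$.

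With the polytope identity in hand, both bounds reduce to the same algebraic computation, driven by $\log(1/(a_v b_x)) = \log(1/a_v) + \log(1/b_x)$ together with $\sum_{x\in V(F)} Q(x) = 1$. Evaluating the objective of $P_{v\leftarrow Q}$ at a $\mathbf c$ arising from a pair $(\mathbf a, \mathbf b)$ gives
\[
\sum_{x \in V(G)} P(x)\log\frac{1}{a_x} + P(v)\sum_{x\in V(F)} Q(x)\log\frac{1}{b_x},
\]
where the term $P(v)\log(1/a_v)$ contributed by the $F$-coordinates is exactly what is needed to complete the $G$-sum over all of $V(G)$. For the upper bound I feed in the optimal $\mathbf a$ and $\mathbf b$ for $H_k(G,P)$ and $H_k(F,Q)$, producing the value $H_k(G,P) + P(v) H_k(F,Q)$, so $H_k\!\left(G_{v\leftarrow F}, P_{v\leftarrow Q}\right)$ is at most this. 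For the lower bound I take an optimal $\mathbf c$, extract $(\mathbf a, \mathbf b)$ by the converse construction, and bound the two sums below by $H_k(G,P)$ and $H_k(F,Q)$ respectively, using only $\mathbf a \in VP(G)$ and $\mathbf b \in VP(F)$.

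The step I expect to be the main obstacle is the converse half of the polytope identity, and in particular making the extraction $b_x = c_x/a_v$ legitimate. Here I must guarantee $a_v > 0$: this follows because the objective tends to $+\infty$ on the boundary of the nonnegative orthant (Remark \ref{rem:Rem1}), so an optimal $\mathbf c$ has all coordinates strictly positive, which forces positive weight on the second-type independent sets and hence $a_v = \gamma_B > 0$. I would also dispose of the degenerate case $P(v) = 0$ separately, where the $F$-coordinates carry no probability mass and both sides collapse to $H_k(G,P)$.
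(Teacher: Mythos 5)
Your proposal cannot be checked against a proof in the thesis, because the thesis states this lemma without proof and simply refers the reader to K\"{o}rner, Simonyi and Tuza \cite{JKor2}. Judged on its own, your argument is correct and complete. The key structural claim --- that every independent set of $G_{v\leftarrow F}$ is either an independent set of $G$ avoiding $v$, or of the form $(I\setminus\{v\})\cup J$ with $I$ an independent set of $G$ containing $v$ and $J$ a nonempty independent set of $F$ --- holds exactly because each vertex of $F$ is joined to precisely the $G$-neighbours of $v$, and it yields the polytope description $c_x=a_x$ on $V(G)\setminus\{v\}$, $c_x=a_vb_x$ on $V(F)$. From there the identity $\log\frac{1}{a_vb_x}=\log\frac{1}{a_v}+\log\frac{1}{b_x}$ together with $\sum_{x\in V(F)}Q(x)=1$ reassembles the objective into the two separate entropies, giving both inequalities. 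You also correctly isolate the one delicate point: extracting $\mathbf b=\mathbf c/a_v$ on $V(F)$ requires $a_v>0$, which at an optimal $\mathbf c$ follows from the blow-up of the objective at the boundary whenever $P(v)>0$ and $Q$ is not identically zero on the relevant coordinates, and the case $P(v)=0$ degenerates harmlessly. This is essentially the standard proof via the vertex packing polytope (as in Simonyi's surveys), consistent with the combinatorial definition of $H_k$ the thesis works with, so no gap remains.
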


Notice that the entropy of an empty graph (a graph with no edges) is always zero (regardless of the distribution on its vertices). Noting this fact, we have the following corollary as a consequence of Lemma \ref{lem:Subs}.

\begin{corollary}\label{cor:EntrDisc}
Let the connected components of the graph $G$ be the subgraphs $G_i$ and $P$ be a probability distribution on $V(G)$. Set
\[
P_i(x) = P(x)\left(P(V(G_i))\right)^{-1}, x\in V(G_i).
\]
Then
\[
H_k\left(G,P\right) = \sum_i P\left(V(G_i)\right) H_k\left(G_i,P_i\right).
\]
\end{corollary}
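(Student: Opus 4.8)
The plan is to realize $G$ as the result of repeatedly substituting its connected components into an edgeless graph and then to apply Lemma~\ref{lem:Subs} once for each component. Let $G_1,\dots,G_m$ be the connected components of $G$, and let $O$ be the empty graph on the vertex set $\{v_1,\dots,v_m\}$. Because $O$ has no edges, substituting $G_i$ for $v_i$ never joins a vertex of $G_i$ to a vertex of $G_j$ with $i\neq j$; hence performing all $m$ substitutions turns $O$ into the disjoint union $G_1\cup\dots\cup G_m=G$. On the distribution side I would put $\bar P(v_i)=P(V(G_i))$, which is a probability density on $V(O)$ since $\sum_i P(V(G_i))=P(V(G))=1$, and note that each $P_i$ is exactly the conditional density of $P$ on $V(G_i)$. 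A direct check from the definition of $P_{v\leftarrow Q}$ shows that substituting $P_i$ for $v_i$ in $\bar P$ reproduces $P$ on the substituted vertices, so the fully substituted distribution is $P$.

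Next I would run the substitutions one at a time. Set $O_0=O$ with density $\Pi_0=\bar P$, and for $j=1,\dots,m$ let $O_j=(O_{j-1})_{v_j\leftarrow G_j}$ carry the density $\Pi_j=(\Pi_{j-1})_{v_j\leftarrow P_j}$, so that $O_m=G$ and $\Pi_m=P$. Applying Lemma~\ref{lem:Subs} at the $j$-th step gives
\[
H_k(O_j,\Pi_j)=H_k(O_{j-1},\Pi_{j-1})+\Pi_{j-1}(v_j)\,H_k(G_j,P_j).
\]
Since the substitution operation alters only the vertex being replaced, the mass $\Pi_{j-1}(v_j)$ is still the original $\bar P(v_j)=P(V(G_j))$. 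Summing these identities over $j$ telescopes to
\[
H_k(G,P)=H_k(O,\bar P)+\sum_{j=1}^m P(V(G_j))\,H_k(G_j,P_j),
\]
and the result follows once we use that the entropy of the edgeless graph $O$ vanishes, i.e.\ $H_k(O,\bar P)=0$.

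The argument is largely bookkeeping, and the step I expect to require the most care is verifying that the intermediate substitutions do not interfere with one another: namely, that substituting $G_j$ for $v_j$ leaves the vertices $v_{j+1},\dots,v_m$ (and the already-substituted components) untouched, so that each still carries the probability assigned to it by $\bar P$, and that no new edges are created between distinct components. Both facts are immediate from the definition of substitution into a graph with no edges, but they are exactly what guarantees that the telescoping sum picks up the coefficient $P(V(G_j))$ rather than some perturbed value. An equivalent route, which avoids the explicit empty-graph scaffold, is a short induction on the number of components $m$, peeling off one component at a time via Lemma~\ref{lem:Subs}; the base case $m=1$ is trivial since then $P_1=P$.
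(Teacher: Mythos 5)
Your proposal is correct and follows exactly the paper's argument: the paper likewise substitutes each connected component for a vertex of an empty graph carrying the density $P(V(G_i))$, applies Lemma~\ref{lem:Subs}, and uses that the entropy of an edgeless graph is zero. Your write-up merely makes explicit the iterated/telescoping bookkeeping that the paper leaves implicit.
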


\proof

Consider the empty graph on as many vertices as the number of connected components of $G$. Let a distribution be given on its vertices by $P\left(V(G_i)\right)$ being the probability of the vertex corresponding to the $i$th component of $G$. Now substituting each vertex by the component it belongs to and applying Lemma \ref{lem:Subs} the statement follows.
\qed

\section{Entropy of Some Special Graphs}
Now we look at entropy of some graphs which are also mentioned in G. Simonyi \cite{Sim} and \cite{Simu} . The first one is the complete graph.

\begin{lemma}
For $K_n$, the complete graph on $n$ vertices, one has
\[
H_k\left(K_n,P\right) = H(P).
\]
\end{lemma}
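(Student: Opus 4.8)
The plan is to recognize that the vertex packing polytope of $K_n$ is exactly the unit corner $\mathcal{S}$, and then to quote the lemma of Section~3.1 which identifies the entropy of the unit corner with the Shannon entropy. The whole argument reduces to a polytope identification followed by a one-line appeal to an earlier result.

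First I would describe the independent sets of $K_n$. Because every two distinct vertices of $K_n$ are adjacent, no independent set can contain more than one vertex; thus the independent sets are precisely the empty set together with the $n$ singletons $\{i\}$. Their characteristic vectors are therefore the zero vector $\mathbf{0}$ and the standard basis vectors $e_1,\dots,e_n$ of $\mathbb{R}^n$.

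Next I would compute the convex hull of these vectors. The convex hull of $\{\mathbf{0},e_1,\dots,e_n\}$ is $\{\mathbf{x}\in\mathbb{R}_+^n:\sum_i x_i\le 1\}$, which is precisely the unit corner $\mathcal{S}$ defined earlier. Hence $VP(K_n)=\mathcal{S}$, and by the definition of graph entropy $H_k(K_n,P)=H_{VP(K_n)}(P)=H_{\mathcal{S}}(P)$. The lemma relating the unit corner to Shannon entropy then yields $H_{\mathcal{S}}(P)=H(P)$, which finishes the argument.

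There is essentially no obstacle here: the only real content is the identification of the convex hull with $\mathcal{S}$, which is routine. Should a self-contained derivation be preferred over quoting the unit-corner lemma, one can instead minimize $\sum_i p_i\log(1/a_i)$ directly over the simplex $\sum_i a_i\le 1$. Since the objective strictly decreases as any coordinate $a_i$ increases, the minimum is attained on the face $\sum_i a_i=1$; writing $-\sum_i p_i\log a_i = D(P\|\mathbf{a}) + H(P)$ and using $D(P\|\mathbf{a})\ge 0$ with equality if and only if $\mathbf{a}=P$ shows the minimum equals $H(P)$, attained at $\mathbf{a}=P$.
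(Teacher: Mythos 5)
Your proof is correct and follows essentially the same route as the paper: the paper likewise identifies $VP(K_n)$ with the probability simplex and minimizes $\sum_i p_i\log(1/q_i)$ via the nonnegativity of relative entropy, which is exactly the content of the unit-corner lemma you cite (and of your self-contained alternative). No issues.
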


\proof
By definition of entropy of a graph, $H_k\left(K_n,P\right)$ has the form $\sum_{i=1}^np_i\log\frac{1}{q_i}$ where $q_i\geq 0$ for all $i$ and $\sum_{i=1}^n q_i =1$. This expression is well known to take its minimum at $q_i = p_i$. Indeed, by the concavity of the log function $\sum_{i=1}^n p_i\log\frac{p_i}{q_i}\leq \log\sum_{i=1}^nq_i = 0$.
\qed

And the next one is the complete multipartite graph.

\begin{lemma}\label{lem:components}
Let $G = K_{m_1,m_2,\cdots,m_k}$, i.e., a complete $k$-partite graph with maximal stable sets of size $m_1,m_2,\cdots,m_k$. Given a distribution $P$ on $V(G)$ let $Q$ be the distribution on $S(G)$, the set of maximal stable sets of $G$, given by $Q(J) = \sum_{x\in J}P(x)$ for each $J\in S(G)$. Then $H_k(G,P) = H_k\left(K_k,Q\right)$.
\end{lemma}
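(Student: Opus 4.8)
The plan is to realize $G=K_{m_1,\dots,m_k}$ as an iterated vertex substitution into the complete graph $K_k$ and then invoke the substitution formula of Lemma~\ref{lem:Subs}, exploiting that the graphs being substituted are edgeless and hence contribute zero entropy.

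First I would pin down the combinatorial structure. In a complete $k$-partite graph two vertices are adjacent precisely when they lie in different parts, so every independent set is contained in a single part and the maximal stable sets are exactly the $k$ parts $J_1,\dots,J_k$ with $|J_i|=m_i$. Thus $S(G)=\{J_1,\dots,J_k\}$, and writing $q_i:=Q(J_i)=\sum_{x\in J_i}p_x$, the distribution $Q$ lives on the $k$ vertices of $K_k$, exactly as the statement requires.

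Next I would observe that $G$ arises from $K_k$ by substituting, for each vertex $i$, the edgeless graph $E_{m_i}$ on $m_i$ vertices: deleting vertex $i$ of $K_k$ and joining each of the $m_i$ new, mutually non-adjacent vertices to all former neighbours of $i$ reproduces precisely a part of size $m_i$ joined completely to every other part. On the distribution side I would set $Q_i(x):=p_x/q_i$ for $x\in J_i$ (assuming $q_i>0$; a part of total weight $0$ contributes nothing to either side and may be discarded), so that the substituted weight of $x\in J_i$ equals $q_i\,Q_i(x)=p_x$; hence the iterated substitution distribution is exactly $P$.

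Finally I would apply Lemma~\ref{lem:Subs} $k$ times, expanding the $E_{m_i}$ one at a time. At the $i$-th step the vertex being expanded still carries weight $q_i$, so the formula contributes the term $q_i\,H_k(E_{m_i},Q_i)$; since an edgeless graph has entropy $0$, every such term vanishes and the running entropy never departs from its initial value $H_k(K_k,Q)$. Therefore $H_k(G,P)=H_k(K_k,Q)$. This is the same bookkeeping as in Corollary~\ref{cor:EntrDisc}, here with a complete graph playing the role of the quotient. The only points needing care are the correct tracking of the conditional weights through the iterated substitution and the verification that substituting edgeless graphs into $K_k$ really yields the complete multipartite adjacency, so I expect no serious obstacle. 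As a sanity check one can confirm directly that the vector $\mathbf a$ with $a_x=q_i$ for $x\in J_i$ lies in $VP(G)$ (each maximal clique is a transversal, giving $\sum_i q_i=1$) and attains the value $\sum_i q_i\log(1/q_i)=H(Q)=H_k(K_k,Q)$.
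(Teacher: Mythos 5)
Your proof is correct and is essentially the paper's own argument: the paper likewise obtains $G$ by substituting edgeless graphs of sizes $m_1,\dots,m_k$ for the vertices of $K_k$ and invokes Lemma~\ref{lem:Subs} together with the fact that empty graphs have zero entropy. Your version merely spells out the iteration and the weight bookkeeping that the paper leaves implicit.
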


\proof
The statement follows from Lemma \ref{lem:Subs} and substituting stable sets of size \\$m_1,m_2,\cdots,m_k$ for the vertices of $K_k$.
\qed

A special case of the above Lemma is the entropy of a complete bipartite graph with equal probability measure on its stable sets equal to 1.
Now, let $G$ be a bipartite graph with color classes $A$ and $B$. For a set $D\subseteq A$, let $\mathcal{N}(D)$ denotes the the set of neighbours of $D$ in $B$, that is a subtes of the vertices in $B$ which are adjacent to a vertex in $A$.

Given a distribution $P$ on $V(G)$ we have
\[
P(D) = \sum_{i\in D}p_i~~\forall D\subseteq V(G),
\]
Furthermore, defining the binary entropy as
\[
h(x) := -x\log x - (1 - x)\log (1 - x),~~0\leq x\leq 1,
\]
J. K\"{o}rner and K. Marton proved the following theorem in \cite{JKor3}.

\begin{theorem}\emph{(J. K\"{o}rner and K. Marton).}\label{thm:bipentropy}
Let $G$ be a bipartite graph with no isolated vertices and $P$ be a probability distribution on its vertex set. If
\[
\frac{P(D)}{P(A)} \leq \frac{P(\mathcal{N}(D))}{P(B)},
\]
for all subsets $D$ of $A$, then
\[
H_k\left(G,P\right) = h\left(P(A)\right).
\]
And if
\[
\frac{P(D)}{P(A)} > \frac{P(\mathcal{N}(D))}{P(B)},
\]
then there exists a partition of $A = D_1\cup\cdots\cup D_k$ and a partition of $B = U_1\cup\cdots\cup U_k$ such that
\[
H_k\left(G,P\right) = \sum_{i=1}^k P\left(D_i \cup U_i\right) h\left(\frac{P(D_i)}{P(D_i\cup U_i)}\right).
\]
\end{theorem}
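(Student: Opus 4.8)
The plan is to argue directly from the combinatorial definition $H_k(G,P)=\min_{\mathbf a\in VP(G)}\sum_i p_i\log(1/a_i)$. Since a bipartite graph is perfect and triangle-free, its vertex packing polytope is exactly the edge-constrained polytope $VP(G)=\{\mathbf a\ge 0: a_i+a_j\le 1 \text{ for every edge } ij\}$, so the problem becomes a convex program: minimize the convex function $-\sum_i p_i\log a_i$ over this polytope. The upper bound $H_k(G,P)\le h(P(A))$ comes for free in both cases, because $G$ is a spanning subgraph of the complete bipartite graph $K_{A,B}$; monotonicity (Lemma \ref{lem:mono}) together with $H_k(K_{A,B},P)=H_k(K_2,Q)=H(Q)=h(P(A))$ (Lemma \ref{lem:components}) gives it. Everything therefore reduces to matching lower bounds and to identifying the optimal $\mathbf a$.

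For the first case I would propose the explicit candidate $a_i=P(A)$ for $i\in A$ and $a_j=P(B)$ for $j\in B$. This point lies in $VP(G)$ with every edge inequality tight (since $P(A)+P(B)=1$), and a one-line computation gives objective value exactly $h(P(A))$, so only optimality remains. Writing the Lagrangian with a multiplier $\lambda_{ij}\ge 0$ on each edge, stationarity $a_i=p_i/\sum_{j\sim i}\lambda_{ij}$ forces the multipliers to be a nonnegative weighting of the edges whose total mass is $p_i/P(A)$ at each $i\in A$ and $p_j/P(B)$ at each $j\in B$; complementary slackness is automatic because every edge constraint is already tight at the candidate. Thus the candidate is optimal if and only if $G$ admits a transportation plan routing the probability vector $(p_i/P(A))_{i\in A}$ into $(p_j/P(B))_{j\in B}$ along its edges, and by the feasibility criterion for transportation problems (a consequence of the max-flow--min-cut theorem) such a plan exists precisely when $P(D)/P(A)\le P(\mathcal N(D))/P(B)$ for every $D\subseteq A$. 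This is exactly the hypothesis, and convexity then upgrades the KKT point to a global minimizer, yielding $H_k(G,P)=h(P(A))$. The conceptual key here is recognizing that the stated Hall-type inequality is nothing but transportation feasibility for the dual multipliers.

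For the second case the same dictionary says that when the hypothesis fails there is a ``dense'' set with $P(D)/P(\mathcal N(D))>P(A)/P(B)$, so the optimal $\mathbf a$ is no longer constant on $A$ and $B$. I would build the partition by peeling: using the submodularity of $D\mapsto P(\mathcal N(D))$ (which follows from $\mathcal N(D_1\cup D_2)=\mathcal N(D_1)\cup\mathcal N(D_2)$ and $\mathcal N(D_1\cap D_2)\subseteq\mathcal N(D_1)\cap\mathcal N(D_2)$), extract the maximal densest set $D_1$ with $U_1=\mathcal N(D_1)$, then recurse on $G-(D_1\cup U_1)$; ordering the resulting blocks by decreasing density $P(D_i)/P(U_i)$ produces the claimed partition $A=D_1\cup\cdots\cup D_k$, $B=U_1\cup\cdots\cup U_k$. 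On block $i$ I set $a_v=P(D_i)/P(D_i\cup U_i)$ for $v\in D_i$ and $a_v=P(U_i)/P(D_i\cup U_i)$ for $v\in U_i$; a short computation shows this contributes exactly $P(D_i\cup U_i)\,h\!\left(P(D_i)/P(D_i\cup U_i)\right)$, matching the stated sum. Because $D_i$ is densest inside its subgraph, the first case applies verbatim within the block, and its block transportation multipliers, rescaled by $P(D_i\cup U_i)$ and extended by zero across blocks, assemble into a global KKT certificate; the decreasing-density ordering is precisely what keeps every inter-block edge constraint $a_v+a_w\le 1$ satisfied.

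The main obstacle I anticipate lies entirely in the second case: proving that the peeling procedure is well defined and terminates with blocks each satisfying the first-case inequality, that the densities strictly order the blocks, and that the piecewise-constant vector is simultaneously primal feasible across blocks and supported by nonnegative dual multipliers. This is the principal-partition (Dulmage--Mendelsohn) theory made quantitative by the measure $P$, and it rests on the submodularity noted above. By contrast the first case is clean once the Hall-type hypothesis is identified with transportation feasibility, so I would establish that identification first and then organize the second case around repeatedly applying it to the blocks of the decomposition.
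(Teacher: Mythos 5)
Your plan is sound and reaches the same answer, but it certifies optimality by a genuinely different mechanism than the paper. The paper never works with the linear-inequality description of $VP(G)$ or with KKT multipliers: in the first case it bounds $H_k(G,P)\le h(P(A))$ by monotonicity against $K_{A,B}$ (exactly as you do), but then uses the max-flow--min-cut transportation plan to build a \emph{primal} point $\mathbf b\in VP(\overline G)$ (the edges of $G$ are independent sets of $\overline G$) with $\sum_v p_v\log(1/b_v)=H(P)-h(P(A))$, and closes the sandwich with sub-additivity $H(P)\le H_k(G,P)+H_k(\overline G,P)$; in the second case it likewise exhibits the block-constant $\mathbf a\in VP(G)$ and a companion $\mathbf b\in VP(\overline G)$ with $\mathbf a\circ\mathbf b=\mathbf p$ and invokes the same splitting inequality. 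Your route instead treats $H_k(G,P)$ as a single convex program over the edge-constrained polytope (valid for bipartite $G$, as the paper itself notes in Chapter 5) and uses the transportation plan as a \emph{dual} certificate; the combinatorial core --- Gale/max-flow feasibility of routing $(p_i/P(A))_{i\in A}$ into $(p_j/P(B))_{j\in B}$ being equivalent to the stated Hall-type inequality --- is identical in both proofs, and your observation that the decreasing-density ordering of the blocks is exactly what keeps the inter-block constraints $a_v+a_w\le 1$ feasible (with zero multipliers there by complementary slackness) is the right replacement for the paper's verification that the $\alpha(J_i)$ are nonnegative. What your approach buys is self-containedness: you never need $VP(\overline G)$, the antiblocker machinery, or the splitting inequality. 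What it costs is that the whole burden of the second case falls on the principal-partition argument you flag --- that the maximal-density peeling leaves each block satisfying the first-case hypothesis relative to $\mathcal N(\cdot)\cap U_i$ and orders the densities monotonically --- which you have only outlined; to be fair, the paper's own treatment of that step is equally terse (``it can be verified that\ldots''), so I would not count this as a gap so much as shared unfinished bookkeeping.
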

\proof

Let us assume the condition in the theorem statement holds. Then, using max-flow min-cut theorem (see A. Schrijver \cite{Schriv1} page 150), we show that there exists a probability density $Q$ on the edges of $G$ such that for all vertices $v\in A$, we have
\begin{equation}\label{eq:edgedensity}
\sum_{v\in e\in E(G)}Q(e) = \frac{p(v)}{P(A)},
\end{equation}
We define a digraph $D^\prime $ by
\[
V(D^\prime) = V(G)\cup\{s,t\},
\]
and joining vertices $s$ and $t$ to all vertices in parts $A$ and $B$, respectively. The edges between $A$ and $B$ are the exactly the same edges in $G$. Furthermore, we orient edges from $s$ toward $A$ and from $A$ toward $B$ and from $B$ to $t$. We define a capacity function $c:E(D^\prime)\rightarrow\mathbb R_+$ as
\begin{equation}
c(e) = \left\{ \begin{array}{rcl}
\frac{p(v)}{P(A)}, & & e = (s,v),~v\in A ,\\
1, &&  e=(v,u),~v\in A~\text{and}~u\in B,\\
\frac{p(u)}{P(B)},&& e=(u,t),~u\in B.
\end{array}\right.
\end{equation}
By the definition of $c$, we note that the maximum $st$-flow is at most $1$. Now, by showing that the minimum capacity of an $st$-cut is at least $1$, we are done.

Let $\delta(U)$ be a $st$-cut for some subset $U = \{s\}\cup A^\prime \cup B^\prime$ of $V(D^\prime)$ with $A^\prime\subseteq A$ and $B^\prime\subseteq B$.  If
\[
\mathcal N\left(A^\prime\right)\nsubseteq B^\prime,
\]
then
\[
c\left(\delta(U)\right)\geq 1.
\]
So suppose that
\[
\mathcal N\left(A^\prime\right)\subseteq B^\prime.
\]
Then using the assumption
\[
\frac{P(A^\prime)}{P(A)}\leq \frac{P\mathcal N(A^\prime)}{P(A)},
\]
we get
\begin{eqnarray}\label{eq:mincut}
c\left(\delta(U)\right)&\geq&\frac{P(B^\prime)}{P(B)}+\frac{P(A\setminus A^\prime)}{P(A)}\nonumber\\
&\geq&\frac{P(A^\prime)}{P(A)}+\frac{P(A\setminus A^\prime)}{P(A)} = 1.
\end{eqnarray}
Now, we define the vector $\mathbf b \in\mathbb R_+^{|V(G)|}$, as follows,
\[
\left(\mathbf b\right)_v := \frac{p(v)}{P(A)}.
\]
Then using (\ref{eq:edgedensity}), we have
\[
\mathbf b\in VP\left(\overline{G}\right),
\]
Thus,
\[
H_k(\overline{G},P)\leq\sum_{v\in V(G)}p(v)\log\frac{1}{b_v} = H(P) - h\left(P(A)\right).
\]
Then, using Lemma \ref{lem:mono} and Lemma \ref{lem:components}, we have
\[
H_k(G,P)\leq h(P(A)),
\]
Now, adding the last two inequalities we get
\begin{equation}\label{eq:inequality1}
H_k\left(G,P\right) +H_k\left(\overline{G},P\right) \leq H(P).
\end{equation}
On the other hand, by Lemma \ref{lem:subadd}, we also have
\begin{equation}\label{eq:inequality2}
H(P)\leq H_k\left(G,P\right) + H_k\left(\overline{G},P\right),
\end{equation}
Comparing (\ref{eq:inequality1}) and (\ref{eq:inequality2}), we get
\[
H(P)= H_k\left(G,P\right) + H_k\left(\overline{G},P\right),
\]
which implies that
\[
H_k(G,P) = h(P(A)).
\]
This proves the first part of the theorem.

Now, suppose that the condition does not hold. Let $D_1$ be a subset of $A$ such that
\[
\frac{P(D_1)}{P(A)}.\frac{P(B)}{P(\mathcal N(D_1))}
\]
is maximal. Now consider the subgraph $(A \setminus D_1)\cup(B\setminus\mathcal N(D_1))$ and for $i=2,\cdots,k$ let
\[
D_i\subseteq A\setminus \bigcup_{j=1}^{i-1}D_j,
\]
such that
\[
\frac{P(D_i)}{P(A \setminus \bigcup_{j=1}^{i-1}D_j)}.\frac{P(B\setminus\bigcup_{j=1}^{i-1}\mathcal N(D_j))}{P(\mathcal N(D_i))},
\]
is maximal.
Let us
\[
U_i = \mathcal N(D_i) \setminus \mathcal N(D_i\cup\cdots\cup D_{i-1}),\quad\text{for}~i=1,\cdots,k.
\]
Consider the independent sets $J_0,\cdots,J_k$ of the following form
\[
J_0 = B,~J_1 = D_1\cup B \setminus U_1,\cdots,~J_i =D_1\cup\cdots\cup D_i\cup B \setminus U_1\setminus\cdots\setminus U_i,\cdots,~J_k = A.
\]
Set
\begin{eqnarray}
&&\alpha(J_0) = \frac{P(U_1)}{P(U_1\cup D_1)},\nonumber\\
&&\alpha(J_i) = \frac{P(U_{i+1})}{P(U_{i+1}\cup D_{i+1})} - \frac{P(U_i)}{P(U_i\cup D_i)},\quad\text{for}~i=1,\cdots,k-1,\nonumber\\
&&\alpha(J_k) = 1 - \frac{P(U_k)}{P(U_k\cup D_k)}.\nonumber
\end{eqnarray}
Note that by the choice of $D_i$'s, all $\alpha(J_i)$'s are non-negative and add up to one. This implies that the vector $\mathbf a\in \mathbb{R}_+^{|V(G)|}$ defined as
\[
a_j = \sum_{j\in J_r}\alpha(J_r),\quad\forall j\in V(G),
\]
is in $VP(G)$.
Furthermore,
\begin{equation}
a_j = \left\{ \begin{array}{rcl}
\frac{P(D_i)}{P(D_i\cup U_i)}, & & j\in D_i ,\\
\frac{P(U_i)}{P(D_i\cup U_i)}, && j\in U_i.\nonumber
\end{array}\right.
\end{equation}
By the choice of the $D_j$'s and using the same max-flow min-cut argument we had, there exists a probability density $Q_i$ on edges of $G[D_i\cup U_i]$ such that
\begin{eqnarray}
&&b_j^\prime = \sum_{j\in e\in E\left(G[D_i\cup U_i]\right)} Q_i(e) = \frac{p_j}{P(D_i)},\quad\forall j\in D_i,\nonumber\\
&&b_j^\prime = \sum_{j\in e\in E\left(G[D_i\cup U_i]\right)} Q_i(e) = \frac{p_j}{P(U_i)},\quad\forall j\in U_i.\nonumber
\end{eqnarray}
Now we define the probability density $Q$ on the edges of $G$ as follows
\begin{equation}
Q(e) = \left\{ \begin{array}{rcl}
&P(D_i\cup U_i)Q_i(e),  & e\in E\left(G[D_i\cup U_i]\right) ,\\
&0,  & e\notin E\left(G[D_i\cup U_i]\right).\nonumber
\end{array}\right.
\end{equation}
The corresponding vector $\mathbf b\in VP\left(\overline G\right)$ is given by
\[
b_j = P\left(D_i\cup U_i\right)b_j^\prime,\quad\text{for}~j\in D_i\cup U_i.
\]
The vectors $\mathbf a\in VP\left(G\right)$ and $\mathbf b\in VP\left(\overline G\right)$ are the minimizer vectors in the definition of $H_k\left(G,P\right)$ and $H_k\left(\overline{G},P\right)$, respectively. Suppose that is not true. Then noting that the fact that by the definition of $\mathbf a$ and $\mathbf b$, we have
\[
\sum_{j\in V(G)}p_j\log\frac{1}{a_j} +\sum_{j\in V(G)}p_j\log\frac{1}{b_j} = \sum_{j\in V(G)}p_j\log\frac{1}{p_j} = H(P).
\]
the sub-additivity of graph entropy is violated. Now, it can be verified that $H_k\left(G,P\right)$ is equal to what stated in the theorem statement.
\qed
\section{Graph Entropy and Fractional Chromatic Number}

In this section we investigate the relation between the entropy of a graph and its fractional chromatic number\index{fractional chromatic number} which was already established by G. Simonyi \cite{Simu}. First we recall that the \emph{fractional chromatic number} of a graph $G$ is denoted by $\chi_f\left(G\right)$ is the minimum sum of nonnegative weights on the stable sets of $G$ such that for any vertex the sum of the weights on the stable sets of $G$ containing that vertex is at least one (see C. Godsil and G. Royle \cite{CGodsil}). I.Csisz\'{a}r and et.~al. \cite{Csis} showed that for every probability density $P$, the entropy of a graph $G$ is attained by a point $\mathbf a\in VP(G)$ such that there is not any other point $\mathbf a^\prime\in VP(G)$ majorizing the point $\mathbf a$ coordinate-wise. Furthermore, for any such point $\mathbf a\in VP(G)$ there is some probability density $P$ on $VP(G)$ such that the value of $H_k\left(G,P\right)$ is attained by $\mathbf a$.
Using this fact G. Simonyi \cite{Simu} proved the following lemma.
\begin{lemma}\emph{(G. Simonyi).}\label{lem:keylemma}
For a graph $G$ and probability density $P$ on its vertices with fractional chromatic number $\chi_f(G)$, we have
\[
\max_{P} H_k(G,P) = \log\chi_f(G).
\]
\end{lemma}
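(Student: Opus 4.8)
The plan is to view $\max_P H_k(G,P)$ as a max–min and to exchange the order of optimization. Writing out the definition,
\[
\max_P H_k(G,P) = \max_{P}\ \min_{\mathbf a\in VP(G)}\ \sum_{i=1}^n p_i\log\frac{1}{a_i},
\]
the objective $\sum_i p_i\log(1/a_i)$ is linear (hence concave) in $P$ over the probability simplex and convex in $\mathbf a$ over $VP(G)$, since $-\log$ is convex; both feasible sets are convex and compact. I would therefore invoke a minimax theorem (Sion's) to interchange the two operations, so that the quantity equals $\min_{\mathbf a\in VP(G)}\max_P\sum_i p_i\log(1/a_i)$. For fixed $\mathbf a$ the inner maximum of a linear functional over the simplex is attained at a vertex of the simplex, which gives $\max_P\sum_i p_i\log(1/a_i)=\log(1/\min_i a_i)$.

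Substituting this back reduces the lemma to a purely combinatorial statement about $VP(G)$:
\[
\max_P H_k(G,P)=\min_{\mathbf a\in VP(G)}\log\frac{1}{\min_i a_i}=\log\frac{1}{t^*},\qquad t^*:=\max_{\mathbf a\in VP(G)}\min_i a_i.
\]
Because $VP(G)$ is a convex corner, hence down-monotone, $t^*$ is exactly the largest $t$ for which the uniform vector $t\mathbf 1=(t,\dots,t)$ lies in $VP(G)$: if $\min_i a_i=t$ then $t\mathbf 1\le\mathbf a$ coordinatewise and down-monotonicity puts $t\mathbf 1$ in $VP(G)$, while conversely $t\mathbf 1\in VP(G)$ forces $t^*\ge t$. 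Thus it remains only to prove $t^*=1/\chi_f(G)$, and the lemma follows on taking $\log$.

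To identify $t^*$ with $1/\chi_f(G)$ I would argue by two inequalities using the covering-LP description of the fractional chromatic number recalled above. Given a point $\mathbf a=\sum_S\lambda_S\chi_S\in VP(G)$, a convex combination of stable-set indicators, with $\min_i a_i=t$, the weights $y_S:=\lambda_S/t$ satisfy $\sum_{S\ni v}y_S\ge 1$ for every vertex $v$, so they form a fractional colouring of total weight $\sum_S\lambda_S/t\le 1/t$; hence $\chi_f(G)\le 1/t$, giving $t^*\le 1/\chi_f(G)$. Conversely, starting from an optimal fractional colouring $y$ of total weight $\chi_f(G)$ with $\sum_{S\ni v}y_S\ge1$, the normalised weights $\lambda_S:=y_S/\chi_f(G)$ sum to $1$ and produce a point $\mathbf a\in VP(G)$ with every coordinate $\ge 1/\chi_f(G)$; down-monotonicity of $VP(G)$ then places $(1/\chi_f(G))\mathbf 1$ in $VP(G)$, so $t^*\ge 1/\chi_f(G)$. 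Combining the two bounds yields $t^*=1/\chi_f(G)$.

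The step I expect to require the most care is the minimax exchange, because $\sum_i p_i\log(1/a_i)$ is only lower semicontinuous as $\mathbf a$ approaches the boundary of the nonnegative orthant, where some coordinate vanishes and the objective blows up to $+\infty$. I would handle this either by restricting attention to the nonempty part of $VP(G)$ on which all coordinates are bounded below—legitimate since $\chi_f(G)<\infty$ guarantees the point $t^*\mathbf 1$ with strictly positive coordinates, so the outer minimum is finite and attained away from that boundary—or, to sidestep the analytic point entirely, by appealing to the structural result of Csisz\'{a}r et al.\ quoted before the lemma: every probability density attains its entropy at a Pareto-maximal point of $VP(G)$, and every Pareto-maximal point is optimal for some $P$. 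This correspondence lets one replace the analytic minimax step by a direct comparison of Pareto-maximal points of $VP(G)$ with the uniform vector $t^*\mathbf 1$.
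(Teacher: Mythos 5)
Your route is genuinely different from the paper's: you reduce the lemma to the identity $\max_{\mathbf a\in VP(G)}\min_i a_i=1/\chi_f(G)$ via a minimax exchange, whereas the paper proves the upper bound directly from $\tfrac{1}{\chi_f}\mathbf 1\in VP(G)$ and gets attainment from the Csisz\'{a}r et al.\ structural result (every Pareto-maximal point of $VP$ is the minimizer for some $P$) applied to a carefully chosen induced subgraph. Your combinatorial half is correct and in fact more self-contained than the paper's: the inner maximization over the simplex giving $\log(1/\min_i a_i)$, the down-monotonicity argument identifying $t^*$ with the largest $t$ such that $t\mathbf 1\in VP(G)$, and the two LP-duality inequalities showing $t^*=1/\chi_f(G)$ are all sound (the paper merely asserts $\tfrac{1}{\chi_f}\mathbf 1\in VP(G)$).

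The gap is in the attainment direction, and neither of your two proposed fixes closes it as stated. First, restricting the inner variable to $Y_\delta=VP(G)\cap\{a_i\ge\delta\}$ goes the wrong way: shrinking the inner feasible set can only increase $\min_{\mathbf a}f(P,\mathbf a)$, hence $\max_P\min_{Y_\delta}f\ge\max_P H_k(G,P)$, so the equality $\max_P\min_{Y_\delta}f=\log\chi_f$ recovers only the easy upper bound. To conclude you would still need that the optimizer $P_\delta$ of the restricted problem satisfies $\min_{\mathbf a\in VP(G)}f(P_\delta,\mathbf a)=\log\chi_f$, which requires an additional limiting argument as $\delta\to0$ (extract a convergent subsequence $P_{\delta_k}\to P^*$ and perturb any competing $\mathbf a\in VP(G)$ toward the interior via $(1-\epsilon)\mathbf a+\tfrac{\epsilon}{n}\mathbf 1$); your stated justification only addresses where the \emph{outer} minimum $\min_{\mathbf a}\max_P$ is attained, which is not the issue. (Sion's theorem also cannot be applied on all of $VP(G)$ directly, since $P\mapsto f(P,\mathbf a)$ fails upper semicontinuity at boundary points $\mathbf a$ with a zero coordinate.) Second, the fallback via Pareto-maximal points is not a one-line comparison: $\tfrac{1}{\chi_f}\mathbf 1$ need not be Pareto-maximal in $VP(G)$ itself --- e.g.\ if $G$ has an isolated vertex or a component of smaller fractional chromatic number, the coordinate there can be pushed up --- so the correspondence gives no $P$ for which $\tfrac{1}{\chi_f}\mathbf 1$ is optimal. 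This is precisely why the paper passes to an induced subgraph $G'$ with $\chi_f(G')=\chi_f(G)$ on which $\tfrac{1}{\chi_f}\mathbf 1$ \emph{is} Pareto-maximal, obtains $P'$ there, and extends by zeros. Either repair works, but as written the crucial existence of a maximizing $P$ is not established.
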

\proof
Note that for every graph $G$ we have $\left(\frac{1}{\chi_f(G)},\cdots,\frac{1}{\chi_f(G)}\right)\in VP(G)$. Thus for every probability density $P$, we have
\[
H_k\left(G,P\right)\leq \log \chi_f(G).
\]
Now, from the definition of the fractional chromatic number we deduce that graph $G$ has an induced subgraph $G^\prime$ with $\chi_f\left(G^\prime\right)=\chi_f\left(G\right)=\chi_f$ such that
\[
\forall \mathbf y\in VP\left(G^\prime\right),~\mathbf y\geq\frac{\mathbf 1}{\chi_f}~\text{implies}~\mathbf y=\frac{\mathbf 1}{\chi_f}.
\]
Now, by the above remark from I.Csisz\'{a}r and et.~al. \cite{Csis}, there exists a probability density $P^\prime$ on $VP(G^\prime)$ such that $H_k\left(G^\prime,P^\prime\right) = \log\chi_f$. Extending $P^\prime$ to a probability distribution $P$ as
\begin{equation}
p_i = \left\{ \begin{array}{rcl}
 p_i^\prime,& & i\in V(G) ,\\
0, && i\in V(G)-V(G^\prime).
\end{array}\right.
\end{equation}
the lemma is proved.
\qed

Now there is a natural question of uniqueness of the probability density which is a maximizer in the above lemma.
Using the above lemma we compute the fractional chromatic number of a vertex transitive graph in the following corollary.
\begin{corollary}
Let $G$ be a vertex transitive graph with $|V(G)|=n$, and let $\alpha(G)$ denote the size of a coclique of $G$ with maximum size. Then
\[
\chi_f(G) = \frac{n}{\alpha(G)}.
\]
\end{corollary}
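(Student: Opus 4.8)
The plan is to exploit Lemma \ref{lem:keylemma}, which identifies $\log\chi_f(G)$ with $\max_P H_k(G,P)$, and to evaluate this maximum for a vertex transitive graph by showing it is attained at the uniform density $U=(1/n,\dots,1/n)$ and equals $\log(n/\alpha(G))$. The whole argument rests on a single idea applied twice: averaging over the automorphism group $\Gamma=\mathrm{Aut}(G)$, which acts transitively on $V(G)$.

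First I would show that $U$ maximizes $H_k(G,\cdot)$. Observe that $H_k(G,P)=\min_{\mathbf a\in VP(G)}\sum_i p_i\log(1/a_i)$ is a minimum of functions that are linear in $P$, hence concave in $P$. Since automorphisms permute the independent sets of $G$, the polytope $VP(G)$ is $\Gamma$-invariant, so $H_k(G,\sigma P)=H_k(G,P)$ for every $\sigma\in\Gamma$. Because $\Gamma$ is transitive, averaging any density over $\Gamma$ yields $U$; concavity (Jensen) then gives $H_k(G,U)\ge \frac{1}{|\Gamma|}\sum_{\sigma}H_k(G,\sigma P)=H_k(G,P)$ for every $P$. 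Thus $\max_P H_k(G,P)=H_k(G,U)$.

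Next I would evaluate $H_k(G,U)=\min_{\mathbf a\in VP(G)}\frac1n\sum_i\log(1/a_i)$. For the lower bound, every $\mathbf a\in VP(G)$ satisfies $\sum_i a_i\le\alpha(G)$ (it is a convex combination of characteristic vectors of independent sets), so by concavity of $\log$, $\frac1n\sum_i\log a_i\le\log\!\big(\frac1n\sum_i a_i\big)\le\log(\alpha(G)/n)$, giving $H_k(G,U)\ge\log(n/\alpha(G))$; this direction is general. For the matching upper bound I use transitivity: averaging the characteristic vector of a maximum independent set $I_0$ over $\Gamma$ produces a point of $VP(G)$ whose coordinates are all equal (by transitivity) and sum to $\alpha(G)$, namely $\frac{\alpha(G)}{n}\mathbf 1\in VP(G)$. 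Evaluating the objective there gives $H_k(G,U)\le\log(n/\alpha(G))$. Combining with Lemma \ref{lem:keylemma} yields $\log\chi_f(G)=\log(n/\alpha(G))$, i.e. $\chi_f(G)=n/\alpha(G)$.

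The routine points are the concavity of $H_k(G,\cdot)$ and the AM--GM estimate; the step that genuinely needs vertex transitivity, and which I regard as the crux, is verifying that $\frac{\alpha(G)}{n}\mathbf 1$ lies in $VP(G)$---equivalently, that the orbit of a maximum independent set covers every vertex the same number of times. (Alternatively, this upper bound $\chi_f(G)\le n/\alpha(G)$ can be obtained directly by turning that same orbit into a fractional colouring of total weight $n/\alpha(G)$, bypassing the symmetry argument of the second paragraph.)
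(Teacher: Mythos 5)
Your proof is correct, and while it reaches the conclusion through the same gateway as the paper (Lemma \ref{lem:keylemma} plus an evaluation of $H_k(G,U)$), the way you evaluate $H_k(G,U)$ is genuinely different. The paper sets up the Lagrangian of $g(\lambda)$ over the $\lambda$-parametrization of $VP(G)$ and verifies the Karush--Kuhn--Tucker conditions for a candidate optimum $\lambda^*$ supported on a uniform coclique cover; you instead sandwich the value by two elementary observations --- the lower bound $H_k(G,U)\ge\log\bigl(n/\alpha(G)\bigr)$ from $\sum_i a_i\le\alpha(G)$ and Jensen, and the upper bound from exhibiting the explicit point $\frac{\alpha(G)}{n}\mathbf 1\in VP(G)$ obtained by averaging $\chi_{I_0}$ over $\mathrm{Aut}(G)$. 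Your orbit average is really the same combinatorial fact the paper invokes (each vertex lies in the same number $r$ of translates of a maximum coclique, so $b\alpha(G)=nr$), but your route avoids the convex-optimization machinery entirely, which is a gain in transparency. Two small remarks: your first paragraph (the symmetrization argument that $U$ maximizes $H_k(G,\cdot)$) is correct but not actually needed for this corollary --- once you know $\frac{\alpha(G)}{n}\mathbf 1\in VP(G)$, you get $H_k(G,P)\le\log\bigl(n/\alpha(G)\bigr)$ for \emph{every} $P$, hence $\log\chi_f(G)=\max_P H_k(G,P)\le\log\bigl(n/\alpha(G)\bigr)$ directly, and the lower bound comes from the single density $U$; the paper likewise only uses $H_k(G,U)\le\max_P H_k(G,P)$, not that $U$ attains the maximum. (That said, your symmetrization is exactly the content of Theorem \ref{thm:vxtrans} later in the paper, proved there by a different, chromatic-entropy argument, so it is a nice bonus.) Also note that your lower bound $H_k(G,U)\ge\log\bigl(n/\alpha(G)\bigr)$ uses no transitivity at all, which correctly isolates the orbit-covering step as the only place the hypothesis enters.
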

\proof

First note that since $G$ is a vertex transitive graph, there exists a family of cocliques $S_1,\cdots,S_b$ of size $\alpha(G)$ that cover the vertex set of $G$, i.e., $V(G)$ uniformly. That is each vertex of $G$ lies in exactly $r$ of these cocliques, for some constant $r$. Thus we have
\begin{equation}
b\alpha(G) = nr,\label{eq:vxtrans}
\end{equation}
Now, we define a fractional coloring $\mathbf f$ as follows
\begin{equation}\label{eq:fraccol}
f_i = \left\{ \begin{array}{rcl}
\frac{1}{r}, & & i\in \{1,\cdots,b\} ,\\
0, &&~\text{Otherwise}.
\end{array}\right.
\end{equation}
Thus, from the definition of the fractional chromatic number of a graph, (\ref{eq:vxtrans}), and (\ref{eq:fraccol}), we have
\begin{equation}
\log \chi_f(G)\leq\log\sum_i f_i = \log\frac{b}{r} = \log\frac{n}{\alpha(G)}.\label{eq:ineq1}
\end{equation}
Now suppose that the probability density $\mathbf u$ of the vertex set $V(G)$ is uniform and let $\mathbf B$ be the $01$-matrix whose columns are the characteristic vectors of the independent sets in $G$. Then
\[
VP\left(G\right) = \{\mathbf x\in\mathbb R_+^n~:~\mathbf B\mathbf{\lambda} = \mathbf x,~\sum_i\lambda_i =1, ~\lambda_i\geq0,\forall i\}
\]
Consider the function
\[
g(\mathbf x) = -\frac{1}{n}\sum_{i=1}^n\log x_i.
\]
We want to minimize $g(\mathbf x)$ over $VP(G)$. So we use the vector $\mathbf{\lambda}$ in the definition of $VP(G)$ above. Furthermore, from our discussion above, note that each vertex of a vertex transitive graph lies in a certain number of independent sets $m$. Thus, we rewrite the function $g(.)$ in terms of $\mathbf{\lambda}$ as
\[
g(\lambda) = -\frac{1}{n}\log(\lambda_{i_1}+\cdots+\lambda_{i_m})-\cdots-\frac{1}{n}\log(\lambda_{j_1}+\cdots+\lambda_{j_m}).
\]
Now let $\mathcal S$ be the set of independent sets of $G$, and $\nu,\gamma_i\geq 0$
for all $i\in\{1,\cdots,|\mathcal S|\}$ be the Lagrange multipliers. Then the Lagrangian function $L_g(\nu,\gamma_1,\cdots,\gamma_{|\mathcal S|})$ is
\[
L_g(\nu,\gamma_1,\cdots,\gamma_{|\mathcal S|}) = g(\lambda) + \nu\left(\sum_{i=1}^{|\mathcal S|} \lambda_i-1\right) - \sum_{i}^{|\mathcal S|}\gamma_i\lambda_i,
\]
Now using Karush-Kuhn-Tucker conditions for our convex optimization problem (see S. Boyd and L. Vanderberghe\cite{Boyd}) we get
\begin{eqnarray}
&&\nabla L_g (\nu,\gamma_1,\cdots,\gamma_{|\mathcal S|}) = 0,\nonumber\\
&& \gamma_i\geq 0,~i\in\{1,\cdots,|\mathcal S|\},\nonumber\\
&& \gamma_i\lambda_i = 0,~i\in\{1,\cdots,|\mathcal S|\}.\label{eq:KKT}
\end{eqnarray}
Then considering the co-clique cover $\{S_1,\cdots,S_b\}$ above with $|S_i| = \alpha(G)$ for all $i$, one can verify that $\mathbf{\lambda}^*$ defined as
\begin{equation}
\lambda_i^* = \left\{ \begin{array}{rcl}
\frac{\alpha(G)}{nr}, & & i\in \{1,\cdots,b\} ,\\
0, &&~\text{Otherwise}.
\end{array}\right.
\end{equation}
is an optimum solution to our minimization problem. Since setting $\gamma_i =0$ for $i\in\mathcal S\setminus \{1,\cdots,b\}$ along with $\mathbf{\lambda^*}$ gives a solution to (\ref{eq:KKT}). Substituting $\mathbf{\lambda^*}$ into $g(\mathbf{\lambda})$
\[
H_k\left(G,U\right) = \log\frac{n}{\alpha(G)}.
\]
Using (\ref{eq:ineq1}) and Lemma \ref{lem:keylemma}, the corollary is proved.
%
\qed

The above corollary implies that the uniform probability density is a maximizer for $H_k\left(G,P\right)$ for a vertex transitive graph. We will give another proof of this fact at the end of the next chapter using \emph{chromatic entropy}\index{chromatic entropy}.

We have also the following corollary.
\begin{corollary}
For any graph $G$ and probability density $P$, we have
\[
H_k\left(G,P\right) \leq \log\chi(G).
\]
Equality holds if $\chi(G) = \chi_f(G)$ and $P$ maximizes the left hand side above.
\end{corollary}
Note that (\ref{eq:ent1}), Lemma \ref{lem:equiv}, Lemma \ref{lem:equiv1}, and the sub-multiplicative\index{sub-multiplicative} nature of the chromatic number, also results in the above corollary.

\section{Probability Density Generators}
For a pair of vectors $\mathbf a, \mathbf b\in \mathbb R_+^k$, $\mathbf a \circ \mathbf b$ denotes the
\emph{Schur product}\index{product!Schur product} of $\mathbf a$ and $\mathbf b$, i.e.,
\[
\left(\mathbf a\circ \mathbf b\right)_i = a_i.b_i,~i=1,\cdots,k.
\]
Then for two sets $\mathcal A$ and $\mathcal B$, we have
\[
\mathcal A\circ\mathcal B = \{\mathbf a\circ \mathbf b: \mathbf a\in\mathcal A,~\mathbf b\in\mathcal B\}.
\]
We say a pair of sets $\mathcal A,~\mathcal B\in \mathbb R_+^k$ is a \emph{generating pair}, if every probability density vector $\mathbf p\in \mathbb R_+^k$ can be represented as the schur product of the elements of $\mathcal A$ and $\mathcal B$, i.e.,
\[
\mathbf p=\mathbf a\circ \mathbf b,~\mathbf a\in \mathcal A,~\mathbf b\in \mathcal B.
\]
In this section we characterize a \emph{pair of generating convex corners}. First, we recall the definition of the \emph{antiblocker}\index{antiblocker} of a convex corner (see D. R. Fulkerson \cite{Flker}). The \emph{antiblocker} of a convex corner $\mathcal{A}$ is defined as
\[
\mathcal{A}^* :=\left\{\mathbf{b}\in \mathbb{R}_+^n : \mathbf{b}^T\mathbf{a}\leq 1,~~\forall \mathbf a\in \mathcal{A}\right\},
\]
which is itself a convex corner.

The following lemma relates entropy to antiblocking pairs (see I. Csisz\'{a}r and et.~al. \cite{Csis}).
\begin{lemma}\emph{(I. Csisz\'{a}r and et.~al.).}
Let $\mathcal A, \mathcal B \subseteq \mathbb{R}_+^n$ be convex corners and $\mathbf p\in \mathbb{R}_+^n$ a probability density. Then
\begin{description}
\item[(i)] If $\mathbf p = \mathbf a \circ \mathbf b$ for some $\mathbf a\in\mathcal A$ and $\mathbf b\in B$, then
\[
H(\mathbf p)\geq H_{\mathcal A}(\mathbf p) + H_{\mathcal B}(\mathbf p),
\]
with equality if and only if $\mathbf a$ and $\mathbf b$ achieve $H_{\mathcal A}(\mathbf p)$ and $H_{\mathcal B}(\mathbf p)$.
\item[(ii)] If $\mathcal B\subseteq \mathcal A^*$ then
\[
H(\mathbf p)\leq H_{\mathcal A}(\mathbf p) + H_{\mathcal B}(\mathbf p).
\]
with equality if and only if $\mathbf p = \mathbf a\circ \mathbf b$ for some $\mathbf a\in \mathcal A$ and $\mathbf b\in \mathcal B$.
\end{description}
\end{lemma}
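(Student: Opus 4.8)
The plan is to treat the two parts separately, using only three ingredients: the additivity of the logarithm under products, the concavity of the logarithm (Jensen's inequality), and the defining inequality of the antiblocker $\mathbf b^T\mathbf a\le 1$. Both parts reduce to one-line manipulations once the minimizers are named correctly; the genuine work is in tracking the equality conditions.

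For part (i) I would start from the hypothesis $\mathbf p=\mathbf a\circ\mathbf b$, so $p_i=a_ib_i$ for every $i$. Taking logarithms coordinatewise and summing against the weights $p_i$ gives the exact identity
\[
H(\mathbf p)=\sum_i p_i\log\frac{1}{p_i}=\sum_i p_i\log\frac{1}{a_i}+\sum_i p_i\log\frac{1}{b_i}.
\]
Since $\mathbf a\in\mathcal A$ and $\mathbf b\in\mathcal B$ are particular feasible points, each sum on the right is bounded below by the corresponding minimum $H_{\mathcal A}(\mathbf p)$ and $H_{\mathcal B}(\mathbf p)$; adding the two bounds yields the claimed inequality. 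Equality forces each sum to equal its own minimum, which is precisely the assertion that $\mathbf a$ and $\mathbf b$ achieve $H_{\mathcal A}(\mathbf p)$ and $H_{\mathcal B}(\mathbf p)$.

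For part (ii) I would now let $\mathbf a$ and $\mathbf b$ denote the minimizers realizing $H_{\mathcal A}(\mathbf p)$ and $H_{\mathcal B}(\mathbf p)$ (these exist by compactness of the convex corners together with the boundary behaviour noted in Remark \ref{rem:Rem1}, which also keeps $a_i,b_i>0$ wherever $p_i>0$). Then
\[
H(\mathbf p)-H_{\mathcal A}(\mathbf p)-H_{\mathcal B}(\mathbf p)=\sum_i p_i\log\frac{a_ib_i}{p_i}\le\log\sum_i a_ib_i,
\]
the inequality being Jensen's applied to the concave $\log$ with weights $p_i$. Because $\mathbf b\in\mathcal B\subseteq\mathcal A^{*}$, the antiblocker condition gives $\sum_i a_ib_i=\mathbf b^T\mathbf a\le 1$, so the right-hand side is at most $\log 1=0$, establishing the inequality.

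The delicate point, and the one I expect to require the most care, is the equality characterization in (ii). Tracing the chain, equality demands both that Jensen be tight, forcing $a_ib_i/p_i$ to be constant over the support of $\mathbf p$, and that $\sum_i a_ib_i=1$; together these force $a_ib_i=p_i$, i.e.\ $\mathbf p=\mathbf a\circ\mathbf b$, giving the ``only if'' direction. For the ``if'' direction I would simply invoke part (i), whose inequality runs in the opposite sense, so that the two bounds squeeze to an equality. Throughout I would note that coordinates with $p_i=0$ contribute nothing and may be discarded, which is what lets me pass cleanly between the support condition and the full Schur identity.
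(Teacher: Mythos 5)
Your proposal is correct and follows essentially the same route as the paper: part (i) by splitting $\log(1/p_i)=\log(1/a_i)+\log(1/b_i)$ and bounding each sum by the respective minimum, and part (ii) by applying Jensen's inequality to the minimizers together with the antiblocker bound $\mathbf b^T\mathbf a\le 1$, with the same handling of the equality cases (including the observation that $\sum_i a_ib_i=1=\sum_i p_i$ forces $a_ib_i=p_i$ even on coordinates where $p_i=0$). No gaps.
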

\proof

(i) We have
\begin{eqnarray}
H(\mathbf p)& =& -\sum_i p_i\log a_ib_i\nonumber\\
&=& -\sum_ip_i\log a_i-\sum_ip_i\log b_i\nonumber\\
&\geq&H_{\mathcal A}(\mathbf p) + H_{\mathcal B}(\mathbf p).
\end{eqnarray}
We have equality if and only if $\mathbf a$ and $\mathbf b$ achieve $H_{\mathcal A}(\mathbf p)$ and $H_
{\mathcal B}(\mathbf p)$.\\
(ii) Let $\mathbf a\in\mathcal A$ and $\mathbf b\in \mathcal B$ achieve $H_{\mathcal A}(\mathbf p)$ and $H_{\mathcal B}(\mathbf p)$, respectively. Then the strict concavity of the $\log$ function and the relation $\mathbf b^T\mathbf a\leq 1$ imply
\[
H_{\mathcal A}(\mathbf p) + H_{\mathcal B}(\mathbf p) - H(\mathbf p) = -\sum_ip_i\log\frac{a_ib_i}{p_i} \geq -\log\sum_i a_ib_i\geq 0.
\]
Equality holds if and only if $a_ib_i=p_i$ whenever $p_i>0$. But then since
\[
1\geq\sum_i a_ib_i\geq\sum_ip_i=1,
\]
equality also holds for those indices with $p_i=0$.
\qed

The following theorem which was previously proved in I. Csisz\'{a}r and et.~al. \cite{Csis} characterizes a pair of generating convex corners.
\begin{theorem}\emph{(I. Csisz\'{a}r and et.~al.).}
For convex corners $\mathcal A,~\mathcal B\subseteq\mathbb R_+^k$ the following are equivalent:
\begin{description}
\item[(i)] $\mathcal A^*\subseteq\mathcal B$,
\item[(ii)] $\left(\mathcal A,\mathcal B\right)$ is a generating pair,
\item[(iii)] $H(\mathbf p)\geq H_{\mathcal A}(\mathbf p) + H_{\mathcal B}(\mathbf p)$ for every probability density $\mathbf p\in \mathbb R_+^k$.
\end{description}
\end{theorem}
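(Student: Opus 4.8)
The plan is to prove the cyclic chain $(i)\Rightarrow(ii)\Rightarrow(iii)\Rightarrow(i)$, leaning throughout on the preceding lemma relating entropy to antiblocking pairs together with Lemma~\ref{lem:corner1}.

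For $(i)\Rightarrow(ii)$ I would fix a probability density $\mathbf p$ and let $\mathbf a^*\in\mathcal A$ achieve $H_{\mathcal A}(\mathbf p)=\min_{\mathbf a\in\mathcal A}-\sum_i p_i\log a_i$; since $\mathcal A$ has nonempty interior this minimum is finite and $a_i^*>0$ whenever $p_i>0$. As $-\sum_i p_i\log a_i$ is convex and $\mathbf a^*$ minimizes it over the convex set $\mathcal A$, the first-order optimality condition reads $\sum_i (p_i/a_i^*)(a_i-a_i^*)\le 0$ for all $\mathbf a\in\mathcal A$, equivalently $\sum_i (p_i/a_i^*)\,a_i\le\sum_i p_i=1$. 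Putting $b_i:=p_i/a_i^*$ (and $b_i:=0$ at indices with $p_i=0$) therefore yields $\mathbf b^{T}\mathbf a\le 1$ for every $\mathbf a\in\mathcal A$, so that $\mathbf b\in\mathcal A^*\subseteq\mathcal B$, while $\mathbf a^*\circ\mathbf b=\mathbf p$ by construction. Hence every density decomposes and $(\mathcal A,\mathcal B)$ is a generating pair. I expect this step to be the main obstacle, since it is the only place the explicit decomposition must actually be produced: it rests on the variational description of the minimizer and requires a little care at the coordinates where $p_i$ or $a_i^*$ vanishes.

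The implication $(ii)\Rightarrow(iii)$ is then immediate: given any $\mathbf p$, write $\mathbf p=\mathbf a\circ\mathbf b$ with $\mathbf a\in\mathcal A$ and $\mathbf b\in\mathcal B$, and invoke part (i) of the antiblocking lemma to conclude $H(\mathbf p)\ge H_{\mathcal A}(\mathbf p)+H_{\mathcal B}(\mathbf p)$, which is exactly (iii).

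Finally, for $(iii)\Rightarrow(i)$ I would argue through Lemma~\ref{lem:corner1}, which tells us that $\mathcal A^*\subseteq\mathcal B$ is equivalent to $H_{\mathcal A^*}(\mathbf p)\ge H_{\mathcal B}(\mathbf p)$ for all $\mathbf p$. To establish this inequality, apply part (ii) of the antiblocking lemma with the second corner taken to be $\mathcal A^*$ itself (the hypothesis $\mathcal A^*\subseteq\mathcal A^*$ being trivially satisfied), obtaining $H(\mathbf p)\le H_{\mathcal A}(\mathbf p)+H_{\mathcal A^*}(\mathbf p)$. Combining this with the assumed bound $H(\mathbf p)\ge H_{\mathcal A}(\mathbf p)+H_{\mathcal B}(\mathbf p)$ from (iii) and cancelling the common term $H_{\mathcal A}(\mathbf p)$ leaves $H_{\mathcal B}(\mathbf p)\le H_{\mathcal A^*}(\mathbf p)$ for every $\mathbf p$, which by Lemma~\ref{lem:corner1} is precisely $\mathcal A^*\subseteq\mathcal B$, closing the cycle.
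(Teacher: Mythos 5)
Your proof is correct. Note that the paper itself states this theorem without proof, deferring to Csisz\'{a}r et al., so there is no in-text argument to compare against; your write-up actually supplies the missing details. The cyclic structure is sound: the step $(i)\Rightarrow(ii)$ is indeed the only substantive one, and your variational argument works --- the minimum defining $H_{\mathcal A}(\mathbf p)$ is attained at some $\mathbf a^*$ with $a_i^*>0$ wherever $p_i>0$ (else the objective is infinite, while the nonempty interior of $\mathcal A$ guarantees a finite value), and the first-order condition $\sum_i (p_i/a_i^*)(a_i-a_i^*)\le 0$ for all $\mathbf a\in\mathcal A$ rearranges to $\mathbf b^T\mathbf a\le 1$ with $b_i=p_i/a_i^*$, placing $\mathbf b$ in $\mathcal A^*\subseteq\mathcal B$ and giving $\mathbf p=\mathbf a^*\circ\mathbf b$; your convention $b_i=0$ at coordinates with $p_i=0$ handles the degenerate indices. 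The implication $(ii)\Rightarrow(iii)$ follows verbatim from part (i) of the antiblocking lemma, and $(iii)\Rightarrow(i)$ correctly combines part (ii) of that lemma applied to the pair $\left(\mathcal A,\mathcal A^*\right)$ with the cancellation of the finite term $H_{\mathcal A}(\mathbf p)$ and the comparison criterion of Lemma \ref{lem:corner1}. This matches the original argument of Csisz\'{a}r, K\"{o}rner, Lov\'{a}sz, Marton, and Simonyi in spirit, and no gaps remain.
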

\section{Additivity and Sub-additivity }
If $\mathbf{a}\in \mathbb{R}_{+}^k$ and  $\mathbf{b}\in \mathbb{R}_{+}^l$ then their \emph{Kronecker product}\index{product!Kronecker product} $\mathbf{a}\otimes\mathbf{b}\in\mathbb{R}_{+}^{kl}$ is defined by
\[
(\mathbf{a}\otimes\mathbf{b})_{ij} = a_i.b_j,~~~~i=1,\cdots,k,~j=1,\cdots,l.
\]
Note that if $\mathbf{p}$ and $\mathbf{q}$ are probability distributions then $\mathbf{p}\otimes\mathbf{q}$ is the usual product distribution. If $k=l$, then also the Schur product $\mathbf{a}\circ\mathbf{b}\in\mathbb{R}_{+}^k$ is defined by
\[
\mathbf{a}\circ\mathbf{b} = a_i.b_i,~~~~i=1,\cdots,k.
\]
Let $\mathcal{A}\subseteq \mathbb{R}_{+}^k$ and $\mathcal{B}\subseteq \mathbb{R}_{+}^l$ be convex corners. Their \emph{Kronecker product} $\mathcal{A}\otimes \mathcal{B}\subseteq \mathbb{R}_{+}^{kl}$ is the convex corner spanned by the Kronecker products $\mathbf{a}\otimes\mathbf{b}$ such that $\mathbf{a}\in \mathcal{A}$ and $\mathbf{b}\in \mathcal{B}$. The
\emph{Schur product} $\mathcal{A}\odot \mathcal{B}$ of the convex corners $\mathcal{A},~\mathcal{B}\subseteq \mathbb{R}_{+}^{k}$ is the convex corner in that same space spanned by the vectors $\mathbf{a}\circ\mathbf{b}$ such that $\mathbf{a}\in \mathcal{A}$ and $\mathbf{b}\in \mathcal{B}$. Thus
\[
\mathcal{A}\odot \mathcal{B} = \text{Convex Hull of }\left\{\mathbf{a}\circ\mathbf{b}:~\mathbf{a}\in \mathcal{A},~\mathbf{b}\in \mathcal{B}\right\}.
\]

 I. Csisz\'{a}r  et.~al.  proved the following lemma and theorem in \cite{Csis}.

\begin{lemma}\emph{( I. Csisz\'{a}r  et.~al.).}\label{lem:antiblock}
Let $\mathcal A,~\mathcal B\subseteq \mathbb{R}_+^k$ be convex corners. The pair $\left(\mathcal A, \mathcal B\right)$ is an antiblocking pair if and only if
\[
H(\mathbf p) = H_{\mathcal A}(\mathbf p) + H_{\mathcal B}(\mathbf p)
\]
for every probability distribution $\mathbf p\in\mathbb{R}_+^k$.
\end{lemma}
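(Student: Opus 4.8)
The plan is to split the asserted equality into its two one-sided inequalities and pair each with one of the two set inclusions that together define an antiblocking pair. Recall that $(\mathcal A,\mathcal B)$ is an antiblocking pair precisely when $\mathcal B=\mathcal A^*$, that is, when $\mathcal A^*\subseteq\mathcal B$ and $\mathcal B\subseteq\mathcal A^*$ hold simultaneously; since the antiblocker is an involution on convex corners, this is symmetric and is equivalently expressed as $\mathcal A=\mathcal B^*$. I would therefore establish the two equivalences ``$H(\mathbf p)\ge H_{\mathcal A}(\mathbf p)+H_{\mathcal B}(\mathbf p)$ for all $\mathbf p$ $\iff\mathcal A^*\subseteq\mathcal B$'' and ``$H(\mathbf p)\le H_{\mathcal A}(\mathbf p)+H_{\mathcal B}(\mathbf p)$ for all $\mathbf p$ $\iff\mathcal B\subseteq\mathcal A^*$''. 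Granting both, the lemma is immediate: the pair is antiblocking iff both inclusions hold iff both inequalities hold for every probability density iff the stated equality holds for every $\mathbf p$.

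The first equivalence is not new: it is exactly the implication (i)$\Leftrightarrow$(iii) in the characterization of generating pairs established just above. In the second equivalence, the forward direction ($\mathcal B\subseteq\mathcal A^*\Rightarrow{\le}$) is precisely part (ii) of the preceding lemma. Hence the only genuinely new content is the reverse direction of the second equivalence, namely that the inequality $H(\mathbf p)\le H_{\mathcal A}(\mathbf p)+H_{\mathcal B}(\mathbf p)$ holding for \emph{every} $\mathbf p$ forces $\mathcal B\subseteq\mathcal A^*$.

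I would prove this last step by contraposition. Suppose $\mathcal B\not\subseteq\mathcal A^*$; then there exist $\mathbf a_0\in\mathcal A$ and $\mathbf b_0\in\mathcal B$ with $c:=\mathbf a_0^{T}\mathbf b_0>1$. Set $\mathbf p:=\tfrac1c\,\mathbf a_0\circ\mathbf b_0$, which is a genuine probability density. Using $\mathbf a_0$ and $\mathbf b_0$ as feasible points in the definitions of $H_{\mathcal A}(\mathbf p)$ and $H_{\mathcal B}(\mathbf p)$ gives $H_{\mathcal A}(\mathbf p)\le\sum_i p_i\log(1/a_{0,i})$ and $H_{\mathcal B}(\mathbf p)\le\sum_i p_i\log(1/b_{0,i})$; adding these and substituting $a_{0,i}b_{0,i}=c\,p_i$ yields
\[
H_{\mathcal A}(\mathbf p)+H_{\mathcal B}(\mathbf p)\le\sum_i p_i\log\frac{1}{a_{0,i}b_{0,i}}=\sum_i p_i\log\frac{1}{c\,p_i}=H(\mathbf p)-\log c<H(\mathbf p),
\]
contradicting the assumed inequality and thereby forcing $\mathcal B\subseteq\mathcal A^*$.

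The step I expect to require the most care is the bookkeeping with vanishing coordinates. I would restrict every sum to the support of $\mathbf p$, namely the indices $i$ with $a_{0,i}>0$ and $b_{0,i}>0$, which are exactly those with $p_i>0$; on that support $\mathbf a_0$ and $\mathbf b_0$ take finite positive values, so all logarithms are finite, while off the support every term vanishes under the convention $0\log\tfrac10=0$. Once this convention is in place the three displayed estimates are routine, and everything else is bookkeeping against the two previously established results (the generating-pairs theorem and part (ii) of the preceding lemma).
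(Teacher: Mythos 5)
Your proof is correct. Note first that the thesis itself states Lemma \ref{lem:antiblock} without proof, attributing it to I.~Csisz\'ar et~al.\ \cite{Csis}, so there is no in-paper argument to compare against; what you have written supplies the missing derivation. Your decomposition is sound: writing the antiblocking condition as the conjunction $\mathcal A^*\subseteq\mathcal B$ and $\mathcal B\subseteq\mathcal A^*$, the first inclusion is equivalent to the $\geq$ inequality by the (i)$\Leftrightarrow$(iii) equivalence of the generating-pairs theorem, and the forward half of the second is exactly part (ii) of the preceding lemma, so the only new content is that the $\leq$ inequality holding for every $\mathbf p$ forces $\mathcal B\subseteq\mathcal A^*$. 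Your contrapositive for that step checks out: if $\mathbf a_0^T\mathbf b_0=c>1$ then $\mathbf p=\tfrac1c\,\mathbf a_0\circ\mathbf b_0$ is a genuine density, the two feasible-point bounds give $H_{\mathcal A}(\mathbf p)+H_{\mathcal B}(\mathbf p)\leq H(\mathbf p)-\log c<H(\mathbf p)$, and your handling of the zero coordinates is right, since $p_i>0$ exactly when both $a_{0,i}$ and $b_{0,i}$ are positive. The appeal to involutivity of the antiblocker ($\mathcal A^{**}=\mathcal A$, which holds for convex corners by Fulkerson) is not actually load-bearing in your argument, so nothing hinges on it. The one presentational caveat is that the generating-pairs theorem you lean on is itself stated without proof in the thesis, so your argument is complete only modulo that cited equivalence; but that is the same status the lemma itself has in the text, and within that convention your proof is a clean and correct filling of the gap.
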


\begin{theorem}\emph{( I. Csisz\'{a}r  et.~al.).}\label{thm:additivity}
Let $\mathcal{A}\subseteq \mathbb{R}_{+}^k$ and $\mathcal{B}\subseteq \mathbb{R}_{+}^l$ be convex corners, and $\mathbf{p}\in \mathbb{R}_{+}^k,~\mathbf{q}\in \mathbb{R}_{+}^l$ probability distributions. Then, we have
\[
H_{\mathcal{A}\otimes \mathcal{B}}(\mathbf{p}\otimes\mathbf{q}) = H_\mathcal{A}(\mathbf{p}) + H_\mathcal{B}(\mathbf{q}) = H_{(\mathcal{A}^*\otimes \mathcal{B}^*)^*}(\mathbf{p}\otimes\mathbf{q}),
\]
Furthermore, for convex corners $\mathcal{A}, ~\mathcal{B}\subseteq \mathbb{R}_+^k$, and a probability distribution $\mathbf{p}\in \mathbb{R}_+^k$, we have
\[
H_{\mathcal{A}\odot \mathcal{B}}(\mathbf{p}) \leq H_\mathcal{A}(\mathbf{p}) + H_\mathcal{A}(\mathbf{p}).
\]
\end{theorem}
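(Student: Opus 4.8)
The plan is to treat the Kronecker-product additivity and the Schur-product sub-additivity separately, and within the additivity statement to obtain a two-sided bound by combining the easy ``plug in a product vector'' direction with a lower bound routed through the third quantity $(\mathcal{A}^*\otimes\mathcal{B}^*)^*$, using the antiblocker identities of Lemma~\ref{lem:antiblock} and the inclusion-monotonicity of Lemma~\ref{lem:corner1}.

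First I would dispose of the upper bound. Let $\mathbf{a}\in\mathcal{A}$ and $\mathbf{b}\in\mathcal{B}$ achieve $H_\mathcal{A}(\mathbf{p})$ and $H_\mathcal{B}(\mathbf{q})$. Since $\mathbf{a}\otimes\mathbf{b}\in\mathcal{A}\otimes\mathcal{B}$, substituting it into the defining minimization, using $\log\frac{1}{a_ib_j}=\log\frac{1}{a_i}+\log\frac{1}{b_j}$ together with $\sum_i p_i=\sum_j q_j=1$, gives $H_{\mathcal{A}\otimes\mathcal{B}}(\mathbf{p}\otimes\mathbf{q})\le H_\mathcal{A}(\mathbf{p})+H_\mathcal{B}(\mathbf{q})$. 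The identical computation applied to $\mathcal{A}^*,\mathcal{B}^*$ yields $H_{\mathcal{A}^*\otimes\mathcal{B}^*}(\mathbf{p}\otimes\mathbf{q})\le H_{\mathcal{A}^*}(\mathbf{p})+H_{\mathcal{B}^*}(\mathbf{q})$, which I will need below.

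The substantive step is the matching lower bound. The first observation is the inclusion $\mathcal{A}\otimes\mathcal{B}\subseteq(\mathcal{A}^*\otimes\mathcal{B}^*)^*$: for generators this follows from $(\mathbf{a}\otimes\mathbf{b})^T(\mathbf{a}'\otimes\mathbf{b}')=(\mathbf{a}^T\mathbf{a}')(\mathbf{b}^T\mathbf{b}')\le1$ whenever $\mathbf{a}'\in\mathcal{A}^*,\mathbf{b}'\in\mathcal{B}^*$, and it passes to the convex corners they span. By Lemma~\ref{lem:corner1} this inclusion gives $H_{\mathcal{A}\otimes\mathcal{B}}(\mathbf{p}\otimes\mathbf{q})\ge H_{(\mathcal{A}^*\otimes\mathcal{B}^*)^*}(\mathbf{p}\otimes\mathbf{q})$. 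I would then compute the latter exactly. Since $\mathcal{A}^*\otimes\mathcal{B}^*$ is a convex corner, the pair $\big((\mathcal{A}^*\otimes\mathcal{B}^*)^*,\,\mathcal{A}^*\otimes\mathcal{B}^*\big)$ is an antiblocking pair, so Lemma~\ref{lem:antiblock} gives
\[
H(\mathbf{p}\otimes\mathbf{q}) = H_{(\mathcal{A}^*\otimes\mathcal{B}^*)^*}(\mathbf{p}\otimes\mathbf{q}) + H_{\mathcal{A}^*\otimes\mathcal{B}^*}(\mathbf{p}\otimes\mathbf{q}).
\]
Using the additivity of Shannon entropy on product distributions, $H(\mathbf{p}\otimes\mathbf{q})=H(\mathbf{p})+H(\mathbf{q})$, the upper bound on $H_{\mathcal{A}^*\otimes\mathcal{B}^*}$ from the previous paragraph, and the antiblocking identities $H(\mathbf{p})=H_\mathcal{A}(\mathbf{p})+H_{\mathcal{A}^*}(\mathbf{p})$ and $H(\mathbf{q})=H_\mathcal{B}(\mathbf{q})+H_{\mathcal{B}^*}(\mathbf{q})$ (Lemma~\ref{lem:antiblock} applied to $(\mathcal{A},\mathcal{A}^*)$ and $(\mathcal{B},\mathcal{B}^*)$), I would deduce $H_{(\mathcal{A}^*\otimes\mathcal{B}^*)^*}(\mathbf{p}\otimes\mathbf{q})\ge H_\mathcal{A}(\mathbf{p})+H_\mathcal{B}(\mathbf{q})$. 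Chaining the three inequalities squeezes all three quantities together and proves the first displayed equation.

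Finally, the Schur-product bound is routine: if $\mathbf{a},\mathbf{b}$ achieve $H_\mathcal{A}(\mathbf{p})$ and $H_\mathcal{B}(\mathbf{p})$, then $\mathbf{a}\circ\mathbf{b}\in\mathcal{A}\odot\mathcal{B}$ by definition, and substituting it into the minimization splits the logarithm exactly as before to give $H_{\mathcal{A}\odot\mathcal{B}}(\mathbf{p})\le H_\mathcal{A}(\mathbf{p})+H_\mathcal{B}(\mathbf{p})$. The part I expect to require the most care is the bookkeeping in the lower bound: one must verify that the antiblocker of the Kronecker product behaves as claimed and correctly track which relations are equalities and which are inequalities, so that the squeeze genuinely closes. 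Everything else reduces to splitting $\log(xy)=\log x+\log y$ and normalizing by $\sum_i p_i=\sum_j q_j=1$.
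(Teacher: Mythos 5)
Your proof is correct and follows essentially the same squeeze argument as the paper: the plug-in upper bound $H_{\mathcal{A}\otimes\mathcal{B}}(\mathbf{p}\otimes\mathbf{q})\le H_{\mathcal{A}}(\mathbf{p})+H_{\mathcal{B}}(\mathbf{q})$, the antiblocking identity of Lemma~\ref{lem:antiblock}, and an inclusion relating $\mathcal{A}^*\otimes\mathcal{B}^*$ to the antiblocker of $\mathcal{A}\otimes\mathcal{B}$. The only difference is cosmetic: you split $H(\mathbf{p}\otimes\mathbf{q})$ along the pair $\left((\mathcal{A}^*\otimes\mathcal{B}^*)^*,\ \mathcal{A}^*\otimes\mathcal{B}^*\right)$ and invoke $\mathcal{A}\otimes\mathcal{B}\subseteq(\mathcal{A}^*\otimes\mathcal{B}^*)^*$, whereas the paper splits along $\left(\mathcal{A}\otimes\mathcal{B},\ (\mathcal{A}\otimes\mathcal{B})^*\right)$ and uses the equivalent dual inclusion $\mathcal{A}^*\otimes\mathcal{B}^*\subseteq(\mathcal{A}\otimes\mathcal{B})^*$ together with Lemma~\ref{lem:corner1}, so both chains close by forcing equality everywhere.
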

\proof
For $a\in\mathcal A$ and $b\in \mathcal B$, we have $a\otimes b\in \mathcal A \otimes \mathcal B$, which implies
\begin{eqnarray}
H_{\mathcal A\otimes \mathcal B}\left(\mathbf{p}\otimes\mathbf{q}\right)&\leq& -\sum_{i=1}^k\sum_{j=1}^lp_iq_j\log a_i b_j\nonumber\\
&=&-\sum_{i=1}^{k}p_i\log a_i -\sum_{j=1}^lq_j\log b_j.\nonumber
\end{eqnarray}
Hence $H_{\mathcal A\otimes\mathcal B}\left(\mathbf{p}\otimes\mathbf{q}\right)\leq H_{\mathcal A} (\mathbf{p})+ H_{\mathcal B}(\mathbf{q})$. By Lemma \ref{lem:antiblock},
\[
H\left(\mathbf p\otimes \mathbf q\right) = H_{\mathcal A\otimes \mathcal B}\left(\mathbf p\otimes \mathbf q\right) + H_{\left(\mathcal A\otimes \mathcal B\right)^*}\left(\mathbf p\otimes \mathbf q\right).
\]
Since $(\mathcal A)^*\otimes (\mathcal B)^*\subseteq \left(\mathcal A\otimes \mathcal B\right)^*$, we obtain
\begin{eqnarray}
H\left(\mathbf p\otimes\mathbf q\right)&\leq& H_{\mathcal A\otimes\mathcal B}\left(\mathbf p\otimes\mathbf q\right) + H_{\mathcal A^*\otimes\mathcal B^*}\left(\mathbf p\otimes\mathbf q\right)\label{eq:entp}\\
&\leq&H_{\mathcal A}(\mathbf p) + H_{\mathcal B}(\mathbf q) + H_{\mathcal A^*}(\mathbf p) + H_{\mathcal B^*}(\mathbf q)\nonumber\\
&\leq& H(\mathbf p) + H(\mathbf q)\nonumber\\
&=& H\left(\mathbf p\otimes\mathbf q\right).\nonumber
\end{eqnarray}
Thus we get equality everywhere in (\ref{eq:entp}), proving
\[
H_{\mathcal A\otimes \mathcal B}\left(\mathbf p\otimes\mathbf q\right) = H_{\mathcal A}(\mathbf p) + H_{\mathbf B}(\mathbf q),
\]
and consequently,
\[
H_{\left(\mathcal A\otimes \mathcal B\right)^*}\left(\mathbf p\otimes\mathbf q\right) = H_{\mathcal A^*\otimes \mathcal B^*}\left(\mathbf p\otimes\mathbf q\right) = H_{\mathcal A^*}(\mathbf p) + H_{\mathbf B^*}(\mathbf q).
\]
The second claim of the theorem is obviously true.
\qed

As an example let $G_1=\left(V_1,E_1\right)$ and $G_2=\left(V_2,E_2\right)$ be two graphs. The \emph{OR} product\index{product!OR product} of $G_1$ and $G_2$ is the graph $G_1\bigvee G_2$ with vertex set $V\left(G_1\bigvee G_2\right) = V_1\times V_2$ and $(v_1,v_2)$ is adjacnet to $(u_1,u_2)$ if and only if $v_1$ is adjacent to $u_1$ or $v_2$ is adjacent to $u_2$.  It follows that $VP\left(G_1\bigvee G_2\right) = VP\left(G_1\right)\otimes VP\left(G_2\right)$. From the above theorem we have
\[
H_k\left(G_1\bigvee G_2,\mathbf p\otimes \mathbf q\right) = H_k\left(G_1,\mathbf p\right) + H_k\left(G_2,\mathbf q\right).
\]
Thus if uniform probability densities on the vertices of $G_1$ and $G_2$ maximize  $H_k\left(G_1,\mathbf p\right)$ and  $H_k\left(G_2,\mathbf q\right)$ then the uniform probability density on the vertex of $G_1\bigvee G_2$ maximizes $H_k\left(G_1\bigvee G_2,\mathbf p\otimes \mathbf q\right)$.
\section{Perfect Graphs\index{perfect graphs} and Graph Entropy}

A graph $G$ is \emph{perfect} if for every induced subgraph $G^\prime$ of $G$, the chromatic number of $G^\prime$ equals the maximum size of a clique in $G^\prime$. Perfect graphs introduced by Berge in \cite{Brg} (see C. Berge \cite{Brg} and L. Lov\'{a}sz \cite{Lvzs}).

We defined the vertex packing polytope $VP(G)$ of a graph, in the previous sections. Here, we need another important notion from graph theory, i.e, the \emph{fractional vertex packing polytope} of a graph $G$. The \emph{fractional vertex packing polytope}\index{polytope!fractional vertex packing polytope} of G is defined as
\[
FVP(G) = \{\mathbf{b} \in \mathbb{R}^{|V|} : \mathbf{b}\geq 0,
\sum_{i \in K} b_i \leq 1~ \text{for all cliques $K$ of}~ G\}
\]
It is easy to see that, similar to $VP(G)$, the fractional vertex packing polytope $FVP(G)$ is also a convex corner and $VP(G)\subseteq FVP(G)$ for every graph $G$. Equality holds here if and only if the graph is perfect (See V. Chv\'{a}tal \cite{chv} and D. R. Fulkerson \cite{Flker}). Also note that
\[
FVP(G) = \left(VP(\overline{G})\right)^*.
\]
\begin{lemma}\emph{(I. Csisz\'{a}r and et.~al.).}\label{lem:lemma1}
Let $S = \{\mathbf{x}\geq 0 , \sum_i x_i \leq 1\}$. Then we have
\[
S = VP(G)\odot FVP(\overline{G}) = FVP(G) \odot VP (\overline{G}).
\]
Furthermore,
\[
VP(G)\odot VP (\overline{G}) \subseteq FVP(G)\odot FVP(\overline{G})
\]
\end{lemma}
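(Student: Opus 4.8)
The plan is to reduce both displayed equalities to a single general identity about antiblocking convex corners and then read off the final inclusion by monotonicity. First I would record the key translation: applying the stated identity $FVP(G) = (VP(\overline{G}))^*$ with $G$ replaced by $\overline{G}$ gives $FVP(\overline{G}) = (VP(G))^*$. Thus $VP(G)\odot FVP(\overline{G}) = \mathcal{A}\odot\mathcal{A}^*$ with $\mathcal{A} = VP(G)$, and likewise $FVP(G)\odot VP(\overline{G}) = \mathcal{A}'\odot(\mathcal{A}')^*$ with $\mathcal{A}' = VP(\overline{G})$ after using commutativity of $\odot$. So it suffices to prove the general claim $\mathcal{A}\odot\mathcal{A}^* = S$ for an arbitrary convex corner $\mathcal{A}$, and then apply it twice.

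For the inclusion $\mathcal{A}\odot\mathcal{A}^*\subseteq S$, I would argue directly from the definition of the antiblocker: for any $\mathbf{a}\in\mathcal{A}$ and $\mathbf{b}\in\mathcal{A}^*$ we have $a_ib_i\geq 0$ and $\sum_i a_ib_i = \mathbf{a}^T\mathbf{b}\leq 1$, so every generator $\mathbf{a}\circ\mathbf{b}$ lies in $S$; since $S$ is convex this passes to the convex hull $\mathcal{A}\odot\mathcal{A}^*$.

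The reverse inclusion $S\subseteq\mathcal{A}\odot\mathcal{A}^*$ is the substantive step, and here I would invoke the theorem characterizing generating pairs with the choice $\mathcal{B}=\mathcal{A}^*$. Condition (i) there, namely $\mathcal{A}^*\subseteq\mathcal{B}$, holds trivially, so $(\mathcal{A},\mathcal{A}^*)$ is a generating pair: every probability density $\mathbf{p}$ factors as $\mathbf{p}=\mathbf{a}\circ\mathbf{b}$ with $\mathbf{a}\in\mathcal{A}$ and $\mathbf{b}\in\mathcal{A}^*$. To reach all of $S$, I would write a nonzero $\mathbf{x}\in S$ as $\mathbf{x}=s\mathbf{p}$, where $s=\sum_i x_i\in(0,1]$ and $\mathbf{p}=\mathbf{x}/s$ is a density; factoring $\mathbf{p}=\mathbf{a}\circ\mathbf{b}$ and replacing $\mathbf{a}$ by $s\mathbf{a}$ (still in $\mathcal{A}$, since $\mathcal{A}$ is down-closed and $s\mathbf{a}\leq\mathbf{a}$) exhibits $\mathbf{x}$ itself as a Schur product, while $\mathbf{0}=\mathbf{0}\circ\mathbf{0}$ is handled separately. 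Hence every point of $S$ is a Schur product of the required form, giving $S\subseteq\mathcal{A}\odot\mathcal{A}^*$ and completing the identity.

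Finally, the inclusion $VP(G)\odot VP(\overline{G})\subseteq FVP(G)\odot FVP(\overline{G})$ is immediate from monotonicity of the Schur product: since $VP(H)\subseteq FVP(H)$ for every graph $H$ (noted in the text), every generator $\mathbf{a}\circ\mathbf{b}$ of the left-hand side, with $\mathbf{a}\in VP(G)$ and $\mathbf{b}\in VP(\overline{G})$, is also a generator of the right-hand side, so the convex hulls are nested. The only real obstacle is the direction $S\subseteq\mathcal{A}\odot\mathcal{A}^*$, where the generating pair characterization supplies exactly the factorization of probability densities that makes the argument work; all remaining steps are formal manipulations using down-closedness and convexity.
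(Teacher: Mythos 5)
The thesis states this lemma without proof (it is simply quoted from Csisz\'{a}r et al.), so there is no in-paper argument to compare against; judged on its own, your derivation is correct and assembles the result cleanly from other facts the paper does state. The reduction via $FVP(G)=(VP(\overline{G}))^*$ (and hence $FVP(\overline{G})=(VP(G))^*$) to the single identity $\mathcal{A}\odot\mathcal{A}^*=S$ is sound; the inclusion $\mathcal{A}\odot\mathcal{A}^*\subseteq S$ follows exactly as you say from $\mathbf{a}^T\mathbf{b}\le 1$ and convexity of $S$; and the reverse inclusion correctly leans on the generating-pair theorem (condition (i) with $\mathcal{B}=\mathcal{A}^*$ is trivially satisfied), with the scaling trick $\mathbf{x}=(s\mathbf{a})\circ\mathbf{b}$ and down-closedness of $\mathcal{A}$ handling non-density points of $S$ and $\mathbf{0}=\mathbf{0}\circ\mathbf{0}$ handling the origin. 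The final inclusion $VP(G)\odot VP(\overline{G})\subseteq FVP(G)\odot FVP(\overline{G})$ by monotonicity of generators under $VP(H)\subseteq FVP(H)$ is immediate, as you note. One caveat worth flagging: the generating-pair theorem you invoke is itself stated without proof in the thesis, and in the original Csisz\'{a}r et al. paper the factorization of densities as $\mathbf{p}=\mathbf{a}\circ\mathbf{b}$ for an antiblocking pair is obtained from the entropy-splitting equality $H(\mathbf{p})=H_{\mathcal{A}}(\mathbf{p})+H_{\mathcal{A}^*}(\mathbf{p})$ together with the equality condition in part (ii) of the lemma on antiblocking pairs — an equivalent route you could substitute if you wanted to avoid any risk of circularity with the unproved theorem. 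As written, your argument is a legitimate and complete deduction from the results the thesis takes as given.
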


A graph $G = (V,E)$ is \emph{strongly splitting} if for every probability distribution $P$ on $V$, we have
\[
H(P) = H_k\left(G,P\right) + H_k\left(\overline{G},P\right).
\]
K\"{o}rner and Marton in \cite{Jkor4} showed that bipartite graphs are strongly splitting while odd cycles are not.

Now, consider the following lemma which was previously proved in I. Csisz\'{a}r and et.~al. \cite{Csis}.
\begin{lemma}\label{lem:split1}
Let $G$ be a graph. For a probability density $P$ on $V(G)$, we have $H(P) = H_k\left(G,P\right) + H_k\left(\overline{G},P\right)$ if and only if $H_{VP(G)}(P) = H_{FVP(G)}(P)$.
\end{lemma}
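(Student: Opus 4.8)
The plan is to translate both sides of the claimed equivalence into the language of convex-corner entropy and then collapse everything onto a single antiblocking identity. By the definition of graph entropy we have $H_k(G,P) = H_{VP(G)}(P)$ and $H_k(\overline{G},P) = H_{VP(\overline{G})}(P)$, so the left-hand relation reads $H(P) = H_{VP(G)}(P) + H_{VP(\overline{G})}(P)$, while the right-hand relation is simply $H_{VP(G)}(P) = H_{FVP(G)}(P)$. My aim is to show that each condition amounts to eliminating the term $H_{FVP(G)}(P)$ against one fixed, unconditional identity.

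The key step exploits the relation $FVP(G) = \left(VP(\overline{G})\right)^*$ recorded just above the statement: this says exactly that $\left(VP(\overline{G}), FVP(G)\right)$ is an antiblocking pair. Hence Lemma \ref{lem:antiblock} applies and yields, for the given $P$ (indeed for every probability density),
\[
H(P) = H_{VP(\overline{G})}(P) + H_{FVP(G)}(P) = H_k(\overline{G},P) + H_{FVP(G)}(P).
\]
This identity holds with no hypothesis on $P$ and is the engine of the whole argument, since it lets me substitute $H_{FVP(G)}(P) = H(P) - H_k(\overline{G},P)$ wherever the former occurs.

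With this in hand the equivalence is a one-line rearrangement. Starting from the right-hand condition $H_{VP(G)}(P) = H_{FVP(G)}(P)$, I rewrite $H_{VP(G)}(P) = H_k(G,P)$ and $H_{FVP(G)}(P) = H(P) - H_k(\overline{G},P)$; the condition becomes $H_k(G,P) = H(P) - H_k(\overline{G},P)$, that is $H(P) = H_k(G,P) + H_k(\overline{G},P)$, which is precisely the left-hand condition. Every step is reversible, so the two statements are equivalent and both implications are obtained at once.

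I expect the only genuine content — and therefore the one thing I would verify carefully — to be the claim that $FVP(G) = \left(VP(\overline{G})\right)^*$ really does make $\left(VP(\overline{G}), FVP(G)\right)$ an antiblocking pair in the sense demanded by Lemma \ref{lem:antiblock}; once that is granted, the rest is formal substitution. As a consistency check I would also note that $VP(G) \subseteq FVP(G)$, so that the inequality $H_{VP(G)}(P) \geq H_{FVP(G)}(P)$ coming from Lemma \ref{lem:corner1} is compatible with the equality case we are characterizing, though this monotonicity is not strictly needed to establish the equivalence itself.
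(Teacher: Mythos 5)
Your proof is correct and follows essentially the same route as the paper: both arguments invoke the identity $FVP(G) = \left(VP(\overline{G})\right)^*$ together with Lemma \ref{lem:antiblock} to get the unconditional relation $H(P) = H_{VP(\overline{G})}(P) + H_{FVP(G)}(P)$, and then reduce the claimed equivalence to the algebraic identity $H_k(G,P) + H_k(\overline{G},P) - H(P) = H_{VP(G)}(P) - H_{FVP(G)}(P)$. No gaps.
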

\proof

We have $\left[VP(\overline{G})\right]^* = FVP(G)$. Thus, Lemma \ref{lem:equiv} and Lemma \ref{lem:antiblock} imply
\begin{eqnarray}
H_k\left(G,P\right) + H_k\left(\overline{G},P\right) - H(P) &=& H_{VP(G)}(P) + H_{VP(\overline{G})}(P) - H(P)\nonumber\\
&=& H_{VP(G)}(P) - H_{FVP(G)}(P).\nonumber
\end{eqnarray}
\qed

The following theorem conjectured by K\"{o}rner and Marton in \cite{Jkor4} first and proved by I. Csisz\'{a}r and et.~al. in \cite{Csis}.

\begin{theorem}\emph{(I. Csisz\'{a}r and et.~al.).}
A graph is strongly splitting if and only if it is perfect.
\end{theorem}
\proof

By Lemmas \ref{lem:corner1} and \ref{lem:split1}, $G$ is strongly splitting if and only if $VP(G) = FVP(G)$. This is equivalent to the perfectness of $G$.
\qed

Let $G = (V,E)$ be a graph with vertex set $V$ and edge set $E$. The graph
\[
G^{[n]} = (V^{n}, E^{[n]})
\]
is the \emph{$n$-th normal
power} where $V^n$ is the set of sequences of length $n$ from $V$, and two distinct vertices $x$ and $y$ are adjacent in $G^{[n]}$ if all of their entries are adjacent or equal in $G$, that is
\[
E^{[n]} = \{(x,y) \in V^n \times V^n :  x \neq y, ~\forall i~ (x_i,y_i) \in E~ \text{or}~ x_i = y_i\}.
\]
The \emph{$\pi$-entropy of a graph} $G=(V,E)$ with respect to the probability density $P$ on $V$ is defined as
\[
H_\pi(G,P) = \lim_{\epsilon \rightarrow 0} \lim_{n\rightarrow\infty} \min_{ U\subseteq V^n, \mathbf p^n(U)\geq 1 - \epsilon}
\frac{1}{n} \log\chi(G^{[n]}(U)).
\]
Note that $\overline{G^{[n]}} = \overline{G}^{(n)}$.

The follwoing theorem is proved in G. Simonyi \cite{Simu}.
\begin{theorem}\emph{(G. Simonyi).}
If $G=(V,E)$ is perfect, then $H_\pi(G,P)=H_k(G,P)$.
\end{theorem}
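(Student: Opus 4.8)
The plan is to establish the two inequalities $H_\pi(G,P)\le H_k(G,P)$ and $H_\pi(G,P)\ge H_k(G,P)$ separately, where only the second will use perfection, and even there only through the strong splitting property. Throughout I would freely use the identity $H_k(G,P)=H(G,P)$ supplied by Lemma \ref{lem:equiv} and Lemma \ref{lem:equiv1}, so that the combinatorial and conormal-power definitions are interchangeable, together with the relation $\overline{G^{[n]}}=\overline{G}^{(n)}$ recorded just before the statement.

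For the easy direction I would first observe that the normal power $G^{[n]}$ is a spanning subgraph of the conormal power $G^{(n)}$: if $x\neq y$ agree or are adjacent in every coordinate, then in particular they are adjacent in \emph{some} coordinate. Hence $G^{[n]}[U]$ is a subgraph of $G^{(n)}[U]$ for every $U\subseteq V^n$, so $\chi(G^{[n]}[U])\le\chi(G^{(n)}[U])$. Taking the minimum over $U\in T_\epsilon^{(n)}$, letting $n\to\infty$ and then $\epsilon\to 0$, and comparing with the defining limit \eqref{eq:ent1}, yields $H_\pi(G,P)\le H(G,P)=H_k(G,P)$ for \emph{every} graph $G$.

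The core of the argument is a complementary inequality $H_\pi(G,P)+H(\overline G,P)\ge H(P)$, which again holds for arbitrary $G$. Fix $U\subseteq V^n$ and colour it twice, once by an optimal colouring of $G^{[n]}[U]$ and once by an optimal colouring of $\overline G^{(n)}[U]=\overline{G^{[n]}}[U]$. If two vertices $x\neq y$ of $U$ receive the same pair of colours, then they lie in a common independent set of $G^{[n]}$ (so $x\not\sim y$ in $G^{[n]}$) and in a common independent set of $\overline{G^{[n]}}$ (so $x\sim y$ in $G^{[n]}$), a contradiction; thus the pair of colourings separates the points of $U$ and $|U|\le\chi(G^{[n]}[U])\cdot\chi(\overline G^{(n)}[U])$. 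I would then take $U_1$ minimizing $\chi(G^{[n]}[\,\cdot\,])$ and $U_2$ minimizing $\chi(\overline G^{(n)}[\,\cdot\,])$ over sets of probability at least $1-\epsilon$, and apply the displayed bound to $U_1\cap U_2$, which has probability at least $1-2\epsilon$. Since passing to an induced subgraph only lowers the chromatic number, the two separate minima together dominate $\tfrac1n\log|U_1\cap U_2|$; and a standard asymptotic equipartition estimate shows that any set whose probability is bounded away from $0$ has at least $2^{n(H(P)-o(1))}$ elements. Letting $n\to\infty$ and then $\epsilon\to0$ gives the claimed $H_\pi(G,P)+H(\overline G,P)\ge H(P)$.

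Finally I would feed in perfection. Because $G$ is perfect it is strongly splitting, so $H(P)=H_k(G,P)+H_k(\overline G,P)$, and $H(\overline G,P)=H_k(\overline G,P)$ by the equivalence of the definitions applied to $\overline G$. Substituting these into the complementary inequality gives $H_\pi(G,P)\ge H(P)-H_k(\overline G,P)=H_k(G,P)$, which with the easy direction proves $H_\pi(G,P)=H_k(G,P)$. I expect the main obstacle to be the middle paragraph: spotting that the two optimal colourings multiply to separate all of $U$, and then arranging the interchange of the minima over $U$ with the nested limits in $\epsilon$ and $n$ (via the intersection $U_1\cap U_2$) so that the equipartition size bound can be applied to both colourings at once.
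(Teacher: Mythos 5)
The thesis states this theorem without proof, merely citing Simonyi's survey, so there is no in-paper argument to measure you against; your proposal, however, is correct and is essentially the standard proof, assembled from ingredients already present elsewhere in the thesis. The easy direction is exactly as you say: $E^{[n]}\subseteq E^{(n)}$, so $G^{[n]}$ is a spanning subgraph of $G^{(n)}$, chromatic numbers of the induced subgraphs compare termwise, and the limits give $H_\pi(G,P)\le H(G,P)=H_k(G,P)$ for every graph. Your middle step is the same product-colouring count that underlies the sub-additivity of $H$: pairing an optimal colouring of $G^{[n]}[U]$ with one of $\overline{G^{[n]}}[U]=\overline{G}^{(n)}[U]$ injects $U$ into the set of colour pairs, since two distinct vertices cannot be simultaneously non-adjacent in a graph and in its complement; combined with the typical-sequence lower bound $|U|\ge 2^{n(H(P)-o(1))}$ for sets of probability bounded away from zero (Lemma A.2 in the appendix) and the intersection trick to handle the two minimizing sets at once, this yields $H(P)\le H_\pi(G,P)+H_k(\overline{G},P)$ for arbitrary $G$. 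Feeding in the characterization of perfect graphs as strongly splitting (proved earlier in the chapter) then closes the argument. The only points deserving a little more care in a written-up version are bookkeeping ones: the inner limits in $n$ should be treated as $\liminf$/$\limsup$ until existence is settled, and the $\epsilon\to 2\epsilon$ degradation from intersecting $U_1$ and $U_2$ should be absorbed before sending $\epsilon\to 0$. Neither affects the validity of the proof.
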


\chapter{Chromatic Entropy}
In this chapter, we investigate minimum entropy colouring of the vertex set of a probabilistic graph $(G,P)$ which was previously studied by N. Alon and A. Orlitsky \cite{Alon96}. The minimum number of colours $\chi_H(G,P)$ required in a minimum entropy colouring of $V(G)$ was studied by J. Cardinal and et.~al. \cite{JC04} and \cite{JC08}. We state their results and further investigate $\chi_H(G,P)$.
\section{Minimum Entropy Coloring}

Let $X$ be a random variable distributed over a countable set $V$ and $\pi$ be a partition of $V$, i.e., $\pi = \{C_1,\cdots,C_k\}$ and $V = \cup_{i = 1}^kC_i$. Then $\pi$ induces a probability distribution on its cells, that is

\[
p(C_i) = \sum_{v\in C_i} p(v), \forall i\in\{1,\cdots,k\}.
\]

Therefore, the cells of $\pi$ have a well-defined entropy as follows:

\[
H\left(\pi\right) = \sum_{i = 1}^k p\left(C_i\right) \log \frac{1}{p\left(C_i\right)},
\]

If we consider $V$ as the vertex set of a probabilistic graph $(G,P)$ and $\pi$ as a partitioning of the vertices of $G$ into colour classes, then $H\left(\pi\right)$ is the entropy of a proper colouring of $V(G)$.

The \emph{chromatic entropy} of a probabilistic graph $(G,P)$ is defined as

\[
H_{\chi}(G,P) := \min\{H\left(\pi\right): \pi~\text{is a colouring of G}\},
\]

i.e. the lowest entropy of any colouring of $G$.

\begin{example}
We can colour the vertices of an empty graph with one colour. Thus, an empty graph has chromatic entropy 0. On the other hand, in a proper colouring of the vertices of a complete graph, we require distinct colours for distinct vertices. Hence, a complete graph has chromatic entropy $H(X)$. 

Now consider a 5-cycle with two different probability distributions over its vertices, i.e., uniform distribution and another one given by
$p_1 = 0.3$, $p_2 = p_3 = p_5 = 0.2$, and $p_4 = 0.1$. In both of them we require three colours. In the first one, a colour is assigned to a single vertex and each of the other two colours are assigned to two vertices. Therefore, the first probabilistic 5-cycle has chromatic entropy 
\[
H(0.4,0.4,0.2) \approxeq 1.52.
\]
For the second probabilistic 5-cycle, the chromatic entropy is attained by choosing the colour classes as $\{1,3\}$, $\{2, 5\}$, and $\{4\}$. Then, its chromatic entropy is 
\[
H(0.5,0.4,0.1) \approxeq 1.36.
\]
\end{example}

\section{Entropy Comparisons}

A \emph{source code} $\phi$ for a random variable $X$ is a mapping from the range of $X$, i.e., $\mathcal X$, to the set of finite-length strings, i.e., $\mathcal D^*$, of a $D$-ary alphabet. Let $\phi(x)$ denote the codeword corresponding to $x$ and let $l(x)$ be the length of $\phi(x)$. Then the average length $L(\phi)$ of the source code $\phi$ is
\[
L(\phi) = \sum_{x\in\mathcal X}p(x)l(x).
\]
The source coding problem is the problem of representing a random variable by a sequence of bits such that the expected length of the representation is minimized.

N. Alon and A. Orlitsky \cite{Alon96} considered a source coding problem in which a sender wants to transmit an information source to a receiver with some related data to the intended information source. Motivated by this problem, they considered the \emph{OR product} of graphs, as we stated in the previous chapter. We recall this graph product here. 

Let $G_1,\cdots,G_n$ be graphs with vertex sets $V_1,\cdots,V_n$. The \emph{OR product}\index{product!OR product} of $G_1,\cdots,G_n$ is the graph $\bigvee_{i=1}^nG_i$ whose vertex set is $V^n$ and where two distinct vertices $(v_1,\cdots,v_n)$ and $(v^\prime_1,\cdots,v^\prime_n)$ are adjacent if for some $i\in\{1,\cdots,n\}$ such that $v_i\neq v^\prime_i$, $v_i$ is adjacent to $v^\prime_i$ in $G_i$. The $n$-fold OR product of $G$ with itself is denoted by $G^{\bigvee n}$.

N. Alon and A. Orlitsky \cite{Alon96} proved the following lemma which relates chromatic entropy to graph entropy.
\begin{lemma}\emph{(N. Alon and A. Orlitsky).}\label{lem:H0}
$\lim_{n\rightarrow\infty} \frac{1}{n} H_{\chi} (G^{\bigvee n}, P^{(n)}) = H_k(G,P)$.
\end{lemma}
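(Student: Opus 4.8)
The plan is to prove the two inequalities $\liminf_n \frac1n H_\chi(G^{\bigvee n}, P^{(n)}) \ge H_k(G,P)$ and $\limsup_n \frac1n H_\chi(G^{\bigvee n}, P^{(n)}) \le H_k(G,P)$ separately. The starting observation is that $G^{\bigvee n}$ is exactly the conormal power $G^{(n)}$ (adjacency in $G$ already forces distinctness of the offending coordinates), so the conormal-power machinery of the previous chapter applies verbatim, and $P^{(n)}$ is the product density $P^{\otimes n}$. I will also invoke two facts from the excerpt: that the OR product is additive for $H_k$, namely $H_k(G^{\bigvee n}, P^{(n)}) = n\,H_k(G,P)$, which follows from $VP(G_1\bigvee G_2)=VP(G_1)\otimes VP(G_2)$ and Theorem \ref{thm:additivity}; and that the Körner definition \eqref{eq:ent1}, through Lemmas \ref{lem:equiv} and \ref{lem:equiv1}, gives $\frac1n \log\chi(G^{(n)}[U_n]) \to H_k(G,P)$ for an optimal typical set $U_n$.

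For the lower bound I would first establish the single-shot inequality $H_\chi(H,Q) \ge H_k(H,Q)$ for every graph $H$ and density $Q$. Given a proper colouring $\pi$ of $H$, let $Y$ be the deterministic map sending a vertex to its colour class, viewed as a random independent set; the joint law of $(X,Y)$ is feasible for \eqref{eq:ent2}, and since $Y$ is a function of $X$ one has $I(X;Y) = H(Y) = H(\pi)$. Minimising over colourings then gives $H_\chi(H,Q) = \min_\pi H(\pi) \ge \min_{\mathcal Q} I(X;Y) = H'(H,Q) = H_k(H,Q)$ by Lemma \ref{lem:equiv}. Applying this to $H = G^{\bigvee n}$, $Q = P^{(n)}$, dividing by $n$, and using additivity yields $\frac1n H_\chi(G^{\bigvee n}, P^{(n)}) \ge \frac1n H_k(G^{\bigvee n}, P^{(n)}) = H_k(G,P)$ for every $n$.

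The upper bound is the main obstacle, and this is where the asymptotics genuinely enter; for finite $n$ the quantity can strictly exceed $H_k$. Fix $\epsilon > 0$ and let $U_n \in T_\epsilon^{(n)}$ minimise $\chi(G^{(n)}[U])$. I would build a colouring $\pi_n$ of $G^{(n)}$ by properly colouring $G^{(n)}[U_n]$ with $m_n = \chi(G^{(n)}[U_n])$ colours and colouring the complement $V^n \setminus U_n$ with a disjoint palette of at most $\chi(G)^n$ further colours. The crucial point is to control the \emph{entropy} $H(\pi_n)$, not merely the number of colours: writing $Z$ for the colour of $X^n$ and $B$ for the indicator of $\{X^n \in U_n\}$, the grouping identity gives $H(Z) = H(B) + H(Z\mid B)$, and \eqref{eq:maxent} bounds each conditional term by the logarithm of the number of colours available on that side, producing $H(\pi_n) \le 1 + \log m_n + \epsilon\, n \log\chi(G)$ since $P^{(n)}(V^n\setminus U_n) \le \epsilon$.

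Dividing by $n$ and letting $n \to \infty$, the term $\frac1n\log m_n$ tends to $H_k(G,P)$ by the Körner definition, while the remaining terms contribute $o(1) + \epsilon\log\chi(G)$; letting $\epsilon \to 0$ then gives $\limsup_n \frac1n H_\chi(G^{\bigvee n}, P^{(n)}) \le H_k(G,P)$. Together with the lower bound this forces the limit to exist and equal $H_k(G,P)$. The delicate points to verify will be that the atypical colour classes are absorbed with only an $O(\epsilon n)$ entropy cost, and that the Körner limit is independent of $\epsilon$, so that the two $\epsilon$-dependent estimates can be reconciled as $\epsilon \to 0$.
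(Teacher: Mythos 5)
Your argument is correct, and it is worth noting that the thesis does not actually prove this lemma: it is stated with a citation to Alon and Orlitsky and no proof is given, so there is no in-paper argument to compare against. Your proof assembles the result cleanly from machinery that \emph{is} in the thesis. The lower bound, $H_\chi(H,Q)\ge H_k(H,Q)$ via the deterministic coupling $Y=\pi(X)$ and $I(X;Y)=H(Y)=H(\pi)$, is exactly the inequality already recorded in Lemma \ref{lem:H2}, so you could simply cite that and then use the OR-product additivity $H_k(G^{\bigvee n},P^{(n)})=nH_k(G,P)$ from Theorem \ref{thm:additivity}. The upper bound is the genuinely substantive part and your construction is sound: colouring $G^{(n)}[U_n]$ with $m_n=\chi(G^{(n)}[U_n])$ colours, colouring the atypical remainder with a disjoint product palette of $\chi(G)^n$ colours (which is proper on all of $G^{(n)}$ precisely because the palettes are disjoint), and then using $H(Z)=H(B)+H(Z\mid B)\le 1+\log m_n+\epsilon\, n\log\chi(G)$ gives the right bound. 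The one point you flagged as delicate — that $\frac1n\log m_n\to H_k(G,P)$ for each \emph{fixed} $\epsilon\in(0,1)$, not merely in a joint $\epsilon\to0$ limit — is indeed what the appendix establishes ($\lim_n\frac1n\log_2 M(n,\epsilon)=H'(G,P)$ for every $0<\epsilon<1$), so the two $\epsilon$-dependent estimates reconcile as you claim. The only cosmetic caveat is the degenerate case $\chi(G)=1$, where everything vanishes trivially.
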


Let $\Omega(G)$ be the collection of cliques of a graph $G$. The \emph{clique entropy} of a probabilistic graph $(G,P)$ is
\[
H_{\omega}(G,P):=\max\{H(X|Z^\prime):X\in Z^\prime\in\Omega(G)\}.
\]
That is, for every vertex $x$ we choose a conditional probability distribution $p(z^\prime|x)$ ranging over the cliques containing $x$. This determines a joint probability distribution of $X$ and a random variable $Z^\prime$ ranging over all cliques containing $X$. Then, the clique entropy is the maximal conditional entropy of $X$ given $Z^\prime$.

\begin{example}
The only cliques of an empty graph are singletones. Thus for an empty graph, we have
\[
Z^\prime = \{X\},
\]
which implies 
\[
H_\omega(G,P) = 0.
\]
On the other hand, for a complete graph, we can take $Z^\prime$ to be the set of all vertices. Thus, for a probabilistic complete graph $(G,P)$, we have
\[
H_\omega(G,P) = H(X).
\]
For a 5-cycle, every clique is either a singleton or an edge. Thus, for a probabilistic 5-cycle with uniform distribution over the vertices, we have
\[
H_\omega(G,P)\leq 1.
\] 
Now, if for every $x$ we let $Z^\prime$ be uniformly distributed over the two edges containing $x$, then by symmetry we get
\[H(X|Z^\prime) = 1,
\]
which implies 
\[
H_\omega(G,P) = 1.
\]
\end{example}

N. Alon and J. Cardinal and et.~al. proved the following lemmas in \cite{Alon96} and \cite{JC08}.

\begin{lemma}\emph{(N. Alon and A. Orlitsky).}\label{lem:H1}
Let $U$ be the uniform distribution over the vertices $V(G)$ of a probabilistic graph $(G,U)$ and $\alpha(G)$ be the independence number of the graph $G$. Then,
\[
H_{\chi}(G,U)\geq \log\frac{|V(G)|}{\alpha(G)}.
\]
\end{lemma}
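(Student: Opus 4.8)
The plan is to argue directly from the definitions, since $H_\chi$ is defined as a minimum over colourings and the uniform distribution renders every quantity explicit. First I would fix an arbitrary proper colouring $\pi = \{C_1,\dots,C_k\}$ of $G$ into independent sets, and write $n = |V(G)|$. Under the uniform density $U$ each vertex carries weight $1/n$, so the induced weight of a colour class is $p(C_i) = |C_i|/n$, and the colouring entropy becomes
\[
H(\pi) = \sum_{i=1}^k \frac{|C_i|}{n}\log\frac{n}{|C_i|}.
\]

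The key observation is that every colour class $C_i$ is a stable set, hence $|C_i|\le\alpha(G)$. Since $\log(n/x)$ is decreasing in $x$, this yields the uniform lower bound $\log(n/|C_i|)\ge\log(n/\alpha(G))$ on each logarithmic term. Substituting this termwise into the sum and pulling the now-constant factor out gives
\[
H(\pi) \ge \log\frac{n}{\alpha(G)}\sum_{i=1}^k\frac{|C_i|}{n}.
\]

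Because the colour classes partition $V(G)$, we have $\sum_i |C_i| = n$, so the trailing sum equals $1$ and therefore $H(\pi)\ge\log(n/\alpha(G))$. As the colouring $\pi$ was arbitrary, the bound passes to the minimum defining $H_\chi(G,U)$, which completes the argument.

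There is essentially no hard step here: the whole proof is the single inequality $|C_i|\le\alpha(G)$ applied to each term, followed by the remark that the weights sum to one. The only point deserving care is that the bound must be applied to each summand \emph{before} summing (rather than to some average), and that it is precisely the uniform distribution which collapses the entropy expression into a weighted average of the quantities $\log(n/|C_i|)$ with weights summing to $1$; both facts are immediate once the definitions are unwound.
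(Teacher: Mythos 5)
Your proof is correct: the termwise bound $|C_i|\le\alpha(G)$ applied inside the weighted average $\sum_i \frac{|C_i|}{n}\log\frac{n}{|C_i|}$, whose weights sum to $1$, immediately gives the claim for every colouring and hence for the minimum. The paper itself states this lemma without proof (deferring to Alon and Orlitsky), and your argument is precisely the standard one-line derivation that reference uses, so there is nothing to add.
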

\qed

\begin{lemma}\emph{(N. ALon and A. Orlitsky).}
For every probabilistic graph $(G,P)$
\[
H_\omega(G,P) = H(P) - H_k(\overline{G},P).
\]
\end{lemma}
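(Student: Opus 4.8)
The plan is to recognize the clique entropy as a mutual-information minimization over the complement graph and then invoke the equivalence of the two definitions of graph entropy. The single structural fact driving everything is that the cliques of $G$ are exactly the independent sets of $\overline{G}$, so that the collection $\Omega(G)$ over which $Z'$ ranges in the definition of $H_\omega(G,P)$ is precisely the family of independent sets appearing in the definition $(\ref{eq:ent2})$ of $H'(\overline{G},P)$.

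First I would unwind the definition of $H_\omega(G,P)$. A choice of conditional distribution $p(z'\mid x)$, ranging over the cliques $z'\ni x$, together with the fixed vertex distribution $P$, determines a joint distribution of $X$ and $Z'$. Since each $p(\cdot\mid x)$ is a genuine conditional distribution, the $X$-marginal of this joint distribution is automatically $P$; hence these joint distributions are exactly the feasible distributions $\mathcal Q$ in the definition of $H'(\overline{G},P)$, with $Y = Z'$. This bookkeeping step --- checking that ``ranging over cliques containing $x$'' and the marginal constraint $\sum_{z'}Q(X,Z'=z')=P(X)$ describe the same feasible set --- is where I would be most careful, since it is the hinge of the whole argument.

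Next I would apply the identity $H(X\mid Z') = H(X) - I(X;Z')$. Because $H(X) = H(P)$ is fixed by $P$ alone, maximizing $H(X\mid Z')$ over the admissible conditional distributions is the same as minimizing $I(X;Z')$. Thus
\[
H_\omega(G,P) = \max_{\mathcal Q} H(X\mid Z') = H(P) - \min_{\mathcal Q} I(X;Z'),
\]
and by the identification of feasible sets above, $\min_{\mathcal Q} I(X;Z')$ is exactly $H'(\overline{G},P)$ as given in $(\ref{eq:ent2})$.

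Finally I would invoke Lemma \ref{lem:equiv} applied to $\overline{G}$, which gives $H'(\overline{G},P) = H_k(\overline{G},P)$. Substituting this into the display yields $H_\omega(G,P) = H(P) - H_k(\overline{G},P)$, as claimed. No analytic estimates are needed; the entire content lies in matching the two variational descriptions, so the principal obstacle is the definitional identification in the first step rather than any computational difficulty.
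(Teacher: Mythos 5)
Your argument is correct. Note that the paper itself supplies no proof of this lemma --- it is stated with a bare \qed and attributed to Alon and Orlitsky --- so there is nothing internal to compare against; but your route is the natural one suggested by the surrounding definitions, and it goes through. The one point you rightly flag as the hinge deserves a word of care: the paper's definition of the feasible set $\mathcal Q$ for $H^\prime(\overline{G},P)$ states only the marginal constraint $\sum_{y}Q(X,Y=y)=P(X)$, whereas your identification with the clique-entropy feasible set also needs the support condition $X\in Y$ (equivalently $Q(x,y)=0$ unless $x\in y$). That condition is clearly intended --- without it the minimum of $I(X;Y)$ is trivially $0$ --- and it is implicit in the paper's prose and in the proof of Lemma \ref{lem:equiv}, so once it is made explicit the two feasible sets coincide exactly as you say. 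From there, $H(X\mid Z^\prime)=H(X)-I(X;Z^\prime)$ with $H(X)=H(P)$ fixed converts the maximization into $H(P)-\min_{\mathcal Q}I(X;Z^\prime)=H(P)-H^\prime(\overline{G},P)$, and Lemma \ref{lem:equiv} applied to $\overline{G}$ finishes the proof. No gaps.
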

\qed

\begin{lemma}\emph{(J. Cardinal and et.~al.).}\label{lem:H2}
For every probabilistic graph $(G,P)$, we have
\[
-\log\alpha(G,P)\leq H_k(G,P) \leq H_{\chi}(G,P) \leq \log\chi(G).
\]
Here $\alpha(G,P)$ denotes the maximum weight $P(S)$ of an independent set $S$ of $(G,P)$.
\end{lemma}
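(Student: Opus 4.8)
The plan is to establish the three inequalities in the chain separately, working throughout from the combinatorial definition $H_k(G,P)=\min_{\mathbf a\in VP(G)}\sum_i p_i\log(1/a_i)$ and the definition $H_\chi(G,P)=\min_\pi H(\pi)$ of chromatic entropy.

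For the leftmost inequality $-\log\alpha(G,P)\le H_k(G,P)$, I would bound an arbitrary feasible point $\mathbf a\in VP(G)$ from below. By concavity of $\log$ (Jensen's inequality with weights $p_i$ summing to $1$) we have $\sum_i p_i\log(1/a_i)\ge -\log\sum_i p_i a_i=-\log(\mathbf p^{T}\mathbf a)$. Writing $\mathbf a=\sum_S\lambda_S\mathbf 1_S$ as a convex combination of indicator vectors of independent sets $S$, we get $\mathbf p^{T}\mathbf a=\sum_S\lambda_S P(S)\le\max_S P(S)=\alpha(G,P)$, since the $\lambda_S$ are nonnegative and sum to $1$. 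Hence every feasible point satisfies $\sum_i p_i\log(1/a_i)\ge -\log\alpha(G,P)$, and taking the minimum over $\mathbf a$ yields the bound. (If $a_i=0$ for some $i$ with $p_i>0$ the left side is $+\infty$, so nothing is lost in assuming $a_i>0$ on the support of $P$.)

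The middle inequality $H_k(G,P)\le H_\chi(G,P)$ is the crux, and the key step is to turn a colouring into a feasible point of $VP(G)$. Given any proper colouring $\pi=\{C_1,\dots,C_k\}$, each colour class $C_j$ is an independent set, and the weights $P(C_j)$ are nonnegative with $\sum_j P(C_j)=\sum_i p_i=1$. Therefore $\mathbf a:=\sum_j P(C_j)\mathbf 1_{C_j}$ lies in $VP(G)$, and since each vertex $i$ lies in exactly one class $C_{j(i)}$ we have $a_i=P(C_{j(i)})$. Plugging this $\mathbf a$ into the definition of $H_k$ gives
\[
H_k(G,P)\le\sum_i p_i\log\frac{1}{P(C_{j(i)})}=\sum_{j=1}^{k} P(C_j)\log\frac{1}{P(C_j)}=H(\pi),
\]
and minimizing the right-hand side over all colourings $\pi$ yields $H_k(G,P)\le H_\chi(G,P)$. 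Finally, for $H_\chi(G,P)\le\log\chi(G)$ I would take a proper colouring achieving $\chi(G)$ colours; its partition has at most $\chi(G)$ nonempty cells, so by the maximum-entropy bound (\ref{eq:maxent}) we get $H(\pi)\le\log\chi(G)$, and since $H_\chi$ is the minimum colouring entropy, $H_\chi(G,P)\le H(\pi)\le\log\chi(G)$.

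I expect the main obstacle to be the middle inequality, not because the computation is long but because it hinges on spotting the right correspondence between colourings and points of $VP(G)$, namely weighting each colour class by its own probability $P(C_j)$ rather than by uniform weights. The outer two inequalities are routine once the combinatorial definition and the maximum-entropy bound are in hand, so the whole statement reduces to that single construction plus two short convexity arguments.
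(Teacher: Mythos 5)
Your argument is correct in all three steps: Jensen's inequality plus the convex-hull representation of $VP(G)$ gives the lower bound via $\mathbf p^{T}\mathbf a\le\alpha(G,P)$; the point $\mathbf a=\sum_j P(C_j)\mathbf 1_{C_j}$ built from the colour classes is indeed in $VP(G)$ and evaluates the objective exactly to $H(\pi)$; and the final bound is just the maximum-entropy inequality applied to a $\chi(G)$-colouring. The thesis itself states this lemma without proof (it is quoted from Cardinal et al.\ with only a citation), so there is no in-paper argument to compare against; your construction is the standard one from that literature and is a complete, self-contained justification of the chain of inequalities.
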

\qed

It may seem that non-uniform distribution decreases chromatic entropy $H_\chi(G,P)$, but the following example shows that this is not true.
Let us consider 7-star with $deg(v_1)=7$ and $deg(v_i) =1$ for $i\in\{2,\cdots,8\}$. If $p(v_1) = 0.5$ and $p(v_i) = \frac{1}{14}$ for $i\in\{2,\cdots,8\}$, then $H_\chi(G,P) = H(0.5,0.5) = 1$, while if $p(v_i) = \frac{1}{8}$ for $i\in\{1,\cdots,8\}$, then $H_\chi(G,P) = H(\frac{1}{8},\frac{7}{8})\leq H(0.5,0.5) = 1$.

\section{Number of Colours and Brooks' Theorem}
Here, we investigate  the minimum number of colours $\chi_{H}(G,P)$ in a minimum entropy colouring of a probabilistic graph $(G,P)$. First, we have the following definition.

A \emph{Grundy colouring}\index{grundy colouring} of a graph is a colouring such that for any colour $i$, if a vertex has colour $i$ then it is adjacent to at least one vertex of colour $j$ for all $j<i$. The Grundy number $\Gamma(G)$ of a graph $G$ is the maximum number of colours in a Grundy colouring of $G$. Grundy colourings are colourings that can be obtained by iteratively removing maximal independent sets.

The following theorem was proved in J. Cardinal and et.~al. \cite{JC08}.

\begin{theorem}\emph{(J. Cardinal and et.~al.)}\label{thm:Grundy}
Any minimum entropy colouring of a graph $G$ equipped with a probability distribution on its vertices is a Grundy colouring. Moreover, for any Grundy colourig $\phi$ of $G$, there exists a probability mass function $P$ over $V(G)$ such that $\phi$ is the unique minimum entropy colouring of $(G,P)$.
\end{theorem}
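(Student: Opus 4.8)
The plan is to treat the two assertions separately, relying throughout on the fact that the entropy $H(\pi)=\sum_i p(C_i)\log\frac{1}{p(C_i)}$ of a colouring is a strictly Schur-concave function of the vector of colour-class weights, so that making this weight vector more unequal strictly lowers the entropy. The precise form I would invoke is the elementary one-parameter computation: if $p(C_a)\ge p(C_b)$ and one transfers a mass $t>0$ from the lighter class to the heavier one, the derivative of the resulting entropy in $t$ equals $\log\frac{p(C_b)-t}{p(C_a)+t}<0$, so the transfer strictly decreases $H(\pi)$. This single inequality drives the whole first half of the argument.

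To show that every minimum entropy colouring is a Grundy colouring, I would fix such a colouring and order its classes by non-increasing weight, $p(C_1)\ge p(C_2)\ge\cdots\ge p(C_k)$. Suppose the Grundy condition failed for this ordering, so that some vertex $v\in C_i$ had no neighbour in a heavier class $C_j$ with $j<i$. Recolouring $v$ with colour $j$ keeps the colouring proper and transfers the mass $p(v)>0$ from the lighter class $C_i$ to the heavier class $C_j$; by the transfer inequality above this strictly lowers $H(\pi)$, contradicting minimality. Hence every vertex of colour $i$ is adjacent to a vertex of each colour $j<i$, which is exactly the Grundy condition for this ordering; equivalently each $C_j$ is a maximal independent set of $G\setminus(C_1\cup\cdots\cup C_{j-1})$, so the colouring arises by iteratively removing maximal independent sets.

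For the converse I would begin from a Grundy colouring $\phi$ with elimination order $C_1,\dots,C_k$, so that $C_i$ is a maximal independent set of $G_i:=G\setminus(C_1\cup\cdots\cup C_{i-1})$, and then construct a distribution $P$ for which $\phi$ is the \emph{unique} minimiser. The idea is a super-decreasing weight scheme: assign to each vertex of $C_i$ a weight of order $t^{\,i}$ for a very small $t>0$, together with tiny generic (say algebraically independent) perturbations within each class before normalising, so that all subset-sums of vertex weights are distinct. With $t$ small, $C_1$ is the unique maximum-weight independent set of $G$: any competing independent set either omits a heavy $C_1$-vertex, or, to include a lighter vertex $u\notin C_1$, must by maximality of $C_1$ omit the $C_1$-neighbour of $u$, losing more weight than it gains. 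Inductively $C_i$ is the unique maximum-weight independent set of $G_i$. I would then argue that in this regime minimising entropy amounts to lexicographically maximising the sorted vector of class weights, i.e. concentrating as much mass as possible in the heaviest class, then in the next, and so on; the greedy rule ``take a maximum-weight independent set, delete it, repeat'' is precisely this lexicographic maximiser, and by the uniqueness just established it returns exactly $C_1,C_2,\dots,C_k$, while the distinctness of subset-sums rules out ties, giving $\phi$ as the unique minimum entropy colouring.

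The routine half is the first assertion; the delicate part is the second, and the main obstacle is making the passage ``minimum entropy $\Leftrightarrow$ lexicographic maximisation of the sorted class-weight vector'' rigorous for an explicit finite $t$ rather than only in the limit $t\to 0$. Concretely, I would need to choose the decay rate fast enough that the total entropy contribution of all classes below the current level is dominated by the weight gap created by the top-level greedy choice, and to check that the generic within-class perturbations can be made small enough not to disturb the maximum-weight independent set identifications or these gap estimates, while still separating all subset-sums. Balancing these competing smallness and largeness requirements is where the real work lies.
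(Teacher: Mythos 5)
First, note that the thesis does not actually prove this theorem: it is quoted from Cardinal, Fiorini and Joret, so there is no internal proof to compare against. The only tools the thesis supplies are the Alon--Orlitsky transfer lemma stated just before Lemma \ref{lem:colourseq} (moving mass $\alpha$ from a lighter colour class to a heavier one strictly decreases $H$) and the dominance Lemma \ref{lem:colourseq}. Your proof of the first assertion is correct and is exactly the expected exchange argument: order the classes of a minimum entropy colouring by non-increasing weight, and if some $v\in C_i$ has no neighbour in a class $C_j$ with $j<i$, recolour $v$ with $j$; the transfer lemma gives a strict decrease, a contradiction. (One pedantic caveat: if $p(v)=0$ the decrease is not strict, so the first assertion implicitly assumes a nowhere-zero distribution; this imprecision sits in the theorem as stated rather than in your argument.)

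The second assertion is where the gap lies. Your construction (weights of order $t^{\,i}$ on the $i$-th Grundy class, plus generic perturbations) is the right one, and the claim that $C_i$ is then the unique maximum-weight independent set of $G_i$ for small $t$ is correct, using maximality of $C_i$ in $G_i$. But the bridge you propose --- that in this regime minimising entropy amounts to lexicographically maximising the sorted vector of class weights --- is not a theorem you can appeal to, and as a general principle it is false: the sorted vector $(0.5,0.25,0.25)$ lexicographically dominates $(0.45,0.45,0.1)$ yet has the larger entropy ($1.5$ versus roughly $1.37$). The dominance Lemma \ref{lem:colourseq} does not rescue this, because majorization is only a partial order and the Grundy colour sequence need not dominate all competitors: for $P_4$ with its three-class Grundy colouring, the unique $2$-colouring yields an incomparable sequence, so entropies must be compared directly. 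What is actually needed, and what you have not supplied, is a two-step argument: (i) a quantitative gap showing that for $t$ small every colouring whose heaviest class is not $C_1$ has entropy bounded below by a positive constant (its heaviest class must omit a $C_1$-vertex of renormalised weight about $1/|C_1|$, since $C_1$ is a maximal independent set), whereas $\phi$ has entropy $O(t\log(1/t))$, so every minimum entropy colouring contains $C_1$ as a class; and (ii) the grouping identity $H(\pi)=h(q)+q\,H(\pi')$, where $q=P(V\setminus C_1)$ and $\pi'$ is the induced colouring of $G-C_1$ under the renormalised distribution, which reduces the problem to $G-C_1$ and lets you induct on the number of classes, with genericity of the perturbations giving uniqueness. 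You correctly flag that balancing the smallness and largeness requirements is where the real work lies, but the issue is not merely that the work is unfinished: the organizing principle you route it through is invalid and must be replaced by (i)--(ii).
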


We now consider upper bounds on $\chi_H(G,P)$ in terms of the maximum valency of $G$, i.e., $\Delta(G)$. The following theorems were proved in J. Cardinal and et.~al. \cite{JC04} and J. Cardinal and et.~al. \cite{JC08}.

\begin{theorem}\emph{(J. Cardinal and et.~al.).}
For any probabilistic graph $(G,P)$, we have $\chi_H(G,P)\leq\Delta(G)+1$.
\end{theorem}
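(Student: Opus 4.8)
The plan is to combine the structural result of Theorem~\ref{thm:Grundy} with the elementary degree bound that governs Grundy colourings. First I would invoke Theorem~\ref{thm:Grundy}, which guarantees that every minimum entropy colouring of $(G,P)$ is a Grundy colouring. Consequently the value $\chi_H(G,P)$ is realized by some Grundy colouring $\phi$ of $G$, and it suffices to bound the number of colours appearing in any Grundy colouring of $G$.

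Next I would bound the largest colour used in an arbitrary Grundy colouring $\phi$. Suppose $\phi$ uses the colours $1,\ldots,k$, and let $v$ be a vertex receiving the largest colour $k$. By the defining property of a Grundy colouring, for every $j<k$ the vertex $v$ is adjacent to at least one vertex coloured $j$. Since these neighbours carry pairwise distinct colours, they are pairwise distinct vertices, and therefore $v$ has at least $k-1$ neighbours. This yields $k-1\leq \deg(v)\leq \Delta(G)$, hence $k\leq \Delta(G)+1$.

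Combining the two steps, the minimum entropy colouring $\phi$ uses at most $\Delta(G)+1$ colours, and therefore $\chi_H(G,P)\leq \Delta(G)+1$, as claimed. Equivalently, this is the statement that the Grundy number satisfies $\Gamma(G)\leq \Delta(G)+1$, applied to the colouring furnished by Theorem~\ref{thm:Grundy}.

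I do not expect a serious obstacle here once Theorem~\ref{thm:Grundy} is available: essentially all of the content is carried by that theorem, and the remaining argument is the standard degree bound for the Grundy number. The only point that needs a moment of care is verifying that the vertex of largest colour really does see all smaller colours on \emph{distinct} neighbours, but this is immediate from the Grundy condition together with the fact that distinct colours force distinct vertices.
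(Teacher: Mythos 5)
Your proof is correct. The paper itself states this theorem without proof (it is cited to Cardinal et al.), but your argument---every minimum entropy colouring is a Grundy colouring by Theorem~\ref{thm:Grundy}, and any Grundy colouring uses at most $\Delta(G)+1$ colours because a vertex carrying the top colour $k$ must have neighbours of each of the $k-1$ smaller colours, which are necessarily distinct vertices---is exactly the natural deduction from the machinery the paper sets up, and it is sound.
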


\begin{theorem}\emph{(Brooks' Theorem for Probabilistic Graphs).}
If $G$ is connected graph different from a complete graph or an odd cycle, and $U$ is a uniform distribution on its vertices, then $\chi_H(G,U)\leq\Delta(G)$.
\end{theorem}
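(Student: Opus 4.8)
The plan is to combine the Grundy structure of minimum entropy colourings (Theorem~\ref{thm:Grundy}) with the classical Brooks' theorem, exploiting the very simple form the entropy takes for the uniform distribution. Write $n=|V(G)|$ and, for a colouring $\pi=\{C_1,\dots,C_k\}$, note that under $U$ the cell probabilities are $p(C_i)=|C_i|/n$, so that
\[
H(\pi)=\log n-\frac{1}{n}\sum_{i=1}^{k}|C_i|\log|C_i|.
\]
Thus minimizing $H(\pi)$ is equivalent to maximizing $\sum_i|C_i|\log|C_i|$, a quantity that strictly rewards few large colour classes and penalizes small ones; a singleton class contributes $0$ to the sum and hence adds $\tfrac{1}{n}\log n$ to the entropy. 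Among all minimum entropy colourings of $(G,U)$ I would fix one, $\pi$, using as few colours as possible, and aim to show that $\pi$ uses at most $\Delta(G)$ colours.

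First I would reduce the problem to excluding a single case. By Theorem~\ref{thm:Grundy}, $\pi$ is a Grundy colouring, so a vertex bearing the top colour $k$ must be adjacent to a vertex of each of the colours $1,\dots,k-1$ and therefore has degree at least $k-1$; hence $k\le\Delta(G)+1$. This recovers the bound $\chi_H(G,P)\le\Delta(G)+1$ already established, and it remains only to rule out $k=\Delta(G)+1$. In that extremal case every vertex of the top class $C_{\Delta+1}$ has degree exactly $\Delta(G)$ and a rainbow neighbourhood meeting each lower colour exactly once.

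The core of the argument is a recolouring step modelled on the classical proof of Brooks' theorem. Fix a vertex $v$ of the top colour. Since its neighbourhood is rainbow, $v$ cannot be recoloured directly, so I would use a Kempe-chain exchange. Because $G$ is connected, not complete and not an odd cycle, Brooks' hypotheses allow one to choose two neighbours of $v$ lying in distinct colour classes but in different components of the associated two-colour subgraph; swapping the two colours on one such component frees one of the colours $1,\dots,\Delta$ at $v$, which can then be reassigned to $v$. Repeating this empties $C_{\Delta+1}$ and yields a proper colouring with at most $\Delta(G)$ colours.

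The main obstacle, and the reason the uniform distribution is indispensable, is to ensure that this recolouring does not raise the entropy: otherwise the resulting colouring need not be of minimum entropy and no contradiction with the minimality of $k$ arises. Since under $U$ the entropy depends only on the multiset of class sizes, I must track how each Kempe swap and the reassignment of $v$ redistribute the $|C_i|$ and confirm that $\sum_i|C_i|\log|C_i|$ never decreases. The favourable tendency is clear — draining the small top class into larger classes moves the size sequence upward in the majorization order, and the entropy is Schur-concave in that sequence — but the intermediate Kempe exchanges merely transpose two colours on a component and can by themselves shift class sizes in either direction. Making this bookkeeping precise, so that the net recolouring delivers a colouring of entropy at most $H(\pi)$ using strictly fewer colours and thereby contradicts the choice of $\pi$, is the step I expect to demand the most care; once it is done, $k\le\Delta(G)$ follows and the theorem is proved.
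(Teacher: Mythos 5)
You have set the problem up correctly: under the uniform distribution $H(\pi)=\log n-\frac{1}{n}\sum_i|C_i|\log|C_i|$, the Grundy property of minimum entropy colourings (Theorem \ref{thm:Grundy}) forces the number $k$ of colours to satisfy $k\le\Delta(G)+1$, and in the extremal case $k=\Delta(G)+1$ every vertex of the top class has degree exactly $\Delta(G)$ with exactly one neighbour in each lower class. The logical frame is also sound: if from a minimum entropy colouring $\pi$ with fewest colours you could produce a proper colouring with fewer colours and entropy at most $H(\pi)$, that colouring would itself be of minimum entropy and would contradict the choice of $\pi$. Note that the thesis does not actually prove this statement; it quotes it from Cardinal, Fiorini and Joret, so there is no in-paper argument to measure you against.

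There is nonetheless a genuine gap, and it sits exactly where you place it: you never establish that the Kempe-chain recolouring can be performed without raising the entropy. A single Kempe exchange on colours $i,j$ shifts $|C_i|$ and $|C_j|$ by opposite amounts and can strictly decrease $\sum_i|C_i|\log|C_i|$ (moving one vertex from a class of size $10$ to a class of size $2$ already does so), emptying $C_{\Delta+1}$ requires one such exchange per vertex of the top class, and each exchange may disturb the sizes produced by the previous ones; the gain from deleting the small top class need not compensate these losses. The classical Brooks argument gives no control whatever over class sizes --- its case analysis (cut vertices, two nonadjacent neighbours, the spanning-tree ordering) is blind to them. Since everything in this theorem beyond the trivial bound $\Delta(G)+1$ is precisely this entropy bookkeeping, flagging it as \emph{the step that will demand the most care} leaves the proof unfinished rather than complete. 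To close it you would need either a quantitative argument that the final colour sequence dominates the original one, so that Lemma \ref{lem:colourseq} applies, or a different route altogether, for instance showing directly that a colour sequence which is maximal in the dominance order can always be realized by a Grundy colouring with at most $\Delta(G)$ colours when $G$ is connected and neither complete nor an odd cycle.
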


\section{Grundy Colouring and Minimum Entropy Colouring}
Let $\phi: v_1,v_2,\cdots,v_n$ be an ordering of the vertices of a graph $G$. A proper vertex colouring $c:V(G)\rightarrow \mathbb{N}$ of $G$ is a $\phi-$colouring of $G$ if the vertices of $G$ are coloured in the order $\phi$, beginning with $c(v_1) = 1$, such that each vertex $v_{i+1} (1\leq i \leq n-1)$ must be assigned a colour that has been used to colour one or more of the vertices $v_1,v_2,\cdots,v_i$ if possible. If $v_{i+1}$ can be assigned more than one colour, then a colour must be chosen which results in using the fewest number of colours needed to colour $G$. If $v_{i+1}$ is adjacent to vertices of every currently used colour, then $c(v_{i+1})$ is defined as the smallest positive integer not yet used. The parsimonious $\phi-$colouring number $\chi_{\phi}(G)$ of $G$ is the minimum number of colours in a $\phi-$colouring of $G$. The maximum value of  $\chi_{\phi}(G)$ over all orderings $\phi$ of the vertices of $G$ is the oredered chromatic number or, more simply, the ochromatic number of $G$, which is denoted by $\chi^{o}(G)$.

Paul Erd\"{o}s, William Hare, Stephen Hedetniemi, and Renu Lasker proved the following lemma in \cite{Erdos03}.

\begin{lemma}\emph{(Erd\"{o}s and et.~al.).}
For every graph $G$, $\Gamma(G) = \chi^o(G)$.
\end{lemma}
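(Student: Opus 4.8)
The plan is to prove the two inequalities $\chi^o(G)\le\Gamma(G)$ and $\Gamma(G)\le\chi^o(G)$ separately, the first being routine and the second requiring a carefully chosen ordering. For the upper bound, I would fix an arbitrary ordering $\phi$ of $V(G)$ and compare the parsimonious $\phi$-colouring with the ordinary greedy (first-fit) colouring for the same $\phi$, in which each vertex takes the least colour absent from its already-coloured neighbours. The first-fit colouring respects the rule that an old colour must be reused whenever one is available, so it is one of the colourings over which $\chi_\phi(G)$ takes its minimum; hence $\chi_\phi(G)$ is at most the number of colours first-fit uses. It is then immediate from the definitions that every first-fit colouring is a Grundy colouring: if a vertex receives colour $m$, then every smaller colour was blocked by an earlier neighbour, so the vertex is adjacent to a vertex of each colour $j<m$. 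Thus first-fit uses at most $\Gamma(G)$ colours, giving $\chi_\phi(G)\le\Gamma(G)$ for every $\phi$ and therefore $\chi^o(G)\le\Gamma(G)$.

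For the lower bound I would begin with a Grundy colouring attaining $\Gamma(G)$ colours, with colour classes $C_1,\ldots,C_\Gamma$, and take $\phi$ to list the vertices in order of increasing class ($C_1$ first, $C_\Gamma$ last, arbitrary within a class). The goal is to show this single ordering already forces $\chi_\phi(G)=\Gamma(G)$. I would argue by induction along $\phi$ that in any $\phi$-colouring every vertex of $C_i$ must receive colour $i$. Each class is independent, so when the first vertex of $C_i$ is processed the Grundy property supplies a neighbour in each earlier class $C_1,\ldots,C_{i-1}$, and by the induction hypothesis these neighbours already carry the colours $1,\ldots,i-1$; no colour is then available for reuse, so this vertex is forced to open the new colour $i$. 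Each later vertex of $C_i$ is likewise blocked from every colour below $i$, but by independence of $C_i$ it is not adjacent to colour $i$, so $i$ is the unique reusable colour and must be taken. Consequently colour $\Gamma$ is used, and since the original Grundy colouring is itself such a $\phi$-colouring with exactly $\Gamma$ colours, we conclude $\chi_\phi(G)=\Gamma(G)$, whence $\chi^o(G)\ge\Gamma(G)$.

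The main obstacle is the lower bound. The parsimonious rule is explicitly designed to minimise the number of colours, so the difficulty is to exhibit an ordering under which no amount of colour-reuse can keep the count below $\Gamma(G)$. The crux is the observation that ordering by increasing Grundy class converts the Grundy adjacency condition into a step-by-step obstruction: the first representative of each class is barred from all previously used colours and is compelled to introduce a fresh one. The only point demanding genuine care is the within-class bookkeeping, where one must use independence of the colour classes to check that colour $i$ (and nothing smaller) remains reusable for the non-first vertices of $C_i$, so that the induction hypothesis ``$C_i$ is coloured $i$'' propagates cleanly from one class to the next.
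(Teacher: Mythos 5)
Your proof is correct. Note that the thesis itself gives no proof of this lemma; it is stated as a cited result of Erd\H{o}s, Hare, Hedetniemi and Laskar, so there is no in-paper argument to compare against. Your two-sided argument is the standard one: first-fit along any ordering reuses an old colour whenever one is available and is automatically Grundy, which gives $\chi_\phi(G)\le\Gamma(G)$ for every $\phi$; and ordering the vertices by increasing Grundy class makes every step of the parsimonious colouring forced (the first vertex of $C_i$ sees all colours $1,\dots,i-1$ on neighbours and must open colour $i$, while later vertices of $C_i$ have $i$ as the unique reusable colour by independence of $C_i$), which gives $\chi^o(G)\ge\Gamma(G)$. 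The one point worth stating explicitly is that in the lower-bound induction the parsimonious tie-breaking rule never gets to act, since at each vertex there is at most one admissible colour; this is what makes the conclusion independent of how that rule is interpreted.
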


Now we prove the following lemma.

\begin{lemma}
For every probabilistic graph $(G,P)$, we have
\[
\max_P \chi_H(G,P) = \Gamma(G).
\]
\end{lemma}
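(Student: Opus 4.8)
The plan is to establish the two inequalities $\max_P \chi_H(G,P) \le \Gamma(G)$ and $\max_P \chi_H(G,P) \ge \Gamma(G)$ separately, relying on Theorem~\ref{thm:Grundy} in both of its parts together with the definition of the Grundy number as the largest number of colours occurring in a Grundy colouring of $G$.

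For the upper bound, I would fix an arbitrary probability distribution $P$ on $V(G)$ and take a minimum entropy colouring $\pi$ that realizes $\chi_H(G,P)$. By the first part of Theorem~\ref{thm:Grundy}, every minimum entropy colouring of $(G,P)$ is a Grundy colouring, so $\pi$ is Grundy. Since $\Gamma(G)$ is the maximum number of colours over all Grundy colourings of $G$, the colouring $\pi$ uses at most $\Gamma(G)$ colours, whence $\chi_H(G,P) \le \Gamma(G)$. As this holds for every $P$, taking the maximum over all distributions gives $\max_P \chi_H(G,P) \le \Gamma(G)$.

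For the lower bound, I would use the definition of $\Gamma(G)$ to choose a Grundy colouring $\phi$ of $G$ that uses exactly $\Gamma(G)$ colours. The second part of Theorem~\ref{thm:Grundy} then supplies a probability mass function $P$ on $V(G)$ for which $\phi$ is the \emph{unique} minimum entropy colouring of $(G,P)$. Because $\phi$ is the only minimum entropy colouring, $\chi_H(G,P)$ equals the number of colours used by $\phi$, namely $\Gamma(G)$. Hence $\max_P \chi_H(G,P) \ge \chi_H(G,P) = \Gamma(G)$, and combining the two bounds yields the claimed equality.

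I do not expect a serious obstacle, as both directions are essentially immediate consequences of Theorem~\ref{thm:Grundy}. The only point needing a little care is the upper bound: one must observe that $\chi_H(G,P)$ is attained by some minimum entropy colouring, that this colouring is Grundy, and that the bound $\Gamma(G)$ applies to the number of colours in an \emph{arbitrary} Grundy colouring (not merely those arising from minimum entropy colourings). The identity $\Gamma(G) = \chi^o(G)$ from Erd\"{o}s and et.~al. is not required for this argument, although it could be used to restate the right-hand side in terms of the ochromatic number.
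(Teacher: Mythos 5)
Your proposal is correct and follows essentially the same route as the paper: the paper's proof consists precisely of invoking the two parts of Theorem~\ref{thm:Grundy} (every minimum entropy colouring is Grundy, giving the upper bound $\Gamma(G)$; and every Grundy colouring, in particular one with $\Gamma(G)$ colours, is the unique minimum entropy colouring for some $P$, giving the lower bound). You have merely written out the two inequalities that the paper leaves implicit.
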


\begin{proof}
Due to Theorem \ref{thm:Grundy} any minimum entropy colouring of a graph $G$ equipped with a probability distribution on its vertices is a Grundy colouring, and for any Grundy colouring $\phi$ of $G$, there exists a probability distribution $P$ over $V(G)$ such that $\phi$ is the unique minimum entropy colouring of $(G,P)$.
\end{proof}

\begin{corollary}
\[
\max_P \chi_H(G,P) = \chi^o(G,P).
\]
\end{corollary}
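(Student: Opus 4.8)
The plan is to obtain this corollary immediately by chaining the two results that directly precede it, so that essentially no new work is required. The immediately preceding lemma establishes that for every probabilistic graph $(G,P)$ we have $\max_P \chi_H(G,P) = \Gamma(G)$, where $\Gamma(G)$ is the Grundy number. The lemma of Erd\"os and et.~al.\ establishes that $\Gamma(G) = \chi^o(G)$, where $\chi^o(G)$ is the ochromatic number. First I would invoke the preceding lemma to replace $\max_P \chi_H(G,P)$ by $\Gamma(G)$, and then I would invoke the Erd\"os et.~al.\ lemma to replace $\Gamma(G)$ by $\chi^o(G)$, giving
\[
\max_P \chi_H(G,P) = \Gamma(G) = \chi^o(G),
\]
which is the claimed identity.

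The only subtlety worth flagging is purely notational: the ochromatic number defined earlier is a function of the graph alone and is written $\chi^o(G)$, whereas the corollary statement writes $\chi^o(G,P)$. Since the ochromatic number does not depend on the probability distribution $P$ (it is the maximum of the parsimonious $\phi$-colouring number $\chi_\phi(G)$ over all orderings $\phi$ of $V(G)$), the symbol $\chi^o(G,P)$ should be read as $\chi^o(G)$. I would either suppress the spurious $P$ or remark explicitly that $\chi^o(G,P) = \chi^o(G)$ before concluding, so that the two sides of the Erd\"os et.~al.\ lemma line up with the two sides of the corollary.

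There is no genuine obstacle here: the result is a one-line consequence of substituting one equality into another, and both equalities have already been recorded as lemmas earlier in this section. If anything, the ``hard part'' is entirely bookkeeping, namely making sure the objects named $\Gamma(G)$, $\chi^o(G)$, and $\max_P \chi_H(G,P)$ are the same objects as in the two cited lemmas and that the distribution-independence of $\chi^o$ is acknowledged so the notation in the corollary statement is consistent.
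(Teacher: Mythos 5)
Your proposal is correct and matches the paper's intent exactly: the corollary is stated immediately after the lemma $\max_P \chi_H(G,P) = \Gamma(G)$ and the Erd\H{o}s et al.\ lemma $\Gamma(G) = \chi^o(G)$, and the paper leaves the one-line chaining of these two equalities implicit. Your remark that the $P$ in $\chi^o(G,P)$ is spurious, since the ochromatic number depends only on the graph, is a fair observation about the paper's notation and does not affect the argument.
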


Note that every greedy colouring of the vertices of a graph is a Grundy colouring.

It is worth mentioning that the chromatic number of a vertex transitive graph is not achieved by a Grundy colouring. Let $G$ be a 6-cycle. Consider the first Grundy colouring of $G$ with colour classes $\{v_1,v_4\}$, $\{v_3,v_6\}$, and $\{v_2,v_5\}$ and the second Grundy colouring with colour classes $\{v_1,v_3,v_5\}$ and $\{v_2,v_4,v_6\}$.

It may seem that every Grundy colouring of a probabilistic graph is a minimum entropy colouring, but the following example shows that is not true.
Consider a probability distribution for the 6-cycle in the above example as $p(v_1) = p(v_3) = 0.4$ and $p(v_2) = p(v_4) = p(v_5) = p(v_6) = 0.05$. Then, denoting the first Grundy colouring in the example above by $c_A$ and the second one by $c_B$, we have $H(c_A) = 0.44$ and $H(c_B) = 0.25$ which are not equal.

\begin{remark}\label{rem:cnj1}
Let $(G,P)$ and $(G^\prime,P^\prime)$ be two probabilistic graphs, and $\phi:G\rightarrow G^\prime$ a homomorphism from $G$ to $G^\prime$ such that for every $v^\prime \in V(G^\prime)$, we have
\[
p^\prime(v^\prime) = \sum_{v:v \in \phi^{-1}(v^\prime)}p(v).
\]
Then, can we say that
\[
\chi_H(G,P)\leq\chi_H(G^\prime,P^\prime)?
\]
The following example shows that is not true. Let $(G,P)$ be a probabilistic 6-cycle and $(G^\prime,P^\prime)$ be a probabilistic $K_2$ with the corresponding probability distributions as follows. $p(v_1) = p(v_4) = 0.4$, $p(v_2) = p(v_3) = p(v_5) = p(v_6) = 0.05$. Then, $\chi_H(C_6,P) = 3$ while $\chi_H(K_2,P^\prime) = 2$, i.e., $\chi_H(C_6,P)\geq\chi_H(K_2,P^\prime)$. It is worth noting that even as a result of simple operations like deleting an edge, we cannot have the above conjecture. To see this just add an edge between $v_1$ and $v_4$ in this example.
\end{remark}

\section{Minimum Entropy Colouring and Kneser Graphs}

In this section, we study the minimum entropy colouring of a Kneser graph $K_{v:r}$ and prove the following Theorem.

\begin{theorem}\label{thm:Kneser}
Let $(K_{v:r}, U)$ be a probabilistic Kneser graph with uniform distribution $U$ over its vertices and $v\geq 2r$. Then, the minimum number of colours in a minimum entropy colouring of $(K_{v:r}, U)$, - i.e. $\chi_H(K_{v:r}, U)$, is equal to the chromatic number of $K_{v:r}$, i.e. $\chi(K_{v:r})$. Furthermore, the chromatic entropy of  $(K_{v:r}, U)$ is
\begin{multline}
H_{\chi}(K_{v:r}, U)= \frac{1}{\chi_f(K_{v:r})} \log\chi_f(K_{v:r}) +\\
\sum_{0\leq i \leq v-1-2r} \frac{1}{\chi_f(K_{v:r})} \frac{1}{\prod_{j = 0}^i\chi_f(K_{v-j-1:v-r-j})}\log\chi_f(K_{v:r})\prod_{j = 0}^i\chi_f(K_{v-j-1:v-r-j}).\nonumber
\end{multline}

\end{theorem}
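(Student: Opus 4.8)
The plan is to exhibit one explicit colouring, show it is entropy-optimal, and then read off both assertions. Recall the standard facts valid for $v \ge 2r$: by the Erd\H{o}s--Ko--Rado theorem every maximum independent set (intersecting family) of $K_{v:r}$ is a \emph{star}, of size $\alpha = \binom{v-1}{r-1}$, so $\alpha/\binom{v}{r} = r/v = 1/\chi_f(K_{v:r})$; by Lov\'asz's theorem $\chi(K_{v:r}) = v-2r+2$; and $\chi_f(K_{m:s}) = m/s$ whenever $m \ge 2s$. I would introduce the \emph{minimum-element colouring}: colour an $r$-set $S$ by $\min S$ when $\min S \le v-2r+1$, and give every $S \subseteq \{v-2r+2,\dots,v\}$ one common extra colour. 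Each class $\{S : \min S = j\}$ is independent (all members contain $j$) of size $\binom{v-j}{r-1}$, and the extra class consists of the $r$-subsets of a $(2r-1)$-set, which pairwise intersect. Hence this is a proper colouring using exactly $(v-2r+1)+1 = v-2r+2 = \chi(K_{v:r})$ colours, with class sizes $\binom{v-1}{r-1},\binom{v-2}{r-1},\dots,\binom{2r-1}{r-1}$ together with a terminal class of size $\binom{2r-1}{r-1}$.

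I would then reduce both assertions to the single claim that, for the uniform distribution $U$, the minimum-element colouring attains $H_\chi(K_{v:r},U)$. Granting this, the second assertion becomes the computation below, while the first follows immediately: a minimum entropy colouring is in particular a proper colouring, hence uses at least $\chi(K_{v:r})$ colours, whereas the minimum-element colouring is a minimum entropy colouring using exactly $\chi(K_{v:r})$ of them, so $\chi_H(K_{v:r},U)=\chi(K_{v:r})$. Note that, being one specific colouring, it supplies the upper bound $H_\chi(K_{v:r},U)\le$ its own entropy for free; the content is the matching lower bound, namely that no colouring does better.

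For uniform $U$ on $n=\binom{v}{r}$ vertices, $H(\pi)=\log n-\frac1n\sum_C|C|\log|C|$, so minimizing $H(\pi)$ is the same as maximizing $\sum_C|C|\log|C|$; since $x\log x$ is convex this functional is Schur-convex in the vector of class sizes and is maximized by whichever achievable size vector \emph{majorizes} all others. I would therefore prove that the size vector $\lambda^\ast$ of the minimum-element colouring majorizes the sorted size vector of every proper colouring: for each $k$, the $k$ largest classes of any colouring form a union of $k$ intersecting families, and the key extremal fact is that such a union has at most $\binom{v}{r}-\binom{v-k}{r}$ members, a bound met with equality by the first $k$ classes of $\lambda^\ast$ via the hockey-stick identity $\sum_{j=1}^{k}\binom{v-j}{r-1}=\binom{v}{r}-\binom{v-k}{r}$. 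Schur-concavity of Shannon entropy then gives the lower bound. \textbf{This extremal bound on the union of $k$ intersecting families is the step I expect to be the main obstacle}: it is a genuine generalization of Erd\H{o}s--Ko--Rado, and its validity throughout the range $v\ge 2r$ must be verified. Equivalently, one may reorganize the argument as an induction on $v$ that peels off one maximum independent set at a time, using $H_\chi(K_{v:r},U)=h\!\left(\tfrac{1}{\chi_f(K_{v:r})}\right)+\bigl(1-\tfrac{1}{\chi_f(K_{v:r})}\bigr)H_\chi(K_{v-1:r},U)$; then the obstacle migrates to justifying that an optimal colouring may be taken to remove a maximum independent set first.

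Finally I would carry out the entropy computation. Writing $p_j=\binom{v-j}{r-1}/\binom{v}{r}$ for the probability of the $j$-th class, a routine binomial manipulation gives $p_j=\frac{r}{v}\prod_{t=1}^{j-1}\frac{v-r-t+1}{v-t}$, and using $\chi_f(K_{m:s})=m/s$ this equals $\bigl(\chi_f(K_{v:r})\prod_{t=0}^{j-2}\chi_f(K_{v-t-1:\,v-r-t})\bigr)^{-1}$; substituting into $H=\sum_j p_j\log(1/p_j)$ yields the stated telescoped expression. The one point demanding care is the terminal behaviour: the recursion bottoms out at $K_{2r:r}$, a perfect matching whose minimum entropy colouring necessarily splits it into two equal independent sets of size $\binom{2r-1}{r-1}$, so the last two classes are equal and both must be accounted for when assembling the closed form.
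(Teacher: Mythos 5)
Your proposal follows essentially the same route as the paper's (very terse) proof: exhibit the greedy star colouring with classes of sizes $\binom{v-1}{r-1},\binom{v-2}{r-1},\dots,\binom{2r-1}{r-1}$ plus one terminal intersecting class, argue that its colour sequence dominates every colour-feasible sequence, and invoke the majorization lemma (Lemma 4.5.3 of the paper, which is exactly your Schur-concavity step) to conclude optimality; your alternative formulation as an induction on $v$ that peels off one maximum independent set at a time is precisely the induction the paper runs. The step you single out as the main obstacle --- that the union of the $k$ largest colour classes, being a union of $k$ intersecting families, contains at most $\binom{v}{r}-\binom{v-k}{r}$ sets --- is exactly the step the paper dispatches with the phrase ``due to Erd\H{o}s--Ko--Rado theorem'' and nothing more. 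You are right to be suspicious: for $k=1$ this is EKR, but for $k\ge 2$ it is a strictly stronger extremal statement (Frankl's problem on covering $r$-sets by $k$ intersecting families), and the paper supplies no argument for it in the full range $v\ge 2r$. So you have not missed any idea that the paper actually provides; if anything your write-up is more honest about where the real difficulty sits, and your attention to the terminal class (the two equal classes of size $\binom{2r-1}{r-1}$ at the bottom of the recursion, and whether the stated closed form counts both) is a point the paper's proof does not address at all.
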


Before proving the above theorem, we explain some preliminaries and a lemma which were previously given in J. Cardinal and et.~al. \cite{JC08}.

Consider a probabilistic graph $(G,P)$. Let $S$ be a subset of the vertices of $G$, i.e., 
\[
S \subseteq V(G).
\]
Then $P(S)$ denotes
\[
P(S):=\sum_{x\in S}p(x).
\] 
Note that a \emph{colouring} of $V(G)$ is a map $\phi$ from the vertex set $V(G)$ of $G$ to the set of positive integers $\mathbb{N}$, that is
\[
\phi:V(G)\rightarrow \mathbb{N}.
\]
Then $\phi^{-1}(i)$ denotes the set of vertices coloured with colour $i$. Let $c_i$ be the probability of the $i$-th colour class. Hence, letting $X$ be a random vertex with distribution $P$ ranging over the vertices of $G$, we get 
\[
c_i = P(\phi^{-1}(i))= P(\phi(X)=i). 
\]
The \emph{colour sequence} of $\phi$ with respect to $P$ is the infinite vector $c=(c_i)$.

Let  $(G,P)$ be a probabilistic graph. A sequence $c$ is said to be \emph{colour-feasible} if there exists a colouring $\phi$ of $V(G)$ having $c$ as colour sequence. We consider nonincreasing colour sequences,  that is, colour sequences $c$ such that 
\[
c_i\geq c_{i+1},~\forall~i.
\]
Note that colour sequences define discrete probability distributions on $\mathbb{N}$. Then the \emph{entropy of colour sequence c} of a colouring $\phi$, i.e., $H(c)$ is 
\[
H(c) = -\sum_{i\in\mathbb{N}} c_i \log c_i.
\]

The following lemma was proved in N. Alon and A. Orlitsky\cite{Alon96}.

\begin{lemma}\emph{(N. Alon and A. Orlitsky).}
Let $c$ be a nonincreasing colour sequence, let $i$, $j$ be two indices such that $i<j$ and let $\alpha$ a real number such that $0<\alpha\leq c_j$. Then we have $H(c)>H(c_1,\cdots,c_{i-1},c_i + \alpha, c_{i+1}, \cdots, c_{j-1}, c_j - \alpha, c_{j+1},\cdots)$.
\end{lemma}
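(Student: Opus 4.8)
The plan is to observe that the perturbation changes only two entries of the colour sequence, so the inequality reduces to a statement about the single real function $g(x) = -x\log x$ evaluated at the affected coordinates. Writing $H(c) = \sum_k g(c_k)$ and cancelling all the unchanged terms, the claim $H(c) > H(c')$, where $c'$ denotes the perturbed sequence, is equivalent to
\[
g(c_i + \alpha) + g(c_j - \alpha) < g(c_i) + g(c_j).
\]
Here the hypothesis $i<j$ together with the monotonicity $c_i \ge c_{i+1} \ge \cdots \ge c_j$ gives $c_i \ge c_j$, and $0 < \alpha \le c_j$ keeps both perturbed entries in $[0,1]$, so that every term is well defined under the convention $0\log 0 = 0$.

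Next I would exploit that $g$ is strictly concave on $(0,\infty)$: since $g''(x) = -(\log e)/x < 0$, the derivative $g'$ is strictly decreasing. The cleanest finish is the mean value theorem applied to the two increments of equal length $\alpha$. Write $g(c_i+\alpha) - g(c_i) = \alpha\, g'(\xi_1)$ with $\xi_1 \in (c_i, c_i+\alpha)$ and $g(c_j) - g(c_j-\alpha) = \alpha\, g'(\xi_2)$ with $\xi_2 \in (c_j-\alpha, c_j)$. Because $\xi_1 > c_i \ge c_j > \xi_2$, strict monotonicity of $g'$ yields $g'(\xi_1) < g'(\xi_2)$, hence $g(c_i+\alpha) - g(c_i) < g(c_j) - g(c_j - \alpha)$, which rearranges to the displayed inequality. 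Equivalently one may set $\phi(\alpha) = g(c_i+\alpha) + g(c_j-\alpha)$, compute $\phi'(\alpha) = g'(c_i+\alpha) - g'(c_j-\alpha) < 0$ since $c_i + \alpha > c_j - \alpha$, and conclude $\phi(\alpha) < \phi(0)$.

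The only points needing care, and the closest thing to an obstacle, are strictness and the boundary. Strictness is guaranteed because $(c_i+\alpha) - (c_j-\alpha) = (c_i - c_j) + 2\alpha \ge 2\alpha > 0$, so the two points at which $g'$ is evaluated are genuinely distinct even when $c_i = c_j$. The boundary case $\alpha = c_j$ forces $c_j - \alpha = 0$, where $g'(x) \to +\infty$ as $x \to 0^+$; this causes no trouble, since it only reinforces $g'(\xi_2) > g'(\xi_1)$, while $g$ is continuous at $0$ with $g(0)=0$, so the mean value argument (or monotonicity of $\phi$ on $(0,c_j]$ combined with continuity at the endpoint) still yields a strict inequality. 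Conceptually this is nothing more than the Schur-concavity of Shannon entropy: transferring probability mass from a smaller coordinate to a larger one strictly decreases the entropy.
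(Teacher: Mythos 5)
Your argument is correct and complete. Note that the thesis itself gives no proof of this lemma; it only states it with a citation to Alon and Orlitsky, so there is no in-paper argument to compare against. Your reduction to the two affected coordinates, followed by the mean value theorem applied to the strictly concave function $g(x)=-x\log x$ on the two increments of length $\alpha$, is exactly the standard Schur-concavity argument, and you handle the only delicate points correctly: strictness follows from $\xi_1>c_i\geq c_j>\xi_2$ (so the two points where $g'$ is evaluated are distinct even when $c_i=c_j$), the hypotheses force $c_i\geq c_j>0$ so all quantities are defined, and the boundary case $\alpha=c_j$ is covered by continuity of $g$ at $0$ with $g(0)=0$. This is a valid, self-contained proof of the stated lemma.
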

 We now examine the consequences of this lemma. We say that a colour sequence $c$ \emph{dominates} another colour sequence $d$ if $\sum_{i=1}^jc_j\geq\sum_{i=1}^jd_i$ holds for all $j$. We denote this by $c\succeq d$.Note that $\succeq$ is a partial order. We also let $\succ$ denote the strict part of $\succeq$. The next lemma which was proved in J. Cardinal and et.~al. \cite{JC08} shows that colour sequences of minimum entropy colourings are always maximal colour feasible.

\begin{lemma}\emph{(J. Cardinal and et.~al.).} \label{lem:colourseq}
Let $c$ and $d$ be two nonincreasing rational colour sequences such that $c\succ d$. Then we have $H(c)<H(d)$.
\end{lemma}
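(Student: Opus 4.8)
The plan is to prove the statement directly, by exhibiting a finite chain of \emph{elementary transfers} that carries $d$ into $c$, each of which strictly decreases the entropy; the single building block is precisely the transfer lemma of Alon and Orlitsky stated just above. Throughout I use that $c$ and $d$ are nonincreasing rational probability sequences (a colouring of a finite graph uses finitely many colours, so both are finitely supported), and that $c\succ d$ means $\sum_{i=1}^{j}c_i\ge\sum_{i=1}^{j}d_i$ for every $j$, with strict inequality for some $j$, while $\sum_i c_i=\sum_i d_i=1$. First I would record a monovariant: writing $C_j=\sum_{i=1}^{j}c_i$ and, for the running sequence $d$, $D_j=\sum_{i=1}^{j}d_i$, set $\Phi(d)=\sum_{j\ge 1}(C_j-D_j)$. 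Since $c\succ d$ we have $C_j\ge D_j$ for all $j$, so $\Phi(d)\ge 0$, and $\Phi(d)=0$ forces $C_j=D_j$ for all $j$, i.e.\ $d=c$. Clearing denominators, say $c_i=a_i/N$ and $d_i=b_i/N$ with nonnegative integers $a_i,b_i$, every partial sum is a multiple of $1/N$, hence so is $\Phi(d)$.

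Next I would describe one elementary step while $d\ne c$. Let $i$ be the first index with $c_i\ne d_i$; the prefix-sum inequality forces $c_i>d_i$. Because the total masses agree, $\sum_{r>i}(c_r-d_r)=-(c_i-d_i)<0$, so there is a first index $j>i$ with $d_j>c_j$, and $d_r\le c_r$ for $i<r<j$. Transfer $\alpha=\min\{c_i-d_i,\ d_j-c_j\}>0$ of mass from coordinate $j$ to coordinate $i$. Since $d$ is nonincreasing and $i<j$ we have $d_i\ge d_j$, so this is exactly the operation of the Alon--Orlitsky transfer lemma, with mass added to the earlier, larger coordinate $i$ and removed from the later, smaller coordinate $j$; hence the entropy \emph{strictly decreases}. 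A short computation gives $C_m-D_m\ge c_i-d_i\ge\alpha$ for $i\le m<j$, so the new sequence is still majorized by $c$, while $\Phi$ drops by exactly $\alpha\,(j-i)\ge 1/N$.

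Iterating, the entropy strictly decreases at every step and $\Phi$ decreases by at least $1/N$ each time; as $\Phi$ is a nonnegative multiple of $1/N$, the process halts after finitely many steps, necessarily at $\Phi=0$, that is, at the sequence $c$. Chaining the strict inequalities yields $H(c)<\cdots<H(d)$, which is the assertion of the lemma.

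The main obstacle is the book-keeping that keeps the transfer lemma applicable at every step, namely that the running sequence stays nonincreasing so that $d_i\ge d_j$ holds for the chosen coordinates. Raising coordinate $i$ is harmless, since $\alpha\le c_i-d_i$ forces $d_i+\alpha\le c_i\le d_{i-1}$; but lowering coordinate $j$ can in principle violate $d_j\ge d_{j+1}$. This is the classical subtlety in realizing a majorization by $T$-transforms, and I would handle it in one of two ways: either invoke the Hardy--Littlewood--P\'olya construction, which selects the transfer pair so that nonincreasing order is preserved throughout, or, more cheaply, re-sort after each step and use the permutation-invariance of $H$, observing that $\Phi$ depends only on the multiset of values so the termination count is unaffected. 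Checking that this re-sorting cannot increase a prefix sum past the corresponding $C_m$, i.e.\ cannot destroy the property of being majorized by $c$, is the one point that demands genuine care.
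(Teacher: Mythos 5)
The paper gives no proof of this lemma at all --- it is stated with a citation to Cardinal et al.\ \cite{JC08} --- so there is nothing in the text to compare against line by line; but your plan is exactly the intended argument: realize the strict domination $c\succ d$ by a finite chain of elementary transfers, each of which strictly lowers entropy by the Alon--Orlitsky lemma stated immediately before. Your bookkeeping is sound: at the first index $i$ with $c_i\neq d_i$ the prefix inequality forces $c_i>d_i$; a first $j>i$ with $d_j>c_j$ exists because the totals agree; $\alpha=\min\{c_i-d_i,\ d_j-c_j\}$ satisfies $0<\alpha\le d_j$ as the transfer lemma requires; the gaps $C_m-D_m\ge c_i-d_i\ge\alpha$ for $i\le m<j$ keep the new sequence dominated by $c$; and the rational potential $\Phi$ drops by at least $1/N$ per step, so the chain terminates at $c$ after finitely many strict decreases.

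The one step you defer --- that re-sorting after a transfer cannot push a prefix sum past $C_m$ --- is a genuine obligation, since the property ``all prefix sums at most $C_m$'' is not preserved by sorting for arbitrary sequences (take $c=(\tfrac12,\tfrac12)$ and the unsorted $(0,1)$). It does, however, hold for your specific transfer, and the check is short. The first $j-1$ entries of the new sequence $d'$ are still nonincreasing (since $d_i+\alpha\le c_i\le d_{i-1}$) and dominate every later entry, so sorting only slides $d_j-\alpha$ rightward past a block $d_{j+1},\dots,d_{j+t}$ of entries exceeding it. For $j\le m<j+t$ the sorted prefix sum is $D_{m+1}-d_j+\alpha\le C_{m+1}-d_j+\alpha=C_m+c_{m+1}-d_j+\alpha\le C_m+c_j-d_j+\alpha\le C_m$, using $c_{m+1}\le c_j$ and $\alpha\le d_j-c_j$; all other prefix sums are unchanged from $d'$ and were already handled. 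With that paragraph inserted (or with the Hardy--Littlewood--P\'olya choice of transfer pair, which sidesteps the re-sorting entirely), your proof is complete and correct.
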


Now we prove Theorem \ref{thm:Kneser}.

\begin{proofthm}
The proof is based on induction on $v$. For $v=2r$ the assertion holds. We prove the assertion for $v>2r$. Due to Erd\"{o}s-Ko-Rado theorem, the colour sequence corresponding to the grundy colouring acheiveing the chromatic number of a Kneser graph dominates all colour feasble sequences. Hence using Lemma \ref{lem:colourseq}, we have $\chi_H(K_{v-1:r},U) = \chi(K_{v-1:r})$. Now, removing the maximum size coclique in $K_{v:r}$, due to induction hypothesis, we get a minimum entropy colouring of $K_{v-1:r}$. Thus we have $\chi_H(K_{v:r},U) - 1 = \chi_H(K_{v-1:r},U)= \chi(K_{v-1:r})$. Noting that $\chi(K_{v:r}) = \chi(K_{v-1:r}) + 1$, we have $\chi_H(K_{v:r},U) = \chi(K_{v:r})$.
\end{proofthm}

\begin{corollary}
Let $G_1= \left(K_{v:r},U\right)$, and $\left(G_2,U\right)$ is homomorphically equivalent, in the sense of Remark \ref{rem:cnj1}, to $G_1$, then we have
$\chi_H\left(G_1\right) = \chi_H\left(G_2\right) = \chi\left(G_1\right) = \chi\left(G_2\right)$.
\end{corollary}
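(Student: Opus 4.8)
The plan is to use Theorem \ref{thm:Kneser} to settle everything on the side of $G_1$ and then transport the conclusion to $G_2$ along the two homomorphisms witnessing the equivalence. Theorem \ref{thm:Kneser} already gives $\chi_H(G_1) = \chi(K_{v:r}) = \chi(G_1)$, and its proof exhibits an explicit minimiser: the Grundy colouring $c_1^\ast$ whose classes are the maximum cocliques of $K_{v:r}$ supplied by the Erd\"{o}s--Ko--Rado theorem is simultaneously a minimum entropy colouring of $G_1$ and uses exactly $\chi(G_1)$ colours. Thus the whole burden is to prove the two equalities involving $G_2$, namely $\chi(G_2) = \chi(G_1)$ and $\chi_H(G_2) = \chi(G_2)$.

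The equality of chromatic numbers is the routine half. Writing $\phi\colon G_1 \to G_2$ and $\psi\colon G_2 \to G_1$ for the homomorphisms, I recall that composing a proper colouring with a homomorphism yields a proper colouring, since adjacent vertices map to adjacent vertices and hence keep distinct colours. Pulling an optimal colouring of $G_2$ back along $\phi$ gives $\chi(G_1) \leq \chi(G_2)$, and symmetrically along $\psi$ gives $\chi(G_2) \leq \chi(G_1)$; thus $\chi(G_1) = \chi(G_2)$.

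The substantive point is $\chi_H(G_2) = \chi(G_2)$. One inequality is free: every proper colouring, in particular every minimum entropy colouring, uses at least $\chi(G_2)$ colours, so $\chi_H(G_2) \geq \chi(G_2)$. For the reverse I would exploit the probability hypothesis of Remark \ref{rem:cnj1}. Because the distributions are uniform and $\psi\colon (G_2,U)\to (G_1,U)$ respects them, the pushforward identity $U_{G_1}(v') = \sum_{v\in\psi^{-1}(v')}U_{G_2}(v)$ holds, whence $U_{G_2}(\psi^{-1}(S')) = U_{G_1}(S')$ for every $S'\subseteq V(G_1)$. Consequently the pulled-back colouring $c_1^\ast\circ\psi$ has exactly the same colour-class probabilities, hence the same colour sequence and the same entropy $H_{\chi}(G_1)$, as $c_1^\ast$; and since each fibre is nonempty (as $U_{G_1}(v')>0$ forces $\psi$ surjective), it uses exactly $\chi(G_1)=\chi(G_2)$ colours. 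To certify that $c_1^\ast\circ\psi$ is an honest minimum entropy colouring of $G_2$ I would also show $H_{\chi}(G_1)=H_{\chi}(G_2)$: the colouring just built gives $H_{\chi}(G_2)\leq H_{\chi}(G_1)$, and pulling a minimum entropy colouring of $G_2$ back along $\phi$ gives $H_{\chi}(G_1)\leq H_{\chi}(G_2)$. Hence $c_1^\ast\circ\psi$ attains $H_{\chi}(G_2)$ using $\chi(G_2)$ colours, giving $\chi_H(G_2)\leq\chi(G_2)$ and so equality. Assembling the pieces yields $\chi_H(G_1)=\chi_H(G_2)=\chi(G_1)=\chi(G_2)$.

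The main obstacle, and the place where the hypotheses must be used with care, is this transfer of the \emph{minimum} entropy property, because Remark \ref{rem:cnj1} explicitly warns that a distribution-respecting homomorphism need not be monotone for $\chi_H$ (the $C_6\to K_2$ example). What disarms that warning is the uniformity of the distributions: uniformity forces each fibre $\psi^{-1}(v')$ to have the common size $|V(G_2)|/|V(G_1)|$, and it is precisely this balancing that makes pullback colour-sequence-preserving and keeps every fibre nonempty so that no colour is merged away. I would therefore check the balancing condition explicitly in both directions (it in fact pins down $|V(G_1)|=|V(G_2)|$), since it is exactly this rigidity, absent in the non-uniform counterexample of the remark, that upgrades the one-sided homomorphism bound into the full chain of equalities.
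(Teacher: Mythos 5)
Your argument is correct, and it is essentially the only reasonable one: the paper states this corollary with no proof at all, treating it as immediate from Theorem \ref{thm:Kneser}, and your write-up supplies exactly the missing justification (pull colourings back along the two distribution-respecting homomorphisms, note that the pushforward identity makes the pullback colour-sequence-preserving, use surjectivity to see no colour class is lost, and run the inequalities in both directions to get $H_\chi(G_1)=H_\chi(G_2)$ so that the pulled-back minimiser is itself a minimiser). One small point of emphasis: what really disarms the warning of Remark \ref{rem:cnj1} is the existence of homomorphisms in \emph{both} directions (the remark's $C_6\to K_2$ example is one-sided, so the pullback of a minimum entropy colouring need not remain minimum); uniformity per se is only needed to guarantee that the distributions are nowhere zero, hence that the maps are surjective and every colour class survives the pullback.
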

\section{Further Results}

As we mentioned in the previous chapter,  for a probabilistic graph $(G,P)$, we have

\begin{equation}
\max_{P} H_k\left(G,P\right) = \log \chi_f\left(G\right). \label{H}
\end{equation}

In this section, we prove the following theorem for vertex transitive graphs using chromatic entropy. Recall that we gave another proof of the following theorem using the structure of vertex transitive graphs and convex optimization techniques in previous chapter.

\begin{theorem}\label{thm:vxtrans}
Let $G$ be a vertex transitive graph. Then the uniform distribution over vertices of $G$ maximizes $H_k\left(G,P\right)$.
That is $H_k\left(G,U\right) = \log \chi_f\left(G\right)$.
\end{theorem}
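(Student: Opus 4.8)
The plan is to establish $H_k(G,U)=\log\chi_f(G)$ by sandwiching $H_k(G,U)$ between matching upper and lower bounds, this time using the chromatic-entropy machinery of the present chapter rather than the convex-optimization argument given earlier. The upper bound is immediate: by Lemma \ref{lem:keylemma} we have $H_k(G,U)\le\max_P H_k(G,P)=\log\chi_f(G)$ for every probability density, in particular for the uniform one. So the real content is the reverse inequality $H_k(G,U)\ge\log\chi_f(G)$.

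For the lower bound I would start from Lemma \ref{lem:H0}, which expresses $H_k(G,U)$ as the limit $\lim_{n\to\infty}\frac1n H_{\chi}(G^{\bigvee n},U^{(n)})$. The key observation is that the product density $U^{(n)}$ of the uniform density $U$ is again uniform, now on $V(G^{\bigvee n})=V(G)^n$. Hence Lemma \ref{lem:H1}, applied to the probabilistic graph $(G^{\bigvee n},U^{(n)})$, yields
\[
H_{\chi}\bigl(G^{\bigvee n},U^{(n)}\bigr)\ge\log\frac{|V(G)|^n}{\alpha(G^{\bigvee n})}.
\]
Dividing by $n$ and letting $n\to\infty$ then reduces everything to controlling the growth of $\alpha(G^{\bigvee n})$.

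The main technical step is therefore to show that the independence number is multiplicative under the OR product, $\alpha(G^{\bigvee n})=\alpha(G)^n$ (only the upper bound $\alpha(G^{\bigvee n})\le\alpha(G)^n$ is actually needed). I would prove this by a projection/fibre argument together with induction on $n$: given an independent set $S$ in $G^{\bigvee n}\cong G\bigvee G^{\bigvee(n-1)}$, the distinct first coordinates occurring in $S$ form an independent set in $G$, while for each fixed first coordinate the associated fibre is an independent set in $G^{\bigvee(n-1)}$; summing over the first coordinates gives $|S|\le\alpha(G)\cdot\alpha(G^{\bigvee(n-1)})$, hence $\alpha(G^{\bigvee n})\le\alpha(G)^n$. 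Substituting this into the displayed inequality gives $\frac1n H_{\chi}(G^{\bigvee n},U^{(n)})\ge\log\frac{|V(G)|}{\alpha(G)}$ for every $n$, and so $H_k(G,U)\ge\log\frac{|V(G)|}{\alpha(G)}$ in the limit.

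Finally I would invoke vertex transitivity. By the earlier corollary computing the fractional chromatic number of a vertex transitive graph, $\chi_f(G)=|V(G)|/\alpha(G)$, so the lower bound reads exactly $H_k(G,U)\ge\log\chi_f(G)$. Combined with the upper bound this forces equality, establishing $H_k(G,U)=\log\chi_f(G)$ and, since this equals $\max_P H_k(G,P)$, that $U$ maximizes the entropy. The one place demanding care is the independence-number estimate for the OR power; the remaining steps are routine bookkeeping with the cited lemmas.
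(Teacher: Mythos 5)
Your proof is correct and follows essentially the same route as the paper's: both arguments sandwich $H_k(G,U)$ between the upper bound $\log\chi_f(G)$ and the limit $\lim_{n\to\infty}\frac{1}{n}H_{\chi}\bigl(G^{\bigvee n},U\bigr)$ supplied by Lemma \ref{lem:H0}, with the lower bound of Lemma \ref{lem:H1} applied to the OR powers. The one genuine difference is the middle step: the paper invokes the vertex transitivity of $G^{\bigvee n}$ together with the multiplicativity $\chi_f\bigl(G^{\bigvee n}\bigr)=\chi_f(G)^n$ cited from the literature, whereas you prove the sub-multiplicativity $\alpha\bigl(G^{\bigvee n}\bigr)\le\alpha(G)^n$ directly by a correct projection/fibre induction, which is a self-contained and slightly more elementary substitute that only uses vertex transitivity of $G$ itself (via $\chi_f(G)=|V(G)|/\alpha(G)$).
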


\begin{proof}
First note that for a vertex transitive graph $G$, we have $\chi_f\left(G\right) = \frac{|V(G)|}{\alpha(G)}$, and the $n$-fold OR product
$G^{\bigvee n}$ of a vertex transitive graph $G$ is also vertex transitive.  Now from Lemma \ref{lem:H1}, Lemma \ref{lem:H2} , and equation \ref{H},
we have

\begin{equation}
H_k\left(G^{\bigvee n},U\right) \leq \log \chi_f\left(G^{\bigvee n}\right) \leq H_{\chi}\left(G^{\bigvee n},U\right),\label{eq:U1}
\end{equation}

From \cite{Alon96} and \cite{fgt}, we have $H_k\left(G^{\bigvee n},U\right) = n H_k\left(G,U\right)$,
$\chi_f\left(G^{\bigvee n}\right) =  \chi_f\left(G\right)^n $, and $\log \chi_f\left(G\right)  = \lim_{n\rightarrow\infty} \frac{1}{n}\log\chi\left(G^{\bigvee n}\right)$. Hence, applying Lemma \ref{lem:H0} to equation \ref{eq:U1} and using squeezing theorem, we get

\begin{equation}
H_k\left(G,U\right) = \log \chi_f\left(G\right) = \lim_{n\rightarrow\infty} \frac{1}{n}\log\chi\left(G^{\bigvee n}\right) =
\lim_{n\rightarrow\infty} \frac{1}{n}H_\chi\left(G^{\bigvee n},U\right).\label{U2}
\end{equation}
\end{proof}

The following example shows that the converse of the above theorem is not true. Consider $G = C_4 \cup C_6$, with vertex sets $V(C_4) = \{v_1,v_2,v_3,v_4\}$ and $V(C_6) = \{v_5,v_6,v_7,v_8,v_9,v_{10}\}$, and parts $A = \{v_1,v_3,v_5,v_7,v_9\}$, $B = \{v_2,v_4,v_6,v_8,v_{10}\}$. Clearly, $G$ is not a vertex transitive graph, however, using Theorem \ref{thm:bipentropy}, one can see that the uniform distribution $U = \left(\frac{1}{10},\cdots,\frac{1}{10}\right)$ gives the maximum graph entropy which is $1$.

\begin{remark}
Note that the maximizer probability distribution of the graph entropy is not unique. Consider $C_4$ with vertex set $V(C_4) = \{v_1,v_2,v_3,v_4\}$ with parts $A = \{v_1,v_3\}$ and $B = \{v_2,v_4\}$. Using Theorem \ref{thm:bipentropy}, probability distributions $P_1 = (\frac{1}{4},\frac{1}{4},\frac{1}{4},\frac{1}{4})$ and $P_2 = (\frac{1}{8},\frac{1}{4},\frac{3}{8},\frac{1}{4})$ give the maximum graph entropy which is $1$.
\end{remark}

Now note that we can describe the chromatic entropy of a graph in terms of the graph entropy of a complete graph as
\[
H_\chi\left(G,P\right) = \min\{H_k\left(K_n,P^\prime\right): \left(G,P\right)\rightarrow \left(K_n,P^\prime\right) \}.
\]
A graph $G$ is called \emph{symmetric with respect to a functional $F_G\left(P\right)$} defined on the set of all the probability distributions on its vertex set if the distribution $P^*$ maximizing $F_G\left(P\right)$ is uniform on $V\left(G\right)$. We study this concept in more detail in the next chapter.

\chapter{Symmetric Graphs\index{symmetric graphs}}

A graph $G$ with distribution $P$ on its vertices is called \emph{symmetric with respect to graph entropy} $H_k\left(G,P\right)$ if the uniform probability distribution on its vertices maximizes $H_k\left(G,P\right)$. In this chapter we characterize different classes of graphs which are symmetric with respect to graph entropy.
\section{Symmetric Bipartite Graphs}

\begin{theorem}\label{thm:Bip}
Let $G$ be a bipartite graph with parts $A$ and $B$, and no isolated vertices. The uniform probability distribution $U$ over the vertices of $G$ maximizes $H_k\left(G,P\right)$ if and only if $G$ has a perfect matching.
\end{theorem}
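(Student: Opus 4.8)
The plan is to reduce the statement to a clean numerical criterion and then attack the two directions with different tools. Since $G$ is bipartite with no isolated vertices it contains at least one edge, so $2 \le \chi_f(G) \le \chi(G) = 2$ and hence $\chi_f(G) = 2$. By Lemma~\ref{lem:keylemma}, $\max_P H_k(G,P) = \log\chi_f(G) = 1$, so ``$U$ maximizes $H_k(G,P)$'' is equivalent to ``$H_k(G,U) = 1$''. The theorem thus reduces to showing $H_k(G,U) = 1$ if and only if $G$ has a perfect matching. Throughout I would keep Hall's theorem in mind: a perfect matching exists exactly when $|A| = |B|$ and $|\mathcal N(D)| \ge |D|$ for every $D \subseteq A$. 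Write $n = |V(G)|$.

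For the direction ``perfect matching $\Rightarrow H_k(G,U) = 1$'' I would avoid Theorem~\ref{thm:bipentropy} entirely and argue directly. Since $(\tfrac12,\dots,\tfrac12) = (1/\chi_f(G),\dots,1/\chi_f(G)) \in VP(G)$, evaluating the entropy there gives $H_k(G,U) \le 1$. For the matching lower bound, take any $\mathbf x \in VP(G) \subseteq FVP(G)$; for each edge $uv$ the clique constraint of $FVP(G)$ gives $x_u + x_v \le 1$, so $x_u x_v \le \tfrac14$ by the arithmetic--geometric mean inequality. Multiplying this over the $n/2$ edges of a perfect matching, which partition $V(G)$, yields $\prod_{v} x_v \le 2^{-n}$. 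Since $H_k(G,U) = -\tfrac1n \log \max_{\mathbf x \in VP(G)} \prod_v x_v$, we conclude $H_k(G,U) \ge 1$, hence equality.

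For the converse I would argue the contrapositive, ``no perfect matching $\Rightarrow H_k(G,U) < 1$'', by applying Theorem~\ref{thm:bipentropy} to the uniform distribution and splitting into its two cases. In the first case the hypothesis $\tfrac{U(D)}{U(A)} \le \tfrac{U(\mathcal N(D))}{U(B)}$ holds for all $D$, so $H_k(G,U) = h(U(A)) = h(|A|/n)$; under $U$ this hypothesis reads $|B|\,|D| \le |A|\,|\mathcal N(D)|$, and were $|A| = |B|$ it would become Hall's condition and force a perfect matching, contrary to assumption. Hence $|A| \ne |B|$, so $U(A) \ne \tfrac12$ and $h(|A|/n) < 1$. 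In the second case $H_k(G,U) = \sum_i U(D_i \cup U_i)\, h\!\left(\tfrac{U(D_i)}{U(D_i \cup U_i)}\right)$ is a convex combination (the weights sum to $1$ since the $D_i$ partition $A$ and the $U_i$ partition $B$) of numbers at most $1$; it equals $1$ only if every argument is $\tfrac12$, i.e. $|D_i| = |U_i|$ for all $i$, whence $|A| = |B|$. But the construction picks $D_1$ maximizing $\tfrac{U(D_1)}{U(A)}\cdot\tfrac{U(B)}{U(\mathcal N(D_1))}$, and in this case that maximum exceeds $1$, giving $\tfrac{|D_1|}{|A|} > \tfrac{|\mathcal N(D_1)|}{|B|}$; with $U_1 = \mathcal N(D_1)$ and $|A| = |B|$ this forces $|D_1| > |\mathcal N(D_1)| = |U_1|$, contradicting $|D_1| = |U_1|$. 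So $H_k(G,U) < 1$ in both cases.

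The main obstacle is this second case: establishing the strict inequality requires tracing the K\"{o}rner--Marton construction closely enough to see that $D_1$ is nonempty (its ratio exceeds $1 > 0$), that $\mathcal N(D_1) \ne \emptyset$ (guaranteed by the no-isolated-vertices hypothesis), that $U_1 = \mathcal N(D_1)$, and that the equality condition $|D_i| = |U_i|$ collides with the deficiency witnessed by $D_1$. One must also be slightly careful with empty parts $D_i \cup U_i$ when reading off the equality condition of the convex combination. By contrast, the forward direction is routine once one notices that the edges of the matching supply the binding clique constraints for the arithmetic--geometric mean bound.
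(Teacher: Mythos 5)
Your proof is correct, but both halves take a different route from the paper, and interestingly the two arguments swap which direction leans on the K\"{o}rner--Marton theorem. For ``perfect matching $\Rightarrow$ maximizer,'' the paper verifies the hypothesis of Theorem~\ref{thm:bipentropy} via Hall's condition and reads off $H_k(G,U)=h(1/2)=1$; you instead bound $\prod_v x_v \le \prod_{uv\in M} x_ux_v \le 4^{-n/2}$ directly from the edge inequalities over a perfect matching $M$, which is more elementary and self-contained (it only needs $VP(G)\subseteq FVP(G)$ and the point $(\tfrac12,\dots,\tfrac12)\in VP(G)$). For the converse, the paper avoids Theorem~\ref{thm:bipentropy} altogether: it invokes K\"{o}nig's theorem to produce a stable set $S$ with $|S|>|V|/2$, exhibits the explicit feasible point $\overline x_v=t$ on $S$ and $1-t$ off $S$ with $t=|S|/|V|$, and computes $\prod_v\overline x_v=(t^t(1-t)^{1-t})^{|V|}>2^{-|V|}$, so $H_k(G,U)<1$. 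You instead apply the second case of Theorem~\ref{thm:bipentropy} to $U$ and derive a contradiction between the equality condition $|D_i|=|U_i|$ of the convex combination and the deficiency witnessed by $D_1$. Your case analysis does go through (in particular $U_1=\mathcal N(D_1)$, the maximal ratio exceeding $1$, and the weights summing to $1$), but as you note it hinges on the fine print of the $D_i,U_i$ construction, which the paper states only loosely; the K\"{o}nig-based argument is more robust precisely because it bypasses that structure. Combining your forward direction with the paper's converse would yield a proof of the whole theorem that never touches Theorem~\ref{thm:bipentropy}.
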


\proof

Suppose $G$ has a perfect matching, then $|A| = |B|$, and due to Hall's theorem we have

\[
|D|\leq |\mathcal{N}(D)|,~~~\forall D\subseteq A.
\]

Now assuming $P = U$, we have

\[
p(D) = \frac{|D|}{|V(G)|},~~~~p(A) = \frac{|A|}{|V(G)|} = \frac{|B|}{|V(G)|} = p(B),
\]

Thus, the condition of Theorem \ref{thm:bipentropy} is satisfied, that is

\[
\frac{p(D)}{p(A)}\leq \frac{p(\mathcal{N}(D))}{p(B)},~~~\forall D\subseteq A,
\]

Then, due to Theorem \ref{thm:bipentropy}, we have

\[
H_k\left(G,U\right) = h\left(p(A)\right) = h\left(\frac{1}{2}\right) = 1.
\]

Noting that $H_k\left(G,P\right)\leq \log \mathcal{X}_f(G),~\forall P$, and $\log \mathcal{X}_f(G) = 1$ for a bipartite graph $G$, the assertion holds.

Now suppose that $G$ has no perfect matching, then we show that $H_k\left(G,U\right) < 1$. First, note that from K\"onig's theorem we can say that a bipartite graph $G = (V,E)$ has a perfect matching if and only if each vertex cover has size at least $\frac{1}{2}|V|$. This implies that if a bipartite graph $G$ does not have a perfect matching, then $G$ has a stable set with size $> \frac{|V|}{2}$.

Furtthermore, as mentioned in \cite{Schriv1}, the stable set polytope of a graph $G$ is determined by the following inequalities if and only if $G$ is bipartite.

\begin{eqnarray}
&0\leq x_v \leq 1,~~~\forall v\in V(G),\nonumber\\
&x_u + x_v \leq 1, ~~~\forall e=uv\in E(G).\nonumber
\end{eqnarray}

We show $\max_{x\in~\text{stable set polytope}} \prod_{v\in V}x_v > 2^{-|V|}$. Let $S$ denote a stable set in $G$ with $|S|>\frac{|V|}{2}$. We define a vector $\mathbf{\overline{x}}$ such that $\overline{x}_v = \frac{|S|}{|V|}$ if $v\in S$ and $\overline{x}_v =1 -  \frac{|S|}{|V|}$ otherwise.
Since $|S|>\frac{|V|}{2}$, $\mathbf{\overline{x}}$ is feasible. Letting $t:= \frac{|S|}{|V|}$, we have

\begin{eqnarray}
&-\left(t \log t + (1 -t) \log (1 - t)\right) < 1,\nonumber\\
&\rightarrow \log t^{t} (1 - t)^{(1 - t)} > -1,\nonumber\\
&\rightarrow t^{t} (1 - t)^{(1 - t)} > 2^{-1},\nonumber\\
&\rightarrow \prod_{v\in V} \overline{x}_v > 2^{-|V|}.\nonumber
\end{eqnarray}
\qed

\section{Symmetric Perfect Graphs}

Let $G=\left(V,E\right)$ be a graph. Recall that the fractional vertex packing polytope of $G$,i.e, $FVP(G)$ is defined as
\[
FVP(G) := \{\mathbf{x}\in\mathbb{R}_+^{|V|}: \sum_{v\in K}x_v\leq 1~\text{for all cliques K of G}\}.
\]
Note that $FVP(G)$ is a convex corner and for every graph $G$, $VP(G)\subseteq FVP(G)$. The following theorem was previously proved in \cite{chv} and \cite{Flker}.

\begin{theorem}
A graph $G$ is perfect if and only if $VP(G) = FVP(G)$.
\end{theorem}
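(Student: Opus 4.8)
The plan is to prove the two inclusions separately, after first disposing of the trivial one. For any independent set $S$ and any clique $K$ we have $|S\cap K|\le 1$, so the characteristic vector $\mathbf 1_S$ satisfies every clique inequality, and since $FVP(G)$ is convex this gives $VP(G)\subseteq FVP(G)$ for every graph. Thus only the reverse inclusion is at stake, and the theorem asserts that it holds exactly when $G$ is perfect. I would also record at the outset that both the polyhedral equality $VP=FVP$ and perfectness are hereditary under passing to induced subgraphs: setting the coordinates outside $W\subseteq V$ to zero turns $FVP(G)$ into $FVP(G[W])$ and $VP(G)$ into $VP(G[W])$, so it suffices to understand the global equality together with all of its restrictions.

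For the direction ``$G$ perfect $\Rightarrow VP(G)=FVP(G)$'' I would establish $FVP(G)\subseteq VP(G)$ by a linear programming duality argument. Take $\mathbf x\in FVP(G)$; to place it in $VP(G)$ it is enough to verify $\mathbf w^{T}\mathbf x\le \max\{\mathbf w^{T}\mathbf z:\mathbf z\in VP(G)\}=\alpha_{\mathbf w}(G)$ for every weight $\mathbf w\ge 0$, where $\alpha_{\mathbf w}(G)$ is the maximum weight of an independent set, and by scaling and density one may take $\mathbf w$ integral. The key input is \emph{weighted} perfectness: replacing each vertex $v$ by a clique of size $w_v$ yields a graph that is again perfect by the substitution (replication) lemma of Lov\'asz \cite{Lvzs}, so by the Perfect Graph Theorem applied to its complement, $\alpha_{\mathbf w}(G)$ equals the least number of cliques $K_1,\dots,K_t$ covering each $v$ at least $w_v$ times. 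Then $\mathbf w^{T}\mathbf x\le\sum_{i=1}^{t}\sum_{v\in K_i}x_v=\sum_{i=1}^{t}x(K_i)\le t=\alpha_{\mathbf w}(G)$, using that $x(K_i)\le 1$ for each $i$. Hence $\mathbf x\in VP(G)$.

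For the converse I would first extract the easy fractional consequence: if $VP(G)=FVP(G)$ then for every induced subgraph $H=G[W]$ the vector $\tfrac{1}{\omega(H)}\mathbf 1$ lies in $FVP(H)=VP(H)$, since each clique has at most $\omega(H)$ vertices, so it is a convex combination of independent sets, which exhibits a fractional colouring of total weight $\omega(H)$ and forces $\chi_f(H)=\omega(H)$. The genuine difficulty is upgrading this to the integral statement $\chi(H)=\omega(H)$ demanded by perfectness. Here I would exploit the antiblocker identity $FVP(G)=\left(VP(\overline G)\right)^{*}$ recorded earlier, together with the involutivity $\mathcal A^{**}=\mathcal A$ of the antiblocker on convex corners (Fulkerson \cite{Flker}): these yield $\left(VP(G)\right)^{*}=FVP(\overline G)$, so that $VP(G)=FVP(G)$ holds if and only if $VP(\overline G)=FVP(\overline G)$, i.e. the hypothesis is self-complementary. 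Feeding this self-duality into the Fulkerson--Lov\'asz--Chv\'atal theory of perfect $(0,1)$-matrices \cite{chv}, \cite{Flker} shows that integrality of $FVP(G)$ forces the defining clique system to be totally dual integral, whence the fractional clique covers above may be chosen integral and $\chi(H)=\omega(H)$ follows for every induced $H$.

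The main obstacle is exactly this last passage from fractional to integral optima. The inclusion $VP\subseteq FVP$ and the fractional identity $\chi_f=\omega$ are elementary, but equality of the two polytopes is essentially equivalent to the (weak) Perfect Graph Theorem, so any honest proof must either invoke that result or reprove the self-dual integrality of clique--vertex incidence matrices; this is the step where I would lean on \cite{chv}, \cite{Flker}, and \cite{Lvzs} rather than attempt a fully self-contained argument.
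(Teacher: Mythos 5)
The paper does not actually prove this theorem --- it cites it to Chv\'atal \cite{chv} and Fulkerson \cite{Flker} --- so your proposal is being measured against the classical arguments rather than against a proof in the text. Your skeleton for the direction ``perfect $\Rightarrow FVP(G)\subseteq VP(G)$'' is the standard and correct one (down-monotonicity of $VP(G)$ reduces membership to the inequalities $\mathbf w^{T}\mathbf x\le\alpha_{\mathbf w}(G)$ for integral $\mathbf w\ge 0$, and these follow from a $\mathbf w$-fold clique cover of size $\alpha_{\mathbf w}(G)$), but the auxiliary construction is wrong as stated: you must replace each vertex $v$ by an \emph{independent set} of $w_v$ twins, not by a clique. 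With clique substitution, an independent set of $G_{\mathbf w}$ picks at most one copy from each $C_v$, so $\alpha(G_{\mathbf w})=\alpha\bigl(G[\operatorname{supp}\mathbf w]\bigr)$ rather than $\alpha_{\mathbf w}(G)$ (already for $G=\overline{K_2}$, $\mathbf w=(2,1)$ one gets $2\neq 3$), and a minimum clique cover of $G_{\mathbf w}$ need not cover $v$ with $w_v$ distinct cliques since one clique of $G_{\mathbf w}$ may swallow several copies of $v$. With independent-set substitution (perfection is preserved because one substitutes the edgeless, hence perfect, graph) both defects disappear: $\alpha(G^{\mathbf w})=\alpha_{\mathbf w}(G)$, each cover clique contains at most one twin of $v$, and the projected cover is exactly the $\mathbf w$-fold clique cover your inequality chain needs.

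The converse direction is where the genuine gap lies: the decisive step --- passing from $VP(G)=FVP(G)$ to the \emph{integral} equality $\chi(H)=\omega(H)$ --- is delegated wholesale to ``the Fulkerson--L\'ov\'asz--Chv\'atal theory of perfect $(0,1)$-matrices,'' i.e.\ to the assertion that integrality of $FVP(G)$ forces total dual integrality of the clique system. That assertion is true but is itself the hard content of Fulkerson's pluperfect graph theorem, so nothing has been proved. Moreover the antiblocker detour is unnecessary: there is a short direct argument. Since the hypothesis is hereditary (as you note), it suffices to produce, by induction on $\omega(G)$, a stable set meeting every maximum clique. Write $\tfrac{1}{\omega(G)}\mathbf 1\in FVP(G)=VP(G)$ as a convex combination $\sum_i\lambda_i\mathbf 1_{S_i}$ of stable sets; for any maximum clique $K$,
\[
1=\frac{|K|}{\omega(G)}=\sum_i\lambda_i\,|S_i\cap K|\le\sum_i\lambda_i=1,
\]
so $|S_i\cap K|=1$ for every $i$ with $\lambda_i>0$. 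Any such $S_i$ meets all maximum cliques, hence $\omega(G-S_i)=\omega(G)-1$, and induction yields $\chi(G)\le\omega(G)$; applying this to every induced subgraph gives perfection. I would replace your third paragraph by this argument (or by an explicit appeal to Lov\'asz's $\alpha(H)\omega(H)\ge|V(H)|$ characterization), and correct the substitution in the second.
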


The following theorem which is called \emph{weak perfect graph theorem} is useful in the following discussion. This theorem was proved by  Lov\'{a}sz in \cite{Lov1} and \cite{Lov2} and is follows.
\begin{theorem}\label{thm:weakpf}
A graph $G$ is perfect if and only if its complement is perfect.
\end{theorem}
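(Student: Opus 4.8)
The plan is to derive this entirely from the polyhedral material already assembled in the chapter, rather than reconstructing Lov\'{a}sz's original combinatorial argument. The two ingredients I would lean on are the characterization stated immediately above, that $G$ is perfect if and only if $VP(G)=FVP(G)$, together with the identity $FVP(G)=\left(VP(\overline{G})\right)^{*}$ recorded earlier in this section. The one fact not written out explicitly in the excerpt is the biantiblocker property of convex corners, $\left(\mathcal{A}^{*}\right)^{*}=\mathcal{A}$, valid for every convex corner $\mathcal{A}$; this is the antiblocking analogue of the bipolar theorem and is due to Fulkerson. Since both $VP(G)$ and $VP(\overline{G})$ are convex corners, the duality applies to them, and in particular the operation $(\cdot)^{*}$ is an involution on convex corners.

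With these in hand the argument is short. First I would rewrite the perfection criterion by substituting the second identity into the first: $G$ is perfect if and only if $VP(G)=\left(VP(\overline{G})\right)^{*}$. Because $(\cdot)^{*}$ is an involution on convex corners, this equality of convex corners is equivalent to the one obtained by applying $(\cdot)^{*}$ to both sides, namely $\left(VP(G)\right)^{*}=\left(\left(VP(\overline{G})\right)^{*}\right)^{*}=VP(\overline{G})$. Now $\left(VP(G)\right)^{*}=FVP(\overline{G})$ by the earlier identity applied to $\overline{G}$ in place of $G$ (using $\overline{\overline{G}}=G$), so the equality reads $FVP(\overline{G})=VP(\overline{G})$, which by the criterion is exactly the statement that $\overline{G}$ is perfect. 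Hence $G$ is perfect if and only if $\overline{G}$ is perfect, and the involution furnishes both directions at once.

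The step I expect to be the crux is the duality $\left(\mathcal{A}^{*}\right)^{*}=\mathcal{A}$, since it is the only piece not made explicit in the excerpt; everything else is bookkeeping with the two stated identities. If one preferred to avoid invoking this duality, the alternative is Lov\'{a}sz's classical route: prove the replication lemma (perfection is preserved when a vertex is duplicated into an adjacent or nonadjacent copy), deduce the symmetric characterization that $G$ is perfect if and only if every induced subgraph $H$ satisfies $\alpha(H)\,\omega(H)\ge |V(H)|$, and then observe that this condition is invariant under complementation because $\alpha(\overline{H})=\omega(H)$ and $\omega(\overline{H})=\alpha(H)$. In that route the replication lemma and the hard direction of the characterization are the genuine obstacles, whereas the polyhedral route concentrates all the difficulty into the single antiblocker duality.
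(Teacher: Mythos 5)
Your argument is correct, but note first that the thesis does not prove this theorem at all: it is stated with a citation to Lov\'{a}sz's papers, so there is no ``paper's proof'' to match. Your polyhedral derivation is sound as written. The three ingredients are all legitimate: the characterization $G$ perfect $\Leftrightarrow VP(G)=FVP(G)$ is the theorem stated immediately above (cited to Chv\'{a}tal and Fulkerson), the identity $FVP(G)=\left(VP(\overline{G})\right)^{*}$ appears in Section 3.8, and the duality $\left(\mathcal{A}^{*}\right)^{*}=\mathcal{A}$ does hold for every convex corner (it follows from the separating hyperplane theorem together with down-monotonicity and full-dimensionality, and $VP(G)$ is indeed a convex corner since the stable set polytope is down-closed and contains the unit corner). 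The bookkeeping $VP(G)=\left(VP(\overline{G})\right)^{*}\Leftrightarrow\left(VP(G)\right)^{*}=VP(\overline{G})\Leftrightarrow FVP(\overline{G})=VP(\overline{G})$ is then exactly Fulkerson's ``pluperfect graph theorem'' argument, and it is the historically standard way to deduce the weak perfect graph theorem once the polyhedral characterization is in hand. The one thing you should flag more explicitly is where the genuine difficulty lives: the direction ``$G$ perfect $\Rightarrow VP(G)=FVP(G)$'' of the cited characterization is itself the hard content (classically it rests on Lov\'{a}sz's replication lemma), so your proof does not make the theorem cheap --- it relocates the work into a result the thesis takes as given. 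Within the logical structure of this chapter that is perfectly acceptable, and your self-assessment of the antiblocker duality as the only unstated ingredient, with the replication-lemma route as the alternative, is accurate.
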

Now, we prove the following theorem which is a generalization of our bipartite symmetric graphs with respect to graph entropy.
\begin{theorem}\label{thm:sympf}
Let $G=(V,E)$ be a perfect graph and $P$ be a probability distribution on $V(G)$. Then $G$ is symmetric with respect to graph entropy $H_k\left(G,P\right)$ if and only if $G$ can be covered by its cliques of maximum size.
\end{theorem}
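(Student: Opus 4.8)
The plan is to translate symmetry into a concave maximization over the vertex packing polytope and read the answer off from the optimality conditions. Since $G$ is perfect we have $VP(G)=FVP(G)$ and $\chi_f(G)=\omega$, where $\omega$ denotes the maximum clique size. Writing $U=\tfrac1n\mathbf 1$ with $n=|V(G)|$, the definition of graph entropy gives
\[
H_k(G,U)=-\tfrac1n\max_{\mathbf a\in FVP(G)}\sum_v\log a_v,
\]
so maximizing the entropy is the same as maximizing the strictly concave function $\sum_v\log a_v$ over the convex corner $FVP(G)$. Because $\tfrac1\omega\mathbf 1\in VP(G)$ always gives $H_k(G,U)\le\log\chi_f(G)=\log\omega$ (Lemma~\ref{lem:keylemma}), the graph is symmetric precisely when the uniform point $\tfrac1\omega\mathbf 1$ is the (unique, by strict concavity) maximizer of $\sum_v\log a_v$ over $FVP(G)$.

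For the direction assuming a cover by maximum cliques I would argue by a weighted AM--GM inequality against the clique constraints. If maximum cliques $K_1,\dots,K_m$ cover $V(G)$ in a balanced way, say each vertex lies in the same number $r$ of them, then for any $\mathbf a\in FVP(G)$ the defining inequality $\sum_{v\in K_i}a_v\le 1$ together with AM--GM gives $\prod_{v\in K_i}a_v\le(\tfrac1\omega\sum_{v\in K_i}a_v)^{\omega}\le\omega^{-\omega}$. Multiplying over $i$ and collecting exponents vertex by vertex yields $\big(\prod_v a_v\big)^{r}\le\omega^{-m\omega}=\omega^{-nr}$, using the incidence identity $m\omega=nr$; hence $\prod_v a_v\le\omega^{-n}$ with equality at $\mathbf a=\tfrac1\omega\mathbf 1$. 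Thus the uniform point is optimal and $G$ is symmetric, and the same computation goes through verbatim for a nonnegative weighting of maximum cliques with constant weighted degree.

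For the converse I would invoke the optimality conditions for the concave program. Slater's condition holds ($\epsilon\mathbf 1$ is strictly feasible for small $\epsilon>0$), and since $\sum_v\log a_v\to-\infty$ on the boundary the optimum has all $a_v>0$, so only clique constraints are active. The Karush--Kuhn--Tucker conditions, exactly as in the treatment of vertex transitive graphs, then produce nonnegative multipliers $\lambda_K$ with $\tfrac1{a_v}=\sum_{K\ni v}\lambda_K$ and complementary slackness. At $a_v=\tfrac1\omega$ the only tight clique constraints are those with $\sum_{v\in K}\tfrac1\omega=1$, i.e. the cliques of maximum size; hence $\lambda_K>0$ only on maximum cliques and $\sum_{K\ni v}\lambda_K=\omega$ for every vertex. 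This is precisely a balanced fractional cover of $V(G)$ by maximum cliques, and in particular every vertex lies in a maximum clique. Passing to the complement $\overline G$, which is perfect by the weak perfect graph theorem (Theorem~\ref{thm:weakpf}), I would then upgrade this fractional certificate to a genuine cover of $V(G)$ by maximum cliques via the integrality of the relevant packing/cover polytope for perfect graphs.

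The main obstacle is exactly this last integrality step: the bare existence of a maximum clique through each vertex is \emph{not} enough, since the path $P_3$ is covered by its two maximum edges yet is not symmetric. The real content of the theorem is that symmetry forces a \emph{balanced} cover, and perfectness is what converts the fractional certificate coming from the KKT multipliers into an honest clique cover by maximum cliques. As a consistency check, when $G$ is bipartite the maximum cliques are its edges and a partition into maximum cliques is a perfect matching, so the statement specializes to Theorem~\ref{thm:Bip}. I expect the delicate point to be phrasing ``covered by its cliques of maximum size'' as the balanced/partition cover that the multipliers actually certify, rather than the weaker set-covering notion.
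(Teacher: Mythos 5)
Your argument is correct and reaches the theorem by a genuinely different route on both sides, so let me compare. For sufficiency the paper passes to the spanning subgraph $T$ that is the disjoint union of the covering cliques and combines monotonicity (Lemma \ref{lem:mono}), additivity over components (Corollary \ref{cor:EntrDisc}) and $H_k(K_\omega,U)=\log\omega$ to get $H_k(G,U)\ge\log\omega$; your weighted AM--GM against the clique inequalities of $FVP(G)$ proves the same bound and in fact covers any balanced (possibly overlapping) fractional cover by maximum cliques, not just a partition. For necessity the paper argues combinatorially: if no partition exists then $\gamma(G)=\alpha(G)>n/\omega$ by the weak perfect graph theorem, and an independent set $S$ with $|S|>n/\omega$ yields an explicit two-valued point of $FVP(G)=VP(G)$ certifying $H_k(G,U)<\log\omega$. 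You instead extract KKT multipliers at the uniform optimum, obtain a balanced fractional cover by maximum cliques, and convert it into an honest partition via integrality: the fractional certificate gives $\chi_f(\overline G)\le n/\omega$, perfection of $\overline G$ gives $\chi(\overline G)=\chi_f(\overline G)=\alpha(G)=n/\omega$, and the count $\sum_i\lvert K_i\rvert\le(n/\omega)\cdot\omega=n$ forces the cliques of a minimum cover to be disjoint and of size $\omega$ --- so the step you flag as delicate does close. Both proofs invoke the weak perfect graph theorem at the same spot. Your observation that ``covered'' must be read as ``partitioned'' (or balanced-covered) is well taken: the paper's own proof writes the hypothesis as a disjoint union $V(G)=V(Q_1)\,\dot\cup\cdots\dot\cup\,V(Q_m)$, and $P_3$ shows a mere set cover by maximum cliques does not suffice. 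What your route buys is the dual certificate (the balanced fractional cover) and a slightly more general sufficient condition, in the same KKT spirit the paper later uses for line graphs; what the paper's route buys is a completely explicit witness vector in the non-symmetric case, with no appeal to convex duality.
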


\proof

Suppose $G$ is covered by its maximum-sized cliques, say $Q_1,\cdots,Q_m$. That is $V(G) = V(Q_1) \dot{\cup}\cdots \dot{\cup}V(Q_m)$ and
$|V(Q_i)|=\omega(G),~\forall i\in [m]$.

Now, consider graph $T$ which is the disjoint union of the subgraphs induced by $V(Q_i)~\forall i\in [m]$. That $T = \dot{\bigcup}_{i=1}^m G\left[V(Q_i)\right]$. Noting that $T$ is a disconnected graph with $m$ components, using Corollary \ref{cor:EntrDisc} we have

\[
H_k\left(T,P\right) = \sum_i P(Q_i)H_k(Q_i,P_i).
\]

Now, having $V(T) = V(G)$ and $E(T)\subseteq E(G)$, we get $H_k\left(T,P\right)\leq H_k\left(G,P\right)$ for every distribution $P$. Using Lemma \ref{lem:keylemma}, this implies

\begin{equation}
H_k\left(T,P\right) = \sum_i P(Q_i)H_k\left(Q_i,P_i\right) \leq \log\chi_f(G),~\forall P,\label{eq:comps}
\end{equation}

Noting that $G$ is a perfect graph, the fact that complete graphs are symmetric with respect to graph entropy, $\chi_f\left(Q_i\right) = \chi_f(G) = \omega(G)=\chi(G),~\forall i\in [m]$, and \ref{eq:comps}, we conclude that uniform distribution maximizes $H_k\left(G,P\right)$.

Now, suppose that $G$ is symmetric with respect to graph entropy. We prove that $G$ can be covered by its maximum-sized cliques. Suppose this is not true. We show that $G$ is not symmetric with respect to $H_k\left(G,P\right)$.

Denoting the minimum clique cover number of $G$ by $\gamma(G)$ and the maximum independent set number of $G$ by $\alpha(G)$, from perfection of $G$ and weak perfect theorem, we get $\gamma(G) = \alpha(G)$. Then, using this fact, our assumption implies that $G$ has an independent set $S$ with $|S|> \frac{|V(G)|}{\omega(G)}$.

We define a vector $\overline{\mathbf{x}}$ such that $\overline{x}_v = \frac{|S|}{|V|}$ if $v\in S$ and $\overline{x}_v = \frac{1 - \frac{|S|}{|V|}}{\omega - 1}$ if $v\in V(G) \backslash S$. Then, we can see that $\overline{\mathbf{x}}\in FVP(G) = VP(G)$. Let $t:=\frac{|S|}{|V|}$. Then, noting that $t>\frac{1}{\omega}$,

\begin{eqnarray}
H_k\left(G,U\right)&\leq& -\frac{1}{|V|}\sum_{v\in V} \log \overline{x}_v\nonumber\\
&=& -\frac{1}{|V|}\left(\sum_{v\in S}\log\overline{x}_v + \sum_{v\in V\backslash S}\overline{x}_v\right)\nonumber\\
&=&-\frac{1}{|V|}\left(|S|\log\alpha + (|V| - |S|)\log\frac{1-\alpha}{\omega-1}\right)\nonumber\\
&=&-t\log t - (1 - t)\log\frac{1 - t}{\omega - 1}\nonumber\\
&=&-t\log t - (\omega -1)\left(\frac{1-t}{\omega-1}\log\frac{1 - t}{\omega -1}\right) < \log \omega(G).\nonumber
\end{eqnarray}
\qed

Note that we have
\[
\gamma\left(G\right) = \alpha\left(G\right).
\]
Now, considering that finding the clique number of a perfect graph can be done in polynomial time and using weak perfect graph theorem we conclude that one can decide in polynomial time whether a perferct graph is symmetric with respect to graph entropy.

\begin{corollary}
Let $G$ be a connected regular line graph without any isolated vertices with valency $k>3$. Then if $G$ is covered by its disjoint maximum-size cliques, then $G$ is symmetric with respect to $H_k(G,P)$.
\end{corollary}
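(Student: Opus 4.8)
The plan is to read the statement as a special case of Theorem~\ref{thm:sympf}. Since the hypothesis already supplies a cover of $G$ by disjoint maximum-size cliques, the only thing left to verify before invoking that theorem is that $G$ is a \emph{perfect} graph; once this is in hand, Theorem~\ref{thm:sympf} immediately gives that the uniform distribution maximizes $H_k(G,P)$, i.e. that $G$ is symmetric with respect to graph entropy. So the whole corollary reduces to deriving perfection from the structural hypotheses ``connected, regular, line graph, valency $k>3$, covered by disjoint maximum cliques.''

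To get at perfection I would first write $G=L(H)$ for the underlying graph $H$ and recall the Krausz description of cliques in a line graph: every maximal clique of $L(H)$ is either a \emph{star} $S_v$ consisting of all edges of $H$ incident to a fixed vertex $v$, or a \emph{triangle} coming from a triangle of $H$. Regularity together with the valency bound $k>3$ forces the maximum-size cliques to be the stars $S_v$ at the vertices of largest degree, all of common size $\omega(G)$. The decisive step is then to translate the given partition of $V(G)=E(H)$ into disjoint maximum cliques into information about $H$: writing the cover as a family of full stars $\{S_v : v\in C\}$ with $C\subseteq V(H)$, disjointness forces $C$ to be an independent set of $H$ (two adjacent centers would share the edge between them), while the covering property forces every edge of $H$ to have an endpoint in $C$, i.e. $C$ is a vertex cover. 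An independent vertex cover displays $H$ as bipartite with $C$ one of its parts. Finally I would invoke the classical fact that the line graph of a bipartite graph is perfect (equivalently, K\"onig's edge-colouring theorem yields $\chi=\omega$ on every induced subgraph of $L(H)$), so that $G$ is perfect and Theorem~\ref{thm:sympf} applies.

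The step I expect to be the main obstacle is exactly the reduction ``the maximum cliques are stars, and the disjoint cover forces $H$ bipartite.'' Regular line graphs need not come from a regular $H$ --- by Whitney's theorem they may also arise from semiregular bipartite graphs --- so identifying the maximum cliques requires some care, and in the borderline case $\omega(G)=3$ (valency $k=4$, underlying graph cubic) the maximum cliques may be triangles rather than stars and the ``independent vertex cover'' argument no longer applies verbatim. In that situation I would bypass perfection and instead rerun the sandwich argument from the \emph{if}-direction of Theorem~\ref{thm:sympf} directly: the disjoint maximum-clique cover together with Corollary~\ref{cor:EntrDisc} and Lemma~\ref{lem:mono} yields $H_k(G,U)\ge\log\omega(G)$, and it then suffices to establish the matching identity $\chi_f(G)=\omega(G)$ (equivalently $\chi_f'(H)=\Delta(H)$), which one must still extract from the disjoint cover; combined with Lemma~\ref{lem:keylemma} this pins down $H_k(G,U)=\log\chi_f(G)=\max_P H_k(G,P)$ and proves symmetry.
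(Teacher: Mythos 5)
Your proposal follows the paper's own proof essentially step for step: write $G=L(H)$, use the Krausz description to argue that the maximum cliques are stars, observe that a disjoint cover by stars makes the set of star centres an independent vertex cover of $H$, conclude that $H$ is bipartite, hence $G=L(H)$ is perfect, and finish by invoking Theorem~\ref{thm:sympf}. (The paper phrases this as a dichotomy ``$H$ is bipartite or regular'' and runs the star argument in the non-bipartite branch, but the substance is identical.) The borderline case you flag --- $k=4$ with $H$ cubic, so $\omega(G)=3$ and a maximum clique may be a triangle of $H$ rather than a star --- is a genuine weak point, and here you are actually more careful than the paper, whose proof asserts without qualification that every maximum clique ``contains a triangle and there is only one way extending that triangle to the whole clique''; that assertion fails for cliques of size $3$. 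Be aware, though, that your proposed fallback for that case is not yet a proof: you still owe the identity $\chi_f(G)=\omega(G)$ (equivalently $\chi_f'(H)=3$ for cubic $H$), and it is not obvious that a partition of $E(H)$ into triangles and claws forces it, so that step would need to be supplied before the $k=4$ case is closed.
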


\proof

Let $G = L(H)$ for some graph $H$. Then either $H$ is bipartite or regular. If $H$ is bipartite, then $G$ is perfect (See \cite{West}) and because of Theorem \ref{thm:sympf} we are done. So suppose that $H$ is not bipartite. Then each clique of size $k$ in $G$ corresponds to a vertex $v$ in $V(H)$ and the edges incident to $v$ in $H$ and vice versa. That is because any such cliques in $G$ contains a triangle and there is only one way extending that triangle to the whole clique which corresponds to edges incident with the corresponding vertex in $H$. This implies that the covering cliques in $G$ give an independent set in $H$ which is also a vertex cover in $H$. Hence $H$ is a bipartite graph and hence $G $ is perfect. Then due to Theorem \ref{thm:sympf} the theorem is proved.
\qed

\section{Symmetric Line Graphs}
In this section we introduces a class of line graphs which are symmetric with respect to graph entropy. Let $G_2$ be a line graph of some graph $G_1$, i.e, $G_2 = L(G_1)$.  Let $|V(G_1)|=n$ and $|E(G_1)| = m$. We recall that a vector $\mathbf x\in \mathbb R_+^m$ is in the matching polytope $MP(G_1)$ of the graph $G_1$ if and only if it satisfies (see A. Schrijver \cite{Schriv1}).
\begin{eqnarray}\label{eq:MatchPoly}
&x_e\geq 0~~~~~~~~ &\forall e\in E(G_1),\nonumber\\
&x(\delta(v))\leq 1 ~~~~~~~~~&\forall v\in V(G_1),\\
&x\left(E[U]\right)\leq\lfloor\frac{1}{2}|U|\rfloor, ~~~~~~~~~&\forall U\subseteq V(G_1)~\text{with}~|U|~\text{odd.}\nonumber
\end{eqnarray}
Let $\mathcal M$ denote the family of all matchings in $G_1$, and for every matching $M\in\mathcal M$ let the charactersitic vector $\mathbf b_M\in\mathbb R_+^m$ be as
\begin{equation}
(\mathbf b_M)_e = \left\{ \begin{array}{rcl}
1, & & e\in M ,\\
0, && e\notin M.
\end{array}\right.
\end{equation}
Then the \emph{fractional edge-colouring number}\index{fractional edge-colouring number} $\chi_f^\prime(G_1)$ of $G_1$ is defined as
\[
\chi_f^\prime(G_1):=\min\{\sum_{M\in\mathcal M}\lambda_M|\mathbf{\lambda}\in\mathbb R_+^{\mathcal M},~\sum_{M\in\mathcal M}\lambda_M\mathbf b_M = \mathbf 1\}.
\]
If we restrict $\lambda_M$ to be an integer, then the above definition give rise to the edge colouring number of $G_1$, i.e., $\chi^\prime(G_1)$. Thus
\[
\chi_f^\prime(G)\leq \chi^\prime(G).
\]
As an example considering $G_1$ to be the peterson graph, we have
\[
\chi_f^\prime(G)=\chi^\prime(G) = 3.
\]
\begin{remark} \label{rem:Rem2}
Note that every matching in $G_1$ corresponds to an independent set in $G_2$ and every independent set in $G_2$ corresponds to a matching in $G_1$. Note that the fractional edge-colouring number of $G_1$, i.e., $\chi_f^\prime(G_1)$ is equal to the fractional chromatic number of $G_2$, i.e.,$\chi_(G_2)$. Thus
\[
\chi_f^\prime(G_1) = \chi_f(G_2).
\]
Furthermore, note that the vertex packing polytope $VP(G_2)$ of $G_2$ is the matching polytope $MP(G_1)$ of $G_1$ (see L. Lov\'{a}sz and M. D. Plummer \cite{Lov3}). That is
\[
VP(G_2) = MP(G_1).
\]
\end{remark}
The following theorem which was proved by Edmond, gives a characterization of the fractional edge-colouring number $\chi_f^\prime(G_1)$ of a graph $G_1$ (see A. Schrijver \cite{Schriv1}).
\begin{theorem}\label{thm:edgecol1}
Let $\Delta(G_1)$ denote the maximum degree of $G_1$. Then the fractional edge-colouring number of $G_1$ is obtained as
\[
\chi_f^\prime(G_1) = \max\{\Delta(G_1),\max_{U\subseteq V,~|U|\geq3}\frac{|E(U)|}{\lfloor\frac{1}{2}|U|\rfloor}\}.
\]
\end{theorem}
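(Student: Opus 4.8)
The plan is to read $\chi_f^\prime(G_1)$ geometrically as the smallest factor by which the all-ones edge vector $\mathbf 1$ must be shrunk in order to land inside the matching polytope $MP(G_1)$, and then to extract the binding constraints directly from the inequality description (\ref{eq:MatchPoly}). First I would observe that, because $MP(G_1)$ is the convex hull of the matching incidence vectors $\mathbf b_M$, a representation $\mathbf 1 = \sum_{M\in\mathcal M}\lambda_M\mathbf b_M$ with $\lambda\geq 0$ and $\sum_M\lambda_M = t$ is, for $t>0$, equivalent to $\tfrac{1}{t}\mathbf 1\in MP(G_1)$: dividing by $t$ turns the nonnegative coefficients into convex weights, and conversely scaling any convex representation of $\tfrac1t\mathbf 1$ back up by $t$ produces a fractional cover of total weight $t$. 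Hence
\[
\chi_f^\prime(G_1) = \min\{t>0 : \tfrac{1}{t}\mathbf 1\in MP(G_1)\},
\]
and this minimum is attained because $MP(G_1)$ is compact and $\Delta(G_1)\geq 1$ keeps $t$ bounded away from $0$.

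Next I would substitute $x_e = 1/t$ for every edge into the three families of constraints in (\ref{eq:MatchPoly}) and decide exactly when $\tfrac1t\mathbf 1$ is feasible. The nonnegativity constraints $x_e\geq 0$ hold automatically. The degree constraints $x(\delta(v))\leq 1$ become $\deg(v)/t\leq 1$, i.e.\ $t\geq\deg(v)$ for every vertex, which collectively read $t\geq\Delta(G_1)$. The odd-set constraints $x(E[U])\leq\lfloor\tfrac12|U|\rfloor$ become $|E(U)|/t\leq\lfloor\tfrac12|U|\rfloor$, i.e.\ $t\geq |E(U)|/\lfloor\tfrac12|U|\rfloor$, where $|E(U)|$ is the number of edges induced on $U$. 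Taking the smallest $t$ meeting all of these simultaneously gives
\[
\chi_f^\prime(G_1)=\max\Bigl\{\Delta(G_1),\ \max_{U\subseteq V,\ |U|\ \mathrm{odd},\ |U|\geq 3}\frac{|E(U)|}{\lfloor\frac12|U|\rfloor}\Bigr\}.
\]

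To match the statement, whose inner maximum ranges over all $U$ with $|U|\geq 3$ rather than only odd $U$, I would check that even sets contribute nothing new: for $|U|$ even we have $\frac{|E(U)|}{\lfloor\frac12|U|\rfloor}=\frac{2|E(U)|}{|U|}$, the average degree of $G_1[U]$, and since $2|E(U)|=\sum_{v\in U}\deg_{G_1[U]}(v)\leq |U|\,\Delta(G_1)$ this quantity is at most $\Delta(G_1)$, hence already dominated by the first term. The main obstacle is the opening reduction, which rests squarely on Edmonds' matching polytope theorem, namely that the inequalities (\ref{eq:MatchPoly}) cut out precisely the convex hull of the $\mathbf b_M$; this is exactly what is being invoked when I pass between the covering linear program defining $\chi_f^\prime$ and membership in $MP(G_1)$. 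Granting that theorem, everything else is the routine constraint-by-constraint translation above, so the only genuine content is the polyhedral description of matchings.
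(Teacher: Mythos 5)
Your argument is correct, but note that the paper does not actually prove this statement: it attributes the result to Edmonds and cites Schrijver, so there is no internal proof to compare against. Your derivation is the standard one and it is complete modulo the one ingredient you flag explicitly, namely Edmonds' description of $MP(G_1)$ by the inequalities (\ref{eq:MatchPoly}) --- which the paper itself also quotes without proof, so you are not assuming more than the text does. The two substantive steps both check out: the identification $\chi_f^\prime(G_1)=\min\{t>0:\tfrac{1}{t}\mathbf 1\in MP(G_1)\}$ is justified in both directions by rescaling (using that $MP(G_1)$ is the convex hull of the $\mathbf b_M$ and contains $\mathbf 0$, so the set of feasible $t$ is an interval and the minimum is attained), and the constraint-by-constraint translation plus the observation that even sets $U$ give $\frac{|E(U)|}{\lfloor|U|/2\rfloor}=\frac{2|E(U)|}{|U|}\leq\Delta(G_1)$ correctly reconciles the odd-set-only description of $MP(G_1)$ with the statement's maximum over all $U$ with $|U|\geq 3$. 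In short: a correct, self-contained proof of a result the paper merely cites, with its only external dependence being the matching polytope theorem.
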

Following A. Schrijver \cite{Schriv1} we call a graph $G_1$ a \emph{$k$-graph}\index{$k$-graph} if it is $k$-regular and its fractional edge coloring number $\chi_f^\prime(H)$ is equal to $k$.
The following colloray characterizes a $k$-graph (see Alexander Schrijver \cite{Schriv1}).
\begin{corollary}\label{cor:kgraph}
Let $G_1=(V_1,E_1)$ be a $k$-regular graph. Then $\chi^\prime_f(G_1)=k$ if and only if $|\delta(U)|\geq k$ for each odd subset $U$ of $V_1$.
\end{corollary}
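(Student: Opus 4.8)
The plan is to derive the corollary directly from Edmonds' formula in Theorem \ref{thm:edgecol1}, combined with the elementary degree-sum identity for a $k$-regular graph. Since $G_1$ is $k$-regular, its maximum degree satisfies $\Delta(G_1)=k$, so Theorem \ref{thm:edgecol1} specializes to
\[
\chi_f^\prime(G_1) = \max\Big\{k,\ \max_{U\subseteq V_1,\,|U|\geq 3}\frac{|E(U)|}{\lfloor\frac{1}{2}|U|\rfloor}\Big\}.
\]
Because the first entry of this maximum already forces $\chi_f^\prime(G_1)\geq k$, the equality $\chi_f^\prime(G_1)=k$ holds if and only if the second entry is at most $k$, i.e.\ every $U$ with $|U|\geq 3$ satisfies $|E(U)|\leq k\lfloor\frac{1}{2}|U|\rfloor$. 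Thus the entire task reduces to translating this edge-count inequality into the cut condition $|\delta(U)|\geq k$.

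First I would record the counting identity: summing the degrees of the vertices in $U$ counts each edge inside $U$ twice and each edge of $\delta(U)$ once, so in a $k$-regular graph $k|U| = 2|E(U)| + |\delta(U)|$, whence $|E(U)| = \tfrac12\bigl(k|U|-|\delta(U)|\bigr)$. I would then split on the parity of $|U|$. For even $|U|$ we have $\lfloor\frac12|U|\rfloor=\frac12|U|$, and the required bound becomes $|E(U)|\leq \frac{k}{2}|U|$, which is automatic since $|\delta(U)|\geq 0$; hence even sets never obstruct $\chi_f^\prime=k$ and can be discarded.

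The crux is the odd case. For odd $|U|$ we have $\lfloor\frac12|U|\rfloor=\frac{|U|-1}{2}$, and substituting the identity turns the inequality $|E(U)|\leq k\frac{|U|-1}{2}$ into $k|U|-|\delta(U)|\leq k|U|-k$, i.e.\ exactly $|\delta(U)|\geq k$. This is a one-line algebraic equivalence once the degree identity is in hand, so I do not expect a genuine obstacle; the only care needed is keeping the floor function and parity bookkeeping straight and preserving the direction of the inequality through the substitution.

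Finally I would observe that singletons require no separate treatment: in a $k$-regular graph $|\delta(\{v\})|=k$, so the cut condition holds with equality and is consistent with the statement, even though Edmonds' formula only ranges over $|U|\geq 3$. Combining the two parity cases then yields $\chi_f^\prime(G_1)=k$ if and only if $|\delta(U)|\geq k$ for every odd $U\subseteq V_1$, as claimed.
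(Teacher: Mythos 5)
Your proof is correct. The paper itself gives no argument for this corollary—it simply cites Schrijver—so there is nothing to compare against except the intended route, which is exactly the one you take: specialize Edmonds' formula from Theorem \ref{thm:edgecol1} using $\Delta(G_1)=k$, and convert the edge-count condition $|E(U)|\leq k\lfloor\tfrac12|U|\rfloor$ into the cut condition via the identity $k|U|=2|E(U)|+|\delta(U)|$. Your parity split is right (even $U$ impose no constraint, odd $U$ with $|U|\geq 3$ give precisely $|\delta(U)|\geq k$, singletons hold automatically by regularity), and the argument is complete; it also correctly covers the degenerate case $U=V_1$ with $|V_1|$ odd, where $|\delta(U)|=0$ forces $\chi_f^\prime(G_1)>k$.
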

The following theorem introduces a class of symmetric line graphs with respect to graph entropy. The main tool in the proof of the following theorem is Karush-Kuhn-Tucker (KKT)\index{Karush-Kuhn-Tucker (KKT)} optimality conditions in convex optimization (see S. Boyd and L. Vanderberghe \cite{Boyd}).
\begin{theorem}
Let $G_1$ be a $k$-graph with $k\geq3$. Then the line graph $G_2 = L(G_1)$ is symmetric with respect to graph entropy.
\end{theorem}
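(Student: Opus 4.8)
The plan is to recast the symmetry assertion as a convex optimization problem over the matching polytope and to solve it explicitly using the Karush--Kuhn--Tucker (KKT) conditions, mirroring the treatment of vertex transitive graphs in the corollary following Lemma~\ref{lem:keylemma}. Write $n=|V(G_1)|$ and $m=|E(G_1)|$, so that $V(G_2)=E(G_1)$ has size $m$, and $m=nk/2$ by $k$-regularity. By Remark~\ref{rem:Rem2} we have $VP(G_2)=MP(G_1)$ and $\chi_f(G_2)=\chi_f'(G_1)=k$, and Lemma~\ref{lem:keylemma} gives $\max_P H_k(G_2,P)=\log\chi_f(G_2)=\log k$. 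Hence it suffices to show that the uniform distribution $U$ attains this value, that is
\[
H_k(G_2,U)=\min_{\mathbf a\in MP(G_1)} g(\mathbf a)=\log k,\qquad g(\mathbf a):=-\frac{1}{m}\sum_{e\in E(G_1)}\log a_e.
\]

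First I would guess the minimizer to be the constant vector $\mathbf a^{*}=\tfrac1k\mathbf 1$, whose objective value is $g(\tfrac1k\mathbf 1)=\tfrac1m\sum_e\log k=\log k$. The feasibility $\tfrac1k\mathbf 1\in MP(G_1)$ is exactly where the $k$-graph hypothesis enters: since $\chi_f'(G_1)=k$, the all-$\tfrac1k$ vector is a convex combination of matching characteristic vectors; equivalently, using Edmonds' description (\ref{eq:MatchPoly}), the odd-set inequalities $x(E[U])\le\lfloor|U|/2\rfloor$ hold at $\mathbf a^{*}$ precisely because $|\delta(U)|\ge k$ for every odd $U$ (Corollary~\ref{cor:kgraph}), while the degree inequalities hold with equality. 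The objective $g$ is strictly convex on the positive orthant and the constraints (\ref{eq:MatchPoly}) are linear, so the KKT conditions are necessary and sufficient for global optimality, and I only need to produce nonnegative dual multipliers certifying $\mathbf a^{*}$.

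The key observation is that at $\mathbf a^{*}=\tfrac1k\mathbf 1$ every degree constraint $x(\delta(v))\le1$ is tight (this is $k$-regularity: $\sum_{e\ni v}\tfrac1k=1$), while the nonnegativity constraints are slack. I would therefore set every odd-set multiplier and every nonnegativity multiplier to zero and seek a symmetric solution assigning one common multiplier $\mu$ to all vertex constraints. Since $\partial g/\partial a_e$ takes the same value $-c$ (with $c>0$ a constant independent of $e$) at every coordinate of $\mathbf a^{*}$, the stationarity equation for an edge $e=uv$,
\[
\frac{\partial g}{\partial a_e}+\mu_u+\mu_v=0,
\]
is satisfied by $\mu_v=c/2>0$ for all $v$. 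This choice meets stationarity, dual feasibility ($\mu_v\ge0$ and all other multipliers zero), and complementary slackness (the active constraints are exactly the degree constraints). Thus $\mathbf a^{*}$ is the global minimizer and $H_k(G_2,U)=\log k$, proving symmetry.

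The hard part will be the feasibility step: verifying $\tfrac1k\mathbf 1\in MP(G_1)$ genuinely requires the full strength of the $k$-graph hypothesis through the odd-set inequalities, and it is here --- together with the tightness of the degree constraints --- that both $k$-regularity and $\chi_f'(G_1)=k$ are used. Once feasibility and tightness are established, the symmetric multipliers make the optimality check routine, and strict convexity of $g$ even shows $\tfrac1k\mathbf 1$ is the unique minimizer. Finally, I would record the promised corollary: a bridgeless cubic graph is $3$-regular and every odd edge cut has odd size at least $2$, hence at least $3$, so by Corollary~\ref{cor:kgraph} it is a $3$-graph, and its line graph is symmetric with respect to graph entropy.
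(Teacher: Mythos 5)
Your proposal is correct and follows essentially the same route as the paper: both reduce to minimizing $-\frac{1}{m}\sum_e\log x_e$ over $MP(G_1)$ and certify $\mathbf x^*=\frac{1}{k}\mathbf 1$ via KKT with zero odd-set multipliers and equal vertex multipliers $k/(2m)$ (your $c/2$ with $c=k/m$). The only difference is that you spell out the feasibility of $\frac{1}{k}\mathbf 1$ in $MP(G_1)$ via the $k$-graph hypothesis and the odd-cut bound of Corollary~\ref{cor:kgraph}, a step the paper merely asserts.
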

\proof

From our discussion in Remark \ref{rem:Rem2} above we have
\[
H_k\left(G_2,P\right) = \min_{\mathbf x\in MP(G_1) }\sum_{e\in E(G_1)} p_e\log \frac{1}{x_e},
\]
Let $\lambda_v,~\gamma_U\geq 0$ be the Lagrange multipliers corresponding to inequalities $x(\delta(v))\leq 1$ and $x\left(E[U]\right)\leq\lfloor\frac{1}{2}|U|\rfloor$ in the description of the matching polytope $MP(G_1)$ in (\ref{eq:MatchPoly}) for all $v\in V(G_1)$ and for all $U\subseteq V(G_1)$ with $|U|$ odd, and $|U|\geq 3$, repectively. From our discussion in Remark \ref{rem:Rem1}, the Lagrange mulitipliers corresponding to inequalities $x_e\geq 0$ are all zero.

Set
\[
g(\mathbf x) = -\sum_{e\in E(G_1)} p_e\log x_e,
\]
Then the Lagrangian of $g(\mathbf x)$ is
\begin{eqnarray}
L\left(\mathbf x, \mathbf{\lambda}, \mathbf{\gamma}\right) &&=-\sum_{e\in E(G_1)}p_e\log x_e +\sum_{e=\{u,v\}}\left(\lambda_u+\lambda_v\right)\left(x_e - 1\right)\nonumber\\
&&+\sum_{e\in E(G_1)}\sum_{\substack{U\subseteq V,\\U\ni e,|U|~\text{odd},~|U|\geq 3 }}\gamma_U x_e - \sum_{\substack{U\subseteq V,\\|U|~\text{odd},~|U|\geq 3}}\lfloor\frac{1}{2}|U|\rfloor,
\end{eqnarray}
Using KKT conditions (see S. Boyd, and L. Vanderberghe \cite{Boyd}), the vector $\mathbf x^*$ minimizes $g(\mathbf x)$ if and only if it satisfies \begin{eqnarray}\label{eq:KKT2}
&&\frac{\partial L}{\partial x_e^*} = 0,\nonumber\\
&&\rightarrow -\frac{p_e}{x_e^*} + \left(\lambda_u+\lambda_v\right) + \sum_{\substack{U\subseteq V,\\U\ni e, |U|~\text{odd},~|U|\geq 3}}\gamma_U = 0~\text{for}~e=\{u,v\}.
\end{eqnarray}
Fix the probability density to be uniform over the edges of $G_1$, that is
\[
p_e = \frac{1}{m}, ~~~\forall e\in E(G_1),
\]
Note that the vector $\frac{\mathbf 1}{k}$ is a feasible point in the matching polytope $MP(G_1)$. Now, one can verify that specializing the variables as
\begin{eqnarray}
&&\mathbf x^* = \frac{\mathbf 1}{k},\nonumber\\
&&\gamma_U = 0~~~~~~~~~~~\forall U\subseteq V,~|U|~\text{odd},~|U|\geq3\nonumber\\
&&\lambda_u=\lambda_v = \frac{k}{2m}~~~~~~~~~~\forall~e=\{u,v\}.\nonumber
\end{eqnarray}
satisfies the equations (\ref{eq:KKT2}). Thus
\[
H_k\left(G_2,\mathbf u\right) = \log k.
\]
Then using Lemma \ref{lem:keylemma} and the assumption $\chi_f(G_2) = k$ the theorem is proved.
\qed

It is well known that cubic graphs has a lot of interesting structures. For example, it can be checked that every edge in a bridgeless cubic graph is in a perfect matching. Furthermore, L. Lov\'{a}sz and M. D. Plummer \cite{Lov3} conjectured that every bridgeless cubic graph has an exponentially many perfect matching. This conjecture was proved by Louis Esperet, et. al. \cite{Esp} recently.
Now we have the following interesting statement for every cubic bridgeless graph.
\begin{corollary}\label{cor:symmcubic1}
The line graph of every cubic bridgeless graph $G_1=(V_1,E_1)$ is symmetric with respect to graph entropy.
\end{corollary}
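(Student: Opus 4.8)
The plan is to recognize this corollary as the case $k=3$ of the immediately preceding theorem. Since ``cubic'' means $3$-regular, it suffices to show that a bridgeless cubic graph $G_1$ has fractional edge-colouring number $\chi_f'(G_1) = 3$, for then $G_1$ is a $3$-graph in Schrijver's sense and the preceding theorem (applied with $k = 3 \ge 3$) yields at once that the line graph $L(G_1)$ is symmetric with respect to graph entropy.

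To establish $\chi_f'(G_1) = 3$ I would appeal to Corollary \ref{cor:kgraph}, which says that a $3$-regular graph satisfies $\chi_f'(G_1) = 3$ if and only if $|\delta(U)| \ge 3$ for every odd subset $U \subseteq V_1$. The problem thus reduces entirely to verifying this edge-cut bound. Fixing an odd set $U$ and summing vertex degrees over $U$, the handshake identity gives $3|U| = 2|E[U]| + |\delta(U)|$, so $|\delta(U)| = 3|U| - 2|E[U]| \equiv |U| \pmod{2}$; since $|U|$ is odd, $|\delta(U)|$ is odd.

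It then remains to rule out the values $0$ and $1$ using bridgelessness. If $\delta(U)$ were empty, then $U$ would be a union of connected components of $G_1$; but each component of a cubic graph has even order by the same handshake count applied within the component, so $|U|$ would be even, contradicting oddness. And if $|\delta(U)| = 1$, its unique edge would be a bridge, contradicting the hypothesis. Hence $|\delta(U)| \ge 2$, and combined with the parity conclusion this forces $|\delta(U)| \ge 3$. As this holds for every odd $U$, Corollary \ref{cor:kgraph} gives $\chi_f'(G_1) = 3$, completing the reduction.

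I expect the only delicate point---the crux of the argument---to be the interaction between parity and bridgelessness: parity alone yields only that $|\delta(U)|$ is odd, and bridgelessness alone yields only $|\delta(U)| \ge 2$, so neither is enough by itself; it is exactly their combination that produces the required bound $|\delta(U)| \ge 3$. Everything else is routine, and with the $3$-graph property secured the corollary is an immediate instance of the preceding theorem.
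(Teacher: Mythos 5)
Your proposal is correct and follows essentially the same route as the paper: apply the handshake identity to an odd set $U$ to see that $|\delta(U)|$ is odd, use bridgelessness to exclude $|\delta(U)|=1$, conclude $|\delta(U)|\geq 3$, and invoke Corollary \ref{cor:kgraph} together with the preceding theorem for $k$-graphs. (Your separate exclusion of $|\delta(U)|=0$ is harmless but redundant, since parity already forbids any even value.)
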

\proof
We may assume that $G_1$ is connected. Let $U\subseteq V_1$ and let $U_1\subseteq U$ consist of vertices $v$ such that $\delta(v)\cap\delta(U)=\emptyset$. Then using handshaking lemma for $G_1[U]$, we have
\[
3|U_1| + 3|U\setminus U_1| - |\delta(U)| = 2|E(G_1[U])|.
\]
And consequently,
\[
3|U| = |\delta(U)| \mod 2,
\]
Assuming $|U|$ is odd and noting that $G_1$ is bridgeless, we have
\[
\delta(U)\geq 3.
\]
Then, considering Corollary \ref{cor:kgraph}, the corollary is proved.
\qed
%

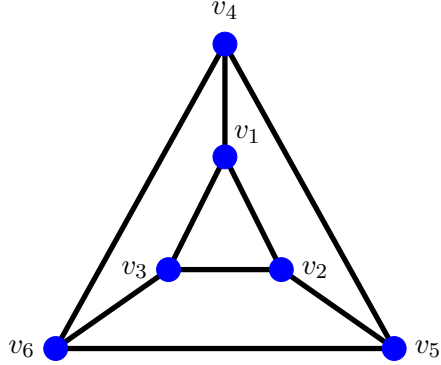
\begin{figure}[!t]
    \begin {center}
        \begin{tikzpicture}
        [scale = 1.5,
        foo/.style={line width = 15pt}]
            \draw[line width=2pt] (0,1) -- (0.5,0);
            \draw[line width=2pt] (0.5,0) -- (-.5,0);
            \draw[line width=2pt] (-.5,0) -- (0,1);
            \draw[line width=2pt] (0,1) -- (0,2);
            \draw[line width=2pt] (0,2) -- (-1.5,-.7);
            \draw[line width=2pt] (-1.5,-.7) -- (1.5,-.7);
            \draw[line width=2pt] (1.5,-.7) -- (0,2);
            \draw[line width=2pt] (1.5,-.7) -- (.5,0);
            \draw[line width=2pt] (-1.5,-.7) -- (-.5,0);
            \node[font=\small] at (0.2,1.2) {$v_1$};
            \node[font=\small] at (.8,0) {$v_2$};
            \node[font=\small] at (-.8,0) {$v_3$};
            \node[font=\small] at (0,2.3) {$v_4$};
            \node[font=\small] at (1.8,-.7) {$v_5$};
            \node[font=\small] at (-1.8,-.7) {$v_6$};
            \filldraw [blue]
            (0,1) circle (3pt)
            (.5,0) circle (3pt)
            (-.5,0) circle (3pt);
            \filldraw [blue]
            (0,2) circle (3pt)
            (1.5,-.7) circle (3pt)
            (-1.5,-.7) circle (3pt);
        \end{tikzpicture}
    \end{center}
\caption{A bridgeless cubic graph.}\label{fig:bridgelesscubic}
  \end{figure}
Figure \ref{fig:bridgelesscubic} shows a bridgeless cubic graph which is not edge transitive and its edges are not covered by disjoint copies of stars and triangles. Thus the line graph of the shown graph in Figure \ref{fig:bridgelesscubic} is neither vertex transitive nor covered by disjoint copies of its maximum size cliques. However, it is symmetric with respect to graph entropy by Corollary \ref{cor:symmcubic1}.

Figure \ref{fig:cubicgraph} shows a cubic graph with a bridge. The fractional edge chromatic number of this graph is $3.5$ while the entropy of its line graph is $1.75712$, i.e., $\log_23.5 = 1.8074>1.75712$. Thus, its line graph is not symmetric with respect to graph entropy, and we conclude that Corollary \ref{cor:symmcubic1} is not true for cubic graphs with bridge.
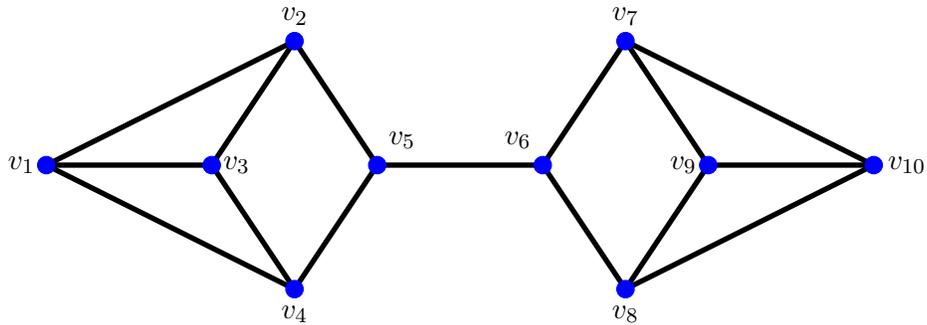
\begin{figure}[!t]\label{fig:cubicgraph}
    \begin {center}
        \begin{tikzpicture}
        [scale = 1.1,
        foo/.style={line width = 15pt}]
            \draw[line width=2pt] (-5,0) -- (-3,0);
            \draw[line width=2pt] (-5,0) -- (-2,1.5);
            \draw[line width=2pt] (-5,0) -- (-2,-1.5);
            \draw[line width=2pt] (-3,0) -- (-2,1.5);
            \draw[line width=2pt] (-3,0) -- (-2,-1.5);
            \draw[line width=2pt] (-2,1.5) -- (-1,0);
            \draw[line width=2pt] (-2,-1.5) -- (-1,0);
            \draw[line width=2pt] (-1,0) -- (1,0);
            \draw[line width=2pt] (1,0) -- (2,1.5);
            \draw[line width=2pt] (1,0) -- (2,-1.5);
            \draw[line width=2pt] (2,1.5) -- (3,0);
            \draw[line width=2pt] (2,-1.5) -- (3,0);
            \draw[line width=2pt] (3,0) -- (5,0);
            \draw[line width=2pt] (2,1.5) -- (5,0);
            \draw[line width=2pt] (2,-1.5) -- (5,0);

            \node[font=\small] at (-5.3,0) {$v_1$};
            \node[font=\small] at (-2,1.8) {$v_2$};
	    \node[font=\small] at (-2.7,0) {$v_3$};
           \node[font=\small] at (-2,-1.8) {$v_4$};
           \node[font=\small] at (-.7,.3) {$v_5$};
           \node[font=\small] at (.7,.3) {$v_6$};
           \node[font=\small] at (2,1.8) {$v_7$};
           \node[font=\small] at (2,-1.8) {$v_8$};
           \node[font=\small] at (2.7,0) {$v_9$};
           \node[font=\small] at (5.4,0) {$v_{10}$};

            \filldraw [blue]
            (-5,0) circle (3pt)
	    (-2,1.5) circle (3pt)
            (-3,0) circle (3pt)
            (-2,-1.5) circle (3pt)
            (-1,0) circle (3pt)
            (1,0) circle (3pt)
            (2,1.5) circle (3pt)
            (2,-1.5) circle (3pt)
            (3,0) circle (3pt)
            (5,0) circle (3pt);

        \end{tikzpicture}
    \end{center}
\caption{A cubic one-edge connected graph.}\label{fig:cubicgraph}
  \end{figure}

\chapter{Future Work}

In this chapter we explain two possible research directions related to the entropy of graphs discussed in previous chapters. Since these directions are related to a superclass of perfect graphs which are called \emph{normal graphs}\index{normal graph} and \emph{Lov\'{a}sz} $\vartheta$\index{Lov\'{a}sz $\vartheta$}, we explain the corresponding terminologies and results in the sequel.

\section{Normal Graphs}
Let $G$ be a graph. A set $\mathcal A$ of subsets of $V(G)$ is a \emph{covering}, if every vertex of $G$ is contained in an element of $\mathcal A$.

We say that graph $G$ is \emph{Normal} if there exists two coverings $\mathcal C$ and $\mathcal S$ such that every element $C$ of $\mathcal C$ is a clique and every element $S$ of $\mathcal S$ is an independent set and the intersection of any element of $\mathcal C$ and any element of $\mathcal S$ is nonempty, i.e.,
\[
C\cap S \neq \emptyset,~\forall C\in \mathcal C,~S\in\mathcal S.
\]
Recall from the sub-additivity of Graph Entropy, we have
\begin{equation}\label{eq:subadd6}
H(P)\leq H_k(G,P) + H_k(\overline G,P).
\end{equation}
A probabilistic graph $(G,P)$ is \emph{weakly splitting} if there exists a nowhere zero probability distribution $P$ on its vertex set which makes inequality (\ref{eq:subadd6}) equality. The following lemma was proved in J. K\"{o}rner et. al. \cite{JKor2}.
\begin{lemma}\label{lem:normal1}\emph{(J. K\"{o}rner, G. Simonyi, and Zs. Tuza)}
A graph $G$ is weakly splitting if and only if it is normal.
\end{lemma}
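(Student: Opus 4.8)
The plan is to reduce the statement to the antiblocking description of entropy and then read off a purely combinatorial condition. Throughout I write $\mathcal A = VP(G)$ and $\mathcal B = VP(\overline G)$, so that $H_k(G,P) = H_{\mathcal A}(P)$ and $H_k(\overline G,P) = H_{\mathcal B}(P)$. First I would verify that $\mathcal B \subseteq \mathcal A^*$: applying the relation $FVP(G)=(VP(\overline G))^*$ with $G$ replaced by $\overline G$ gives $(VP(G))^* = FVP(\overline G)$, and since $VP(\overline G) \subseteq FVP(\overline G)$ for every graph we obtain $VP(\overline G) \subseteq (VP(G))^*$. Hence part (ii) of the lemma of Csisz\'{a}r et al.\ relating entropy to antiblocking pairs applies to $(\mathcal A,\mathcal B)$ and yields, for every probability density $P$,
\[
H(P) \le H_k(G,P) + H_k(\overline G,P),
\]
which is exactly (\ref{eq:subadd6}), together with the crucial equality clause: equality holds for a given $P$ if and only if $P = \mathbf a \circ \mathbf b$ for some $\mathbf a \in VP(G)$ and $\mathbf b \in VP(\overline G)$. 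Consequently $G$ is weakly splitting if and only if there is a \emph{nowhere zero} probability density of the form $P = \mathbf a \circ \mathbf b$ with $\mathbf a \in VP(G)$, $\mathbf b \in VP(\overline G)$.

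Next I would translate the product form into coverings. Since $VP(G)$ is the convex hull of the characteristic vectors of independent sets, write $\mathbf a = \sum_S \mu_S \mathbf 1_S$ with $\mu_S \ge 0$, $\sum_S \mu_S \le 1$, the sum over independent sets $S$ of $G$; likewise, since independent sets of $\overline G$ are cliques of $G$, write $\mathbf b = \sum_C \nu_C \mathbf 1_C$ over cliques $C$ of $G$. Then $a_v = \sum_{S \ni v}\mu_S$ and $b_v = \sum_{C \ni v}\nu_C$, so that
\[
\sum_{v} a_v b_v = \sum_{S,C}\mu_S\,\nu_C\,|S \cap C|.
\]
The key combinatorial observation is that a clique and an independent set meet in at most one vertex, i.e.\ $|S \cap C| \le 1$. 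Hence $\sum_v a_v b_v \le \sum_{S,C}\mu_S\nu_C = (\sum_S \mu_S)(\sum_C \nu_C) \le 1$, and $\mathbf a \circ \mathbf b$ is a probability density precisely when both marginal sums equal $1$ and $|S \cap C| = 1$ whenever $\mu_S \nu_C > 0$. Moreover $\mathbf a \circ \mathbf b$ is nowhere zero exactly when the independent sets $\{S : \mu_S > 0\}$ cover $V(G)$ and the cliques $\{C : \nu_C > 0\}$ cover $V(G)$.

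With this dictionary both implications are short. If $G$ is normal, with a clique covering $\mathcal C$ and an independent-set covering $\mathcal S$ meeting pairwise, I would take $\mathbf a = |\mathcal S|^{-1}\sum_{S \in \mathcal S}\mathbf 1_S$ and $\mathbf b = |\mathcal C|^{-1}\sum_{C \in \mathcal C}\mathbf 1_C$; then $\mathbf a \in VP(G)$ and $\mathbf b \in VP(\overline G)$ are nowhere zero, the relation $C \cap S \ne \emptyset$ forces $|C \cap S| = 1$, so $\sum_v a_v b_v = 1$ and $P := \mathbf a \circ \mathbf b$ is a nowhere zero product density; hence $G$ is weakly splitting. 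Conversely, if $G$ is weakly splitting, the equality clause provides a nowhere zero $P = \mathbf a \circ \mathbf b$; setting $\mathcal S = \{S : \mu_S > 0\}$ and $\mathcal C = \{C : \nu_C > 0\}$ gives an independent-set covering and a clique covering, and the identity $\sum_v a_v b_v = 1$ combined with $|S \cap C| \le 1$ forces $|S \cap C| = 1$, in particular $C \cap S \ne \emptyset$, for all $S \in \mathcal S$ and $C \in \mathcal C$; thus $G$ is normal.

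The main obstacle, and the step deserving the most care, is the passage through the antiblocking entropy lemma: one must correctly identify the pair $(VP(G), VP(\overline G))$, check the inclusion $VP(\overline G) \subseteq (VP(G))^*$ so that the hypothesis of part (ii) is met, and invoke the exact equality condition $P = \mathbf a \circ \mathbf b$ rather than merely the inequality (\ref{eq:subadd6}). Once that characterization is in hand the combinatorics is immediate, because $|S \cap C| \le 1$ turns the normality requirement $C \cap S \ne \emptyset$ into the exact count $|C \cap S| = 1$ that is precisely what makes $\mathbf a \circ \mathbf b$ sum to one.
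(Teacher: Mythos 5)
Your proof is correct, but it is worth pointing out that the thesis itself does not prove Lemma \ref{lem:normal1}: it is stated with a citation to K\"{o}rner, Simonyi, and Tuza \cite{JKor2}, so there is no in-paper argument to match yours against. What you have written is a legitimate, essentially self-contained derivation from material the thesis does prove, namely part (ii) of the antiblocking-pair entropy lemma of Csisz\'{a}r et al.\ in Section 3.7 (via the inclusion $VP(\overline G)\subseteq FVP(\overline G)=(VP(G))^*$), combined with the elementary fact that a clique and an independent set share at most one vertex. All the delicate points check out: the equality clause of that lemma really does characterize tightness of (\ref{eq:subadd6}) by the existence of a Schur decomposition $P=\mathbf a\circ\mathbf b$ with $\mathbf a\in VP(G)$, $\mathbf b\in VP(\overline G)$; the chain $1=\sum_v a_vb_v=\sum_{S,C}\mu_S\nu_C\lvert S\cap C\rvert\leq\bigl(\sum_S\mu_S\bigr)\bigl(\sum_C\nu_C\bigr)\leq 1$ correctly forces $\lvert S\cap C\rvert=1$ on every pair carrying positive weight (in particular it silently eliminates any weight on the empty set); and the nowhere-zero condition in the definition of weakly splitting is exactly what makes the supports of $\mu$ and $\nu$ into coverings. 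The one presentational quibble is that a convex combination should have $\sum_S\mu_S=1$ rather than $\leq 1$, but your argument forces equality anyway, so nothing is lost. Compared with simply citing \cite{JKor2}, your route has the advantage of exhibiting the equivalence as a direct corollary of the convex-corner machinery already developed in Chapter 3, at the cost of depending on the unproved-in-detail equality clause of that cited lemma.
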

Furthermore, we call $(G,P)$ is strongly splitting if inequality (\ref{eq:subadd6}) becomes equality for every probability distribution $P$. The following lemma was proved in I. Csisz\'{a}r et. al. \cite{Csis}.
\begin{lemma}\label{lem:normal2}\emph{(I. Csisz\'{a}r et. al.)}
For a probabilistic graph $(G,P)$, we have
\[
H(P) = H_k(G,P) + H_k(\overline G, P)~\text{if and only if}~H_{VP(G)}(P) = H_{FVP(G)}(P).
\]
\end{lemma}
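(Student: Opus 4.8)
The plan is to reduce the statement to the antiblocking machinery for convex corners; indeed the claim is identical to Lemma \ref{lem:split1}, so I would give the same convex-corner argument. First I would rewrite both graph entropies as entropies of convex corners: by the definition of $H_k$ (equivalently Lemma \ref{lem:equiv}) we have $H_k(G,P) = H_{VP(G)}(P)$ and $H_k(\overline{G},P) = H_{VP(\overline{G})}(P)$. Hence the quantity governing (\ref{eq:subadd6}) is
\[
H_k(G,P) + H_k(\overline{G},P) - H(P) = H_{VP(G)}(P) + H_{VP(\overline{G})}(P) - H(P).
\]

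Next I would exploit the antiblocker identity $FVP(G) = \left(VP(\overline{G})\right)^*$ recorded earlier. Since the antiblocker is an involution on convex corners, this says that $FVP(G)$ and $VP(\overline{G})$ are an antiblocking pair, i.e. $VP(\overline{G}) = \left(FVP(G)\right)^*$. Applying Lemma \ref{lem:antiblock} to this pair then yields the identity $H(P) = H_{FVP(G)}(P) + H_{VP(\overline{G})}(P)$, valid for \emph{every} probability density $P$, which I rearrange to $H_{VP(\overline{G})}(P) - H(P) = -H_{FVP(G)}(P)$.

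Substituting this into the display from the first step gives
\[
H_k(G,P) + H_k(\overline{G},P) - H(P) = H_{VP(G)}(P) - H_{FVP(G)}(P),
\]
and the biconditional follows immediately: the left-hand side vanishes exactly when $H_{VP(G)}(P) = H_{FVP(G)}(P)$, which is the asserted equivalence.

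There is no genuine obstacle once the antiblocker dictionary is in place; the only point requiring care is the bookkeeping of complements and antiblockers — specifically checking that it is the pair $\left(FVP(G), VP(\overline{G})\right)$, and not some other combination, to which Lemma \ref{lem:antiblock} applies. As a sanity check, $VP(G) \subseteq FVP(G)$ always holds, so by Lemma \ref{lem:corner1} the right-hand side $H_{VP(G)}(P) - H_{FVP(G)}(P)$ is nonnegative, consistent with the sub-additivity inequality (\ref{eq:subadd6}).
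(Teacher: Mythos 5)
Your proof is correct and follows essentially the same route as the paper: both reduce the claim to the antiblocker identity $FVP(G)=\left(VP(\overline{G})\right)^*$ and then apply Lemma \ref{lem:antiblock} to the pair $\left(FVP(G),VP(\overline{G})\right)$ to obtain $H_k(G,P)+H_k(\overline{G},P)-H(P)=H_{VP(G)}(P)-H_{FVP(G)}(P)$. You have merely spelled out the intermediate substitution that the paper compresses into a two-line display.
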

Furthermore, it is shown in I. Csisz\'{a}r et. al. \cite{Csis} that
\begin{lemma}\label{lem:normal3}\emph{(I. Csisz\'{a}r et. al.)} A graph $G$ is perfect if and only
\[
H_{VP(G)}(P) = H_{FVP(G)}(P).
\]
\end{lemma}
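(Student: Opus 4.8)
The plan is to reduce the stated entropy identity (understood to hold for every probability density $P$ on $V(G)$) to the polytope equality $VP(G) = FVP(G)$, and then to invoke the Chv\'{a}tal--Fulkerson characterization of perfect graphs already recorded above (see \cite{chv} and \cite{Flker}). The whole argument is an assembly of Lemma \ref{lem:corner1} with that characterization, so I expect no computation at all.

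First I would recall that both $VP(G)$ and $FVP(G)$ are convex corners in $\mathbb{R}_+^{|V|}$ and that $VP(G) \subseteq FVP(G)$ holds for every graph. By the easy ``if'' direction of Lemma \ref{lem:corner1}, this inclusion immediately yields $H_{VP(G)}(P) \geq H_{FVP(G)}(P)$ for all $P$. Hence one of the two inequalities comprising the claimed equality is automatic and carries no information; only the reverse one is at stake.

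Next I would treat the forward direction using the nontrivial half of Lemma \ref{lem:corner1}. Assume $H_{VP(G)}(P) = H_{FVP(G)}(P)$ for every $P$. Then in particular $H_{FVP(G)}(P) \geq H_{VP(G)}(P)$ for all $P$, and the ``only if'' part of Lemma \ref{lem:corner1}, applied to the convex corners $FVP(G)$ and $VP(G)$, forces the reverse inclusion $FVP(G) \subseteq VP(G)$. Combined with $VP(G) \subseteq FVP(G)$ this gives $VP(G) = FVP(G)$. The converse is trivial: if $VP(G) = FVP(G)$, then $H_{VP(G)}(P)$ and $H_{FVP(G)}(P)$ are obtained by minimizing the very same objective $\sum_i p_i \log (1/a_i)$ over the very same set, so they coincide for every $P$.

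Finally I would close by quoting the theorem that $VP(G) = FVP(G)$ if and only if $G$ is perfect. Chaining the equivalences, namely that $G$ is perfect exactly when $VP(G) = FVP(G)$, which in turn holds exactly when $H_{VP(G)}(P) = H_{FVP(G)}(P)$ for all $P$, completes the proof. I do not expect a genuine obstacle here. The only point requiring care is the implicit universal quantifier on $P$: Lemma \ref{lem:corner1} characterizes set inclusions only through inequalities that hold for \emph{all} densities simultaneously, so the equivalence must be read as ``for every $P$'' and would break down if one attempted to fix a single density.
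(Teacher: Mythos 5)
Your proof is correct and follows essentially the same route the paper takes: the paper's own derivation (in the proof that strongly splitting is equivalent to perfect) likewise combines Lemma \ref{lem:corner1} with the Chv\'{a}tal--Fulkerson fact that $VP(G)=FVP(G)$ exactly for perfect graphs. Your remark about the universal quantifier over $P$ is the right point of care and matches how the paper uses the lemma.
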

Using Lemmas \ref{lem:normal1}, \ref{lem:normal2}, and \ref{lem:normal3}, we conclude that every perfect graph is also a normal graph. This fact was previously proved in J. K\"{o}rner \cite{JKor01}. It is shown in \cite{ZPat} that the line graph of a cubic graph is normal. Furthermore, it is shown in J. K\"{o}rner \cite{JKor01} that every odd cycle of length at least nine is normal. Smaller odd cycles are either perfect like a triangle or not perfect nor normal like $C_5$ and $C_7$. If we require that every induced subgraph of a normal graph to be normal, we obtain the notion of hereditary normality. The following conjecture was proposed in C. De Simone and J. K\"{o}rner \cite{De}.
\begin{conj}\emph{Normal Graph Conjecture}
A graph is hereditarily normal if and only if the graph nor its complement contains $C_5$ or $C_7$ as an induced subgraph.
\end{conj}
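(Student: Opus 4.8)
The statement is the open Normal Graph Conjecture, so in place of a complete proof the plan is to dispatch the one direction that is genuinely accessible, reduce the hard direction to a single clean normality statement, and then indicate the structural programme that the remainder would require. I would begin by recording a symmetry that organizes everything: the definition of a normal graph is completely symmetric in its two coverings, so if $G$ is normal via a clique covering $\mathcal C$ and an independent-set covering $\mathcal S$, then $\overline G$ is normal via the same two families with the roles of clique and independent set interchanged. Hence normality, and therefore hereditary normality, is preserved under complementation; and the hypothesis ``neither $G$ nor $\overline G$ contains an induced $C_5$ or $C_7$'' is manifestly both self-complementary and hereditary.

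The necessity (``only if'') direction then follows at once. If $G$ is hereditarily normal, every induced subgraph of $G$ is normal; since $C_5$ and $C_7$ are not normal (as recorded in the text preceding the conjecture), $G$ can contain neither as an induced subgraph. Applying the same argument to $\overline G$, which is hereditarily normal by the complementation symmetry just noted, shows that $\overline G$ likewise has no induced $C_5$ or $C_7$. This settles the forward implication completely.

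For sufficiency I would first strip away the word ``hereditary.'' Because $\overline{C_5}=C_5$, the hypothesis on $G$ is exactly that $G$ is $\{C_5, C_7, \overline{C_7}\}$-free, and this property is inherited by every induced subgraph. It therefore suffices to prove the single implication that \emph{every $\{C_5, C_7, \overline{C_7}\}$-free graph is normal}; hereditary normality then follows by applying this to each induced subgraph. To construct the two required coverings I would attempt a decomposition theorem in the spirit of the Strong Perfect Graph Theorem: show that a $\{C_5, C_7, \overline{C_7}\}$-free graph is either ``basic'' (for instance perfect, where normality is known, or assembled from odd cycles of length at least nine, which are normal) or admits a structural cutset such as a clique cutset, a homogeneous set, or a $2$-join, and then prove that normality survives the reverse operation by gluing the coverings of the pieces along the cutset. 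A complementary analytic route is to invoke the weak-splitting characterization (Lemma \ref{lem:normal1}) together with Lemmas \ref{lem:normal2} and \ref{lem:normal3}, and to try to build, for such a graph, a nowhere-zero distribution $P$ realizing $H(P) = H_k(G,P) + H_k(\overline G, P)$ by an LP-duality argument relating $VP(G)$ and $FVP(G)$.

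The hard part is precisely this sufficiency direction, since it is the content of the conjecture itself. The central obstacle is that, unlike perfection, normality is not known to behave well under the standard decompositions, so the gluing lemmas underpinning the structural approach are unproven and might genuinely fail for some cutsets; meanwhile the analytic approach stalls for lack of a usable min--max description of weak splitting. I therefore expect that any real progress demands first a structure theorem for $\{C_5, C_7, \overline{C_7}\}$-free graphs and then a separate, delicate verification that each basic class and each admissible decomposition preserves the cross-intersecting covering property.
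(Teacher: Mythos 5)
You have been asked about a statement that the paper never proves: it is the Normal Graph Conjecture of De Simone and K\"{o}rner, recorded in the Future Work chapter as an open problem, with only Wagler's verification for the circulants $C_n^k$ cited. There is therefore no proof in the paper to compare against, and no blind attempt could legitimately settle the statement in full.

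Within that constraint your proposal is correctly apportioned. The necessity direction you give is complete and correct, and uses exactly the facts the paper records: normality is self-complementary, because a defining pair of coverings $\left(\mathcal C,\mathcal S\right)$ for $G$ serves, with the roles of cliques and independent sets exchanged, as a defining pair for $\overline G$; hence hereditary normality is self-complementary as well, and since $C_5$ and $C_7$ are not normal, neither $G$ nor $\overline G$ can contain them as induced subgraphs. Your reduction of sufficiency to the single claim that every $\{C_5, C_7, \overline{C_7}\}$-free graph is normal is also right, using $\overline{C_5}=C_5$ and the fact that this hypothesis is hereditary. What follows---the decomposition programme in the style of the Strong Perfect Graph Theorem, and the alternative analytic route through the weak-splitting characterization of Lemmas \ref{lem:normal1}--\ref{lem:normal3}---is a research plan rather than a proof, and you say so explicitly; that is the honest assessment, since this direction is precisely the open content of the conjecture. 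One further point makes your caution more than rhetorical: the conjecture was subsequently disproved by Harutyunyan, Pastor, and Thomass\'{e}, who constructed graphs with no induced $C_5$, $C_7$, or $\overline{C_7}$ that are not normal. So the sufficiency direction is in fact false, no gluing or entropy argument could ever have completed your programme, and the only part of the statement that is a theorem is the necessity direction you proved.
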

A \emph{circulant} $C_n^k$ is a graph with vertex set $\{1,\cdots,n\}$, and two vertices $i\neq j$ are adjacent if and only if
\[
i - j \equiv k~\mathrm{mod}~n.
\]
We assume $k\geq 1$ and $n\geq 2(k+1)$ to avoid cases where $C_n^k$ is an independent set or a clique. The following theorem was proved in L. E. Trotter, jr. \cite{Trot}.
\begin{theorem}\emph{(L. E. Trotter, jr.)}
The circulant $C_{n^\prime}^{k^\prime}$ is an induced subgraph of $C_n^k$ if and only if \[
\frac{k+1}{k^\prime+1}n^\prime\leq n\leq\frac{k}{k^\prime}.
\]
\end{theorem}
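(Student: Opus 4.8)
The plan is to realize both $C_{n'}^{k'}$ and $C_n^k$ as $k$-th powers of cycles --- that is, to read $C_n^k$ as the graph on $\mathbb{Z}_n$ in which $i\sim j$ exactly when the cyclic distance $\min(|i-j|,\,n-|i-j|)$ lies in $\{1,\dots,k\}$ --- and then to control induced embeddings through the cyclic arrangement of the image. An induced embedding of $C_{n'}^{k'}$ into $C_n^k$ selects $n'$ points of $\mathbb{Z}_n$; reading them in cyclic order as $p_0<p_1<\dots<p_{n'-1}$ and setting the gaps $g_t=p_{t+1}-p_t$ (indices mod $n'$, so $\sum_t g_t=n$), I would track the partial sums $D(t,m)=g_t+g_{t+1}+\dots+g_{t+m-1}$. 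The whole theorem then rests on the following translation: once the embedding respects the cyclic order, vertices $t$ and $t+m$ of $C_{n'}^{k'}$ are adjacent iff $p_t,p_{t+m}$ are adjacent in $C_n^k$ for every $m$, which in turn holds for all $m$ iff the two window conditions $D(t,k')\le k$ and $D(t,k'+1)\ge k+1$ hold for all $t$.

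I would first prove that these two window conditions are equivalent to a correct embedding. Because $C_{n'}^{k'}$ is the $k'$-th cycle power, $t$ and $t+m$ are adjacent precisely when $m\le k'$ or $m\ge n'-k'$; since $D(t,m)$ is monotone in $m$ and satisfies the reflection identity $D(t,m)=n-D(t+m,\,n'-m)$, the boundary conditions at $m=k'$ and $m=k'+1$ propagate to all intermediate and wrap-around values of $m$, using $n\ge 2(k+1)>2k$ to rule out accidental wrap-around adjacencies. With the translation in hand, necessity of the stated inequalities is a one-line averaging argument: summing $D(t,k')\le k$ over all $t\in\mathbb{Z}_{n'}$ gives $k'n=\sum_t D(t,k')\le n'k$, hence $n\le\frac{k}{k'}n'$, while summing $D(t,k'+1)\ge k+1$ gives $(k'+1)n\ge n'(k+1)$, hence $n\ge\frac{k+1}{k'+1}n'$.

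For sufficiency I would exhibit an explicit balanced placement. Assuming $\frac{k+1}{k'+1}n'\le n\le\frac{k}{k'}n'$ (which already forces $k'\le k$ and $n\ge n'$, so all gaps are positive), set $p_t=\lfloor tn/n'\rfloor$. Then $D(t,k')\le\lceil k'n/n'\rceil\le k$ because $k'n/n'\le k$, and $D(t,k'+1)\ge\lfloor(k'+1)n/n'\rfloor\ge k+1$ because $(k'+1)n/n'\ge k+1$; so both window conditions hold and the placement is a genuine induced copy of $C_{n'}^{k'}$.

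The real obstacle is the step I have so far assumed: that every induced embedding may be taken to respect the cyclic order, so that the gap formalism applies. I would establish this through the clique structure, which is rigid for these graphs once $n\ge 2k+2$: the maximal cliques of $C_n^k$ are exactly the arcs $\{i,i+1,\dots,i+k\}$ of $k+1$ consecutive vertices, and they occur in a single cyclic sequence. Each maximal clique of the induced $C_{n'}^{k'}$ (an arc of $k'+1$ consecutive vertices of the copy) is a clique of $C_n^k$, hence sits inside one such $(k+1)$-arc; consecutive cliques of the copy share $k'$ vertices and so map to overlapping arcs, which forces the images to advance monotonically around $\mathbb{Z}_n$. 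Making this circular-arc rigidity fully precise --- in effect, uniqueness of the circular ordering recoverable from the graph, up to rotation and reflection --- is where the main work lies; the gap averaging and the explicit construction above are then routine.
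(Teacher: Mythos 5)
The thesis states this theorem without proof, merely citing Trotter's 1975 paper, so there is no in-paper argument to measure yours against; I can only assess the proposal on its own terms. Within it, the sufficiency half is correct and complete: the balanced placement $p_t=\lfloor tn/n'\rfloor$ does satisfy both window conditions, and your averaging step correctly extracts the two inequalities from $D(t,k')\le k$ and $D(t,k'+1)\ge k+1$ (note also that you have silently repaired the statement itself, whose right-hand inequality is missing a factor of $n'$ in the thesis). The translation between induced adjacency and the two window conditions, granted a cyclic-order-respecting embedding, is likewise sound.

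The genuine gap is the one you flag, but it is worse than ``main work remaining'': the lemma you propose to close it with is false. The maximal cliques of $C_n^k$ are exactly the arcs of $k+1$ consecutive vertices only when $n\ge 3k+1$; for $2k+2\le n\le 3k$ there are maximal cliques that wrap around the circle. Concretely, $C_8^3$ is $K_8$ minus the perfect matching of antipodal pairs (the cocktail-party graph $K_{4\times 2}$), and $\{0,1,3,6\}$ is a maximal clique of it --- all pairwise circular distances are at most $3$, and each of $2,4,5,7$ is antipodal to a member --- yet no arc of four consecutive vertices contains it. So the circular-arc rigidity you invoke collapses precisely in the range $2k+2\le n\le 3k$ that the theorem's hypothesis $n\ge 2(k+1)$ still permits, and with it the necessity half of the proof: without the cyclic-order normalization the gap sums $D(t,m)$ are not even defined and the averaging argument has nothing to average. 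To repair this you would need either to run the clique argument only for $n\ge 3k+1$ and treat $2k+2\le n\le 3k$ separately, or to prove necessity from a different invariant altogether (one that does not presuppose recovering the circular order from the abstract graph); as it stands, the necessity direction is unproved.
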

Note that
\[
C_{n^\prime}^{k^\prime}\subset C_n^k,
\]
implies $k^\prime<k$ and $n^\prime<n$. Particularly, the following lemma was proved in A. K. Wagler \cite{Wag}.
\begin{lemma}\emph{(A. K. Wagler)}
\begin{enumerate}[(i)]
  \item $C_5\subseteq C_n^k$ if and only if $\frac{5(k+1)}{2}\leq n\leq 5k$.
  \item $C_7\subseteq C_n^k$ if and only if $\frac{7(k+1)}{2}\leq n\leq 7k$.
  \item $C_7^2\subseteq C_n^k$ if and only if $\frac{7(k+1)}{3}\leq n\leq \frac{7k}{2}$.
\end{enumerate}
Using the above theorem and lemma, A. K. Wagler \cite{Wag} proved the Normal Graph Conjecture for circulants $C_n^k$.
\end{lemma}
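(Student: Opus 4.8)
The plan is to obtain all three equivalences as direct specializations of Trotter's theorem (the theorem stated immediately above), which asserts that $C_{n'}^{k'}$ is an induced subgraph of $C_n^k$ precisely when $\frac{k+1}{k'+1}n' \leq n \leq \frac{k}{k'}n'$. The only genuinely graph-theoretic step is to recognize that each of the three target graphs is itself a circulant with explicit parameters $(n',k')$; once that identification is in hand, the three displayed inequalities fall out by arithmetic.

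First I would record the circulant presentations of the target graphs. Directly from the definition of $C_n^k$, the $5$-cycle is $C_5 = C_5^1$ and the $7$-cycle is $C_7 = C_7^1$, so in parts (i) and (ii) we take $(n',k') = (5,1)$ and $(n',k') = (7,1)$ respectively; in part (iii) the graph $C_7^2$ is already in the form $C_{n'}^{k'}$ with $(n',k') = (7,2)$. I would also note that the standing hypotheses $k \geq 1$ and $n \geq 2(k+1)$ keep $C_n^k$ in the nondegenerate range where Trotter's characterization applies, so that no boundary cases require separate treatment.

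Next I would substitute each pair $(n',k')$ into Trotter's bounds. For (i), the lower bound is $\frac{k+1}{1+1}\cdot 5 = \frac{5(k+1)}{2}$ and the upper bound is $\frac{k}{1}\cdot 5 = 5k$, giving $\frac{5(k+1)}{2} \leq n \leq 5k$. For (ii), the bounds become $\frac{k+1}{2}\cdot 7 = \frac{7(k+1)}{2}$ and $k\cdot 7 = 7k$, giving $\frac{7(k+1)}{2} \leq n \leq 7k$. For (iii), the bounds become $\frac{k+1}{3}\cdot 7 = \frac{7(k+1)}{3}$ and $\frac{k}{2}\cdot 7 = \frac{7k}{2}$, giving $\frac{7(k+1)}{3} \leq n \leq \frac{7k}{2}$. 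Each of these is exactly the claimed interval, so the three equivalences follow at once.

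Since the argument is essentially a one-line application of a cited structural theorem, I do not anticipate a substantive obstacle. The only point requiring care is the identification of the cycles with the circulants $C_5^1$ and $C_7^1$ and, correspondingly, confirming that the induced-subgraph relation appearing in the lemma is the same relation governed by Trotter's theorem; I would verify this against the circulant definition given above before invoking the theorem, after which the remaining work is purely the routine substitution just described.
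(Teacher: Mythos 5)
Your proposal is correct and is exactly the intended derivation: the paper offers no proof of this lemma (it is cited from Wagler's paper), but it is positioned immediately after Trotter's theorem precisely so that each part follows by substituting $(n',k') = (5,1)$, $(7,1)$, $(7,2)$ into Trotter's bounds, which is what you do. Note also that you implicitly corrected a typo in the paper's statement of Trotter's theorem, whose upper bound reads $n \leq \frac{k}{k'}$ with the factor $n'$ missing; your use of $n \leq \frac{k}{k'}\,n'$ is the correct form, and without that correction the specialization would not produce the claimed intervals.
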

One direction for future research is investigating the Normal Graph Conjecture for general circulants and Cayley graphs.
\section{Lov\'{a}sz $\vartheta$ Function and Graph Entropy}

An old problem in information and graph theory is to determine the \emph{zero error Shannon capacity} $C(G)$ of a graph $G$.
Let $G$ be a graph with vertex set $V(G)$ and edge set $E(G)$. The $n$-th normal power of $G$ is the graph $G^n$ with vertex set $V(G^n)=\left(V(G)\right)^n$ and two vertices $(x_1,\cdots,x_n)\neq(y_1,\cdots,y_n)$ are adjacent if and only if
\[
x_i = y_i~\text{or}~\{x_i,y_i\}\in E(G)~\forall i\in\{1,\cdots,n\}.
\]
The \emph{zero error Shannon capacity} $C(G)$ of a graph $G$ is defined as
\[
C(G) = \limsup_{n\rightarrow\infty} \frac{1}{n}\log\alpha\left(G^n\right).
\]
Let $P$ denote the probability distribution over the vertices of $G$, and $\epsilon>0$. Let $x^n\in V^n$ be an $n$-sequence whose entries are from $\mathcal X$, and $N\left(a|x^n\right)$ denote the number of occurrences of an element $a\in\mathcal X$. We call the set of $\left(P,\epsilon\right)$-typical sequences $T^(P,\epsilon)$ to be the set of $n$-sequences $x^n\in V^n$ such that
\[
|N(a|x^n)-P(X = a)|\leq n\epsilon.
\]
Then the \emph{capacity of the graph relative to} $P$ is
\[
C(G,P)=\lim_{\epsilon\rightarrow0}\limsup_{n\rightarrow\infty}\frac{1}{n}\log
\alpha\left(G^(P,\epsilon)\right).
\]
Given a probabilistic graph $(G,P)$, K. Marton in K. Marton \cite{Mart} introduced a functional $\lambda(G,P)$ which is analogous to Lov\'{a}sz's bound $\vartheta(G)$ on Shannon capacity of graphs. Similar to $\vartheta(G)$, the probabilistic functional $\lambda(G,P)$ is based on the concept of \emph{orthonormal representation} of a graph which is recalled here.

Let $U=\{\mathbf u_i:i\in V(G)\}$ be a set of unit vectors of a common dimension $d$ such that
\[
\mathbf u_i^T\mathbf u_j = 0~\text{if}~i\neq j~\text{and}~\{i,j\}\notin E(G).
\]
Let $\mathbf c$ be a unit vector of dimension $d$. Then, the system $\left(U,\mathbf c\right)$ is called an orthonormal representation of the graph $G$ with handle $\mathbf c$.

Letting $T(G)$ denote the set of all orthonormal representations with a handle for graph $G$, L. Lov\'{a}sz \cite{Lov} defined
\[
\vartheta(G)=\min_{\left(U,\mathbf c\right)\in T(G)}\max_{i\in V(G)}\frac{1}{(\mathbf u_i,\mathbf c)^2}.
\]
Then it is shown in L. Lo\'{a}sz \cite{Lov} that zero error Shannon capacity $C(G)$ can be bounded above by $\vartheta(G)$ as
\[
C(G)\leq\log \vartheta(G).
\]
A probabilistic version of $\vartheta(G)$ denoted by $\lambda(G,P)$ is defined in K. Marton \cite{Mart} as
\[
\lambda(G,P):=\min_{\left(U,\mathbf c\right)\in T(G)}\sum_{i\in V(G)}P_i\log\frac{1}{(\mathbf u_i,\mathbf c)^2}.
\]
K. Marton \cite{Mart} showed that
\begin{theorem}\emph{(K. Marton)}
The capacity of a probabilistic graph $(G,P)$ is bounded above by $\lambda(G,P)$, i.e.,
\[
C(G,P)\leq \lambda(G,P).
\]
\end{theorem}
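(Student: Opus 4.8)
The plan is to mimic Lov\'{a}sz's original argument that $C(G)\le\log\vartheta(G)$, carrying the probability weight $P$ through the typicality constraint at each step. The engine is the Kronecker (tensor) product of orthonormal representations, which turns a representation of $G$ into a representation of the normal power $G^n$ whose handle-inner-products factor over coordinates.

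First I would fix an arbitrary orthonormal representation $(U,\mathbf c)\in T(G)$ with $U=\{\mathbf u_i\}$, and form for each sequence $x^n=(x_1,\dots,x_n)\in V^n$ the product vector $\mathbf u_{x^n}=\mathbf u_{x_1}\otimes\cdots\otimes\mathbf u_{x_n}$ together with the product handle $\mathbf c^{\otimes n}$. These are unit vectors in a common dimension $d^n$, and the key structural observation is that if $x^n$ and $y^n$ are distinct and \emph{non-adjacent} in $G^n$, then by the definition of the normal power there is a coordinate $i$ with $x_i\ne y_i$ and $\{x_i,y_i\}\notin E(G)$, whence $(\mathbf u_{x_i},\mathbf u_{y_i})=0$ and therefore $(\mathbf u_{x^n},\mathbf u_{y^n})=\prod_i(\mathbf u_{x_i},\mathbf u_{y_i})=0$. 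Thus $(U^{\otimes n},\mathbf c^{\otimes n})$ is a legitimate orthonormal representation of $G^n$, and in particular of its subgraph induced on the $(P,\epsilon)$-typical sequences.

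Next I would run the Bessel/Lov\'{a}sz bound. If $S$ is an independent set of typical sequences, the vectors $\{\mathbf u_{x^n}:x^n\in S\}$ are pairwise orthogonal, so Bessel's inequality against the unit handle gives $\sum_{x^n\in S}(\mathbf u_{x^n},\mathbf c^{\otimes n})^2\le1$, and hence $|S|\le\max_{x^n\in S}\prod_i (\mathbf u_{x_i},\mathbf c)^{-2}$. Writing the product by symbol frequencies yields $\log\prod_i(\mathbf u_{x_i},\mathbf c)^{-2}=\sum_{a}N(a\mid x^n)\log\frac{1}{(\mathbf u_a,\mathbf c)^2}$. Since each coefficient $\log\frac{1}{(\mathbf u_a,\mathbf c)^2}$ is non-negative (because $(\mathbf u_a,\mathbf c)^2\le1$ by Cauchy--Schwarz), the typicality estimate $N(a\mid x^n)\le nP(a)+n\epsilon$ may be substituted term by term, giving $\frac1n\log\alpha\big(G^{(P,\epsilon)}\big)\le\sum_a P(a)\log\frac{1}{(\mathbf u_a,\mathbf c)^2}+\epsilon K$, where $K=\sum_a\log\frac{1}{(\mathbf u_a,\mathbf c)^2}$ is a constant depending only on the representation, not on $n$.

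Finally I would take $\limsup_{n\to\infty}$ and then let $\epsilon\to0$, which kills the slack $\epsilon K$ and produces $C(G,P)\le\sum_a P(a)\log\frac{1}{(\mathbf u_a,\mathbf c)^2}$; minimizing over all $(U,\mathbf c)\in T(G)$ then gives $C(G,P)\le\lambda(G,P)$. The main obstacle is the bookkeeping at the typicality step: one must verify that the coefficients $\log\frac{1}{(\mathbf u_a,\mathbf c)^2}$ are indeed non-negative, so that replacing $N(a\mid x^n)$ by its upper bound $nP(a)+n\epsilon$ can only increase the right-hand side, and one must confirm that the resulting $O(\epsilon)$ error is uniform in $n$ so that it survives the $\limsup$ and vanishes in the $\epsilon\to0$ limit. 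The only genuinely structural point needing care is checking that tensor-product orthogonality matches the non-adjacency rule of the normal power exactly; everything else is the classical Lov\'{a}sz argument reweighted by $P$.
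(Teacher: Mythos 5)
The thesis does not actually prove this theorem: it is stated as a quoted result of K.~Marton \cite{Mart} with no argument supplied, so there is no in-paper proof to measure yours against. Judged on its own terms, your argument is sound and is the natural ``umbrella'' proof, i.e.\ Lov\'asz's bound $C(G)\le\log\vartheta(G)$ reweighted by $P$. The structural point you single out is exactly the right one: non-adjacency of distinct $x^n,y^n$ in the normal power $G^{n}$ means some coordinate $i$ has $x_i\neq y_i$ and $\{x_i,y_i\}\notin E(G)$, which is precisely the case in which the orthonormal-representation condition forces $(\mathbf u_{x_i},\mathbf u_{y_i})=0$ and hence kills the product inner product, so the tensor vectors of an independent set of $G^{n}$ form an orthonormal system and Bessel's inequality against the unit handle $\mathbf c^{\otimes n}$ gives $|S|\le\max_{x^n\in S}\prod_i(\mathbf u_{x_i},\mathbf c)^{-2}$. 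The typicality substitution $N(a\mid x^n)\le nP(a)+n\epsilon$ is legitimate because each coefficient $\log\frac{1}{(\mathbf u_a,\mathbf c)^2}$ is non-negative, and the resulting slack $\epsilon K$ is indeed uniform in $n$, so it survives the $\limsup$ and vanishes as $\epsilon\to 0$. The one point you should dispose of explicitly is the degenerate case $(\mathbf u_a,\mathbf c)=0$ for some letter $a$: if $P(a)>0$ the bound for that representation is vacuously $+\infty$, but if $P(a)=0$ your constant $K$ is infinite while the target value is finite, since typical sequences may still contain the letter $a$ with frequency up to $\epsilon$. This is patched by a standard perturbation: handles $\mathbf c'$ with all inner products nonzero are dense on the unit sphere (the bad set is a finite union of hyperplane sections), and the objective restricted to letters of positive probability is continuous in $\mathbf c'$, so the infimum defining $\lambda(G,P)$ is unchanged by excluding degenerate representations. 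With that caveat handled, the proof is complete.
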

The following theorem was proved in K. Marton \cite{Mart} which relates $\lambda(G,P)$ to $H_k(G,P)$.
\begin{theorem}\emph{(K. Marton)}
For any probabilistic graph $(G,P)$,
\[
\lambda\left(\overline G, P\right)\leq H_k\left(G,P\right).
\]
Furthermore, equality holds if and only if $G$ is perfect.
\end{theorem}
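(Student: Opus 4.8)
The plan is to recast Marton's functional $\lambda(\overline G,P)$ as the convex-corner entropy $H_{\mathcal T}(P)$ of the \emph{Lov\'asz theta body} $\mathcal T=TH(G)$, and then to read off both the inequality and the equality case from the sandwich $VP(G)\subseteq TH(G)\subseteq FVP(G)$ together with Lemma \ref{lem:corner1}.

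First I would set up the identification $\lambda(\overline G,P)=H_{TH(G)}(P)$. Recall the orthonormal-representation (primal) description of the theta body: $\mathbf x\in TH(G)$ if and only if there is an orthonormal representation $(U,\mathbf c)$ of $\overline G$ with $x_i\le(\mathbf u_i,\mathbf c)^2$ for every $i$; one checks that this set is a convex corner and that $VP(G)\subseteq TH(G)\subseteq FVP(G)$. Note that an orthonormal representation of $\overline G$ assigns orthogonal vectors to vertices that are \emph{adjacent in $G$}, which is exactly the family of vectors appearing in the definition of $\lambda(\overline G,P)$. For a fixed representation $(U,\mathbf c)$ the vector $x_i=(\mathbf u_i,\mathbf c)^2$ lies in $TH(G)$ and realizes the value $\sum_i p_i\log(1/(\mathbf u_i,\mathbf c)^2)$; conversely, since $\sum_i p_i\log(1/x_i)$ is strictly decreasing in each coordinate, its minimum over $TH(G)$ is attained at a coordinatewise-maximal point, and these maximal points are precisely the representation vectors $(\mathbf u_i,\mathbf c)^2$. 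Hence
\[
\lambda(\overline G,P)=\min_{\mathbf x\in TH(G)}\sum_i p_i\log\frac{1}{x_i}=H_{TH(G)}(P).
\]

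With this in hand the inequality is immediate: since $VP(G)\subseteq TH(G)$, Lemma \ref{lem:corner1} gives $H_{VP(G)}(P)\ge H_{TH(G)}(P)$ for every $P$, that is, $H_k(G,P)\ge\lambda(\overline G,P)$. For the equality case, applying Lemma \ref{lem:corner1} in both directions shows that $\lambda(\overline G,P)=H_k(G,P)$ holds for \emph{every} $P$ if and only if $VP(G)=TH(G)$. If $G$ is perfect then $VP(G)=FVP(G)$ by the theorem quoted above, and the sandwich forces $TH(G)=VP(G)$, giving equality for all $P$. For the converse I would invoke the Gr\"otschel--Lov\'asz--Schrijver theorem that $TH(G)=VP(G)$ holds only for perfect $G$ (equivalently, $TH(G)$ is polyhedral precisely when $G$ is perfect); alternatively one may combine the antiblocker identities $TH(G)^{*}=TH(\overline G)$ and $FVP(\overline G)=VP(G)^{*}$ with Theorem \ref{thm:weakpf}.

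I expect two steps to carry the real weight. The first is the identification $\lambda(\overline G,P)=H_{TH(G)}(P)$: the content is the primal description of $TH(G)$ and the claim that the entropy minimum is attained on the Pareto-maximal boundary, which must be justified with care (compactness of the representation body and strict monotonicity of the logarithmic objective). The second, and the genuine obstacle, is the reverse implication in the equality case, namely $TH(G)=VP(G)\Rightarrow G$ perfect; the antiblocker calculation is circular on its own, so this step rests essentially on the GLS characterization of perfect graphs via the theta body, which I would cite rather than reprove. The easy direction $VP(G)\subseteq TH(G)$ that powers the inequality can, if desired, be shown by hand: for a stable set $S$ of $G$ put $\mathbf u_i=\mathbf c$ for $i\in S$ and take the remaining $\mathbf u_j$ mutually orthogonal and orthogonal to $\mathbf c$, which is a valid orthonormal representation of $\overline G$ realizing $\mathbf 1_S$, so $VP(G)$ is contained in the convex corner $TH(G)$.
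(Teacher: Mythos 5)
The paper never proves this theorem: it appears in the Future Work chapter as a result quoted verbatim from K.~Marton \cite{Mart}, so there is no internal proof to compare your argument against, and your proposal has to be judged on its own merits. On those merits it is a sound plan, and indeed the natural one. Your sign bookkeeping is right (an orthonormal representation of $\overline G$ forces orthogonality exactly on pairs adjacent in $G$, so it is the object in the definition of $\lambda(\overline G,P)$), and once the identification $\lambda(\overline G,P)=H_{TH(G)}(P)$ is granted, the rest is exactly as you say: $VP(G)\subseteq TH(G)$ plus Lemma \ref{lem:corner1} gives the inequality, both directions of Lemma \ref{lem:corner1} convert ``equality for every $P$'' into $VP(G)=TH(G)$, and for perfect $G$ the sandwich $VP(G)\subseteq TH(G)\subseteq FVP(G)$ together with the Chv\'{a}tal--Fulkerson equality $VP(G)=FVP(G)$ forces $TH(G)=VP(G)$. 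Two caveats are worth recording. First, both load-bearing steps are outsourced to Gr\"{o}tschel--Lov\'{a}sz--Schrijver: the primal description of $TH(G)$ (every point of $TH(G)$ is dominated by a profile $\left((\mathbf c^T\mathbf u_i)^2\right)_i$ of an orthonormal representation of $\overline G$) and the characterization that $TH(G)=VP(G)$ only for perfect $G$. These are legitimate citations, but your parenthetical that the first needs only ``compactness and strict monotonicity'' understates it: the hard content is that the set of dominated profiles is convex and exhausts $TH(G)$, which GLS prove via the antiblocking duality $TH(G)^{*}=TH(\overline G)$; the monotonicity/attainment part is routine. You are also right that the antiblocker manipulation cannot substitute for the second citation --- it merely transfers the question from $G$ to $\overline G$. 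Second, the theorem's equality clause has to be read as ``equality for every $P$,'' which is what your proof establishes; the per-distribution reading of the ``only if'' direction is false, since for a point-mass $P$ both $\lambda(\overline G,P)$ and $H_k(G,P)$ vanish on any graph whatsoever.
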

K. Marton \cite{Mart} also related $\lambda(G,P)$ to $\vartheta(G)$ by showing
\begin{equation}\label{eq:Hom1}
\max_{P}\lambda(G,P) = \log\vartheta(G).
\end{equation}
It is worth mentioning that $\vartheta(G)$ can be defined in terms of graph homomorphisms as follows.

Let $d\in\mathbb{N}$ and $\alpha<0$. Then we define $S(d,\alpha)$ to be an infinite graph whose vertices are unit vectors in $\mathbb{R}^d$. Two vertices $\mathbf u$ and $\mathbf v$ are adjacent if and only if $\mathbf u\mathbf v^T = \alpha$. Then
\begin{equation}\label{eq:Hom2}
\vartheta\left(\overline G\right)=\min\left\{1 - \frac{1}{\alpha}:G\rightarrow S\left(d,\alpha\right),~\alpha<0\right\}.
\end{equation}
Thus, noting (\ref{eq:Hom1}) and (\ref{eq:Hom2}) and the above discussion, investigating the relationship between graph homomorphism and graph entropy which may lead to investigating the relationship between graph homomorphism and graph covering problem seems interesting.


\newtheorem*{mydef1}{A. 1. Lemma}
\newtheorem*{mydef2}{A. 2. Lemma}
\newtheorem*{mydef3}{A. 3. Lemma}
\chapter*{Appendix A}
\addcontentsline{toc}{chapter}{Appendix A}
\section*{Proof of Lemma 3.2.2}
First, we state a few lemmas as follow.
\begin{mydef1}\label{lem:kernel}
The chromatic number of a graph $G$, i.e., $\chi(G)$ is equal to the minimum number of maximal independent sets covering $G$.
\end{mydef1}
\proof

Let $\kappa(G)$ be the minimum number of maximal independent sets covering the vertices of $G$. Then $\kappa(G)\leq\chi(G)$, since the colour classes of any proper colouring of $V(G)$ can be extended to maximal independent sets. On the other hand, consider a covering system consisting of maximal independent sets $\mathcal S$ with a minimum number of maximal independent sets. Let $\mathcal S = \{S_1,\cdots,S_{\kappa(G)}\}$. We define a colouring $c$ of the vertices of graph $G$ as
\[
c(v) = i,~\forall{v\in S_i\setminus S_{i-1}},~\text{and}~\forall i\in\{1,\cdots,\kappa(G)\},
\]
The proposed colouring is a proper colouring of the vertices of $V(G)$ in which each colour class corresponds to a maximal independent set in our covering system $\mathcal S$. That is
\[
\kappa(G)\geq\chi(G).
\]
\qed

Let $\mathcal X$ be a finite set and let $P$ be a probability density on its elements. Let $K$ be a constant. Then, a sequence $\mathbf x\in\mathcal X^n$ is called $P$-\emph{typical} if for every $y\in\mathcal X$ and for the number of occurrences of the element $y$ in $\mathbf x$, i.e., $N(y|\mathbf x)$, we have
\[
|N(y|\mathbf x) - np(y)|\leq K\sqrt{p(y)}.
\]
Then we have the following lemma.
\begin{mydef2}\label{lem:type}
Let $T^n(P)$ be the set of the $P$-typical $n$-sequences. Then,
\begin{enumerate}[(i)]
  \item For all $\epsilon>0$ there exists $K>0$ such that
	\[
	P\left(\overline{T^{n}(P)}\right)<\epsilon,\quad\text{for this $K$}.
	\]
  \item For every typical sequence $\mathbf x$ we have
	\[
	2^{-(nH(P)+C\sqrt{n})}\leq P\left(\mathbf x\right)\leq 2^{-(nH(P)-C\sqrt{n})},
	\]
       for some constant $C>0$ depending on $|\mathcal X|$ and $\epsilon>0$ and independent of $n$ and $P$,
  \item The number of typical sequences $N(n)$ is bounded as
	\[
        2^{nH(P) - C\sqrt{n}}\leq N(n)\leq 2^{nH(P) + C\sqrt{n}}.
	\]
        for some constant $C>0$ depending on $|\mathcal X|$ and $\epsilon>0$ and independent of $n$ and $P$.
\end{enumerate}
\qed
\end{mydef2}
Having $\mathcal X$ defined as above, let $\left(G,P\right)$ be a probabilistic graph with vertex $V(G) = \mathcal X$. We define the relation $e$ as
\[
x e y\Longleftrightarrow \text{either} \{x,y\}\in E(G)~\text{or}~x = y.
\]
If $e$ determines an equivalence relation on the vertex set $V(G)$, then graph $G$ is the union of pairwise disjoint cliques. Let $H(P|e)$ denote the conditional entropy given the equivalence class $e$, i.e.,
\[
H(P|e) = \sum_{x\in \mathcal X}p(x)\log\frac{\sum_{y:x e y}p(y)}{p(x)},
\]
Let $\mathcal A$ denote the collection of equivalence classes under $e$. Let $P_e$ be the probability density on the elements $A$ of $\mathcal A$ given by
\[
p_e(A) = \sum_{x\in A}p(x),
\]
Then we have the following lemma (see V. Anantharam \cite{Anan} and J. K\"{o}rner \cite{JKor} ).
\begin{mydef3}(V. Anantharam).\label{mydef3:EquiBound}
The number of $P$-typical $n$-sequences in a $P_e$-typical $n$-sequence of equivalence classes is bounded below by $2^{nH(P|P_e) - C\sqrt{n}}$ and bounded above by $2^{nH(P|P_e) + C\sqrt{n}}$ for some constant $C>0$.
\end{mydef3}
\proof

Let $\mathbf A  = (A_1,\cdots,A_n)$ be a $P_e$-typical $n$-sequence. That is for each $A\in\mathcal A$
\begin{equation}\label{eq:numbound}
|N(A|\mathbf A) - np_e(A)|\leq K\sqrt{np_e(A)},
\end{equation}
Then for all $A\in\mathcal A$, we have
\begin{equation}\label{eq:typdef}
np(A)\leq\max(4K^2,2N(A|\mathbf A)),
\end{equation}
The proof is as follows. Suppose $np(A)\geq 4K^2$. Then $np(A)\geq 2K\sqrt{np_e(A)}$ and therefore,
\[
N(A|\mathbf A)\geq np_e(A) - K\sqrt{np_e(A)}\geq \frac{np_e(A)}{2},
\]
Let $\mathbf x = (x_1,\cdots,x_n)$ be a $P$-typical $n$-sequence in $\mathbf A$, i.e.,
\[
x_i\in A_i,~1\leq i\leq n.
\]
From $P$-typicality of $\mathbf x$, we have
\[
|N(x|\mathbf x) - np(x)|\leq K\sqrt{np(x)}.
\]
Now, we prove that for each $A\in\mathcal A$, the restriction of $\mathbf x$ to those co-ordinates having $A_i = A$ is $\left(\frac{p(x)}{p_e(A)}:x\in A\right)$-typical. For $x\in A$, we have
\begin{eqnarray}
\left|N(x|\mathbf x) - N(A|\mathbf A)\frac{p(x)}{p_e(A)}\right|&\leq&\left|N(x|\mathbf x) - np(x)\right|+\left|np(x)-N(A|\mathbf A)\frac{p(x)}{p_e(A)}\right|\nonumber\\
&\leq&K\sqrt{np(x)}+\frac{p(x)}{p_e(A)}K\sqrt{np_e(A)}\nonumber\\
&=&K\left(\sqrt{\frac{p(x)}{p_e(A)}}+\frac{p(x)}{p_e(A)}\right)\sqrt{np_e(A)}.\nonumber
\end{eqnarray}
Using (\ref{eq:typdef}), and noting $N(A|\mathbf A)\geq 1$ and $\frac{p(x)}{p_e(A)}\leq\sqrt{\frac{p(x)}{p_e(A)}}$, we get
\begin{eqnarray}
\left|N(x|\mathbf x)-N(A|\mathbf A)\frac{p(x)}{p_e(A)}\right|
&\leq&K\left(\sqrt{\frac{p(x)}{p_e(A)}}+\frac{p(x)}{p_e(A)}\right)\sqrt{\max(4K^2,2N(A|\mathbf A))}\nonumber\\
&\leq&K\left(\sqrt{\frac{p(x)}{p_e(A)}}+\frac{p(x)}{p_e(A)}\right)\sqrt{\max\left(\frac{4K^2}{N(A|\mathbf A)},2\right)}.\sqrt{N(A|\mathbf A)}\nonumber\\
&\leq&\max(2K^2,\sqrt{2}K)\left(\sqrt{\frac{p(x)}{p_e(A)}}
+\frac{p(x)}{p_e(A)}\right)\sqrt{N(A|\mathbf A)}\nonumber\\
&\leq&2\max(2K^2,\sqrt{2}K)\sqrt{\frac{N(A|\mathbf A)p(x)}{p_e(A)}}.
\end{eqnarray}
Now, letting $H(P|e=A)$ denote $\sum_{x\in A}\frac{p(x)}{p_e(A)}\log\frac{p_e(A)}{p(x)}$, we give the following lower and upper bounds on the number of $P$-typical $n$-sequences $\mathbf x$ in $\mathbf A$. Let $C>0$ be some constant depending on $K$ and $|\mathcal X|$ as in Lemma \ref{lem:type}, then using Lemma \ref{lem:type} and (\ref{eq:numbound}) we get the following upper bound on the $P$-typical $n$-sequences $\mathbf x$ in $\mathbf A$
\begin{eqnarray}
&&\prod_{A\in\mathcal A}2^{N(A|\mathbf A)H(\frac{P}{P_e(A)})}+ C\sqrt{N(A|\mathbf A)}\nonumber\\
&&=2^{n\sum_{A\in\mathcal A}\left(\frac{N(A|\mathbf A)}{n}H(P|e=A)+C\sqrt{N(A|\mathbf A)}\right)}\nonumber\\
&&\leq 2^{n\sum_{A\in\mathcal A}\left(p_e(A)+\frac{K}{n}\sqrt{np_e(A)}\right)H(P|e=A)+\sum_{A\in\mathcal A}C\sqrt{n}}\nonumber\\
&&\leq 2^{n\sum_{A\in\mathcal A}p_e(A)H\left(P|P_e\right)+K\sum_{A\in\mathcal A}\sqrt{np_e(A)}H\left(P|e = A\right)+C|\mathcal X|\sqrt{n}}\nonumber\\
&&\leq 2^{nH\left(P|P_e\right)+\sqrt{n}\left(C|\mathcal X|+K\sum_{A\in\mathcal A}\log|A|\right)}\nonumber\\
&&= 2^{nH\left(P|P_e\right)+\sqrt{n}\left(C|\mathcal X|+K|\mathcal X|\right)},\nonumber
\end{eqnarray}
Now, setting
\[
C_1 = C|\mathcal X|+K|\mathcal X|,
\]
Thus, the number of $P$-typical $n$-sequences $\mathbf x$ in $\mathbf A$ is upper bounded by
\[
2^{nH(P|e) + C_1\sqrt{n}},
\]
Similarly, the number of $P$-typical $n$-sequences $\mathbf x$ in $\mathbf A$ is lower bounded by
\[
2^{nH(P|e) - C_1\sqrt{n}}.
\]
\qed

\prooflem

Let $0<\epsilon<1$, and $M(n,\epsilon)$ denote
\[
\min_{U\in T_\epsilon^{(n)}}  \chi(G^{(n)}[U]),
\]
for sufficiently large $n$. Let $\lambda>0$ be a positive number. First, we show that
\[
M(n,\epsilon)\geq 2^{\left(H^\prime(G,P)-\lambda\right)}.
\]
Consider $G^{(n)}[U]$ for some $U\in T_\epsilon^{(n)}$. Using Lemma A. 2, for any $\delta>0$ there is a $K>0$ such that for any sufficiently large $n$, we have
\[
P\left(T^n(P)\right)\geq 1 - \delta.
\]
First, note that
\begin{equation}\label{eq:Bound2}
1 - \delta - \epsilon \leq P\left(U\cap T^n(P)\right).
\end{equation}
Now, we estimate the chromatic number of $G^{(n)}[U\cap T^n(P)]$. Let $\mathcal S^n$ denote the family of the maximal independent sets of $G^{(n)}$. Note that every colour class in a minimum colouring of graph can be enlarged to a maximal independent set. Thus,
\begin{equation}\label{eq:Bound3}
P\left(U\cap T^n(P)\right)\leq \chi\left(G^{(n)}[U\cap T^n(P)]\right).\max_{\mathbf S\in\mathcal S^n} P\left(\mathbf S\cap T^n(P)\right),
\end{equation}
Furthermore, we have
\begin{equation}\label{eq:Bound4}
\max_{\mathbf S\in\mathcal S^n}P\left(\mathbf S\cap T^n(P)\right)\leq\max_{\mathbf x\in T^n(P)}p(\mathbf x).\max_{S\in\mathcal S^n}|\mathbf S\cap T^n(P)|.
\end{equation}
It is worth mentioning that $|\mathbf S\cap T^n(P)|$ is the number of typical sequences contained in $\mathbf S$. Furthermore, note that  $\mathbf S$ can be considered as an $n$-sequence of maximal independent sets taken from $\mathcal S$.

Let $N(y,R|\mathbf x,\mathbf S)$ denote the number of occurrences of the pair $(y,R)$ in the following double $n$-sequence
\begin{equation*}
\left(
\begin{array}{cccc}
x_1 & x_2 & \cdots & x_n \\
S_1 & S_2 & \cdots & S_n
\end{array} \right)
\end{equation*}
In other words, $N(y,R|\mathbf x,\mathbf S)$ is the number of occurrences of the letter $y$ selected from the maximal independent set $R$ in the $n$-sequence $\mathbf x$ taken from the maximal independent sequence $\mathbf S$. Similarly, $N(y|\mathbf x)$ denotes the number of occurrences of the source letter $y$ in the $n$-sequence $\mathbf x$.

Setting
\begin{equation}\label{eq:aux1}
q(y,R) = \frac{N(y,R|\mathbf x,\mathbf S)}{N(y|\mathbf x)}.p(y),
\end{equation}
we have
\begin{eqnarray}\label{eq:bound1}
&&|N(y,R|\mathbf x,\mathbf S) - n q(y,R)| = \left|\frac{n q(y,R)}{np(y)}\right|.|N(y|\mathbf x)-np(y)|\nonumber\\
&&\leq\left|\frac{q(y,R)}{p(y)}\right|.K\sqrt{np(y)} = K\sqrt{n.\frac{q^2(y,R)}{p(y)}}\leq K\sqrt{n q(y,R)},\nonumber
\end{eqnarray}
since $\mathbf x$ is a $P$-typical sequence. Let
\begin{equation}\label{eq:aux2}
a(R)=\sum_{y:y\in R}q(y,R).
\end{equation}
Then
\[
N(R|\mathbf S) - na(R) = \sum_{y\in R}N(y,R|\mathbf x, \mathbf S) - nq(y,R),
\]
And therefore using \ref{eq:bound1},
\begin{eqnarray}
|N(R|\mathbf S) - n a(R)|&&\leq \sum_{y\in\mathcal X}K\sqrt{n q(y,R)}\leq K_1\sqrt{n\sum_{y\in\mathcal X}q(y,R)}\nonumber\\
&&= K_1\sqrt{n a(R)}.
\end{eqnarray}
Now, we define an auxiliary graph $\Gamma$ of $G$ as follows. Letting $S$ be a maximal independent set of $G$ containing a vertex $x$ of $G$, the vertex set of $\Gamma$ consists of pairs $(x,S)$. Furthermore, two vertices $(x,S)$ and $(y,R)$ are adjacent if and only if $S\neq R$.
Let $K_2>0$ be some constant. Then, applying Lemma A.3 with the equivalence relation $a$ which is
\[
\left((x,S),(y,R)\right)\notin E\left(\Gamma\right),
\]
and probability density $Q$ for the graph $\overline{\Gamma}$, the number of $Q$-typical $n$-sequences in each $a$-typical equivalence class $A$ which is a maximal independent set of $G$ lies in the interval
\begin{equation}\label{eq:interval}
\left[2^{nH\left(Q|a\right)-K_2\sqrt{n}},2^{nH\left(Q|a\right)+K_2\sqrt{n}}\right].
\end{equation}
Noting that every pair $(y,R)$ may occur zero, one,$\cdots$, or $n$-times in the $n$-sequence $(\mathbf x, \mathbf S)$ and for a given $y$ knowing $N(y,R|\mathbf x,\mathbf S)$ for all $R$ uniquely determines $N(y|\mathbf x)$, there are at most $(n+1)^{|V(\Gamma)|}$ different auxiliary densities of the type given by (\ref{eq:aux1}). Now we bound $\max_{\mathbf S\in\mathcal S^n}|\mathbf S\cap T^{n}(P)|$ as follows.
Note that $\mathbf S\cap T^{n}(P)$ is the set of $P$-typical $n$-sequenences which are contained in a given maximal independent set $\mathbf S$ in $G^{(n)}$. Then letting $\mathcal Q$ be the feasible joint distribution for $(X,S)$, for all $\mathbf S\in \mathcal S^n$ and all $Q\in\mathcal Q$, set
\[
T^{n}(S,Q):=\{\mathbf x:\mathbf x\in\mathcal X^n, x_i\in S_i, (\mathbf x,\mathbf S)~\text{is}~Q\text{-typical.}\}
\]
From (\ref{eq:aux1}), for all $\mathbf S\in \mathcal S^n$ and for all $\mathbf x$ in $|\mathbf S\cap T^n(P)|$ there is some $Q\in\mathcal Q$ such that $\mathbf x\in T^{n}(S,Q)$. Therefore, for all $\mathbf S\in\mathcal S^n$, we get
\begin{eqnarray}
|\mathbf S\cap T^n(P)|&\leq&|\bigcup_{Q\in\mathcal Q}T^n(S,Q)|\nonumber\\
&\leq&\sum_{Q\in\mathcal Q}|T^n(S,Q)|\nonumber\\
&\leq&|\mathcal Q|\max_{Q\in\mathcal Q}|T^n(S,Q)|,\nonumber
\end{eqnarray}
Then, using (\ref{eq:interval}),  we obtain
\begin{equation}\label{eq:Bound10}
\max_{\mathbf S\in\mathcal S^n}|\mathbf S\cap T^n(P)|\leq(n+1)^{|V(\Gamma)|}.2^{n.\max_{Q^\prime\in\mathcal Q}H(Q^\prime|a)+K_2\sqrt{n}}.
\end{equation}
Further,
\[
\sum_{R:y\in R}q(y,R)=\frac{p(y)}{N(y|\mathbf x)}.\sum_{R:y\in R}N(y,R|\mathbf x,\mathbf S) = p(y).
\]
From the Lemma A.2 part (ii), we get
\begin{equation}\label{eq:Bound11}
\max_{\mathbf x\in T^n(P)}p(\mathbf x)\leq 2^{-(nH(P) - C\sqrt{n})}.
\end{equation}
Thus, using the inequalities (\ref{eq:Bound2})-(\ref{eq:Bound4}), (\ref{eq:Bound10}) and (\ref{eq:Bound11}) we have
\begin{eqnarray}
(1 - \lambda - \epsilon)&&\leq \chi\left(G^{(n)}[U\cap T^n(\mathbf p)]\right)\nonumber\\
&&.\mathrm{exp_2}\left(n.\left(\max_{Q^\prime\in\mathcal Q}H(Q^\prime|a) - H(P)\right) + K_2\sqrt{n}+|V(\Gamma)|.\log_2(n+1)\right),\nonumber
\end{eqnarray}
And consequently,
\begin{eqnarray}\label{eq:Bound12}
\chi\left(G^{(n)}[U\cap T^n(P)]\right)&&\geq(1 - \lambda - \epsilon)\\
&&.\mathrm{exp_2}\left(n(H(P) - \max_{Q^\prime\in\mathcal Q}H(Q^\prime|a) - K_2\sqrt{n}-|V(\Gamma)|.\log_2(n+1))\right).\nonumber
\end{eqnarray}
Note that
\[
H(P) - \max_{Q^\prime\in\mathcal Q}H(Q^\prime|a) = \min_{Q^\prime\in\mathcal Q}\sum_{x,S}q^\prime(x,S)\log_2\frac{q^\prime(x,S)}{p(x).q^\prime(S)} = \min_{ Q^\prime\in\mathcal Q}I(Q^\prime).
\]
Now, considering
\[
\chi\left(G^{(n)}[U]\right)\geq\chi\left(G^{(n)}[U\cap T^n(P)]\right),
\]
and using (\ref{eq:Bound12}), for every $U\in T_{\epsilon}^{(n)}$ we get
\[
\chi(G^{(n)}[U])\geq (1 - \lambda - \epsilon).\mathrm{exp_2}\left(nH^\prime(G,P) - K_2\sqrt{n}-|V(\Gamma)|.\log_2(n+1)\right).
\]
Thus,
\[
\frac{1}{n}\log_2\left(\min_{U\in T_{\epsilon}^{n}}\chi\left(G^{(n)}[U]\right)\right)\geq\frac{1}{n}\log_2\left(1 -\lambda -\epsilon\right)+ H^\prime\left(G,P\right) - \frac{K_2}{\sqrt{n}}-\frac{|V(\Gamma)|}{n}\log_2(n+1),
\]
Therefore, we get
\begin{equation}\label{eq:Bound0}
\liminf_{n\rightarrow\infty}\frac{1}{n}\log_2M\left(n,\epsilon\right)\geq H^\prime(G,P).
\end{equation}
Now we show that for every $0<\epsilon<1$ and $\delta>0$ and sufficiently large $n$, there exists subgraphs $G^{(n)}[U]$ of $G^{(n)}$, for some $U\subseteq V(G^{(n)})$, such that
\[
\chi\left(G^{(n)}[U]\right)\leq 2^{n(H^\prime(G,P)+\delta)}.
\]
Let $Q^{*}$ be the joint density on vertices and independent sets of $G$ which minimizes the mutual information $I(Q^{*})$. That is
\[
I(Q^{*}) = H^\prime(G,P).
\]
Then the probability of every maximal independent set $S$ is
\[
Q^*(S)=\sum_{y:y\in S}Q^*(y,S).
\]
Letting $\mathbf S$ be $\mathbf S = \left(S_1,S_2, \cdots, S_n\right)\in\mathcal S^n$, we have
\[
\mathbf Q^*(\mathbf S) = \prod_{i=1}^nQ^*(S_i),
\]
Let $L$ be a fixed parameter. For a family of $L$ maximal independent sets, not necessarily distinct and not necessarily covering, we define the corresponding probability density $\mathbf Q_L^*$ as follows. We assume that the $L$ maximal independent sets of a given system of maximal independent sets are chosen independently. Thus,
\[
Q_L^*\left(\mathbf S_1,\mathbf S_2,\cdots,\mathbf S_L\right) = \prod_{j=1}^LQ^*(\mathbf S_j).
\]
Now consider a fixed $n$. Let $G^{(n)}$ be the $n$-th conormal power graph of graph $G$.  Consider systems of maximal independent sets consisting of $L$ maximal independent sets each in the form of a $n$-sequence of maximal independent sets. We call this system of maximal independent sets an $L$-system.

For each $L$-system $\left(\mathbf S_1,\mathbf S_2,\cdots,\mathbf S_L\right)$ let $U\left(\mathbf S_1,\mathbf S_2,\cdots,\mathbf S_L\right)$ be the union of all vertices of $V(G^{(n)})$ which are not covered by the $L$-system $\left(\mathbf S_1,\mathbf S_2,\cdots,\mathbf S_L\right)$. For a given $L$, we show that the expected value of $P\left(U(\mathbf S_1,\mathbf S_2,\cdots,\mathbf S_L)\right)$ is less than $\epsilon$. This implies that there exists at least one system $\mathbf S_1,\cdots,\mathbf S_L$ covering a subgraph of $G^{(n)}$ with probability greater than or equal to $1 - \epsilon$.
For an $L$-system chosen with probability $Q^*_L$, let $Q_{L,\mathbf x}^*$ be the probability that a given $n$-sequence $\mathbf x$ is not covered by an $L$-system, that is
\begin{eqnarray}
Q_{L,\mathbf x}^* &=& Q_L^*\left(\left\{\left(\mathbf S_1,\cdots,\mathbf S_L\right):\mathbf x\in U\left(\mathbf S_1,\cdots,\mathbf S_L\right)\right\}\right)\nonumber\\
&=&\sum_{\left(\mathbf S_1,\cdots,\mathbf S_L\right)\ni\mathbf x}Q_L^*\left(\mathbf S_1,\cdots,\mathbf S_L\right).\nonumber
\end{eqnarray}
Then we have
\begin{eqnarray}\label{eq:Sum0}
E\left(P\left(U\left(\mathbf S_1,\cdots,\mathbf S_L\right)\right)\right) &=& \sum_{\mathbf S_1,\cdots,\mathbf S_L}Q_L^*\left(\mathbf S_1,\cdots,\mathbf S_L\right).P\left(U(\mathbf S_1,\cdots,\mathbf S_L)\right) \nonumber\\
&=& \sum_{\left(\mathbf S_1,\cdots,\mathbf S_L\right)}Q_L^*\left(\mathbf S_1,\cdots,\mathbf S_L\right)\left(\sum_{\mathbf x\in U(\mathbf S_1,\cdots,\mathbf S_L)}P(\mathbf x)\right)\nonumber\\
&=&\sum_{\mathbf x\in\mathcal X^n}P(\mathbf x)\left(\sum_{U(\mathbf S_1,\cdots,\mathbf S_L)\ni\mathbf x}Q_L^*\left(\mathbf S_1,\cdots,\mathbf S_L\right)\right)\nonumber\\
&=&\sum_{\mathbf x\in\mathcal X^n}P(\mathbf x).Q_{L,\mathbf x}^*.
\end{eqnarray}
For a given $\epsilon$ with $0<\epsilon<1$, by Lemma A. 2 there exists a set of typical sequences with total probability greater than or equal to $1 - \frac{\epsilon}{2}$. Then we can write the right hand of the above equation as
\begin{eqnarray}
&&\sum_{\mathbf x\in\mathcal X^n}P(\mathbf x).Q_{L,\mathbf x}^*\nonumber\\
&&=\sum_{\mathbf x\in T^n(P)}P(\mathbf x).Q_{L,\mathbf x}^*\nonumber\\
&&+\sum_{\mathbf x\in\overline{T^n(P)}}P(\mathbf x).Q_{L,\mathbf x}^*.\label{eq:Sum1}
\end{eqnarray}
The second term in (\ref{eq:Sum1}) is upper-bounded by $P\left(\overline{T^n(P)}\right)$ which is less than $\frac{\epsilon}{2}$. We give an upper bound for the first term and show that for $L=2^{n(H^\prime(G,P)+\delta)}$ it tends to $0$ as $n\rightarrow\infty$. Now
\begin{eqnarray}
&&\sum_{\mathbf x\in T^n(P)}P(\mathbf x).Q_{L,\mathbf x}^*\leq\nonumber\\
&&P\left(T^n(P)\right).\max_{\mathbf x\in T^n(P)}Q_{L,\mathbf x}^*\leq\nonumber\\
&&\max_{\mathbf x\in T^n(P)}Q_{L,\mathbf x}^*.\nonumber
\end{eqnarray}
If an $n$-sequence $\mathbf x$ is not covered by an $L$-system, then $\mathbf x$ is not covered by any element of this system. Letting $\mathcal S_{\mathbf x}$ be the set of maximal independent sets covering the $n$-sequence $\mathbf x$, we have
\begin{equation}\label{eq:Bound13}
\max_{\mathbf x\in T^{n}(P)}Q_{L,\mathbf x}^*=\max_{\mathbf x\in T^n(P)}\left(1 - Q^*(\mathcal S_{\mathbf x})\right)^L.
\end{equation}
We obtain a lower bound for $Q^*(\mathcal S_{\mathbf x})$ by counting the $Q^*$-typical $n$-sequences of maximal independent sets covering $\mathbf x\in T^n(P)$. This number is greater than or equal to the $Q^*$-typical sequences $(\mathbf y,\mathbf B)$ with the first coordinate equal to $\mathbf x$. The equality of the first coordinate of the ordered pairs in $V\left(\Gamma\right)$ is an equivalence relation $p$ on the set $V\left(\Gamma\right)$.
Thus, using Lemma A. 3, the number of the $Q^*$-typical $n$-sequences of maximal independent sets is bounded from below by
\begin{equation}\label{eq:Bound14}
2^{nH(Q^*|q) - K_3\sqrt{n}},
\end{equation}
Let $K_4$ be a constant independent of $n$ and the density $a(Q^*)$. Then, applying Lemma A.2 to $\mathcal S$ and the marginal distribution $a(Q^*)$ of $Q^*$ over the maximal independent sets, we obtain the following lower bound on the probability $Q^*$ of the $a(Q^*)$-typical $n$-sequences of maximal independent sets,
\begin{equation}\label{eq:Bound15}
Q^*\geq 2^{-\left(nH(a(Q^*))+K_4\sqrt{n}\right)}.
\end{equation}
Combining (\ref{eq:Bound13}),(\ref{eq:Bound14}), and (\ref{eq:Bound15}), we get
\begin{eqnarray}
&&\max_{\mathbf x\in T^n(P)}Q_{L,\mathbf x}^*\leq\nonumber\\
&&\left(1 - \mathrm{exp_2}(-(nH(a(Q^*))+K_4\sqrt{n})+nH\left(Q^*|p\right) - K_3\sqrt{n})\right)^L.
\end{eqnarray}
Note that using (\ref{eq:mut}) we have
\[
H^\prime\left(G,P\right) = I(Q^*) = H\left(a(Q^*) - H(Q^*|p)\right),
\]
Therefore,
\[
\max_{\mathbf x\in T^n(P)}Q^*_{L,\mathbf x}\leq \left(1 - 2^{-(nH^\prime(G,P)+K_5\sqrt{n})}\right)^L,\qquad\text{for some constant $K_5$.}
\]
Then, using the inequality $\left(1 - x\right)^L\leq \mathrm{exp_2}(-Lx)$, the above inequality becomes
\begin{equation}\label{eq:Bound16}
\max_{\mathbf x\in T^n(P)}Q_{L,\mathbf x}^*\leq\mathrm{exp_2}\left(-L.2^{-(nH^\prime(G,P)+K_5\sqrt{n})}\right).
\end{equation}
Setting $L = 2^{(nH^\prime(G,P)+\delta)}$, (\ref{eq:Bound16}) becomes
\begin{eqnarray}\label{eq:Bound17}
\max_{\mathbf x\in T^n(P)}Q_{L,\mathbf x}^*&\leq&\mathrm{exp_2}\left(-(2^{nH^\prime(G,P)+\delta}-1).2^{-(nH^\prime(G,P)+K_5\sqrt{n})}\right)\nonumber\\
&\leq&\mathrm{exp_2}\left(-2^{n\delta-K_6\sqrt{n}}\right).
\end{eqnarray}
Substituting (\ref{eq:Sum1}) into (\ref{eq:Bound17}), we get
\[
\sum_{\mathbf x\in\mathcal X^n}P(\mathbf x).Q_{L,\mathbf x}^*\leq\mathrm{exp_2}\left(-2^{n\delta-K_6\sqrt{n}}\right)+\frac{\epsilon}{2},
\]
for $L = 2^{(nH^\prime(G,P)+\delta)}$. For sufficiently large $n$ the term $\mathrm{exp_2}\left(-2^{n\delta - K_6\sqrt{n}}\right)$ tends to zero, and (\ref{eq:Sum0}) implies
\[
\sum_{\mathbf S_1,\cdots,\mathbf S_L}Q_L^*\left(\mathbf S_1,\cdots,\mathbf S_L\right).P\left(U(\mathbf S_1,\cdots,\mathbf S_L)\right)\leq\epsilon.
\]
Thus, we conclude that for every $0<\epsilon<1$ and $\delta>0$, there exists a $\left(2^{n(H^\prime(G,P)+\delta)}\right)$-system covering a subgraph $G^{(n)}[U]$ of $G^{(n)}$ with probability of $U$ at least $1 - \epsilon$. Now, from Lemma A.1, the chromatic number of a graph is equal to the minimum number of maximal independent sets covering the graph. Therefore, for every $\delta>0$ there exists a subgraph $G^{(n)}[U]$ of $G^{(n)}$ with $U\in T_\epsilon^{(n)}$ such that
\[
\chi\left(G^{(n)}[U]\right)\leq 2^{n(H^\prime(G,P)+\delta)},
\]
Consequently,
\[
\min_{U\subset V(G^{(n)}), U\in T_\epsilon^n}\chi\left(G^{(n)}[U]\right)\leq 2^{n\left(H^\prime(G,P)+\delta\right)},\quad\text{for every}~\delta>0,
\]
Then, using the definition of $M(n,\epsilon)$, we get
\[
\frac{1}{n}\log_2M(n,\epsilon)\leq H^\prime\left(G,P\right)+\delta,\quad\text{for every}~\delta>0.
\]
And consequently, we get
\begin{equation}\label{eq:Bound18}
\limsup_{n\rightarrow\infty}\frac{1}{n}\log_2M\left(n,\epsilon\right)\leq H^\prime(G,P).
\end{equation}
Comparing (\ref{eq:Bound0}) and (\ref{eq:Bound18}), we obtain
\[
\lim_{n\rightarrow\infty}\frac{1}{n}\log_2M\left(n,\epsilon\right) = H^\prime\left(G,P\right),\quad\text{for every $\epsilon$ with $0<\epsilon<1$.}
\]
\qed

\newpage
%
%



\bibliographystyle{plain}
\cleardoublepage 
\phantomsection  
\renewcommand*{\bibname}{References}


\bibliography{uw-ethesis}

\begin{thebibliography}{100}
\addcontentsline{toc}{chapter}{\textbf{Bibliography}}
\bibitem{Alon96} Noga Alon, and Alon Orlitsky, ``Source Coding and Graph Entropies," \emph{IEEE Trans. on Information Theory}, Vol. 42, No. 5, pp. 1329-1339, September 1996.
\bibitem{Anan} V. Anantharam, ``Error Exponent in a Source Coding Problem of K\"{o}rner", \emph{available at} www.eecs.berkeley.edu/~ananth/1993-1995/Self/Korner.pdf
\bibitem{Brg} C. Berge, Graphs and hypergraphs, 2nd edition, North Holland, Amesterdam, 1976.
\bibitem{Boyd} Stephen Boyd, and Lieven Vanderberghe, Convex Optimization, Published in the United States of America by Cambridge University Press, New York, First published in 2004, Seventh printing with corrections in 2009.
\bibitem{JC04} Jean Cardinal, Samuel Fiorini, and G. Van Assche, ``On minimum entropy graph colorings," In \emph{Proc. of IEEE Int. Symposium on Information Theory}, p 43, 2004.
\bibitem{JC08} Jean Cardinal, Samuel Fiorini, and Gwenael Joret, ``Minimum Entropy Coloring," \emph{IEEE Trans. on Information Theory}, Vol. 42, No. 5, pp. 1329-1339, September 2008.
\bibitem{chv} V. Chv\'{a}tal, ``On certain polytopes associated with graphs", \emph{J. Comb. Theory B}, \textbf{18}, (1975), 138-154.
\bibitem{Cov} Thomas M. Cover, and Joy A. Thomas, Elements of Information Theory, Second Edition 2006, A Wiley-Interscience publication.
\bibitem{Csis} I. Csisz\'{a}r, J. K\"{o}rner, L. Lov\'{a}s, K. Marton, and G. Simonyi, ``Entropy Splitting for antiblocking corners and perfect graphs," \emph{Combinatorica}, Vol. 10, pp. 27-40, 1990.
\bibitem{prob1} R. M. Dudley, Real Analysis and Probability, First published by Wadsworth, Inc. in 1989, Cambridge University Press Edition published in 2002. 
\bibitem{prob2} Rick Durrett, Probability: Theory and Example, Fourth Edition published 2010 by Cambridge University Press.
\bibitem{Erdos03} P. Erd\"{o}s, S. T. Hedetniemi, R. C. Laskar,  and G. C. E Prins, ``On the equality of the partial Grundy and Upper Ochromatic Numbers of Graphs," \emph{Discrete Math} 272(1): 53-64, in honor of Frank Haray, 2003.
\bibitem{Esp}  Louis Esperet,  Franti\u{s}ek Kardo\u{s}, and Andrew D. King, Daniel Kr\'{a}l, and Serguei Norine, ``Exponentially many perfect matchings in cubic graphs", Advances in Mathematics 227 (4): 1646-1664,  2011.
\bibitem{FK} M. Fredman, and J. Koml\'{o}s, ``On the size of separating systems and perfect hash functions", SIAM J. Algebraic Discrete Methods, 5 (1984), pp. 61-68.
\bibitem{De} C. De Simone and J. K\"{o}rner, ``On the odd cycles of normal graphs", Proceedings of the Third International Conference on Graphs and Optimization, GO-III (Leukerbad, 1998), \emph{Discrete Appl. Math}. \textbf{94} (1999), no. 1 - 3, 161-169.
\bibitem{Flker} D. R. Fulkerson, ``Blocking and anti-blocking of polyhedra", \emph{Math. Programming} \textbf{1}, (1971), 168-194.
\bibitem{CGodsil} Chris Godsil, and Gordon Royle, Algebraic Graph Theory, Graduate Texts in Mathematics-Springer, 2001.
\bibitem{Kahn} J. Kahn and J. H. Kim, ``Entropy and Sorting", \emph{Proc. 24th Annual ACM Symposium on the Theory of Computing}, pp. 178-187.
\bibitem{JKor} J. K\"{o}rner, ``Coding of an information source having ambiguous alphabet and the entropy of graphs," in \emph{Transactions of the 6th Prague Conference on Information Theory}, Academia, Prague, (1973), 411-425.
\bibitem{JKor01} J. K\"{o}rner, ``An extension of the class of perfect graphs", \emph{Studia Sci. Math. Hung.}, \textbf{8} (1973), 405-409.
\bibitem{JKor1} J. K\"{o}rner, ``Fredman-Koml\'{o}s Bounds and Information Theory," in \emph{SIAM J. Algebraic and Discrete Methods}, \textbf{7} (1986), 560-570.
\bibitem{JKor11} J. K\"{o}rner and G. Longo, ``Two-step encoding of finite memoryless sources", \emph{IEEE Trans. Information Theory}, \textbf{19} (1973), 778-782.
\bibitem{JKor2} J. K\"{o}rner, G. Simonyi, and Zs. Tuza, ``Perfect couples of graphs," in \emph{Combinatorica}, \textbf{12} (1992), 179-192.
\bibitem{JKor3} J. K\"{o}rner, and K. Marton, ``Graphs that split entropies", \emph{SIAM J. Discrete Math.}, \textbf{1} (1988), 71-79.
\bibitem{Jkor4} J. K\"{o}rner, and K. Marton, ``New bounds for perfect hashing via information theory", \emph{European J. of Combinatorics}, \textbf{9} (1988), 523-530.
\bibitem{Lvzs} L. Lov\'{a}sz, ``Perfect graphs", in \emph{More Selected Topics in Graph Theory}, (L. W. Beineke, R. J. Wilson, eds), Academic Press, New York-London, 1983, 55-87.
\bibitem{Lov1} L. Lov\'{a}sz, ``Normal hypergraphs and the perfect graph conjecture", in \emph{Discrete Mathematics} \textbf{2} (3), 1972a: 253–267.
\bibitem{Lov2} L. Lov\'{a}sz,``A characterization of perfect graphs", in \emph{Journal of Combinatorial Theory, Series B} \textbf{13} (2), 1972b: 95–98.
\bibitem{Lov} L. Lov\'{a}sz, ``On the Shannon Capacity of a Graph", in \emph{IEEE Trans. on Information Theory}, \textbf{25} (1979), pp. 1-7.
\bibitem{Lov3} L. Lov\'{a}sz and M. D. Plummer, Matching Theory, North-Holland Publishing Co., Amsterdam, 1986.
\bibitem{Mart} Katalin Marton, ``On the Shannon Capacity of Probabilistic Graphs", in \emph{Journal of Combinatorial Theory Series B}, \textbf{57}, pp. 183-195 (1993).
\bibitem{ZPat} Zsolt Patakfalvi, ``Line-graphs of cubic graphs are normal", in \emph{Discrete Mathematics} \textbf{308} (2008),  pp. 2351-2365.
\bibitem{Rad} J. Radhakrishnan,``Entropy and counting",  in the book Computational Mathematics, Modelling and Algorithms 2001
(Ed. J.C. Misra). Narosa Publishers, New Delhi.
\bibitem{Schriv1} Alexander Schrijver, Combinatorial Optimization, Springer-Verlag Berlin Heidelberge 2003.
\bibitem{Sim} G. Simonyi, ``Graph Entropy: A Survey," in \emph{Combinatorial Optimization}, W. Cook, L. Lovas, and P.Seymour ed., DIMACS Series in Discrete Mathematics and Computer Science, Vol. 20, AMS, 399-441.
\bibitem{Simu}  G. Simonyi, ``Perfect graphs and graph entropy. An updated survey," Perfect Graphs, John Wiley and Sons (2001) pp. 293-328.
\bibitem{Trot} L. E. Trotter, jr., ``A Class of Facet Producing Graphs for Vertex Packing Polyhedra", in \emph{Discrete Math.} \textbf{12} (1975) pp. 373-388.
\bibitem{fgt} Daniel Ullman and Edward Scheinerman, Fractional Graph Theory : A Rational Approach to the Theory of Graphs, Wiley-Interscience; 1 edition (August 25, 1997).
\bibitem{Wag} Annegret K. Wagler, ``The Normal Graph Conjecture is true for Circulants", in \emph{Graph Theory in Paris Trends in Mathematics} 2007,  pp. 365-374.
\bibitem{West} Douglas B. West, Introduction to Graph Theory, Second Edition, Prentice-Hall, Inc., 2001.
\end{thebibliography}

\nocite{*}

\printindex
\end{document}